\documentclass{amsart}
\usepackage{amssymb}
\usepackage{relsize}
\usepackage{enumerate}
\allowdisplaybreaks

\usepackage{comment} 
\usepackage{soul}
\usepackage{float}

\let\oldtocsection=\tocsection 
\let\oldtocsubsection=\tocsubsection 
\renewcommand{\tocsection}[2]{\hspace{0mm}\oldtocsection{#1}{#2}}
\renewcommand{\tocsubsection}[2]{\hspace{2em}\oldtocsubsection{#1}{#2}}

\usepackage{hyperref}
\hypersetup{
    colorlinks,
    citecolor=black,
    filecolor=black,
    linkcolor=black,
    urlcolor=black
}

\usepackage{tikz-cd}
\tikzcdset{arrow style=math font}
\usetikzlibrary{external,arrows.meta}

\numberwithin{equation}{subsection}
\newtheorem{theorem}{Theorem}[subsection]
\newtheorem*{theorem*}{Theorem}
\newtheorem{corollary}[theorem]{Corollary}
\newtheorem*{corollary*}{Corollary}
\newtheorem*{conjecture*}{Conjecture}
\newtheorem{lemma}[theorem]{Lemma}
\newtheorem{proposition}[theorem]{Proposition}
\theoremstyle{definition}

\newtheorem{definition}[theorem]{Definition}
\newtheorem{remark}[theorem]{Remark}
\newtheorem{example}[theorem]{Example}
\newtheorem{notation}[theorem]{Notation}

\def\DGra{\mathsf{DGra}}
\def\Gra{\mathsf{Gra}}
\def\PairDGr{\mathsf{PairDGra}}
\def\Fun{\mathrm{Fun}}
\def\Hom{\mathrm{Hom}}
\def\colim{\mathrm{colim}}
\def\op{\mathrm{op}}
\def\Id{\mathrm{Id}} 
\def\id{\mathrm{id}} 
\def\SS{\mathbf{S}}
\def\Set{\mathsf{Set}}
\def\cSet{\mathsf{cSet}}
\def\Quiv{\mathsf{Quiv}}
\def\Out{\mathsf{Out}}
\def\In{\mathsf{In}}
\def\UU{\mathcal{U}}
\def\VV{\mathcal{V}}
\def\HH{\mathcal{H}}
\def\Ner{\mathsf{Ner}}

\def\Shr{\mathsf{Shr}}
\def\HShr{\mathsf{HShr}}
\def\Cant{\mathsf{Cant}}
\def\sSet{\mathsf{sSet}}
\def\Sing{\mathsf{Sing}}

\def\dist{\mathsf{dist}}
\def\Spc{\mathsf{Spc}}

\DeclareMathOperator{\diag}{diag}

\definecolor{carminered}{rgb}{1.0, 0.0, 0.22}
\definecolor{carrotorange}{rgb}{0.93, 0.57, 0.13}
\definecolor{amber}{rgb}{1.0, 0.75, 0.0}
\definecolor{armygreen}{rgb}{0.29, 0.33, 0.13}
\definecolor{blue(munsell)}{rgb}{0.0, 0.5, 0.69}

\begin{document}

\title{The discrete homotopy hypothesis for directed graphs}

\author[B. Eldridge]{Briony Eldridge}
\address{
Beijing Key Laboratory of Topological Statistics and Applications for Complex Systems, Beijing Institute of Mathematical Sciences and Applications (BIMSA), Beijing 101408, China.}
\email{eldridge@bimsa.cn}

\author[S. O. Ivanov]{Sergei O. Ivanov} 
\address{
Beijing Key Laboratory of Topological Statistics and Applications for Complex Systems, Beijing Institute of Mathematical Sciences and Applications (BIMSA), Beijing 101408, China.}
\email{ivanov.s.o.1986@gmail.com, ivanov.s.o.1986@bimsa.cn}

\author[X. Xu]{Xiaomeng Xu} 
\address{Mathematical Sciences, University of Southampton, Southampton SO17 1BJ, United Kingdom}
\email{xiaomeng.x.xu@gmail.com}

\author[S.-T. Yau]{Shing-Tung Yau}
\address{Yau Mathematical Sciences Center, Tsinghua
University, Beijing 100084, China. \\ Beijing Institute of Mathematical Sciences and Applications (BIMSA), Beijing 101408, China.}
\email{styau@tsinghua.edu.cn}

\author[M. Zhang]{Mengmeng Zhang}
\address{
Beijing Key Laboratory of Topological Statistics and Applications for Complex Systems, Beijing Institute of Mathematical Sciences and Applications (BIMSA), Beijing 101408, China.}
\email{mengmengzhang@bimsa.cn}

\begin{abstract} 
We develop a homotopy theory of directed graphs based on cubical
homotopy groups, also known as $A$-groups or reduced GLMY homotopy
groups. Localizing the category of directed graphs at morphisms
that induce isomorphisms on these groups yields an
$\infty$-category, denoted by ${\sf DGra}_\infty$. We prove that
${\sf DGra}_\infty$ is equivalent to the $\infty$-category of
spaces, establishing a directed version of the discrete homotopy
hypothesis of Carranza and Kapulkin.
\end{abstract}

\maketitle

\tableofcontents

\section*{\bf Introduction}

A homotopy theory of (simple) graphs was introduced in \cite{BKLW01,BL05,BBdLL06}, building on earlier ideas of Atkin \cite{Atk74,Atk76}. Its central invariant is the \emph{A-group} of a graph, denoted $A_n(G)$, an analogue of the $n$-th homotopy group of a topological space. In \cite{BBdLL06} it was conjectured that $A_n(G)$ is isomorphic to the $n$-th homotopy group of a suitable topological space. This was proved by Carranza and Kapulkin in \cite{CK24}, who constructed a functor to the category of cubical sets
\begin{equation}
N_1:\Gra \longrightarrow \cSet,    
\end{equation}
called the \emph{cubical nerve}, and showed that $A_n(G)\cong \pi_n(|N_1G|)$.

Independently, a homotopy theory of digraphs (directed graphs) was introduced in \cite{GLMY23} and further developed in \cite{LWYZ24,TWYZ25}. This theory is closely related to the path homology theory, also known as the GLMY-theory, developed earlier in \cite{GLMY12}. It has its own analogue of homotopy groups, the \emph{reduced GLMY homotopy groups} $\bar\pi_n(G)$. These groups extend the A-groups from graphs to digraphs, agreeing with $A_n(G)$ when $G$ is a graph, and—as we show—admit a description in terms of cubical sets. For this reason we also refer to them as the \emph{cubical homotopy groups} of $G$ and, throughout the paper, write
\begin{equation}
A_n(G):=\bar\pi_n(G).
\end{equation}
A digraph map that induces isomorphisms on $A_*$ is called a \emph{cubical weak equivalence}.

In \cite{CK26}, Carranza and Kapulkin formulated the following \emph{discrete homotopy hypothesis}. Let $\Gra_\infty$ denote the ($\infty$-categorical) localization of the category of graphs $\Gra$ at the class of cubical weak equivalences. Then the cubical nerve induces a functor to the $\infty$-category of spaces
\begin{equation}
N : \Gra_\infty \to \Spc.
\end{equation}
The discrete homotopy hypothesis asserts that this functor is an equivalence of $\infty$-categories. This conjecture remains open, although several partial results have been obtained; see \cite{KM24,Min26,CK26}. As explained in \cite{CK26}, the principal obstruction to the original discrete homotopy hypothesis is that pushouts of graphs admit a large number of non-degenerate cubes that are difficult to control. This issue can be resolved by passing to the category of digraphs, where the presence of directions imposes sufficient constraints to control the newly generated cubes arising in pushouts.

In this paper we establish the analogue of the discrete homotopy hypothesis in the directed setting, where it becomes a theorem. We first extend the results of \cite{CK24} to digraphs: we construct the cubical nerve functor $N_1 : \DGra \to \cSet$ and prove that $A_n(G)\cong \pi_n(|N_1G|)$. Let $\DGra_\infty$ denote the $\infty$-category obtained from $\DGra$ by localizing at the class of cubical weak equivalences. Then the functor $N_1$ induces a functor to the $\infty$-category of spaces $N:\DGra_\infty\to \Spc$. Our main result is the following.
\begin{theorem*}[Th. \ref{th:equivalence_to_spaces}]
The functor $N$ is an equivalence of $\infty$-categories
\begin{equation}
\DGra_\infty\simeq \Spc.
\end{equation}
\end{theorem*}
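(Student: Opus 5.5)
The equivalence follows from two facts: $N$ is essentially surjective on the homotopy category, and $N$ is fully faithful. I will prove both by building a functor $\Spc\to\DGra_\infty$ that is inverse to $N$.

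Since $|N_1(-)|$ inverts cubical weak equivalences by the isomorphism $A_n(G)\cong\pi_n(|N_1G|)$, the functor $N$ exists by the universal property of localization. The guiding principle is the Carranza–Kapulkin strategy for $\Gra$, but the obstruction described above (uncontrolled new cubes in pushouts) has to be resolved in the directed setting.

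\textbf{Step 1 (a realization functor).} I construct a functor $\Phi:\square\to\DGra$ sending $[1]^n$ to the directed cube digraph $I^n$, with edges $x\to y$ when $y$ is obtained from $x$ by changing one coordinate from $0$ to $1$. Left Kan extension then gives $L:\cSet\to\DGra$, left adjoint to $N_1$. Faces and degeneracies must be digraph maps, which holds because degeneracies may collapse an edge to a vertex. I then show that the unit $X\to N_1LX$ is a weak equivalence of cubical sets in the Grothendieck/Cisinski model structure (after geometric realization). I would prove this first for representables $\square^n$: both sides are contractible, since $N_1(I^n)$ has a contraction along the directed cube. I would then extend it along colimits by induction on skeleta. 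This requires that $L$ send boundary inclusions $\partial\square^n\hookrightarrow\square^n$ and the pushouts attaching cells to pushouts of digraphs whose nerves are homotopy pushouts.

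\textbf{Step 2 (the key gluing lemma).} The main technical claim is the following. For a pushout of digraphs $G\leftarrow A\hookrightarrow B$, where $A\hookrightarrow B$ is induced by attaching a cube along its boundary (or more generally a suitably ``directed-cofibrant'' inclusion), the canonical map
\[
N_1B\sqcup_{N_1A}N_1G\longrightarrow N_1(B\sqcup_A G)
\]
is a weak equivalence. This is where the directedness is used. A cube in the pushout is a map $I^n\to B\sqcup_A G$, and the orientations force any such cube to either lie in $G$ or factor through $B$ up to a homotopy that can be controlled. My approach would be to filter the cubes of $N_1(B\sqcup_AG)$ by how many coordinates cross into $B\setminus A$. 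I would then show that each stage is obtained by anodyne (open-box filling) extensions, using monotonicity of directed paths in $I^n$ to rule out cubes that wrap back and forth across the gluing. I expect this step to be the main obstacle; everything else is formal. To make the combinatorics manageable, I would first try to replace $L$ by $L$ composed with a subdivision, so that attached cells are ``thick''.

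\textbf{Step 3 (conclusion).} From Steps 1–2, $|N_1L(-)|\simeq|-|$ naturally, so $N$ is essentially surjective, since every space is $|X|$ for some cubical set $X$. For full faithfulness, I show that the counit $LN_1G\to G$ is a cubical weak equivalence. By the Step 1 identity (2-out-of-3 applied to $N_1G\to N_1LN_1G\to N_1G$, whose composite is the identity by the triangle identity), $N_1$ of the counit is a weak equivalence. By the isomorphism $A_n\cong\pi_n|N_1-|$, this means the counit is a cubical weak equivalence, with care for all basepoints and components, and using $\pi_0$ for components.

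Hence $L$ and $N_1$ descend to mutually inverse functors between $\DGra_\infty$ and the localization of $\cSet$ at weak equivalences. The latter is $\Spc$ by the cubical homotopy hypothesis (the Cisinski model structure on $\cSet$ is Quillen equivalent to spaces). This gives $\DGra_\infty\simeq\Spc$, as stated in Theorem~\ref{th:equivalence_to_spaces}.
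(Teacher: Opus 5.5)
Your proposal breaks at Step 1, and the failure lies in the functor $L$ itself rather than in the gluing. Your $L$ is the paper's $|\cdot|_1$. The directed cube $I_1^{\otimes n}$ has no arrows beyond those of its $1$-skeleton: for $n\geq 2$ every arrow changes a single coordinate, so it lies in a proper face. Hence the colimit $L(\partial\Box^n)$ is already all of $I_1^{\otimes n}$. In other words, $L$ sends $\partial\Box^n\hookrightarrow\Box^n$ to an isomorphism for every $n\geq 2$, and $LX$ sees only the $1$-skeleton of $X$.

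Two concrete failures follow. First, $L(\Box^2/\partial\Box^2)=I_1^{\otimes 2}\sqcup_{I_1^{\otimes 2}}*=*$, so the unit $S^2\to N_1LS^2=\Box^0$ is not a weak equivalence. Second, for $X=\partial\Box^2$ we have $|X|\simeq S^1$, while $N_1LX=N_1(I_1^{\otimes 2})$ is contractible by Lemma~\ref{lem:N_J_weak_equiv}. So your skeletal induction fails at the first $2$-cell, and no lemma about $N_1$ of pushouts can rescue it. Step 3 then has nothing to stand on, since both essential surjectivity and your triangle-identity argument for the counit rely on $X\to N_1LX$ being a weak equivalence.

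Your aside about subdividing points in the right direction, but it is not cosmetic. You would have to pass to $|\cdot|_m\dashv N_m$ for large even $m$ and use $N_m\simeq N_1$. Step 2 then becomes the entire proof, and it is still only a sketch. Nothing in the paper supplies it. For even $m$ the boundary $|\partial\Box^n|_m\subseteq I_m^{\otimes n}$ is in-closed, but it is not an in-closed cofibration. Already for $m=4$, $n=2$, the vertex $(1,1)$ of its out-closure receives arrows from the boundary vertices $(0,1)$ and $(1,0)$, and (DDR3) would force $\eta(1,1)$ to equal both. So Theorem~\ref{th:Hur_cof_pushout} does not cover cell attachments. You would need a new theorem saying that $N_1$ turns pushouts along these inclusions, with arbitrary attaching maps, into homotopy pushouts.

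The paper avoids any concrete realization functor:
\begin{enumerate}
\item It obtains cocompleteness of $\DGra_\infty$ and colimit-preservation of $N$ from the in-closed cofibration structure. The only factorization that structure requires is $G\sqcup G\to G\otimes I_4\to G$.
\item It identifies $N\simeq\DGra_\infty(*,-)$ via the right calculus of fractions $I_1^{\otimes\bullet}\colon\Box\to\DGra$.
\item It proves that $N$ is conservative.
\item It lets the universal property of $\Spc$ produce the inverse $R$ abstractly.
\end{enumerate}
In that argument $R(X)$ is a homotopy colimit of points in $\DGra_\infty$, which is exactly what the strict colimit $|X|_1$ fails to compute.
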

In particular, every homotopy type is realized by a digraph: for every pointed space $X$ there exists a pointed digraph $G$ such that $A_*(G)\cong \pi_*(X)$.

We further describe finite limits and small colimits in $\DGra_\infty$ by equipping $\DGra$ with the structures of a category of fibrant objects and of a homotopy cocomplete category of cofibrant objects, respectively. As an application, we recover the long exact sequence of cubical homotopy groups constructed in \cite{LWYZ24} (Corollary \ref{cor:long_exact_sequence}).

The paper is organized as follows. In Section \ref{sec:Cubical_homotopy_groups}, we develop a homotopy theory of digraphs, where homotopies are defined by digraph maps $G\otimes I_1 \to H$, and introduce the cubical homotopy groups $A_*(G)$. We consider the \emph{category of shrinkings} $\Shr$, whose objects are intervals and morphisms are shrinkings. We define a notion of a \emph{nice subcategory of shrinkings} $\SS \subseteq \Shr$ and show that $A_*(G)$ admits an equivalent definition in terms of any such $\SS$. This provides a unified framework encompassing several approaches in the literature. We also define the path and loop digraphs $P_\SS G$ and $\Omega_\SS G$ respectively, depending on $\SS$, and prove that they satisfy the expected properties whenever $\SS$ is cofiltered.

In Section \ref{sec:Cubical nerves of digraphs}, generalizing
the ideas of \cite{CK24}, for each $m\geq 1$ we introduce a functor
$N_m:\DGra\to\cSet$, called the \emph{cubical $m$-nerve}.
We also construct a functor $N_\infty:\DGra\to\cSet$ as a certain colimit (Definition~\ref{def:infinite_nerve}), together
with a natural comparison map
\begin{equation}
N_1G\longrightarrow N_\infty G.    
\end{equation}
Extending the results of \cite{CK24}, we prove that this map
is anodyne, that $N_\infty G$ is a Kan complex, and that its
homotopy groups are naturally isomorphic to the cubical homotopy
groups of $G$. Our proof of the anodyne property also corrects a gap in the proof of \cite[Th.~4.12]{CK24}, as explained in Remark~\ref{rem:CK24-gap}. Thus $N_\infty G$ provides a functorial fibrant
replacement of $N_1G$, and we obtain natural isomorphisms
\begin{equation}
\pi_*(N_1G)\cong \pi_*(N_\infty G)\cong A_*(G).    
\end{equation}
Consequently, the two descriptions of a cubical weak equivalence
coincide: a digraph map $f$ induces an isomorphism on $A_*$
if and only if $N_1f$ is a weak equivalence of cubical sets.
Our comparison results also show that $N_1$ and $N_\infty$ induce 
the same functor of $\infty$-categories
\begin{equation}
N:\DGra_\infty\longrightarrow\Spc.    
\end{equation}

In Section \ref{sec:Finite homotopy limits}, we introduce the notion of a cubical fibration of digraphs and show that $\DGra$, together with the classes of cubical weak equivalences and cubical fibrations, forms a category of fibrant objects. This framework allows us to describe finite limits in $\DGra_\infty$ and to prove the long exact sequence of cubical homotopy groups. To illustrate the notion of cubical fibration, we demonstrate that the 2-coverings introduced in \cite{DIMZ24} are cubical fibrations.

In Section \ref{sec:nerve_theorem}, we introduce the notion of an in-closed subdigraph and prove a nerve theorem for covers of a digraph by in-closed subdigraphs. This provides a useful tool for computing the homotopy type of $NG$ for certain classes of digraphs $G$.

In Section \ref{sec:homotopy_colimits}, we introduce the notion of an in-closed cofibration and show that the category $\DGra$, equipped with the classes of cubical weak equivalences and in-closed cofibrations, is a homotopy cocomplete category of cofibrant objects. Moreover, we prove that $N_1 : \DGra \to \cSet$ is a homotopy cocontinuous functor. Consequently, $\DGra_\infty$ admits small colimits and $N: \DGra_\infty \to \Spc$ is a colimit-preserving functor.

In Section \ref{section:equivalence}, we show that $NG$ can be described as a mapping space from the one-point digraph in the $\infty$-category $\DGra_\infty$:
\begin{equation}
NG \simeq {\DGra}_\infty(*,G).
\end{equation}
By the universal property of the $\infty$-category of spaces $\Spc$, we obtain a colimit-preserving functor
\begin{equation}
R: \Spc \longrightarrow \DGra_\infty
\end{equation}
satisfying $R(*) \simeq *$. Then, using the natural equivalence $N \simeq {\DGra}_\infty(*,-)$ we show that $R$ and $N$ are mutually inverse equivalences.

\section{\bf Cubical homotopy groups}
\label{sec:Cubical_homotopy_groups}

This section reviews the ordinary homotopy theory of digraphs, in which a homotopy is a map  $G\otimes I_1 \to H$. We define the category of digraphs $\DGra$, its homotopy category $\DGra/\!\sim$, the cubical homotopy groups $A_*(G)$, and the path and loop digraphs. While versions of this theory appear in the literature (see \cite{CK24,GLMY23,LWYZ24}), our primary contribution is the introduction of the notion of a nice subcategory of shrinkings $\SS \subseteq \Shr$, where $\Shr$ is the category of intervals and their ``shrinkings''. We show that the cubical homotopy groups $A_*(G)$ can be defined using any nice subcategory of shrinkings $\SS$, yielding many equivalent definitions. We also define the path digraph $P_{\SS}G$ and the loop digraph $\Omega_{\SS}G$ relative to a specific  $\SS$. We prove that these digraphs satisfy all expected properties provided that $\SS$ is cofiltered.

\subsection{Homotopy category of digraphs} In this subsection we collect basic definitions and results concerning the category of digraphs, the category of pairs of digraphs, and their homotopy categories. Most statements are given without proof, as the proofs are straightforward; we include proofs only where indicated. Many of these statements are direct generalizations of results from \cite{CK24,GLMY23,LWYZ24}. 

\begin{definition}[Quivers]
Let ${\sf q}$ be the category consisting of two objects $[0]=\{0\}$ and $[1]=\{0,1\}$ and all monotone maps between them. This category is generated by morphisms 
$d^0,d^1 : [0] \to [1]$,   $s^0 : [1]\to [0],$
where $d^0(0)=1$,  $d^1(0)=0$ and $s^0$ is the only map from $[1]$ to $[0]$. They satisfy relations 
$s^0 d^0 = \id_{\{0\}} = s^0d^1.$
The category of quivers is defined as the category of presheaves over ${\sf q}$ 
\begin{equation}
    \Quiv = \Fun ({\sf q}^\op, \Set). 
\end{equation}
It is easy to see that a quiver $Q$ is defined by two sets 
$V(Q):=Q([0])$ and   $E(Q):=Q([1])$
called the set of vertices and the set of arrows, and three maps
\begin{equation}
d_0,d_1 : E(Q)\to V(Q), \hspace{1cm} s_0:V(Q)\to E(Q)
\end{equation}
satisfying the relations 
$d_0s_0 = \id_{V(Q)} = d_1s_0.$ For $v\in V(Q)$, the arrow $s_0(v)\in E(Q)$ is called a degenerate arrow, and it is not depicted in a picture of a quiver. 
\end{definition}
 
\begin{remark}[(Co)limits of quivers]
Since $\Quiv$ is a category of presheaves over a small category, it is complete and cocomplete, and the limits and colimits are computed objectwise.
\end{remark}
\begin{definition}[Digraphs] 
A digraph $G$ is a quiver for which the map $(d_1,d_0) : E(G)\to V(G)\times V(G)$ is injective. Every digraph is isomorphic to one in which $E(G)\subseteq V(G)\times V(G)$, the maps $d_1$ and $d_0$ are the two projections, and $s_0$ is the diagonal map; we henceforth work with this description. In it, the arrows of the form $(g,g)$ are the degenerate arrows, and they are not depicted in a picture of a digraph. The full subcategory of quivers spanned by the digraphs is denoted by 
\begin{equation}
\DGra \subseteq \Quiv.
\end{equation}
By an abuse of notation, we write $g \in G$ to denote a vertex of $G$, rather than the more formal $g \in V(G)$.
\end{definition}
\begin{proposition}
The subcategory $\DGra\subseteq \Quiv$ is reflective. The reflector 
\begin{equation}
{\sf d} : \Quiv \longrightarrow \DGra
\end{equation}
is defined so that $V({\sf d}(Q))=V(Q)$ and 
$
 E({\sf d}(Q)) $ is defined as the image of the map $(d_1,d_0):E(Q) \to V(Q)\times V(Q).$
\end{proposition}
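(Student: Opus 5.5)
We verify directly that the described construction is a left adjoint to the inclusion $\DGra\subseteq\Quiv$, with unit $\eta_Q:Q\to{\sf d}(Q)$ given by the identity on vertices and by $(d_1,d_0)$ on arrows.

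\emph{Step 1: ${\sf d}(Q)$ is a digraph and $\eta_Q$ is a quiver map.} Set $V({\sf d}(Q))=V(Q)$ and let $E({\sf d}(Q))\subseteq V(Q)\times V(Q)$ be the image of $(d_1,d_0)$. The face maps are the two projections, and $s_0(v)=(v,v)$. This is well defined because $(v,v)=(d_1,d_0)(s_0v)$ lies in the image, by the relations $d_0s_0=\id=d_1s_0$. The quiver relations hold trivially, and $(d_1,d_0)$ on ${\sf d}(Q)$ is the inclusion, hence injective, so ${\sf d}(Q)$ is a digraph. The map $\eta_Q$ commutes with $d_0,d_1$ by construction and with $s_0$ by the computation above.

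\emph{Step 2: universal property.} Let $G$ be a digraph, written with $E(G)\subseteq V(G)\times V(G)$, and let $f:Q\to G$ be a quiver map. Since $f$ commutes with the face maps, $f_E(e)=(f_V d_1e,f_V d_0 e)$. Hence $f_E$ depends on $e$ only through $(d_1e,d_0e)$. Define $\bar f:{\sf d}(Q)\to G$ by $\bar f_V=f_V$ and $\bar f_E(a,b)=(f_Va,f_Vb)$. This lands in $E(G)$, because any $(a,b)$ in the image equals $(d_1e,d_0e)$ for some $e$, and then $(f_Va,f_Vb)=f_E(e)\in E(G)$. Compatibility with the faces and with $s_0$ is immediate, and $\bar f\eta_Q=f$.

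Uniqueness of $\bar f$ follows because $\eta_Q$ is surjective on both vertices and arrows, i.e.\ it is an epimorphism in $\Quiv$. Functoriality of ${\sf d}$ and naturality of $\eta$ follow formally from the universal property.

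There is no real obstacle here. The only point needing care is checking that $\bar f$ is well defined and lands in $E(G)$, which uses the injectivity of $(d_1,d_0)$ on $G$ through the identification $E(G)\subseteq V(G)\times V(G)$.
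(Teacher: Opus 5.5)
Your proof is correct and complete: $\eta_Q$ is a quiver map, $\bar f$ lands in $E(G)$ because $f_E(e)=(f_Vd_1e,f_Vd_0e)$, and uniqueness holds because $\eta_Q$ is surjective on vertices and arrows. The paper states this proposition without proof as a straightforward check, and your direct verification of the universal property is exactly that routine argument.
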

\begin{corollary}[(Co)limits of digraphs]
The category $\DGra$ is complete and cocomplete. Since $\DGra\subseteq \Quiv$ is reflective, the inclusion $i:\DGra\to \Quiv$ is a right adjoint and hence creates limits. In particular, for any functor $F:C\to \DGra$, the limit is computed in $\Quiv$ and is already a digraph,
\begin{equation}
\label{eq:lim_d}
\lim F \cong \lim iF.
\end{equation}
Colimits in $\DGra$, on the other hand, are obtained as  colimits in $\Quiv$ followed by application of the reflector ${\sf d}$. More precisely, for a functor $F : C \to \DGra$, we have 
\begin{equation}
\label{eq:colim_d}
\colim\: F  = {\sf d}(\colim\: iF).
\end{equation}
\end{corollary}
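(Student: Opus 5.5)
The statement is formal, so the plan is to assemble it from the preceding Proposition (reflectivity of $\DGra\subseteq\Quiv$ with reflector ${\sf d}$) together with the completeness and cocompleteness of $\Quiv$.

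\textbf{Limits.} Let $i:\DGra\to\Quiv$ be the inclusion. By the Proposition, $i$ is right adjoint to ${\sf d}$; concretely, for a quiver $Q$ and digraph $G$, a quiver map $Q\to iG$ factors uniquely through the canonical surjection $Q\to i{\sf d}(Q)$, since $G$ has at most one arrow between any two vertices. A right adjoint preserves limits, but here I will argue directly, because that also shows the limit lands in $\DGra$.

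\begin{enumerate}[(1)]
\item Take $F:C\to\DGra$. In $\Quiv$ the limit $L=\lim iF$ is computed objectwise: $V(L)=\lim V(F(c))$ and $E(L)=\lim E(F(c))$.
\item The map $(d_1,d_0):E(L)\to V(L)\times V(L)$ is the limit of the injections $E(F(c))\to V(F(c))\times V(F(c))$. A limit of monomorphisms of sets is a monomorphism, so $L$ is a digraph.
\item Since $i$ is fully faithful and $L$ lies in its image, $L$ is a limit in $\DGra$. This proves \eqref{eq:lim_d}.
\end{enumerate}

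\textbf{Colimits.} Set $K=\colim\, iF$ in $\Quiv$. For any digraph $G$ there are natural bijections
\begin{equation*}
\DGra({\sf d}K,G)\cong\Quiv(K,iG)\cong\lim_c\Quiv(iF(c),iG)\cong\lim_c\DGra(F(c),G).
\end{equation*}
The first uses the adjunction ${\sf d}\dashv i$, the second the universal property of $K$, and the third the full faithfulness of $i$. Hence ${\sf d}K$ is the colimit of $F$ in $\DGra$, which proves \eqref{eq:colim_d} and shows that $\DGra$ is cocomplete.

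\textbf{Main obstacle.} The only genuinely non-formal input is the unit $Q\to i{\sf d}Q$ of the adjunction. It is a quiver map: it is the identity on vertices and sends an arrow $e$ to $(d_1e,d_0e)$. Degenerate arrows go to diagonal pairs, so it respects $s_0$ as well as $d_0$ and $d_1$. Its universality holds because $V({\sf d}Q)=V(Q)$ and a digraph has at most one arrow between any two vertices. This belongs to the preceding Proposition, and I would cite it rather than reprove it.
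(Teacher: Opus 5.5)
Your proposal is correct and matches the paper's argument. The paper gives no separate proof and derives the corollary directly from the reflectivity of $\DGra\subseteq\Quiv$. Limits are computed in $\Quiv$ and already lie in $\DGra$, while colimits are obtained by applying ${\sf d}$ to the $\Quiv$-colimit, using ${\sf d}\dashv i$ and full faithfulness of $i$. Your direct check that a limit of the injections $E(F(c))\to V(F(c))^2$ is injective simply unpacks the general fact that reflective subcategories are closed under limits.
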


\begin{remark}[${\sf d}$ does not preserve finite limits]
Note that the reflector ${\sf d}:\Quiv \to \DGra$ does not preserve finite limits; more precisely, it does not preserve equalizers. Indeed, let $I_1$ be the digraph with two vertices $0$ and $1$ and a single non-degenerate arrow, going from $0$ to $1$, and let $Q$ be the quiver with the same vertices but with two distinct non-degenerate arrows from $0$ to $1$. 
\begin{equation}
I_1: 
\begin{tikzcd}
0 
\ar[r]
& 
1 
\end{tikzcd}
\hspace{1cm}
Q: 
\begin{tikzcd}
0 
\ar[r, bend left=20]
\ar[r, bend right=20]
& 
1
\end{tikzcd}
\end{equation}
Consider the two quiver maps 
\begin{equation}
\varphi,\psi:I_1\to Q    
\end{equation}
that are identical on vertices and send the non-degenerate arrow $0\to 1$ to the two distinct arrows of $Q$. Their equalizer in $\Quiv$ is the discrete digraph $\{0,1\}$, which ${\sf d}$ leaves unchanged. On the other hand, ${\sf d}(Q)\cong I_1$ and ${\sf d}(\varphi)={\sf d}(\psi)=\id_{I_1}$, so the equalizer of ${\sf d}(\varphi)$ and ${\sf d}(\psi)$ is $I_1$. Since $\{0,1\}\not\cong I_1$, the reflector ${\sf d}$ does not preserve this equalizer.   
\end{remark}
 
\begin{lemma}
\label{lemma:DGra_closed_filtered_colimits}
The full subcategory $\DGra\subseteq \Quiv$ is closed under filtered colimits. 
\end{lemma}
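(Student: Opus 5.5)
We have to show that if $F:C\to \DGra$ is a functor from a small filtered category $C$, then the quiver colimit $Q=\colim\, iF$ is already a digraph. In other words, the map $(d_1,d_0):E(Q)\to V(Q)\times V(Q)$ is injective. Once that is done, ${\sf d}(Q)=Q$, so by \eqref{eq:colim_d} the colimit in $\DGra$ agrees with the colimit in $\Quiv$. This is what "closed under filtered colimits" means.

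\textbf{Description of the colimit.} Colimits in $\Quiv$ are computed objectwise, so
\[
V(Q)=\colim_{c} V(F(c)), \qquad E(Q)=\colim_{c} E(F(c)).
\]
Both are filtered colimits of sets. We use the standard description of such a colimit: every element is represented by some $x\in F(c)$. Two representatives $x\in F(c)$ and $x'\in F(c')$ define the same element if and only if there are morphisms $u:c\to c''$ and $u':c'\to c''$ with $F(u)(x)=F(u')(x')$.

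\textbf{Injectivity.} Take arrows $e,e'\in E(Q)$ with the same source and the same target.
\begin{enumerate}[(i)]
\item By filteredness, choose representatives $a\in E(F(c))$ and $a'\in E(F(c))$ at a common object $c$. Write $a=(x,y)$ and $a'=(x',y')$ with $x,y,x',y'\in V(F(c))$.
\item Equality of the sources in $V(Q)$ gives a morphism $u:c\to c_1$ with $F(u)(x)=F(u)(x')$. Here we use that a parallel pair out of $c$ can be equalized in a filtered category, which reduces the two legs to a single morphism.
\item Equality of the targets similarly gives $v:c\to c_2$ with $F(v)(y)=F(v)(y')$.
\item Using filteredness again, choose a common morphism $w:c\to c_3$ through which both $u$ and $v$ factor, up to equalizing parallel arrows. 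Then $F(w)(x)=F(w)(x')$ and $F(w)(y)=F(w)(y')$.
\item Since $F(c_3)$ is a digraph, arrows are determined by their endpoints. Hence
\[
F(w)(a)=(F(w)x,F(w)y)=(F(w)x',F(w)y')=F(w)(a').
\]
\item Therefore $e=e'$ in $E(Q)$.
\end{enumerate}

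\textbf{Main obstacle.} The only delicate point is the bookkeeping in steps (ii)–(iv). The raw description of equality in a filtered colimit uses two legs $c\to c''$ and $c'\to c''$. We must reduce to a single morphism out of $c$ and then merge the witnesses for source and target into one object. Both reductions follow from the two axioms of a filtered category, namely cocones on pairs of objects and coequalizing parallel pairs. Nothing else is needed: objectwise colimits in $\Quiv$ handle the structure maps $d_0,d_1,s_0$ automatically.
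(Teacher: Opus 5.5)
Your argument is correct, but it takes a more hands-on route than the paper. The paper's proof is a one-line categorical argument. Filtered colimits in $\Set$ commute with finite limits, so they preserve monomorphisms and commute with finite products. Hence the colimit of the injections $E(F(c))\hookrightarrow V(F(c))^2$ is an injection $\colim_c E(F(c))\hookrightarrow \colim_c V(F(c))^2\cong(\colim_c V(F(c)))^2$.

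Your element chase unwinds exactly these two facts. Steps (ii)--(iv) show that the comparison map $\colim_c V(F(c))^2\to(\colim_c V(F(c)))^2$ is injective; you only need this half of the product comparison. Step (v) is the preservation of the monomorphism $E(F(c))\hookrightarrow V(F(c))^2$. Your version is self-contained and records where each filteredness axiom is used. The paper's version is shorter, because it delegates that bookkeeping to a standard exactness property of $\Set$.

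There is one wording slip in (iv): the direction of factorization is backwards. You need $w$ to factor through $u$ and $v$, i.e.\ $w=\alpha u=\beta v$, not $u$ and $v$ to factor through $w$. To build it, take a cocone $p\colon c_1\to c'$, $q\colon c_2\to c'$ and a morphism $k\colon c'\to c_3$ coequalizing $pu$ and $qv$. Then set $w=kpu=kqv$, $\alpha=kp$ and $\beta=kq$. Only in this direction does $F(u)(x)=F(u)(x')$ give $F(w)(x)=F(\alpha)F(u)(x)=F(\alpha)F(u)(x')=F(w)(x')$, and likewise for $y$.
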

\begin{proof}
Let $C$ be a filtered category and let $F:C\to \Quiv$ be a functor whose image lies in $\DGra$. By the definition of a digraph, for each $c\in C$ the map $(d_1,d_0):E(F(c)) \to V(F(c))^2$ is injective. Colimits in $\Quiv$ are computed objectwise, so $E(\colim\: F)=\colim_{c\in C} E(F(c))$ and $V(\colim\: F)=\colim_{c\in C} V(F(c))$. Since filtered colimits in $\Set$ commute with finite limits, they preserve monomorphisms and commute with finite products. Therefore the map 
\begin{equation}
E(\colim\: F)=\colim_{c\in C} E(F(c)) \longrightarrow \colim_{c\in C} V(F(c))^2 \cong \Big(\colim_{c\in C} V(F(c))\Big)^2 = V(\colim\: F)^2
\end{equation}
is injective. Hence $\colim\: F$ is a digraph, and so $\DGra$ is closed under filtered colimits in $\Quiv$.  
\end{proof}
 
\begin{proposition} 
\label{prop:filt_colimits_fin_limits}
In $\DGra$, filtered colimits commute with finite limits. 
\end{proposition}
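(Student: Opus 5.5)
We reduce the statement to the corresponding fact for presheaf categories. In $\Quiv=\Fun({\sf q}^\op,\Set)$, limits and colimits are computed objectwise, so for a filtered category $C$ and a finite category $J$ the canonical comparison map
\begin{equation}
\colim_{c\in C}\lim_{j\in J} F(c,j)\longrightarrow \lim_{j\in J}\colim_{c\in C} F(c,j)
\end{equation}
in $\Quiv$ is, on each of the two objects $[0],[1]$ of ${\sf q}$, the analogous map of sets. That map is a bijection because filtered colimits commute with finite limits in $\Set$. Hence filtered colimits commute with finite limits in $\Quiv$.

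To transfer this to $\DGra$, take a functor $F:C\times J\to\DGra$ with $C$ filtered and $J$ finite, and compute both sides in $\DGra$.
\begin{itemize}
\item By \eqref{eq:lim_d}, limits in $\DGra$ are the limits in $\Quiv$. So for each $c$, the object $\lim_j F(c,j)$ is the same whether formed in $\DGra$ or in $\Quiv$, and likewise for $\lim_j$ of any diagram of digraphs.
\item By Lemma \ref{lemma:DGra_closed_filtered_colimits}, a filtered colimit of digraphs formed in $\Quiv$ is already a digraph. It is therefore also the colimit in $\DGra$, since $\DGra\subseteq\Quiv$ is full and ${\sf d}$ acts as the identity on digraphs in \eqref{eq:colim_d}.
\end{itemize}
It follows that $\colim_c\lim_j F$ computed in $\DGra$ coincides with the same expression computed in $\Quiv$. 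The same holds for $\lim_j\colim_c F$: the inner filtered colimits are digraphs computed in $\Quiv$, and the outer limit is computed in $\Quiv$.

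Under these identifications, the canonical comparison morphism in $\DGra$ is the canonical comparison morphism in $\Quiv$. Both are induced by the same universal properties, and the inclusion $i$ is fully faithful and preserves the relevant (co)limits. The morphism is therefore an isomorphism.

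There is no real obstacle. The only point needing care is to check that the inclusion $i$ preserves filtered colimits, and not just that it reflects them. This is exactly the content of Lemma \ref{lemma:DGra_closed_filtered_colimits} combined with \eqref{eq:colim_d}: the $\Quiv$-colimit is a digraph, so applying ${\sf d}$ to it does nothing. The argument would fail for general colimits, such as pushouts, because there ${\sf d}$ genuinely identifies parallel arrows.
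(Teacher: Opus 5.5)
Your proposal is correct and follows the same approach as the paper's proof. Both reduce to the presheaf category $\Quiv$, where filtered colimits commute with finite limits. Both then use that $\DGra\subseteq\Quiv$ is closed under limits (via \eqref{eq:lim_d}) and under filtered colimits (Lemma \ref{lemma:DGra_closed_filtered_colimits}), so the comparison map in $\DGra$ is the one in $\Quiv$. You simply spell out the objectwise reduction to $\Set$ and the identification of the comparison maps in more detail.
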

\begin{proof}
In any presheaf category, filtered colimits commute with finite limits; in particular, this holds in $\Quiv$. Moreover, finite limits in $\DGra$ coincide with finite limits in $\Quiv$, since the inclusion $\DGra\hookrightarrow \Quiv$ creates limits, and filtered colimits in $\DGra$ coincide with filtered colimits in $\Quiv$ by Lemma \ref{lemma:DGra_closed_filtered_colimits}. The statement follows. 
\end{proof}

\begin{corollary}
\label{cor:filt_colimits_fin_limits_pointed}
In $\DGra_*$, filtered colimits commute with finite limits. 
\end{corollary}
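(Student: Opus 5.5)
The plan is to reduce the statement to Proposition~\ref{prop:filt_colimits_fin_limits} by identifying $\DGra_*$ with the coslice category $*/\DGra$, where $*$ is the one-point digraph. I will use two standard facts about a coslice $c/\mathcal{C}$, which I will check directly. First, the forgetful functor $U:c/\mathcal{C}\to\mathcal{C}$ creates connected limits, and finite limits in $c/\mathcal{C}$ are still computed by $U$ in a form we can use. Second, $U$ creates colimits over connected diagrams, and filtered categories are connected.

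\emph{Finite limits.} A finite limit in $\DGra_*$ of a diagram $D:J\to\DGra_*$ ($J$ finite) is computed as follows. Take the limit of $UD$ in $\DGra$. Give it the basepoint induced by the cone of basepoints $*\to UD(j)$, which exist and are compatible because all structure maps are pointed. This object satisfies the universal property in $\DGra_*$, since a pointed cone into $D$ is the same as a cone in $\DGra$ that is compatible with basepoints. The empty-diagram case is the terminal object $*$ itself, which is also the terminal object of $\DGra$, so the recipe covers it. Hence $U$ preserves and reflects finite limits.

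\emph{Filtered colimits.} For a filtered diagram $F:C\to\DGra_*$, take $\colim\,UF$ in $\DGra$, with the basepoint given by the image of $*$ in any $UF(c)$. Because $C$ is connected and the transition maps are pointed, this basepoint is independent of $c$. This object is the colimit in $\DGra_*$. So $U$ preserves and creates filtered colimits.

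\emph{Commutation.} Let $D:C\times J\to\DGra_*$ with $C$ filtered and $J$ finite. The canonical comparison $\colim_C\lim_J D\to\lim_J\colim_C D$ in $\DGra_*$ is sent by $U$ to the corresponding comparison map in $\DGra$, because $U$ preserves both constructions compatibly. By Proposition~\ref{prop:filt_colimits_fin_limits}, that map in $\DGra$ is an isomorphism. A pointed map whose underlying map is an isomorphism is an isomorphism of pointed digraphs, since the inverse automatically preserves the basepoint.

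The only point needing care is the preservation of finite limits including the terminal object, that is, checking that $U$ preserves the empty limit. This holds because $*$ is terminal in $\DGra$, so no real obstacle is expected.
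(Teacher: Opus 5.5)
Your proof is correct and follows the same route as the paper: the forgetful functor $U:\DGra_*\to\DGra$ preserves finite limits and filtered colimits and reflects isomorphisms, so the statement reduces to Proposition~\ref{prop:filt_colimits_fin_limits}. The only differences are cosmetic. You fold the terminal object into the general finite-limit argument, since $*$ is terminal in both categories, whereas the paper checks it separately. Your claim that $U$ creates connected (in particular filtered) colimits is also the accurate formulation, since $U$ does not preserve coproducts (wedges) or the initial object.
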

\begin{proof}
The forgetful functor $U:\DGra_*\to \DGra$ creates nonempty limits and all colimits, and reflects isomorphisms. Hence, by Proposition \ref{prop:filt_colimits_fin_limits}, filtered colimits commute with nonempty finite limits in $\DGra_*$; they also commute with the terminal object, since a filtered colimit of the one-point pointed digraph is again the one-point pointed digraph. 
\end{proof}

\begin{definition}[Opposite digraph]
For a digraph $G$, we denote by $G^\op$ the digraph with the same set of vertices and the opposite directions of arrows 
$E(G^\op)={\sf tw}(E(G)),$
where ${\sf tw}:V(G)^2\to V(G)^2$ is defined by ${\sf tw}(g,g')=(g',g)$. 
\end{definition}

\begin{definition}[Graphs] 
A graph $G$ is a digraph such that $G=G^\op$. The category of graphs is the full subcategory of digraphs 
\begin{equation}
\Gra \subseteq \DGra.
\end{equation}
\end{definition}
\begin{proposition}
The subcategory $\Gra\subseteq \DGra$ is reflective. The reflector
\begin{equation}
{\sf g} : \DGra \longrightarrow \Gra
\end{equation}
is defined by the formulas $V({\sf g}(G))=V(G)$ and 
$E({\sf g}(G)) = E(G) \cup E(G^\op).$
\end{proposition}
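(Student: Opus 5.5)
We have to show that ${\sf g}(G)$ is a graph, that the unit $G\to{\sf g}(G)$ is a digraph map, and that this unit has the universal property: every digraph map $f:G\to H$ with $H$ a graph factors uniquely through it. The plan is to check these three points in turn, directly from the definitions, using the description of digraphs with $E(G)\subseteq V(G)\times V(G)$. In that description a digraph map $f:G\to H$ is just a map of vertex sets $f:V(G)\to V(H)$ with $(f\times f)(E(G))\subseteq E(H)$. Arrows are determined by their endpoints, so the map on arrows is forced.

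First, ${\sf g}(G)$ is a digraph. The set $E(G)\cup E(G^\op)$ is a subset of $V(G)^2$. It contains the diagonal, since $E(G)$ does; hence the degeneracy $s_0$, the diagonal map, is well defined. Second, ${\sf g}(G)$ is a graph. The twist ${\sf tw}$ is an involution, so
\begin{equation}
{\sf tw}\big(E(G)\cup E(G^\op)\big)=E(G^\op)\cup E(G),
\end{equation}
which gives ${\sf g}(G)^\op={\sf g}(G)$. Third, the identity on vertices defines a digraph map $\eta_G:G\to{\sf g}(G)$, because $E(G)\subseteq E({\sf g}(G))$.

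For the universal property, take a digraph map $f:G\to H$ with $H=H^\op$. Any factorization $\bar f:{\sf g}(G)\to H$ with $\bar f\eta_G=f$ must equal $f$ on vertices, because $\eta_G$ is the identity on vertices. Since maps are determined by their vertex maps, this gives uniqueness. For existence we must check that the vertex map $f$ sends $E(G^\op)$ into $E(H)$. Let $(g',g)\in E(G^\op)$, so that $(g,g')\in E(G)$. Then $(f(g),f(g'))\in E(H)$, and therefore
\begin{equation}
(f(g'),f(g))={\sf tw}(f(g),f(g'))\in E(H^\op)=E(H).
\end{equation}
Hence $f$ is a map ${\sf g}(G)\to H$.

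Together these show that $\eta_G$ is a universal arrow from $G$ to the full subcategory $\Gra$, for every $G$. By the standard characterization, the inclusion $\Gra\subseteq\DGra$ then has the left adjoint ${\sf g}$, with ${\sf g}$ acting on a map $f$ by the same vertex map, and so $\Gra$ is reflective. There is no real obstacle; the only point needing care is the existence step, namely that reversed arrows go to arrows precisely because $H$ is symmetric.
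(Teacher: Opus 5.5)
Your proof is correct. It checks the symmetry ${\sf tw}\big(E(G)\cup E(G^\op)\big)=E(G^\op)\cup E(G)$, that the unit is the identity on vertices, and that $f$ extends over reversed arrows because $E(H)={\sf tw}(E(H))$. The paper states this proposition without proof as a straightforward verification, and your argument is exactly that routine check.
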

\begin{remark}[Colimits of graphs]
The subcategory $\Gra\subseteq \DGra$ is closed under limits and colimits. 
\end{remark}

\begin{definition}[Induced subdigraph] A subquiver of a quiver is a sub-presheaf of a quiver. A subdigraph of a digraph is a subquiver of a digraph. A subdigraph $H\subseteq G$ is called induced if every arrow of $G$ between vertices of $H$ is an arrow of $H$.  
\end{definition} 

\begin{remark}[Pushout along an inclusion of an induced subdigraph]\label{rem:pushout_along_inclusion}
Let $H\subseteq G$ be an induced subdigraph and $\varphi:H\to H'$ be a digraph map. Then the pushout $G'=G\sqcup_H H'$ can be described as the commutative diagram 
\begin{equation}
\begin{tikzcd}
        H \arrow[r,"\varphi"] \arrow[d,"i"]
        \arrow[dr, phantom, "\ulcorner" very near end]
        & H' \arrow[d, "i'"] \\
        G \arrow[r,"\varphi'"] 
        & G'
    \end{tikzcd}    
\end{equation}  
where the set of vertices $V(G')$ is defined by 
\begin{equation}
V(G')=(V(G)\setminus V(H))\cup V(H'). 
\end{equation}
The map $\varphi':V(G)\to V(G')$ is defined by $\varphi'(g)=g$ for $g\in V(G)\setminus V(H)$ and $\varphi'(h)=\varphi(h)$ for $h\in V(H)$. The set of arrows is defined as 
\begin{equation}
E(G')= (\varphi'\times \varphi')(E(G)) \cup  E(H') .    
\end{equation}
As a corollary of this description of a pushout along an inclusion of an induced subdigraph we obtain an isomorphism 
\begin{equation}\label{eq:G_H}
G'\setminus H' \cong G\setminus H.
\end{equation}
\end{remark}

\begin{definition}[Box product] 
Let $G,H$ be digraphs. The box-product $G\otimes H$ is a digraph such that $V(G\otimes H)=V(G)\times V(H)$ and $E(G\otimes H)$ is the image of the map
\begin{equation}
(E(G) \times V(H)) \underset{V(G)\times V(H)}\coprod    (V(G)\times E(H)) \to (V(G)\times V(H))^2.
\end{equation} induced by the inclusions
\begin{equation}
 E(\Gamma)\hookrightarrow  V(\Gamma)\times V(\Gamma), \hspace{1cm} {\sf diag}:V(\Gamma)\to V(\Gamma)\times V(\Gamma)   
\end{equation}
for $\Gamma=G,H,$ and the twist map $V(G)^2 \times V(H)^2\to (V(G)\times V(H))^2$. In other words, there is an arrow $(g,h)\to (g',h')$ in $G\otimes H$ if and only if either there is an arrow $g\to g'$ and $h=h'$, or there is an arrow $h\to h'$ and $g=g'.$ The box product is natural in $G$ and $H.$ Note that the box product is a subdigraph of the product
\begin{equation}
G \otimes H \subseteq G\times H.
\end{equation}
\end{definition}
\begin{definition}[Box hom] 
The box hom of two digraphs $G$ and $H$ is defined as the digraph $\Hom^\otimes(G,H)$ whose set of vertices is $\Hom(G,H)$ and in which, for any two morphisms $\varphi,\psi :G\to H$, there is an arrow  $\varphi\to \psi$ if and only if there is an arrow  $\varphi(g)\to \psi(g)$ in $H$ for every $g\in G.$  The box hom is natural in $G$ and $H$.
\end{definition}
\begin{proposition}
$(\DGra,*, \otimes, \Hom^\otimes)$ forms a closed monoidal category, where $*$ is the one-point digraph. 
\end{proposition}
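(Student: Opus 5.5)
The plan is to check the monoidal axioms directly on vertices and arrows, and then obtain closedness from a natural bijection
\begin{equation}
\Hom(G\otimes H, K)\cong \Hom(G,\Hom^\otimes(H,K)),
\end{equation}
natural in $G$ and $K$.

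\textbf{Monoidal structure.} We use the underlying bijections of vertex sets
\begin{equation}
V((G\otimes H)\otimes K)=V(G)\times V(H)\times V(K)=V(G\otimes(H\otimes K)),
\end{equation}
and similarly $V(*\otimes G)=\{*\}\times V(G)\cong V(G)\cong V(G\otimes *)$. These bijections are compatible with arrows. An arrow $(g,h,k)\to(g',h',k')$ in either bracketing exists exactly when two of the three coordinates agree and the remaining coordinate gives an arrow in its factor. Here we allow degenerate arrows, so a loop in one coordinate is also permitted. Since both sides have the same arrows, the associator and unitors are isomorphisms of digraphs. They are natural because they are the identity on coordinates. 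The pentagon and triangle identities hold because they already hold for the cartesian product in $\Set$ on vertex sets, and a digraph map is determined by its vertex map.

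\textbf{Closedness.} Start with a digraph map $f:G\otimes H\to K$. Define $\hat f:G\to \Hom^\otimes(H,K)$ by $\hat f(g)=f(g,-)$. This is a digraph map $H\to K$ because $(g,h)\to(g,h')$ is an arrow of $G\otimes H$ whenever $h\to h'$ is an arrow of $H$. The map $\hat f$ is itself a digraph map: if $g\to g'$ is an arrow of $G$, then $(g,h)\to(g',h)$ is an arrow for every $h$, so $f(g,h)\to f(g',h)$ for all $h$. This is precisely the condition for an arrow $\hat f(g)\to\hat f(g')$ in the box hom.

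Conversely, start with $\phi:G\to\Hom^\otimes(H,K)$ and set $\check\phi(g,h)=\phi(g)(h)$. Arrows of $G\otimes H$ are of exactly two kinds:
\begin{itemize}
\item arrows in the $H$-direction, which $\check\phi$ preserves because each $\phi(g)$ is a digraph map;
\item arrows in the $G$-direction, which it preserves by the definition of arrows in $\Hom^\otimes(H,K)$.
\end{itemize}
The two constructions are mutually inverse on underlying functions, and both are natural in $G$, $H$ and $K$ by inspection. This gives the adjunction $-\otimes H\dashv\Hom^\otimes(H,-)$.

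\textbf{Remarks.} There is no real obstacle here. The one point to check carefully is that degenerate arrows are handled consistently. The condition ``$\varphi(g)\to\psi(g)$ for all $g$'' includes the case $\varphi(g)=\psi(g)$, and the identity map is a degenerate arrow of $\Hom^\otimes$, so it is a vertex-loop as required.

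The box product is not symmetric in a way that matters here. Still, the twist $(g,h)\mapsto(h,g)$ is an isomorphism $G\otimes H\cong H\otimes G$, so the right closed structure follows from the left one. This means we need only the single adjunction above.
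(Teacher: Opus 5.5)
Your proposal is correct. The paper states this proposition without proof, as one of the ``straightforward'' facts in that subsection, and your argument is the routine verification it leaves to the reader: checking the associator and unitors on vertex sets, then giving the explicit bijection $\Hom(G\otimes H,K)\cong\Hom(G,\Hom^\otimes(H,K))$. One wording quibble: the twist $(g,h)\mapsto(h,g)$ makes $\otimes$ genuinely symmetric monoidal (the paper later uses $K^{\otimes m}\otimes J^{\otimes n}\cong J^{\otimes n}\otimes K^{\otimes m}$), so there is no asymmetry to worry about at all.
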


\begin{definition}[Connected components] 
For a digraph $G$, we denote by $\pi_0(G)$ the set of its connected components, that is, the quotient of $V(G)$ by the equivalence relation generated by $g\sim g'$ whenever $g\to g'$ or $g'\to g$ is an arrow of $G$. In other words, connectedness ignores the orientation of the arrows.
\end{definition}

\begin{definition}[Homotopy relation] \label{def:homotopy}
We say that two maps of digraphs $\varphi,\psi:G\to H$ are homotopic, denoted by $\varphi\sim \psi$, if they are in the same connected component of $\Hom^\otimes(G,H).$ The quotient set is denoted by 
\begin{equation}
[G,H] = \pi_0( \Hom^\otimes(G,H)).
\end{equation}  
\end{definition}
\begin{proposition} The homotopy relation is a congruence on $\DGra.$ Therefore, we can consider the homotopy category
\begin{equation}
\DGra/\!\sim.
\end{equation}
\end{proposition}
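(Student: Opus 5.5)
To prove that the homotopy relation is a congruence, I will show two things. First, $\sim$ is an equivalence relation on each $\Hom(G,H)$. Second, it is compatible with composition on both sides: if $\varphi\sim\psi:G\to H$, then $\chi\varphi\sim\chi\psi$ for every $\chi:H\to K$ and $\varphi\theta\sim\psi\theta$ for every $\theta:F\to G$.

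The first point is immediate. By Definition \ref{def:homotopy}, $\sim$ is the relation of lying in the same connected component of the digraph $\Hom^\otimes(G,H)$, and being in the same connected component is by definition the equivalence relation generated by adjacency.

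For compatibility, the plan is to check that pre- and post-composition are digraph maps
\begin{equation}
\chi_*:\Hom^\otimes(G,H)\to \Hom^\otimes(G,K),\qquad \theta^*:\Hom^\otimes(G,H)\to \Hom^\otimes(F,H).
\end{equation}
This is the naturality of the box hom in both variables, which I will verify directly from the definition.

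For $\chi_*$, suppose there is an arrow $\varphi\to\psi$, so $\varphi(g)\to\psi(g)$ is an arrow of $H$ for every $g\in G$, possibly degenerate. Since $\chi$ is a digraph map, $\chi\varphi(g)\to\chi\psi(g)$ is an arrow of $K$, with degenerate arrows going to degenerate arrows. Hence $\chi\varphi\to\chi\psi$ is an arrow of $\Hom^\otimes(G,K)$.

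For $\theta^*$, an arrow $\varphi\to\psi$ gives in particular arrows $\varphi(\theta(f))\to\psi(\theta(f))$ for every $f\in F$. Hence $\varphi\theta\to\psi\theta$ is an arrow of $\Hom^\otimes(F,H)$.

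Digraph maps send connected components into connected components, since they preserve adjacency and therefore the equivalence relation it generates. It follows that $\varphi\sim\psi$ implies $\chi\varphi\sim\chi\psi$ and $\varphi\theta\sim\psi\theta$. Combining the two one-sided statements gives the congruence property: if $\varphi\sim\varphi'$ and $\psi\sim\psi'$ are composable, then
\begin{equation}
\psi\varphi\sim\psi\varphi'\sim\psi'\varphi'.
\end{equation}

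Identities and associativity descend to the quotient trivially. Hence composition on the classes $[G,H]$ is well defined and $\DGra/\!\sim$ is a category.

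No step is a genuine obstacle. The only point to handle carefully is degenerate arrows: the definition of the box hom requires an arrow $\varphi(g)\to\psi(g)$ at every vertex, and this includes the case $\varphi(g)=\psi(g)$, which is allowed because every digraph contains the degenerate arrows $(g,g)$. With that convention, all the verifications above are one-line checks.
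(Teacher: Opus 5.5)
Your proof is correct. The paper states this proposition without proof, as one of its straightforward verifications. Your argument is exactly the intended routine check: $\chi_*$ and $\theta^*$ are digraph maps between box homs, which is the naturality of $\Hom^\otimes$ in both variables that the paper records when defining it, and digraph maps preserve connected components.
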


\begin{proposition}\label{prop:box_product_and_hom_preserve_homotopy}
The box product and box hom induce bifunctors
\[
-\otimes-:
(\mathrm{DGra}/\!\sim)^2
\longrightarrow
\mathrm{DGra}/\!\sim
\]
and
\[
\operatorname{Hom}^{\otimes}(-,-):
(\mathrm{DGra}/\!\sim)^{\mathrm{op}}
\times(\mathrm{DGra}/\!\sim)
\longrightarrow
\mathrm{DGra}/\!\sim.
\]
\end{proposition}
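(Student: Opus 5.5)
To show these bifunctors are well defined, it suffices to check that if $\varphi\sim\varphi'$ then $\varphi\otimes\psi\sim\varphi'\otimes\psi$ and $\psi\otimes\varphi\sim\psi\otimes\varphi'$. Similarly, $\Hom^\otimes(\varphi,H)\sim\Hom^\otimes(\varphi',H)$ and $\Hom^\otimes(K,\varphi)\sim\Hom^\otimes(K,\varphi')$. Functoriality on the quotient is then automatic, because $\otimes$ and $\Hom^\otimes$ are already bifunctors on $\DGra$ and the homotopy relation is a congruence. Since homotopy is the equivalence relation generated by arrows of $\Hom^\otimes$, it is enough to treat the case of a single arrow $\varphi\to\varphi'$ in $\Hom^\otimes(G,G')$, meaning $\varphi(g)\to\varphi'(g)$ is an arrow (possibly degenerate) of $G'$ for every $g$. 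Each claim then becomes a statement that some map of box homs sends arrows to arrows, and chaining these gives the result for zigzags.

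\emph{Box product.} Let $\varphi\to\varphi'$ be an arrow and let $\psi:H\to H'$. For each vertex $(g,h)$ we have $(\varphi\otimes\psi)(g,h)=(\varphi(g),\psi(h))$ and $(\varphi'\otimes\psi)(g,h)=(\varphi'(g),\psi(h))$. The second coordinates agree and the first coordinates are joined by an arrow of $G'$, so by the definition of $\otimes$ there is an arrow in $G'\otimes H'$. Hence $\varphi\otimes\psi\to\varphi'\otimes\psi$ in $\Hom^\otimes(G\otimes H,G'\otimes H')$. The second variable is handled symmetrically. Equivalently, $-\otimes \psi$ is given by a digraph map $\Hom^\otimes(G,G')\to\Hom^\otimes(G\otimes H,G'\otimes H')$, and digraph maps preserve connected components.

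\emph{Covariant variable of box hom.} Let $\varphi\to\varphi'$ in $\Hom^\otimes(H,H')$. For $\alpha\in\Hom(K,H)$, the vertices $\varphi\alpha(k)$ and $\varphi'\alpha(k)$ are joined by an arrow for all $k$. So $\varphi_*\alpha\to\varphi'_*\alpha$ is an arrow, which is a vertex-wise arrow relation between the maps $\varphi_*,\varphi'_*:\Hom^\otimes(K,H)\to\Hom^\otimes(K,H')$. This is exactly an arrow $\varphi_*\to\varphi'_*$ in $\Hom^\otimes(\Hom^\otimes(K,H),\Hom^\otimes(K,H'))$.

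\emph{Contravariant variable of box hom.} This is the only point needing care. Let $\varphi\to\varphi'$ in $\Hom^\otimes(G,G')$ and take $\beta\in\Hom(G',H)$. We need $\beta\varphi\to\beta\varphi'$, i.e.\ $\beta(\varphi(g))\to\beta(\varphi'(g))$ for all $g$. This holds because $\beta$ is a digraph map and so sends the arrow $\varphi(g)\to\varphi'(g)$ to an arrow (degenerate if the endpoints coincide). The direction of the resulting arrow $\varphi^*\to\varphi'^*$ is preserved, not reversed, but this is harmless because connectedness ignores orientation. The cleanest way to organize all four checks is through the closed monoidal structure: an arrow $\varphi\to\varphi'$ is a map $G\otimes I_1\to G'$ restricting to $\varphi,\varphi'$. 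Each check then amounts to producing a map from $(-)\otimes I_1$, obtained from the associativity and symmetry of $\otimes$ and the evaluation and composition maps of $\Hom^\otimes$. I would present the direct vertex-wise verification, since each case is a one-line check, and regard the contravariant case as the only mild subtlety.
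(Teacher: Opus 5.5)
Your proposal is correct and follows essentially the same route as the paper: you produce homotopies one variable at a time (for $\otimes$, and for the contravariant and covariant slots of $\Hom^\otimes$), then change the two variables successively. The only difference is packaging. The paper writes a single homotopy parametrized by an interval $J$, namely $((a,c),j)\mapsto(h_j(a),g(c))$ and $(\alpha,j)\mapsto g\circ\alpha\circ h_j$, whereas you reduce to single arrows of $\Hom^\otimes$ and check the vertex-wise arrow condition, which is the same verification.
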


\begin{proof}
Let $f_0,f_1:A\to B$ be homotopic, and choose a homotopy
$h:A\otimes J\to B$, writing $h_j=h(-,j)$. For any $g:C\to D$, the
formulas
\[
((a,c),j)\longmapsto \bigl(h_j(a),g(c)\bigr),
\qquad
(\alpha,j)\longmapsto g\circ\alpha\circ h_j
\]
define homotopies from $f_0\otimes g$ to $f_1\otimes g$, and from
$\Hom^{\otimes}(f_0,g)$ to
$\Hom^{\otimes}(f_1,g)$, respectively. The second variable
is treated similarly. Changing the two variables successively proves that
both bifunctors descend to $\mathrm{DGra}/\!\sim$.
\end{proof}

\begin{definition}[Pairs of digraphs and pointed digraphs] 
A pair of digraphs is a pair $(G,H),$ where $H\subseteq G$ is an induced subdigraph. A morphism of pairs of digraphs $\varphi:(G,H)\to (G',H')$ is a morphism $\varphi:G\to G'$ such that $\varphi(H)\subseteq H'.$ The category of pairs of digraphs is denoted by $\PairDGr.$ It can be viewed as a full subcategory of the category of morphisms of digraphs 
\begin{equation}
\PairDGr \subseteq \Fun([1],\DGra),
\end{equation}
where $[1]=\{0<1\}$ is the category with two objects and one non-identity morphism. A pointed digraph is a pair of digraphs whose subdigraph is a single
vertex 
$(G,g) = (G,\{g\}).$
The full subcategory of pointed digraphs is denoted by 
\begin{equation}
\DGra_*\subseteq \PairDGr.
\end{equation}
\end{definition}

\begin{definition}[Box product of pairs]
The box product of pairs of digraphs $(G,H)$ and $(G',H')$ is defined by 
\begin{equation}
(G,H) \otimes (G',H') = (G\otimes G', (G \otimes H') \cup (H\otimes G')).
\end{equation}
Since $H\subseteq G$ and $H'\subseteq G'$ are induced, the subdigraph $(G \otimes H') \cup (H\otimes G')$ is an induced subdigraph of $G\otimes G'$, so the right-hand side is a well-defined pair of digraphs.
\end{definition}

\begin{definition}[Box hom of pairs] 
The box hom between  pairs of digraphs $(G,H)$ and $(G',H')$ is a digraph $\Hom^\otimes((G,H),(G',H'))$ whose set of vertices is $\Hom((G,H),(G',H'))$ and in which, for maps $\varphi,\psi:(G,H)\to (G',H')$, there is an arrow $\varphi\to \psi$ if and only if there is an arrow $\varphi(g)\to \psi(g)$ for every $g\in G$ and $\varphi(h)=\psi(h)$ for every $h\in H.$ 

We denote by $\underline{\Hom}^\otimes((G,H),(G',H'))$ a pair of digraphs 
\begin{equation}
\underline{\Hom}^\otimes((G,H),(G',H')) = (\Hom^\otimes((G,H),(G',H')), \Hom(G,H')),
\end{equation}
where $\Hom(G,H')$ is the induced subdigraph of $\Hom^\otimes((G,H),(G',H'))$ consisting of those maps $G\to G'$ whose image lies in $H'$. If $G=(G,g)$ and $G'=(G',g')$ are pointed digraphs, we set 
\begin{equation}
\Hom^{\otimes}_*(G,G') = \underline{\Hom}^{\otimes}(G,G').
\end{equation}
\end{definition}
\begin{proposition}
$(\PairDGr, (*,\emptyset), \otimes, \underline{\Hom}^\otimes )$ is a closed monoidal category. 
\end{proposition}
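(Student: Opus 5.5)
The plan is to check three things in turn: the monoidal structure on $\PairDGr$, a natural bijection
\[
\Hom\bigl((G,H)\otimes(G',H'),(K,L)\bigr)\cong\Hom\bigl((G,H),\underline{\Hom}^\otimes((G',H'),(K,L))\bigr),
\]
and that $\underline{\Hom}^\otimes$ is a well-defined pair that is functorial in the right variances. Throughout I would use the closed monoidal structure $(\DGra,*,\otimes,\Hom^\otimes)$ stated above, and only verify that the extra conditions on the subdigraphs match up.

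\emph{Monoidal structure.} The unit is $(*,\emptyset)$. We have $(*,\emptyset)\otimes(G,H)=(*\otimes G,(*\otimes H)\cup(\emptyset\otimes G))\cong(G,H)$. For the associator, the subdigraph of $((G,H)\otimes(G',H'))\otimes(G'',H'')$ is the union $(H\otimes G'\otimes G'')\cup(G\otimes H'\otimes G'')\cup(G\otimes G'\otimes H'')$, which is symmetric in the three factors. It therefore corresponds to the same subdigraph under the associator of $\DGra$. The coherence axioms then follow from those in $\DGra$, because $\PairDGr\to\Fun([1],\DGra)$ is faithful.

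\emph{Well-definedness.} Two points need checking. First, $\Hom(G',L)$ is an induced subdigraph of $\Hom^\otimes((G',H'),(K,L))$; this holds by definition, since it is taken as the induced subdigraph. Second, $\underline{\Hom}^\otimes$ is a functor that is contravariant in the first variable and covariant in the second, via pre- and post-composition. Post-composition with a map of pairs sends maps into $L$ to maps into the new $L$, and preserves both the arrow condition and the condition "equal on $H'$".

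\emph{Adjunction.} The key step is to take the bijection $\varphi\mapsto\hat\varphi$, $\hat\varphi(g)=\varphi(g,-)$, from $\DGra$, and to check that the pair conditions correspond exactly. This comes down to the following checks.
\begin{enumerate}[(i)]
\item \emph{Points of $G$.} For $g\in G$, the map $\varphi(g,-)$ sends $H'$ into $L$ if and only if $\varphi(G\otimes H')\subseteq L$ on vertices. Hence $\hat\varphi(g)$ is a vertex of $\Hom^\otimes((G',H'),(K,L))$.
\item \emph{Arrows of $G$.} For an arrow $g\to g_1$ in $G$, $\varphi$ gives arrows $\varphi(g,x)\to\varphi(g_1,x)$ for all $x\in G'$, which is the $\DGra$-condition. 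For $x\in H'$ both endpoints lie in $L$. The pair box hom, however, requires $\varphi(g,x)=\varphi(g_1,x)$ for $x\in H'$, and this does not follow: $\varphi$ need not collapse arrows of $G\otimes H'$ inside $L$.
\end{enumerate}

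This mismatch in (ii) is the main obstacle. As literally stated, the correspondence seems to need the arrow condition on $H'$ to be "arrow in $L$" rather than equality. Equality is instead what the right adjoint for the pushout-product convention, or for pointed maps, would give. I would first test the smallest case, $G=I_1$, $H=\emptyset$, $(G',H')=(*,*)$, $(K,L)=(I_1,I_1)$, to decide which is right. In this case maps $(I_1,\emptyset)\otimes(*,*)=(I_1,I_1)\to(I_1,I_1)$ include the identity. The corresponding map into $\underline{\Hom}^\otimes$ needs an arrow between the two constant maps, which equality forbids. So I expect to reconcile the statement by reading the arrow condition as "$\varphi(h)\to\psi(h)$ in $L$". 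In the pointed case, where $L$ is a single vertex, this agrees with equality, which is presumably the case the paper actually uses.

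Once the arrow condition is settled, I would check the remaining correspondence in both directions. A vertex $g\in H$ must go into $\Hom(G',L)$, meaning $\varphi(H\otimes G')\subseteq L$. Arrows between such vertices are automatically in the induced subdigraph. Naturality is inherited from the $\DGra$-adjunction.
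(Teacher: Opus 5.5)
Your diagnosis is correct, and the fault lies in the statement, not in your argument. The paper gives no proof; it states this as one of its ``straightforward'' facts. With $\underline{\Hom}^\otimes$ defined as in the paper, requiring $\varphi(h)=\psi(h)$ on $H$, it is not right adjoint to $-\otimes(G',H')$.

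Your test case refutes the statement outright, not merely the canonical map:
\begin{itemize}
\item $(I_1,\emptyset)\otimes(*,*)=(I_1,I_1)$ admits three maps to $(I_1,I_1)$.
\item $\underline{\Hom}^\otimes((*,*),(I_1,I_1))=(D,D)$, where $D$ is the discrete digraph on the constant maps $c_0,c_1$. It receives only two maps from $(I_1,\emptyset)$.
\end{itemize}
More generally, $(A,B)\otimes(*,*)=(A,A)$, whose right adjoint is $(K,L)\mapsto(L,L)$. The paper's formula instead returns the discrete digraph on $V(L)$ in both slots. The same example shows that Proposition~\ref{prop:hom_adjunction} also fails for general pairs. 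Indeed, $[(I_1,I_1),(I_1,I_1)]$ has three elements, since every arrow of that box hom is forced to be degenerate, while $[(I_1,\emptyset),(D,D)]$ has two.

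Your repair is the right one, and it simplifies. Since $L\subseteq K$ is induced and $\varphi(h),\psi(h)\in L$, any arrow $\varphi(h)\to\psi(h)$ automatically lies in $L$. So the corrected internal hom is simply the induced subdigraph of $\Hom^\otimes(G',K)$ spanned by the maps of pairs, with subdigraph $\Hom(G',L)$. The following checks then show that $(\PairDGr,(*,\emptyset),\otimes)$ is closed (indeed symmetric) monoidal:
\begin{itemize}
\item your checks (i) and (ii);
\item the condition $\varphi(H\otimes G')\subseteq L$;
\item the converse direction: an arrow $\psi(g)\to\psi(g_1)$ handles the arrows $(g,x)\to(g_1,x)$, and each $\psi(g)$ being a digraph map handles $(g,x)\to(g,x')$.
\end{itemize}

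One of your side remarks is off. The paper's tensor of pairs already \emph{is} the pushout-product construction, and its right adjoint is your corrected hom. The equality condition does not come from any adjunction here. It is imposed so that $\pi_0$ of the box hom computes homotopy stationary on $H$.

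That is also why the corrected hom cannot simply replace the paper's everywhere. Definition~\ref{def:relative_homotopy}, the interval description of relative homotopies, and Proposition~\ref{propa:shrinkings_homo} all use the stationary version. The two versions agree whenever $L$ has no non-degenerate arrows, in particular for pointed targets. Moreover, $\underline{\Hom}^\otimes(-,(G,g))$ is again pointed, because $\Hom(G',\{g\})$ is a single vertex. The only place the paper uses the closed structure is Proposition~\ref{prop:hom_adjunction}, in the proof of $A_{n+1}(G)\cong A_n(\Omega_\SS G)$, and there the target is $(G,g)$. So, as you suspected, nothing downstream is affected.
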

\begin{definition}[Relative homotopy relation] \label{def:relative_homotopy}
Two maps of pairs $\varphi,\psi:(G,H)\to (G',H')$ are called  homotopic relative to $H$, denoted by $\varphi \sim_{\sf rel}\psi$, if they are in the same connected component of $\Hom^\otimes((G,H),(G',H')).$ 
The quotient set is denoted by 
\begin{equation}
[(G,H),(G',H')]=\pi_0(\Hom^\otimes ((G,H),(G',H'))).
\end{equation}
\end{definition}

\begin{proposition}
\label{prop:homotopy_pairs}
The relative homotopy  relation is a congruence on $\PairDGr$. Therefore we can consider the homotopy category of pairs 
\begin{equation}
\PairDGr/\!\sim_{\sf rel}.
\end{equation}
\end{proposition}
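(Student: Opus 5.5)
We must show that relative homotopy is an equivalence relation on each hom-set $\Hom((G,H),(G',H'))$ that is compatible with composition; the homotopy category of pairs is then the quotient category. Being an equivalence relation is automatic, since by Definition \ref{def:relative_homotopy} the relation is "lying in the same connected component of $\Hom^\otimes((G,H),(G',H'))$". So the work is compatibility with composition. If $\varphi\sim_{\sf rel}\psi:(G,H)\to(G',H')$, we need $\chi\varphi\sim_{\sf rel}\chi\psi$ for every $\chi:(G',H')\to(G'',H'')$, and $\varphi\theta\sim_{\sf rel}\psi\theta$ for every $\theta:(G_0,H_0)\to(G,H)$. The two-sided statement $\chi\varphi\theta\sim_{\sf rel}\chi'\psi\theta'$ then follows by changing one factor at a time and using transitivity.

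Since connected components are generated by single arrows, it suffices to treat the case of an arrow $\varphi\to\psi$ in $\Hom^\otimes((G,H),(G',H'))$. Such an arrow means two things: there is an arrow $\varphi(g)\to\psi(g)$ in $G'$ (possibly degenerate) for all $g\in G$, and $\varphi|_H=\psi|_H$.

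For post-composition, I will show that $\chi\circ-$ sends this arrow to an arrow $\chi\varphi\to\chi\psi$. Because $\chi$ is a digraph map, it carries the arrow $\varphi(g)\to\psi(g)$ to an arrow $\chi\varphi(g)\to\chi\psi(g)$, possibly degenerate. Also $\chi\varphi(h)=\chi\psi(h)$ for $h\in H$. Finally, $\chi\varphi$ and $\chi\psi$ are maps of pairs, since $\chi(H')\subseteq H''$.

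For pre-composition, $-\circ\theta$ likewise sends the arrow to an arrow $\varphi\theta\to\psi\theta$. For every $x\in G_0$ there is an arrow $\varphi(\theta x)\to\psi(\theta x)$, and for $x\in H_0$ we have $\theta x\in H$, so $\varphi\theta x=\psi\theta x$.

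Equivalently, one can observe that composition is a map of digraphs
\[
\Hom^\otimes((G',H'),(G'',H''))\otimes \Hom^\otimes((G,H),(G',H'))\to \Hom^\otimes((G,H),(G'',H'')),
\]
obtained by adjunction from the closed monoidal structure on $\PairDGr$ stated just above. One would then apply $\pi_0$ and use $\pi_0(A\otimes B)=\pi_0(A)\times\pi_0(B)$; the argument mirrors Proposition \ref{prop:box_product_and_hom_preserve_homotopy}.

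There is no real obstacle. The only point needing care is that the arrows in $\Hom^\otimes$ may be degenerate, so "there is an arrow" must be read to include the case $\varphi(g)=\psi(g)$; digraph maps preserve this, since they may send arrows to degenerate arrows.
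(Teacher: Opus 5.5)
Your proof is correct. The paper states this proposition without proof as routine, and your direct check is the intended argument: pre- and post-composition with a fixed map of pairs send each arrow of $\Hom^\otimes((G,H),(G',H'))$ to a (possibly degenerate) arrow, and hence preserve connected components. Equivalently, one composes a homotopy $G\otimes J\to G'$ rel $H$ with $\chi$ and with $\theta\otimes\id_J$, as in the proof of Proposition~\ref{prop:box_product_and_hom_preserve_homotopy}.
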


\begin{proposition}
\label{prop:hom_adjunction}
For any three pairs of digraphs $(G,H),$ $(G',H')$ and $(G'',H'')$, there is a natural isomorphism
\begin{equation}
[(G,H)\otimes (G',H'), (G'',H'')]\cong [(G,H),\underline{\Hom}^\otimes((G',H'),(G'',H''))].
\end{equation}
\end{proposition}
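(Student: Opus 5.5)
The plan is to deduce the statement from the closed monoidal structure on $\PairDGr$: the natural bijection of hom-sets
\begin{equation}
\Hom\bigl((G,H)\otimes (G',H'), (G'',H'')\bigr)\cong \Hom\bigl((G,H),\underline{\Hom}^\otimes((G',H'),(G'',H''))\bigr),
\qquad \varphi\mapsto \hat\varphi,\quad \hat\varphi(g)=\varphi(g,-).
\end{equation}
I will show that this bijection is compatible with the relative homotopy relations on both sides. Since $[-,-]$ is by definition $\pi_0$ of the box hom of pairs, it suffices to prove that $\varphi\to\psi$ is an arrow of $\Hom^\otimes((G,H)\otimes(G',H'),(G'',H''))$ if and only if $\hat\varphi\to\hat\psi$ is an arrow of $\Hom^\otimes((G,H),\underline{\Hom}^\otimes((G',H'),(G'',H'')))$. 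The bijection then induces an isomorphism of these digraphs, hence of their $\pi_0$. Naturality in all three variables is inherited from the naturality of the currying bijection, since $\pi_0$ is a functor.

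To verify the arrow condition, unwind both sides. On the left, $\varphi\to\psi$ means two things. First, $\varphi(g,g')\to\psi(g,g')$ in $G''$ for all $(g,g')$. Second, $\varphi=\psi$ on $(G\otimes H')\cup(H\otimes G')$.

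On the right, $\hat\varphi\to\hat\psi$ means the following for every $g\in G$:
\begin{itemize}
\item[(a)] $\hat\varphi(g)\to\hat\psi(g)$ in $\Hom^\otimes((G',H'),(G'',H''))$, that is, $\varphi(g,g')\to\psi(g,g')$ for all $g'$ and $\varphi(g,h')=\psi(g,h')$ for $h'\in H'$;
\item[(b)] $\hat\varphi(h)=\hat\psi(h)$ for $h\in H$, that is, $\varphi(h,g')=\psi(h,g')$ for all $g'$.
\end{itemize}
These two lists of conditions coincide. Equality on vertices suffices here, because arrows of subdigraphs of digraphs are determined by their endpoints.

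Before this, I will briefly check that currying really lands in maps of pairs. This is where the subdigraph $\Hom(G',H'')$ enters. A map $\varphi$ sends $H\otimes G'$ into $H''$, so $\hat\varphi(h)$ factors through $H''$. Also, $\hat\varphi(g)$ is a map of pairs $(G',H')\to(G'',H'')$, because $G\otimes H'$ maps into $H''$.

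The main point needing care is this bookkeeping: the subdigraph part of the box-product pair matches exactly the two kinds of relativity on the right, namely relativity inside the inner hom and the base-point subdigraph $\Hom(G',H'')$. I would also note that $\hat\varphi$ preserves arrows. An arrow $g\to g_1$ in $G$ gives arrows $(g,g')\to(g_1,g')$, hence $\varphi(g,g')\to\varphi(g_1,g')$. Meanwhile $\varphi(g,h')=\varphi(g_1,h')$ on $H'$ up to an arrow. Here the definition of box-hom arrows between maps of pairs requires equality on $H'$. But $\varphi(g,h')$ and $\varphi(g_1,h')$ need not be equal; they only lie in $H''$ and are joined by an arrow.

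So the inner hom arrow condition must be read via the closed structure already asserted in the stated proposition on $(\PairDGr,(*,\emptyset),\otimes,\underline{\Hom}^\otimes)$. I will invoke that proposition for the hom-set bijection itself. I will then only perform the arrow-level comparison above, which uses the same definitions consistently.
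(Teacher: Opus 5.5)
The paper lists this proposition among the ``straightforward'' statements and gives no proof, so your argument has to stand on its own. Your arrow-by-arrow comparison is correct. Once $\varphi\mapsto\hat\varphi$ is a bijection on vertices, your (a) and (b) say exactly that $\varphi(g,g')\to\psi(g,g')$ for all $(g,g')$ and $\varphi=\psi$ on $(G\otimes H')\cup(H\otimes G')$. So the two box-hom digraphs would then be isomorphic.

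\textbf{The gap is that vertex-level bijection}, at exactly the point you noticed and then set aside. For $\hat\varphi$ to be a digraph map into $\Hom^\otimes((G',H'),(G'',H''))$, every arrow $g\to g_1$ of $G$ must give $\varphi(g,h')=\varphi(g_1,h')$ for all $h'\in H'$. You only get an arrow $\varphi(g,h')\to\varphi(g_1,h')$ between vertices of $H''$. Invoking the closed monoidal structure on $\PairDGr$ does not help, since with $\underline{\Hom}^\otimes$ as defined that assertion fails for the same reason. And any reinterpretation of the inner hom that makes currying work drops the equality $\varphi(g,h')=\psi(g,h')$ from your (a). Then (a) and (b) no longer describe homotopy relative to $(G\otimes H')\cup(H\otimes G')$.

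\textbf{In fact the statement is false in this generality.} Take $(G,H)=(I_1,\emptyset)$, $(G',H')=(*,\{*\})$, $(G'',H'')=(I_1,I_1)$.
\begin{itemize}
\item Left-hand side: $(G,H)\otimes(G',H')=(I_1,I_1)$, and homotopy relative to all of $I_1$ is equality. So the left-hand side has three elements, the three digraph maps $I_1\to I_1$.
\item Right-hand side: $\Hom^\otimes((*,\{*\}),(I_1,I_1))$ has vertices $0,1$ and no non-degenerate arrow, since an arrow would need equality at $*$. So $\underline{\Hom}^\otimes((*,\{*\}),(I_1,I_1))$ is a discrete two-vertex pair, and the right-hand side has two elements, the constant maps.
\end{itemize}
The same example also rules out the hom-set bijection ($3$ maps versus $2$).

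\textbf{What your argument does prove, verbatim,} is the case where $H''$ has no non-degenerate arrows. This includes a pointed target $(G'',\{g''\})$, which is the case needed for $A_{n+1}(G)\cong A_n(\Omega_{\SS}G)$. The argument runs as follows.
\begin{itemize}
\item Since $H''\subseteq G''$ is induced, the arrow $\varphi(g,h')\to\varphi(g_1,h')$ lies in $H''$ and is therefore degenerate, so currying lands in maps of pairs.
\item Uncurrying is always well defined.
\item Your arrow comparison then gives an isomorphism of box-hom digraphs, hence of $\pi_0$.
\end{itemize}
You should state this hypothesis explicitly rather than appeal to the closed structure.
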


\begin{definition}[Interval]
\label{def:interval} 
An interval is a digraph $J$ with vertices $\{0,\dots, k\}$ that has exactly $k$ non-degenerate arrows: for each $0\leq i<k$ either $i\to i+1$ or $i+1\to i$ is an arrow. 
\begin{equation}
 J : 0 \to 1 \to 2 \leftarrow 3 \to 4 \to 5 \leftarrow 6 
\end{equation}
If $k=0$, the interval is called trivial. 
For any interval $J$, we denote by $\partial J$ the discrete digraph defined by the endpoints $\{0,k\}.$ We also denote by $s,t:*\to J$ the inclusions of the endpoints $s(*)=0$ and $t(*)=k$. For any two intervals $J$ and $J'$ with the sets of vertices $\{0,\dots,k\}$ and $\{0,\dots,k'\}$ we denote by 
$J\vee J'$
their ``wedge'', i.e. the interval with the set of points $\{0,\dots,k+k'\}$ and the set of edges $E(J)\cup (E(J')+(k,k))$, where $E(J')+(k,k)=\{(i+k,j+k)\mid (i,j)\in E(J')\}$.
\end{definition}

\begin{definition}[Standard intervals]
We denote by $I_n,$ $n\geq 0,$ the interval with the set of vertices  $\{0,1,\dots, n\}$ and arrows $2i\to 2i+1$ and $2j+2 \to 2j+1$ for all possible $i,j$. We also consider the opposite interval $I^\op_n$.
\begin{equation}
I_4: \  0 \to 1 \leftarrow 2 \to 3 \leftarrow 4, \hspace{1cm}
I_4^\op: \  0 \leftarrow 1 \to 2 \leftarrow 3 \to 4.  
\end{equation}
The digraphs $I_n,I_n^\op, n\geq 0$ are called standard intervals. We will also use notations 
\begin{equation}
I_n^{+1} = I_n, \hspace{1cm} I_n^{-1}=I_n^\op.
\end{equation}
\end{definition}

\begin{remark}[Homotopy relation via intervals] 
Two digraph maps $\varphi,\psi: G\to G'$ are homotopic if and only if there is an interval $J$ with the set of vertices $\{0,\dots,k\}$ and a map $\eta:G\otimes J \to G'$ such that $\eta(g,0)  =\varphi(g)$ and $\eta(g,k)=\psi(g)$ for every $g\in G$. The map $\eta$ is called a homotopy from $\varphi$ to $\psi$.  

If $H\subseteq G$ and $H'\subseteq G'$ are  induced subdigraphs, then $\varphi,\psi:(G,H)\to (G',H')$ are homotopic relative to $H$  if and only if there exists a homotopy $\eta: G\otimes J\to G'$ from $\varphi$ to $\psi$ such that $\varphi(h)=\eta(h,j)=\psi(h)$ for every  $h\in H$ and every $j\in J.$  
\end{remark}

\begin{lemma}
\label{lemma:homotopy-as-a-coequaliser}
For any two pairs of digraphs $(G,H)$ and $(G',H')$, the set $[(G,H),(G',H')]$ is the coequalizer of the two maps   
\begin{equation}
\Hom((G\otimes I_1,H\otimes I_1),(G',H')) \rightrightarrows \Hom((G,H),(G',H'))
\end{equation}
induced by the endpoints $*\to I_1.$ 
\end{lemma}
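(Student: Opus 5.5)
The plan is to reduce the statement to two elementary facts. The first is a formal description of $\pi_0$ of a digraph as a coequalizer in $\Set$. The second identifies the arrows of the relevant hom digraph with maps out of $G\otimes I_1$.

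\emph{Step 1: $\pi_0$ as a coequalizer.} For any digraph $\Gamma$, the set $\pi_0(\Gamma)$ is the coequalizer in $\Set$ of $d_0,d_1:E(\Gamma)\rightrightarrows V(\Gamma)$. Indeed, the coequalizer of two maps $a,b:X\rightrightarrows Y$ in $\Set$ is the quotient of $Y$ by the equivalence relation generated by $a(x)\sim b(x)$. For $\Gamma$ this is exactly the relation generated by $g\sim g'$ whenever $g\to g'$ is an arrow, which defines $\pi_0$ (orientation is ignored because the generated equivalence relation is symmetric). Degenerate arrows $(g,g)$ only contribute the trivial relations $g\sim g$, so including them changes nothing.

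\emph{Step 2: arrows as maps out of $G\otimes I_1$.} Apply Step 1 to the hom digraph whose vertex set is $\Hom((G,H),(G',H'))$. I then identify its arrow set with $\Hom((G\otimes I_1,H\otimes I_1),(G',H'))$, in such a way that $d_1,d_0$ become restriction along the endpoints $0,1:*\to I_1$.

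Given a map of pairs $\eta:(G\otimes I_1,H\otimes I_1)\to(G',H')$, set $\varphi=\eta(-,0)$ and $\psi=\eta(-,1)$. Restricting $\eta$ to $G\otimes\{0\}$ and to $G\otimes\{1\}$ shows that $\varphi$ and $\psi$ are maps of pairs $(G,H)\to(G',H')$. The arrows $(g,0)\to(g,1)$ of $G\otimes I_1$ give arrows $\varphi(g)\to\psi(g)$ in $G'$ for every $g\in G$.

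Conversely, given maps of pairs $\varphi,\psi$ with an arrow (possibly degenerate) $\varphi(g)\to\psi(g)$ for every $g$, define $\eta(g,0)=\varphi(g)$ and $\eta(g,1)=\psi(g)$. This is a digraph map $G\otimes I_1\to G'$, by the explicit description of the arrows of the box product: horizontal arrows go to arrows of $G'$ because $\varphi$ and $\psi$ are digraph maps, and vertical arrows go to the chosen arrows $\varphi(g)\to\psi(g)$. It sends $H\otimes I_1$ into $H'$ because $\varphi(H),\psi(H)\subseteq H'$. These two constructions are mutually inverse, since $(d_1,d_0)$ is injective on arrows of a digraph, so an arrow is determined by its endpoints. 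This is the closed monoidal adjunction of the Proposition preceding Definition \ref{def:relative_homotopy}, evaluated on the single arrow $I_1$.

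\emph{Main obstacle.} The delicate point is matching exactly which arrows the relevant hom digraph has on the subdigraph $H$. With the source pair $(G\otimes I_1,H\otimes I_1)$ as in the statement, the constraint on $\eta$ restricted to $H\otimes I_1$ is only that it lands in $H'$. So the arrows $\varphi\to\psi$ to use are those with $\varphi(g)\to\psi(g)$ for all $g$, with both maps being maps of pairs; in particular $\varphi(h)\to\psi(h)$ is then an arrow of $H'$, because $H'\subseteq G'$ is induced. I will check this bookkeeping carefully on $H$, with the hom taken in $\PairDGr$ as in the statement's source object.

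\emph{Conclusion.} With this identification in place, Step 1 applied to the hom digraph says precisely that $[(G,H),(G',H')]$ is the coequalizer of the two endpoint-restriction maps $\Hom((G\otimes I_1,H\otimes I_1),(G',H'))\rightrightarrows\Hom((G,H),(G',H'))$. Naturality in both pairs is clear, since every identification above is given by precomposition and postcomposition.
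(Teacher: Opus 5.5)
\textbf{The gap is in Step 2, and it cannot be closed for the statement as written.}

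Step 1 is fine. Step 2, however, identifies $\Hom((G\otimes I_1,H\otimes I_1),(G',H'))$ with the arrows of a different digraph from the one that defines $[(G,H),(G',H')]$. By definition, $[(G,H),(G',H')]=\pi_0\bigl(\Hom^\otimes((G,H),(G',H'))\bigr)$. That digraph has an arrow $\varphi\to\psi$ only if, besides the arrows $\varphi(g)\to\psi(g)$, one also has $\varphi(h)=\psi(h)$ for every $h\in H$; it encodes homotopy \emph{relative to} $H$. A map of pairs $\eta\colon(G\otimes I_1,H\otimes I_1)\to(G',H')$ is only required to send $H\otimes I_1$ into $H'$, so $\eta(h,0)\to\eta(h,1)$ can be a non-degenerate arrow of $H'$.

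Your ``main obstacle'' paragraph spots exactly this point. It then settles it by declaring that the arrows to use are the unconstrained ones, which silently replaces $\Hom^\otimes((G,H),(G',H'))$ by another digraph. Knowing that $\varphi(h)\to\psi(h)$ lies in $H'$ does not give $\varphi(h)=\psi(h)$. Appealing to the closed monoidal structure does not help either: with $\underline{\Hom}^\otimes$ as defined in the paper, maps out of $(I_1,\emptyset)\otimes(G,H)=(I_1\otimes G,I_1\otimes H)$ do not match arrows of $\Hom^\otimes((G,H),(G',H'))$, for the same reason.

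In fact no argument can close the gap, because the statement is false as literally written. Take $G=H=*$ and $G'=H'=I_1$.
\begin{itemize}
\item $\Hom^\otimes((*,*),(I_1,I_1))$ has two vertices and only degenerate arrows, so $[(*,*),(I_1,I_1)]$ has two elements.
\item $\id_{I_1}$ is a map of pairs $(I_1,I_1)\to(I_1,I_1)$ whose two endpoint restrictions are the two constant maps, so the coequalizer is a single point.
\end{itemize}
What your Steps 1--2 actually prove is that the coequalizer equals $\pi_0$ of the coarser digraph of maps of pairs with pointwise arrows. That is the quotient by homotopy through maps of pairs, not homotopy rel $H$.

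\textbf{How to repair it.} The paper states this lemma without proof, so the repair has to be made in the statement. There are two options.
\begin{itemize}
\item Replace the source by the relative cylinder $\bigl((G\otimes I_1)\sqcup_{H\otimes I_1}H,\;H\bigr)$, the pushout along the projection $H\otimes I_1\to H$. This is the relative analogue of the reduced cylinder the paper uses in the pointed setting in Lemma~\ref{lemma:homotopy_adjunction}. A map of pairs from it to $(G',H')$ is exactly an $\eta$ as above with $\eta(h,0)=\eta(h,1)$ for all $h\in H$, hence exactly an arrow of $\Hom^\otimes((G,H),(G',H'))$. Your Steps 1 and 2 then go through verbatim. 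This cylinder is still finite when $G$ is, so Proposition~\ref{prop:compact} is unaffected.
\item Restrict to the case where $H=\emptyset$, or $H'$ has no non-degenerate arrows (e.g.\ a base point). Then $\varphi(h)\to\psi(h)$ is an arrow of the induced subdigraph $H'$, hence degenerate. The two relations coincide, and your argument proves the statement as written.
\end{itemize}
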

\begin{corollary}
\label{cor:homotopy-as-a-coequaliser}
The conclusion of Lemma \ref{lemma:homotopy-as-a-coequaliser} holds with $I_1$ replaced by any non-trivial interval $J$: for any two pairs of digraphs $(G,H)$ and $(G',H')$, the set $[(G,H),(G',H')]$ is the coequalizer of the two maps   
\begin{equation}
\Hom((G\otimes J,H\otimes J),(G',H')) \rightrightarrows \Hom((G,H),(G',H'))
\end{equation}
induced by the endpoints $*\to J.$ 
\end{corollary}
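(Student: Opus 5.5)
We deduce the corollary from Lemma \ref{lemma:homotopy-as-a-coequaliser} by showing that, for a non-trivial interval $J$ with vertices $\{0,\dots,k\}$, the equivalence relation on $\Hom((G,H),(G',H'))$ generated by homotopies $G\otimes J\to G'$ (relative to $H$) coincides with the one generated by homotopies $G\otimes I_1\to G'$. Write $R_J$ for the relation $\varphi R_J\psi$ iff there exists $\eta:(G\otimes J,H\otimes J)\to(G',H')$ with $\eta(-,0)=\varphi$ and $\eta(-,k)=\psi$. The coequalizer of the two maps is the quotient of $\Hom((G,H),(G',H'))$ by the equivalence relation generated by $R_J$. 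By the lemma, the quotient by the relation generated by $R_{I_1}$ is $[(G,H),(G',H')]$. So we show that $R_J$ and $R_{I_1}$ generate the same equivalence relation.

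First inclusion: $R_J$ is contained in the relative homotopy relation $\sim_{\sf rel}$. Given $\eta$ realizing $\varphi R_J\psi$, the maps $\eta_i=\eta(-,i)$ are maps of pairs, since $\eta(H\otimes J)\subseteq H'$. For each $i<k$, the arrow between $i$ and $i+1$ in $J$ gives arrows $\eta_i(g)\to\eta_{i+1}(g)$ (or the reverse) for all $g$. On $H$ we have $\eta_i(h)=\eta_{i+1}(h)$, because $H\otimes J$ lies in the subdigraph and the homotopy is relative; this is exactly the condition in the Remark on homotopy via intervals. So consecutive $\eta_i$ are adjacent in $\Hom^\otimes((G,H),(G',H'))$, and $\varphi\sim_{\sf rel}\psi$.

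Second inclusion: every generating step of $\sim_{\sf rel}$, namely an arrow $\varphi\to\psi$ in the box hom, is in the equivalence relation generated by $R_J$. Since $J$ is non-trivial, there is an index $i_0$ with an arrow between $i_0$ and $i_0+1$; up to replacing $(\varphi,\psi)$ by $(\psi,\varphi)$, which is harmless for a generated equivalence relation, we may assume this arrow is $i_0\to i_0+1$. Let $r:J\to I_1$ be the vertex map sending $j\le i_0$ to $0$ and $j>i_0$ to $1$. We check that $r$ is a digraph map. Arrows within $\{0,\dots,i_0\}$ or within $\{i_0+1,\dots,k\}$ map to degenerate arrows, and the only arrow crossing between the two blocks is $i_0\to i_0+1$, which maps to $0\to1$. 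The arrow $\varphi\to\psi$ gives a map $\theta:G\otimes I_1\to G'$ relative to $H$, and then $\eta=\theta\circ(\id_G\otimes r)$ satisfies $\eta(-,0)=\varphi$ and $\eta(-,k)=\psi$. It also sends $H\otimes J$ into $H'$, so $\varphi R_J\psi$. (By the first inclusion, in fact $\eta$ is even constant on $H$.)

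The only delicate point is the orientation of the crossing arrow $i_0\to i_0+1$ versus $i_0+1\to i_0$. It is handled by the symmetry of the generated equivalence relation, or equivalently by using $I_1^{\op}$ and reversing the roles of $\varphi$ and $\psi$. The remaining verifications are routine.
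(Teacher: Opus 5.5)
Your argument is correct and is essentially the paper's. In one direction, a $J$-homotopy restricts along the vertices of $J$ to a zigzag in $\Hom^\otimes((G,H),(G',H'))$. Conversely, collapsing $J$ at one arrow onto $I_1$ (or $I_1^{\op}$, which you handle by swapping $\varphi$ and $\psi$) turns each arrow of the box hom into a $J$-homotopy.

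One caveat applies equally to the paper's own proof. The equality $\eta_i(h)=\eta_{i+1}(h)$ does not follow merely from $\eta(H\otimes J)\subseteq H'$; it is automatic only when $H'$ has no non-degenerate arrows, e.g.\ in the pointed case. So you are really reading the maps $(G\otimes J,H\otimes J)\to(G',H')$ as homotopies constant on $H$, as in the Remark on homotopy via intervals.
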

\begin{proof}
By Lemma \ref{lemma:homotopy-as-a-coequaliser}, it suffices to show that $J$-homotopy generates the same equivalence relation on $\Hom((G,H),(G',H'))$ as $I_1$-homotopy. A $J$-homotopy restricts, along the vertices of $J$, to a path in $\Hom^\otimes((G,H),(G',H'))$ joining its two endpoints, so $J$-homotopic maps are homotopic. Conversely, since $J$ is non-trivial, collapsing it at one of its arrows yields a shrinking $\sigma:J\to I_1$ or $\sigma:J\to I_1^\op$, according to the direction of that arrow; precomposition with $\id_G\otimes\sigma$ then sends any $I_1$- or $I_1^\op$-homotopy to a $J$-homotopy with the same endpoints, so homotopic maps are $J$-homotopic. 
\end{proof}

\begin{proposition}
\label{prop:compact}
Let $(G,H)$ be a pair of finite digraphs. Then $(G,H)$ is a compact object of $\PairDGr$ and a compact object of  $\PairDGr/\!\sim_{\sf rel}.$  
\end{proposition}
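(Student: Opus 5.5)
The plan is to reduce everything to the fact that a finite presheaf on the finite category ${\sf q}$ is finitely presentable in $\Quiv$, and then pass to the homotopy category using the coequalizer description of Lemma \ref{lemma:homotopy-as-a-coequaliser}. For the homotopy category, I interpret compactness as saying that $[(G,H),-]$ sends filtered colimits formed in $\PairDGr$ to colimits of sets; this is the form in which the statement will be used.

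\emph{Step 1: filtered colimits in $\PairDGr$.} Let $(G_i,H_i)_{i\in C}$ be a filtered diagram of pairs. I claim that $(\colim G_i,\colim H_i)$ is again a pair and is the colimit in $\PairDGr$. By Lemma \ref{lemma:DGra_closed_filtered_colimits}, both colimits are computed in $\Quiv$, that is, objectwise in $\Set$.
\begin{enumerate}[(a)]
\item Filtered colimits preserve monomorphisms, so $\colim H_i\to\colim G_i$ is a subdigraph.
\item Being induced means $E(H_i)=E(G_i)\times_{V(G_i)^2}V(H_i)^2$. This is a finite limit, so it is preserved by filtered colimits in $\Set$. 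Hence $\colim H_i$ is induced in $\colim G_i$.
\item The universal property in $\PairDGr$ then follows from the one in $\DGra$. A compatible family of pair maps induces a map on $\colim G_i$ that carries $\colim H_i$ into the target's subdigraph, because every vertex of $\colim H_i$ comes from some $H_i$.
\end{enumerate}

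\emph{Step 2: compactness in $\PairDGr$.} A finite digraph $G$ is a finite presheaf on ${\sf q}$, hence a finite colimit of representables ${\sf q}(-,[0])$ and ${\sf q}(-,[1])$. Therefore $\Hom_{\Quiv}(G,-)$ commutes with filtered colimits, since these are computed objectwise. Now
\[
\Hom((G,H),(G',H'))\subseteq\Hom(G,G')
\]
is the subset of maps sending the finitely many vertices of $H$ into $V(H')$. I will check directly that the canonical map
\[
\colim_i\Hom((G,H),(G_i,H_i))\longrightarrow\Hom((G,H),\colim(G_i,H_i))
\]
is bijective.
\begin{itemize}
\item \emph{Surjectivity.} A map into the colimit factors through some $G_i$. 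Each of the finitely many vertices $\varphi(h)$, $h\in H$, lies in the image of some $H_j$. By filteredness we can pass to a common later stage where the factored map is a map of pairs.
\item \emph{Injectivity.} Two pair maps that agree in the colimit agree at a later stage, by the corresponding property for $\Hom_{\Quiv}(G,-)$.
\end{itemize}
This bookkeeping with finitely many vertices, together with the inducedness check in Step 1, is the only real point.

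\emph{Step 3: the homotopy category.} By Lemma \ref{lemma:homotopy-as-a-coequaliser}, $[(G,H),(G',H')]$ is the coequalizer of
\[
\Hom((G\otimes I_1,H\otimes I_1),-)\rightrightarrows\Hom((G,H),-).
\]
The pair $(G,H)\otimes(I_1,\emptyset)=(G\otimes I_1,H\otimes I_1)$ is again a pair of finite digraphs, so by Step 2 both functors commute with filtered colimits. Colimits commute with colimits, in particular coequalizers commute with filtered colimits in $\Set$. Hence
\[
[(G,H),\colim(G_i,H_i)]\cong\colim_i[(G,H),(G_i,H_i)].
\]

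The main obstacle I expect is Step 1, specifically making sure that filtered colimits in $\PairDGr$ really are computed componentwise, so that inducedness is preserved. This is handled by expressing inducedness as a pullback and using that filtered colimits in $\Set$ commute with finite limits.
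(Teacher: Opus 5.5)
Your proof is correct and follows the same route as the paper. You prove compactness in $\PairDGr$ by the standard finiteness argument, and you spell out what the paper calls ``standard'': that filtered colimits of pairs are computed componentwise and preserve inducedness. You then get compactness in $\PairDGr/\!\sim_{\sf rel}$ from Lemma~\ref{lemma:homotopy-as-a-coequaliser}, the finiteness of $(G\otimes I_1,H\otimes I_1)$, and the commutation of coequalizers with filtered colimits. Your reading of compactness in the homotopy category, namely that $[(G,H),-]$ commutes with filtered colimits formed in $\PairDGr$, is exactly what the paper's argument establishes and how the result is used later for $A_{n+1}(G)\cong A_n(\Omega_\SS G)$.
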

\begin{proof}
The proof of the fact that a pair of finite digraphs $(G,H)$ is a compact object of $\PairDGr$ is standard (similar to the proof of the fact that a finite set is compact in the category of sets). The fact that $(G,H)$ is a compact object of $\PairDGr/\sim_{\sf rel}$ follows from the fact that the pair $(G\otimes I_1,H\otimes I_1)$  also consists of finite digraphs, Lemma \ref{lemma:homotopy-as-a-coequaliser}, and the fact that coequalizers commute with filtered colimits. 
\end{proof}

\subsection{Shrinkings}  
In this subsection, we introduce the category of shrinkings. Its objects are intervals (Definition \ref{def:interval}), and its morphisms are certain digraph maps between intervals called shrinkings. We will be interested not only in the category of shrinkings itself, but also in its various subcategories. To unify the subsequent theory, we introduce the notion of a nice subcategory of shrinkings and prove some of its properties.

\begin{definition}[Category of shrinkings] 
A shrinking is a morphism of intervals $J\to J'$ which is surjective and monotone on vertices \cite[Def.4.4]{GLMY23}. It is easy to see that shrinkings are closed under composition. The category of all intervals and shrinkings is denoted by
$\Shr$
and called the category of shrinkings.   
\end{definition}
\begin{definition}[Homotopy category of shrinkings]
The category of shrinkings can be treated as a subcategory of the category of pairs of digraphs 
\begin{equation}
    \Shr \subseteq \PairDGr
\end{equation}
whose objects are pairs $(J,\partial J)$ and morphisms are shrinkings.  Then the image of $\Shr$ in $\PairDGr/\sim_{\sf rel}$ is called the homotopy category of shrinkings and denoted by 
\begin{equation}
\HShr \subseteq \PairDGr/\!\sim_{\sf rel}.
\end{equation}
Its objects are intervals and its morphisms are shrinkings $J\to J'$ up to homotopy  relative to $\partial J$.  
\end{definition}

\begin{proposition}\label{propa:shrinkings_homo}
Any two shrinkings $J \to J'$ are homotopic relative to $\partial J.$ In other words, the homotopy category of shrinkings  $\HShr$ is a posetal category, i.e. a thin category: there is at most one morphism between any two of its objects. 
\end{proposition}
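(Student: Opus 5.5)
Write $J=\{0,\dots,k\}$ and $J'=\{0,\dots,k'\}$, and let $\sigma,\tau:J\to J'$ be two shrinkings. Both are surjective and monotone on vertices, so both send $0\mapsto 0$ and $k\mapsto k'$. Hence they already agree on $\partial J$. A relative homotopy only needs to keep $\partial J$ fixed, so it suffices to connect $\sigma$ and $\tau$ by a zigzag in $\Hom^\otimes((J,\partial J),(J',\partial J'))$. Each step of the zigzag will be an arrow $\varphi\to\psi$ or $\psi\to\varphi$, with $\varphi$ and $\psi$ agreeing on $\{0,k\}$.

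\emph{Step 1: reduce to a canonical form.} A shrinking $\sigma$ is determined by the nondecreasing surjection $V(J)\to V(J')$. Equivalently, it is determined by the set $C_\sigma\subseteq\{0,\dots,k-1\}$ of positions $i$ with $\sigma(i)=\sigma(i+1)$; these are the collapsed arrows, and there are $k-k'$ of them. The remaining $k'$ arrows of $J$ must map bijectively and in order onto the arrows of $J'$, preserving direction. So $C_\sigma$ is constrained by the directions: the $j$-th non-collapsed arrow of $J$ must have the same direction as the $j$-th arrow of $J'$.

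\emph{Step 2: elementary moves.} I will show that $\sigma$ can be connected to $\tau$ by a sequence of elementary moves. Each move changes $\sigma$ at a single vertex $i$ with $0<i<k$, replacing its value $\sigma(i)$ by $\sigma(i)\pm1$, and the new map must still be a shrinking. Let $\sigma'$ be the result of such a move. Since $\sigma$ and $\sigma'$ differ at one vertex, their values there are adjacent in $J'$, so there is an arrow between $\sigma(i)$ and $\sigma'(i)$ in one direction. If that arrow goes $\sigma(i)\to\sigma'(i)$, then $\sigma\to\sigma'$ is an arrow of the box hom, because at every other vertex we only need the degenerate arrow. If it goes the other way, then $\sigma'\to\sigma$ is an arrow. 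In either case $\sigma$ and $\sigma'$ lie in the same connected component.

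An elementary move corresponds to sliding a collapsed arrow past an adjacent non-collapsed arrow. Concretely, suppose $\sigma$ collapses the arrow $(i-1,i)$ but not $(i,i+1)$. Setting $\sigma'(i)=\sigma(i)+1$ instead collapses $(i,i+1)$ and makes $(i-1,i)$ the non-collapsed arrow mapping to $(\sigma(i),\sigma(i)+1)$. This is valid exactly when the arrows $(i-1,i)$ and $(i,i+1)$ have the same direction in $J$.

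\emph{Step 3: connectivity (main obstacle).} The main obstacle is proving that any two admissible collapse sets are connected by valid slides, since slides past an arrow of the opposite direction are forbidden. I will first try the following strategy, handling the arrows of $J'$ in order $j=1,\dots,k'$. The $j$-th arrow of $J'$ is matched under $\sigma$ and $\tau$ to two arrows of $J$, at positions $a_j(\sigma)$ and $a_j(\tau)$, both with the same direction. All arrows of $J$ strictly between these two positions are collapsed in one of the two maps. If the direct slide is blocked, an elementary move is not available, so instead I will connect $\sigma$ and $\tau$ directly by induction on $\sum_j |a_j(\sigma)-a_j(\tau)|$. Take the first index $j$ where they differ, say $a_j(\sigma)<a_j(\tau)$. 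Then $\sigma$ takes the value $j$ on the block $[a_j(\sigma)+1,\,a_j(\tau)]$, while $\tau$ takes the value $j-1$ there, and both maps agree elsewhere on this region. Define $\rho$ to agree with $\tau$ except that it takes the value $j$ at the last vertex of this block; this $\rho$ is a shrinking. Moreover, the vertex where $\sigma$ and $\rho$ differ is adjacent through the $j$-th arrow of $J'$, which gives a box-hom arrow between $\sigma$ and $\rho$. If the block has length greater than one, I will instead change the whole block simultaneously, since the vertices in the block all map to the two endpoints of a single arrow of $J'$. This yields one arrow between $\sigma$ and the modified map. The modified map is a shrinking with a smaller discrepancy, so the induction terminates, and its base case is $\sigma=\tau$.

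Thinness of $\HShr$ then follows immediately, because homotopic shrinkings give equal morphisms in $\HShr$.
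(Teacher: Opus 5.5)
Your strategy works, but Step 3 as written contains two false assertions that you need to repair.

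First, $\sigma$ is not necessarily constantly $j$ on the block $[a_j(\sigma)+1,a_j(\tau)]$, because $\sigma$ may pass through several arrows of $J'$ inside it. Take $J\colon 0\to 1\to\cdots\to 6$, $J'\colon 0\to 1\to 2\to 3$, $\sigma=(0,1,2,3,3,3,3)$ and $\tau=(0,0,0,0,1,2,3)$. Then $j=1$, the block is $\{1,2,3\}$, and $\sigma$ takes the values $1,2,3$ on it.

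Second, and for the same reason, the box-hom arrow you produce joins $\tau$ to the modified map, not $\sigma$ to it. In the example the modified map is $\rho=(0,1,1,1,1,2,3)$, and $\sigma(3)=3$, $\rho(3)=1$ are not adjacent, so there is no arrow between $\sigma$ and $\rho$.

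Neither claim is needed. The argument actually uses only the following facts:
\begin{itemize}
\item $\tau\equiv j-1$ on the block, since $a_i(\tau)=a_i(\sigma)<a_j(\sigma)$ for $i<j$.
\item The map $\rho$ obtained from $\tau$ by raising the whole block to $j$ is a shrinking. Its non-collapsed arrows are those of $\tau$, except that the $j$-th one now sits at position $a_j(\sigma)$; that arrow has the right direction because it is the $j$-th non-collapsed arrow of $\sigma$.
\item The block avoids $0$ and $k$.
\item Hence $\tau$ and $\rho$ are joined by a single arrow of $\Hom^\otimes((J,\partial J),(J',\partial J'))$, oriented like the $j$-th arrow of $J'$.
\item Meanwhile $\sum_i|a_i(\sigma)-a_i(\rho)|$ drops by $a_j(\tau)-a_j(\sigma)\geq 1$.
\end{itemize}
With this correction your induction closes, and the single-vertex move and the slides of Step 2 become superfluous.

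Once corrected, your proof takes a different route from the paper's. The paper inducts on the total length of $J$ and $J'$ by looking at their last arrows. If the directions differ, every shrinking collapses the last arrow of $J$, and the problem reduces to $\bar J\to J'$. If they agree, shrinkings split into two classes according to whether $s(n-1)=m-1$ or $s(n-1)=m$. Each class is handled by the induction hypothesis, and the two classes are bridged by a single block move: send every $x\neq n$ with $s(x)=m$ to $m-1$.

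You instead encode a shrinking by the positions $a_1<\dots<a_{k'}$ of its non-collapsed arrows and align them from left to right. Both proofs rest on the same elementary homotopy: shifting a maximal run of vertices lying over one endpoint of an arrow of $J'$ to the other endpoint. Yours yields an explicit zigzag of at most $k'$ arrows, since each step fixes the first discrepant index without disturbing earlier ones. The paper's induction avoids this combinatorial bookkeeping at the cost of such explicitness.
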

\begin{proof}
Let $J$ and $J'$ be two intervals with sets of vertices $\{0,\dots,n\}$ and $\{0,\dots,m\}$ and sets of arrows $E=E(J)$ and $E'=E(J').$ We prove that any two shrinkings $J\to J'$ are homotopic relative to $\partial J.$ The proof is by induction on $n+m.$ 
The base case $n+m=0$ is obvious. The cases $n<m$ and $m=0$ are also obvious, so we assume that $n\geq m\geq 1.$ We denote by $\bar J\subseteq J$ and $\bar J'\subseteq J'$ sub-intervals with the sets of vertices $\{0,\dots,n-1\}$ and $\{0,\dots,m-1\}$ respectively.  If the directions of the last arrows are different, $(n-1, n)\in E$ and $(m,m-1)\in E'$ or $(n,n-1)\in E$ and $(m-1,m)\in E'$, then  any shrinking $J\to J'$ factors through a shrinking $\bar J\to J'.$  In this case the result follows from the inductive assumption. 

Now assume that the directions of the last arrows are the same. Without loss of generality assume the last arrows are $(n-1,n)\in E$ and $(m-1,m)\in E'.$ Then there are shrinkings $s:J\to J'$ of two types: 
\begin{enumerate}
    \item $s(n-1)=m-1$; 
    \item  $s(n-1)=m$.
\end{enumerate}
Shrinkings of the first type are in bijection with shrinkings $\bar J\to \bar J'.$ Shrinkings of the second type are in bijection with shrinkings $\bar J\to J'.$ Therefore, by the inductive assumption all shrinkings of the first type are homotopic to each other relative to $\partial J$, and all shrinkings of the second type are homotopic to each other relative to $\partial J.$ If one of the two sets is empty, there is nothing to prove. So we assume that both of them are nonempty. Therefore, we need to show that a shrinking of the second type is homotopic to a shrinking of the first type relative to $\partial J.$

Take a shrinking of the second type $s:J\to J'$. Then $s(n-1) = m.$ Consider a map $s':\{0,\dots,n\}\to \{0,\dots, m\}$ defined by 
\begin{equation}
s'(x) = 
\begin{cases}
s(x), & s(x)<m\\
m-1, & x\neq n, s(x)=m,\\
m, & x=n.\\
\end{cases}
\end{equation}
We claim that $s'$ is a shrinking of the first type which is homotopic to $s$ relative to $\partial J.$ We first verify that $s'$ is a digraph map. For an arrow $(x,y)\in E$ such that $s(x),s(y)<m$, we have $(s'(x),s'(y))=(s(x),s(y))\in E'.$ If $(x,y)\in E$ such that $s(x)<m$ and $s(y)=m$, then $s(x)=m-1.$ In this case either $(s'(x),s'(y))=(m-1,m-1)\in E'$ or $(s'(x),s'(y))=(m-1,m)\in E'$. Since $(m-1,m)\in E$, the case that $(x,y)\in E$ and $s(x)=m$ and $s(y)<m$ is impossible. If $(x,y)\in E$ such that $s(x)=s(y)=m,$ we have either $(s'(x),s'(y))=(m-1,m-1)\in E'$, or $(s'(x),s'(y))=(m-1,m)\in E'$ or $(s'(x),s'(y))=(m,m)\in E$.

In order to check that $s$ and $s'$ are homotopic relative to $\partial J,$ we just note that for any $0\leq x\leq n,$ there is an arrow $s'(x)\to s(x),$ and that $s'(0)=s(0)$ and $s'(n)=s(n)$.
\end{proof}

\begin{proposition}\label{prop:HShr:cofiltered}
The posetal category $\HShr$ is cofiltered. 
\end{proposition}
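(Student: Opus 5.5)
Since $\HShr$ is posetal by Proposition \ref{propa:shrinkings_homo}, cofilteredness reduces to two conditions. First, $\HShr$ is nonempty, which is clear since the trivial interval is an object. Second, for any two intervals $J_1,J_2$ there must be an interval $K$ with shrinkings $K\to J_1$ and $K\to J_2$; in a thin category, parallel morphisms coincide, so the equalizing condition is automatic. The whole proof therefore comes down to constructing, for two arbitrary intervals, a common ``refinement'' interval that shrinks onto both.

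The plan is to build $K$ by wedging. Write $J_1$ as a wedge of one-arrow intervals, $J_1 = e_1\vee\dots\vee e_n$, where each $e_i$ is $I_1$ or $I_1^\op$. Define
\[
K = J_1 \vee J_2 .
\]
Shrinkings can be assembled from pieces: if $\sigma:J\to J'$ and $\tau:L\to L'$ are shrinkings, then $\sigma\vee\tau:J\vee L\to J'\vee L'$ is a shrinking. Also, every interval admits a shrinking onto the trivial interval, namely the constant map. Using these facts, the shrinking $K\to J_1$ is $\id_{J_1}\vee c$, where $c:J_2\to I_0$ is constant; this works because $J_1\vee I_0=J_1$. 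Symmetrically, the shrinking $K\to J_2$ is $c\vee \id_{J_2}$, where $c:J_1\to I_0$ is constant. Both maps are monotone, surjective, and send arrows to arrows or degenerate arrows, so they are digraph maps of intervals. They also preserve endpoints, since monotone surjections fix $0$ and the last vertex.

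The only point needing care is that the constant collapse of a whole interval is a legitimate digraph map. It is: every arrow goes to the degenerate arrow at the single vertex. One must also check that gluing at the wedge point is consistent. This holds because both pieces send the shared vertex to the same vertex, namely the end of $J_1$ in the first map and $0$ in the second. I do not expect any real obstacle here. The content lies in Proposition \ref{propa:shrinkings_homo}, which makes parallel morphisms equal. Once that is available, cofilteredness is the elementary construction above, with $K=J_1\vee J_2$ serving as a cone over $J_1$ and $J_2$.
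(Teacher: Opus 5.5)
Your proposal is correct and is essentially the paper's proof: the paper also takes the wedge $J\vee J'$ with its two collapsing shrinkings onto $J$ and $J'$. Your explicit reduction, using thinness (Proposition~\ref{propa:shrinkings_homo}), to nonemptiness plus this cone condition is what the paper leaves implicit; the decomposition of $J_1$ into one-arrow pieces is never used and can be dropped.
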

\begin{proof}
It follows from the fact that for any two intervals $J$ and $J'$ there are shrinkings $J\vee J'\to J$ and $J\vee J'\to J'$. 
\end{proof}

\begin{remark}
The category $\Shr$ is not cofiltered. Indeed, for any two different shrinkings $s_1,s_2: J' \to J,$ there is no shrinking $s:J''\to J'$ such that $s_1s=s_2s.$  
\end{remark}

\begin{definition}[Nice subcategory of shrinkings]
\label{def:nice_subcategory}
A subcategory $\SS\subseteq \Shr$ is called a nice subcategory of shrinkings if the following two properties hold:
\begin{enumerate}
\item for any interval $J$, there is an interval $J'\in \SS$ such that there exists a shrinking $J'\to J;$
\item for any $J,J'\in \SS$ such that there is a shrinking $J'\to J,$ there is a morphism $J'\to J$ in $\SS$. 
\end{enumerate}
\end{definition}

\begin{remark}
Intuitively, property (1) in Definition \ref{def:nice_subcategory} means that for any finite sequence of right and left arrows there is an interval $J$ in $\SS$ whose sequence of arrows contains this sequence as a subsequence.    
\end{remark}

\begin{definition}[Cofinal functors] 
A functor $F:D\to C$ is called left cofinal if for any  $c\in C$, the relative over-category $F_{/c}$ is (not empty and) connected. Precomposition with left cofinal functors does not change limits (see, e.g., \cite{KS06}). Dually, a functor $F:D\to C$ is called right cofinal if for any $c\in C$, the relative under-category $F_{c/}$ is (not empty and) connected; precomposition with right cofinal functors does not change colimits. 
\end{definition}

\begin{remark}[Properties of left cofinal functors]
\label{rem:cofinal_facts}
We shall use two standard facts about cofinal functors (see, e.g., \cite{KS06,Cis19}). First, a finite product of left cofinal functors is again left cofinal: the relative over-category $\big(\prod_i F_i\big)_{/(c_1,\dots,c_n)}$ is the product $\prod_i (F_i)_{/c_i}$ of the individual over-categories, and a finite product of nonempty connected categories is nonempty and connected. Second, a small category $C$ is cofiltered if and only if the diagonal functor $C\to C^n$ is left cofinal for every $n\geq 0$. 
\end{remark}

\begin{proposition}
\label{prop:hom_lef_cof} If $\SS$ is a nice subcategory of shrinkings, the functor  
\begin{equation}
\SS \to \HShr
\end{equation}
is left cofinal. 
\end{proposition}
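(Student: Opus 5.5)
We have to show that for every interval $J$ the relative over-category $F_{/J}$ of the functor $F:\SS\to\HShr$ is nonempty and connected. Since $\HShr$ is posetal by Proposition \ref{propa:shrinkings_homo}, an object of $F_{/J}$ is just an interval $J'\in\SS$ that admits a shrinking $J'\to J$; there is no further choice of structure map. A morphism $J''\to J'$ in $F_{/J}$ is a morphism of $\SS$, and it automatically commutes over $J$ because $\HShr$ is thin. So $F_{/J}$ is the full subcategory of $\SS$ on those $J'\in\SS$ that shrink onto $J$.

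\emph{Nonemptiness.} This is exactly property (1) of Definition \ref{def:nice_subcategory}.

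\emph{Connectedness.} Take two objects $J_1,J_2$ of $F_{/J}$; we must join them by a zigzag in $F_{/J}$.
\begin{enumerate}
\item Form the wedge $J_1\vee J_2$. As in the proof of Proposition \ref{prop:HShr:cofiltered}, it admits shrinkings onto $J_1$ and onto $J_2$.
\item Apply property (1) to $J_1\vee J_2$. This gives some $K\in\SS$ with a shrinking $K\to J_1\vee J_2$.
\item Composing, $K$ shrinks onto $J_1$ and onto $J_2$, hence also onto $J$. Therefore $K$ is an object of $F_{/J}$.
\item Apply property (2). Since $K,J_1\in\SS$ and there is a shrinking $K\to J_1$, there is a morphism $K\to J_1$ in $\SS$. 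Likewise there is a morphism $K\to J_2$ in $\SS$.
\end{enumerate}
These two morphisms are morphisms of $F_{/J}$, since commutativity over $J$ is automatic. They give a span $J_1\leftarrow K\to J_2$, so $F_{/J}$ is connected.

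There is essentially no obstacle. The only points needing care are the following.
\begin{itemize}
\item Commutativity of triangles over $J$ is automatic because $\HShr$ is thin (Proposition \ref{propa:shrinkings_homo}).
\item Property (2) only yields some morphism of $\SS$, not a prescribed shrinking. This suffices precisely because of thinness.
\item The wedge shrinks onto each factor, by collapsing the other factor to an endpoint; this is a monotone surjective digraph map.
\end{itemize}
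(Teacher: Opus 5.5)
Your proof is correct and follows the same route as the paper. Nonemptiness comes from property (1), and connectedness comes from choosing $K\in\SS$ that shrinks onto $J_1\vee J_2$, which gives a span $J_1\leftarrow K\to J_2$ in $F_{/J}$. You also spell out two points the paper's proof leaves implicit: property (2) is needed to make the legs of the span morphisms of $\SS$, and thinness of $\HShr$ makes them automatically commute over $J$.
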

\begin{proof}
Let us denote the functor by $F:\SS\to \HShr$. We must show that the relative over-category $F_{/J}$ is connected for every $J\in \HShr$. Property (1) implies that $F_{/J}$ is nonempty. Take two objects $\alpha_1:F(J_1)\to J$ and $\alpha_2:F(J_2)\to J$ of $F_{/J}$. Consider an object $J_3$ of $\SS$ such that there is a shrinking $J_3\to J_1 \vee J_2$. Since we have shrinkings $J_1\vee J_2\to J_1$ and $J_1\vee J_2\to J_2$, this defines an object $\alpha_3 : F(J_3)\to J$ in $F_{/J}$ with two morphisms $\alpha_1 
\leftarrow \alpha_3 \to \alpha_2 $ in $F_{/J}$. Therefore $F_{/J}$ is connected.   
\end{proof}

\begin{definition}[Left, right and central shrinkings] 
The standard left and right shrinkings are defined as maps between standard intervals 
\begin{equation}
l : I_{n+1}^{-\varepsilon} \to I_{n}^{\varepsilon}, \hspace{1cm} r:I_{n+1}^\varepsilon\to I_{n}^\varepsilon,    
\end{equation}
where $n\geq 1,$ $\varepsilon\in \{-1,1\},$
defined by the formulas 
\begin{equation}
l(x)=\max(x-1,0), \qquad r(x)=\min(x,n) 
\end{equation}
(see \cite[p.2861]{CK24}).  We also consider central shrinkings $c=lr=rl$ and their squares 
\begin{equation}
c: I_{n+2}^{-\varepsilon} \to I^{\varepsilon}_{n}, \hspace{1cm}  
c^2 : I_{n+4} \to I_n.
\end{equation}
\end{definition}

\begin{example}[Examples of nice subcategories of shrinkings]\label{example:nice_subcategories}\ 
\begin{enumerate}
    \item $\Shr$ is a nice subcategory of shrinkings. 
    \item The full subcategory of $\Shr$ spanned by the standard intervals is a nice subcategory of shrinkings.  
    \item \label{standard_interval_category}
    The subcategory defined by the sequence 
    \[ \dots \xrightarrow{r} I_{4} \xrightarrow{l}  I^\op_{3} \xrightarrow{r}  I^\op_{2} \xrightarrow{l}  I_1 \]
    is a nice subcategory of shrinkings. Here we mean that all objects of this category are objects of this sequence and morphisms are compositions of morphisms from this sequence. This category is denoted by 
    \[{\sf St} \subseteq \Shr \]
    and called the standard interval category.  
    \item The subcategory defined by the sequence 
    \[ \dots \xrightarrow{r} I_{4} \xrightarrow{r} I_{3} \xrightarrow{r} I_{2} \xrightarrow{r} I_1 \]
    is a nice subcategory of shrinkings. 
    \item The subcategory defined by the sequence 
    \[ \dots \xrightarrow{l} I_{4}^\op \xrightarrow{l} I_{3} \xrightarrow{l} I_{2}^\op \xrightarrow{l} I_1 \]
    is a nice subcategory of shrinkings.  
    \item The subcategory defined by the sequence 
    \[\dots \xrightarrow{c^2} I_{16} \xrightarrow{c^2} I_{12} \xrightarrow{c^2} I_8
    \xrightarrow{c^2} I_4 \]
    is a nice subcategory of shrinkings. 
    \item The subcategory whose objects are the intervals $I_n$ and whose morphisms are the maps $ I_{nm}\to I_n$  defined by  $x\mapsto [x/m]$ with $m$ odd is a nice subcategory of shrinkings. 
\end{enumerate}
\end{example}

\begin{example}[Cantor intervals] Here we give another example of a nice subcategory of shrinkings.  
A Cantor interval $I^{\Cant}_n, n\geq 1$ is an interval whose set of vertices is $\{0,\dots,2^n-1\}$.  Arrows of $I^{\Cant}_n$ are defined as follows. Here we will use the binary number system $[a_0,\dots,a_{n-1}]=\sum 2^{n-i-1} a_i,$ $a_i\in \{0,1\}.$ Then there is an arrow 
\[[a_0,\dots,a_{k-1},0,1,\dots,1] \to [a_0,\dots,a_{k-1},1,0,\dots,0]\] for even $k$, and an arrow  \[[a_0,\dots,a_{k-1},1,0,\dots,0] \to [a_0,\dots,a_{k-1},0,1,\dots,1]\] for odd $k.$ For example: 
\[I_1^\Cant : [0] \to [1],\]
\[I_2^\Cant: [00] \leftarrow [01] \to [10] \leftarrow [11],\]
\[I^\Cant_3: [000] \to [001] \leftarrow [010] \to [011] \to [100] \to [101] \leftarrow [110] \to [111]. \]
For $m\geq n$ we define the projection map
\[{\sf pr}:I^\Cant_m \longrightarrow I^\Cant_n \]
by the formula ${\sf pr}([a_0,\dots,a_{m-1}]) = [a_0,\dots,a_{n-1}].$ The subcategory defined by the sequence of Cantor intervals 
    \[ \dots \xrightarrow{\sf pr} I^\Cant_4  \xrightarrow{\sf pr} I^\Cant_3 \xrightarrow{\sf pr} I^\Cant_2 \xrightarrow{\sf pr} I^\Cant_1\]
    is a nice subcategory of shrinkings.
\end{example}

\subsection{Cubical homotopy groups} 

In this subsection we show that several different definitions of cubical homotopy groups $A_*(G)$ of digraphs are equivalent. 

\begin{definition}[Cubical homotopy groups]
For a pointed digraph $G=(G,g)$  and $n\geq 0$, we set 
\begin{equation}
A_n(G) = \underset{(J_1,\dots,J_n)\in (\Shr^\op)^n }\colim\: \left[\bigotimes_{i=1}^n (J_i,\partial J_i),G\right],
\end{equation}
where the colimit is taken over $(\Shr^\op)^n$, the product of $n$ copies of the opposite category of shrinkings.   
\end{definition}

\begin{remark}
From the definition it is clear that if $G=(G,g)$ and $G'=(G',g')$ are pointed digraphs, then any pointed homotopy equivalence  $\varphi:G\to G'$ induces an isomorphism $A_*(G)\cong A_*(G')$. 
\end{remark}

\begin{remark}[The structure of a group]
In this section we do not use the group structure on $A_n(G).$ For now, it suffices to regard it as a set that depends naturally on the pointed digraph. The reader who wants to know an elementary definition of the group structure can find it in \cite[Prop.4.7]{LWYZ24} for digraphs and in \cite[Prop.3.5]{BKLW01} for graphs. Further, we will show that $A_n(G)\cong \pi_n(N_\infty G),$ where $N_\infty G$ is a cubical Kan complex (Proposition \ref{prop:A_n=pi_n}). This statement automatically determines the group structure on $A_n(G).$ 
\end{remark}

\begin{proposition}\label{prop:A_n:S_i}
Let $\SS_1,\dots,\SS_n$ be $n$ nice subcategories of shrinkings. Then, for any pointed digraph $G$ and $n\geq 0$, the map 
\begin{equation}
\underset{(J_1,\dots,J_n)\in \SS_1^\op \times \dots \times \SS_n^\op}{\colim}\: \left[\bigotimes_{i=1}^n (J_i,\partial J_i),G\right] \longrightarrow A_n(G)
\end{equation}
is a bijection. 
\end{proposition}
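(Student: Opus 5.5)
The plan is to show that the diagram being colimited factors through the homotopy category of shrinkings, and then to apply cofinality twice.

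First, the functor
\[
(J_1,\dots,J_n)\longmapsto \left[\bigotimes_{i=1}^n (J_i,\partial J_i),G\right]
\]
on $(\Shr^\op)^n$ factors through $(\HShr^\op)^n$. The box product of pairs is compatible with relative homotopy, by the pair analogue of Proposition \ref{prop:box_product_and_hom_preserve_homotopy}; concretely, a homotopy rel $\partial J_i$ between two shrinkings $J_i\to J_i'$ tensors with identities to give a homotopy of pairs. Here the subdigraph of $\bigotimes_i (J_i,\partial J_i)$ is the union of the pieces $\partial J_i\otimes\cdots$, and it is preserved throughout the homotopy. Hence, by Proposition \ref{prop:homotopy_pairs}, precomposition with homotopic maps induces the same map on $[-,G]$. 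Write $\Phi:(\HShr^\op)^n\to\Set$ for the induced functor.

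Second, the colimit of the composite
\[
(\SS_1^\op\times\cdots\times\SS_n^\op)\to(\HShr^\op)^n\xrightarrow{\Phi}\Set
\]
agrees with $\colim\,\Phi$. By Proposition \ref{prop:hom_lef_cof}, each $\SS_i\to\HShr$ is left cofinal, so $\SS_i^\op\to\HShr^\op$ is right cofinal. By Remark \ref{rem:cofinal_facts}, a finite product of cofinal functors is cofinal, so the product functor is right cofinal and the colimits agree. The special case $\SS_i=\Shr$ is a nice subcategory by Example \ref{example:nice_subcategories}(1), so it gives $A_n(G)\cong\colim\,\Phi$ as well. The comparison map in the statement is induced by the inclusions $\SS_i\subseteq\Shr$, and these commute with the projections to $\HShr$. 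It therefore identifies with the composite of the two bijections to $\colim\,\Phi$, and so it is a bijection.

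The main obstacle is checking that the comparison map in the statement really is the map between the two cofinality isomorphisms. This holds because the functor $\SS_i\to\HShr$ factors as $\SS_i\subseteq\Shr\to\HShr$, so the comparison maps commute by functoriality of colimits. The only other point needing care is the factorization in the first step for pairs, which requires relative homotopies that fix the boundary $\partial J_i$.
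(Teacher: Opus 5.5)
Your proposal is correct and is essentially the paper's proof. It factors the diagram through $(\HShr^n)^\op$, uses that $\SS_i\to\HShr$ and $\Shr\to\HShr$ are left cofinal (Proposition \ref{prop:hom_lef_cof}), takes products via Remark \ref{rem:cofinal_facts}, and passes to opposite categories.

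One small correction to your first step. The maps $s\otimes\id$ and $s'\otimes\id$ need not agree on the whole boundary (e.g.\ on $J_1\otimes\partial J_2$), so the box product of pairs does not descend to $\PairDGr/\!\sim_{\sf rel}$, and Proposition \ref{prop:homotopy_pairs} is not the right justification. The factorization still holds, because $G$ is pointed: composing your boundary-preserving homotopy with a pointed map into $(G,g)$ makes it constant on the boundary, i.e.\ a relative homotopy.
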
 
\begin{proof}
By the definition $A_n(G)$ is defined as a colimit of a functor $F:(\Shr^n)^\op\to \Set$, where $F(J_1,\dots,J_n) = \left[\bigotimes_{i=1}^n (J_i,\partial J_i),G\right]$. It is easy to see that this functor factors through a functor from the homotopy category  $\tilde F:(\HShr^n)^\op\to \Set$. By Proposition \ref{prop:hom_lef_cof} the functors $\Shr\to\HShr$ and $\SS_i\to\HShr$ are left cofinal, hence so are the products $\Shr^n\to\HShr^n$ and $\SS_1\times\dots\times\SS_n\to\HShr^n$ by Remark \ref{rem:cofinal_facts}. Passing to opposite categories, $(\Shr^n)^\op \to (\HShr^n)^\op $ and $\SS_1^\op\times \dots \times \SS_n^\op \to (\HShr^n)^\op$ are right cofinal. The result then follows from the fact that precomposition with right cofinal functors preserves colimits. 
\end{proof}

\begin{proposition}
Let $\SS$ be a nice subcategory of shrinkings. Then, for any pointed digraph $G$  and $n\geq 0$,  the map 
\begin{equation}
\label{eq:colim_C}
\underset{J\in \SS^\op}{\colim}\: \left[(J,\partial J)^{\otimes n},G\right] \longrightarrow A_n(G)
\end{equation}
is a bijection. 
\end{proposition}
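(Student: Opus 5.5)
The plan is to reduce the statement to Proposition \ref{prop:A_n:S_i} by a cofinality argument for the diagonal. By Proposition \ref{prop:A_n:S_i} with $\SS_1=\dots=\SS_n=\SS$, the map
\begin{equation*}
\underset{(J_1,\dots,J_n)\in (\SS^\op)^n}{\colim}\: \left[\bigotimes_{i=1}^n (J_i,\partial J_i),G\right] \longrightarrow A_n(G)
\end{equation*}
is a bijection. The map \eqref{eq:colim_C} factors as the map induced by the diagonal functor $\Delta:\SS^\op\to(\SS^\op)^n$ into this colimit, followed by this bijection. It therefore suffices to show that restriction along $\Delta$ does not change the colimit.

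For this I will pass through the homotopy category, as in the proof of Proposition \ref{prop:A_n:S_i}. The functor $F(J_1,\dots,J_n)=[\bigotimes_i(J_i,\partial J_i),G]$ factors through $\tilde F:(\HShr^n)^\op\to\Set$. It is enough to show that the composite $\SS\xrightarrow{\Delta}\SS^n\to\HShr^n$, which equals $\SS\to\HShr\xrightarrow{\Delta}\HShr^n$, is left cofinal. Then both colimits agree with $\colim \tilde F$. Indeed, the colimit over $(\SS^\op)^n$ agrees with $\colim\tilde F$ by the proof of Proposition \ref{prop:A_n:S_i}, and the colimit over $\SS^\op$ agrees with it by the cofinality just described. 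The comparison maps are compatible, so \eqref{eq:colim_C} is a bijection.

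Left cofinality of the composite follows from two facts. First, $\SS\to\HShr$ is left cofinal by Proposition \ref{prop:hom_lef_cof}. Second, $\HShr$ is cofiltered by Proposition \ref{prop:HShr:cofiltered}, so its diagonal $\HShr\to\HShr^n$ is left cofinal by Remark \ref{rem:cofinal_facts}.

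The step needing care is that a composite of left cofinal functors is left cofinal. This is a standard fact: for limits, $\lim_D (X\circ G\circ F)\cong\lim_{E}(X\circ G)\cong\lim_C X$ for all diagrams, and this characterizes cofinality. I could also verify it directly. Given $(K_1,\dots,K_n)\in\HShr^n$, nonemptiness comes from choosing $K$ with maps to every $K_i$ (cofiltered) and then some $J\in\SS$ over $K$. For connectedness, take two objects $J\to K_i$ and $J'\to K_i$. Since $\HShr$ is thin, compatibility conditions are automatic, so it suffices to find $J''\in\SS$ with morphisms in $\SS$ to $J$ and $J'$. Apply property (1) to $J\vee J'$ to get $J''\in\SS$ shrinking onto $J\vee J'$. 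Then property (2) gives morphisms $J''\to J$ and $J''\to J'$ in $\SS$. The case $n=0$ is trivial, since both sides are $[*,G]$ over connected index categories; this uses that $\SS$ is connected, which follows by the same argument.
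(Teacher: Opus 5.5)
Your proof is correct and is essentially the paper's argument: the composite $\SS\to\HShr\xrightarrow{\Delta}\HShr^n$ is left cofinal, by Proposition \ref{prop:hom_lef_cof} together with the cofilteredness of $\HShr$. Hence the colimit over $\SS^\op$ agrees with the colimit of $\tilde F$ over $(\HShr^n)^\op$, exactly as in the proof of Proposition \ref{prop:A_n:S_i}. Your direct check of cofinality for the composite, using thinness of $\HShr$ and the wedge $J\vee J'$, makes explicit a step the paper leaves implicit.
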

\begin{proof}
Since $\HShr$ is cofiltered (Proposition \ref{prop:HShr:cofiltered}), the diagonal functor $\HShr \to \HShr^n$ is left cofinal by Remark \ref{rem:cofinal_facts}. By Proposition \ref{prop:hom_lef_cof}, the functor $\SS\to \HShr$ is left cofinal. Therefore $\SS^\op \to (\HShr^n)^\op$ is right cofinal.  The rest of the proof is similar to the proof of Proposition \ref{prop:A_n:S_i}.
\end{proof}
\begin{corollary}
For a pointed digraph $G$ and $n\geq 0$, there is a natural bijection 
\begin{equation}
A_n(G) \cong \underset{J\in \Shr^\op}\colim [(J,\partial J)^{\otimes n},G].
\end{equation}
\end{corollary}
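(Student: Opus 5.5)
This corollary is the special case $\SS=\Shr$ of the preceding proposition, which states that for any nice subcategory of shrinkings $\SS$ the map \eqref{eq:colim_C} is a bijection. The plan is therefore to check that $\Shr$ is itself a nice subcategory of shrinkings, as recorded in Example \ref{example:nice_subcategories}(1).

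To verify this, condition (1) of Definition \ref{def:nice_subcategory} holds by taking $J'=J$ with the identity shrinking. Condition (2) holds because every shrinking $J'\to J$ is by definition a morphism of $\Shr$.

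Applying the proposition with $\SS=\Shr$ then gives the bijection
\[
\underset{J\in \Shr^\op}\colim\, [(J,\partial J)^{\otimes n},G]\longrightarrow A_n(G).
\]
This map is induced by the diagonal $\Shr^\op\to(\Shr^\op)^n$, $J\mapsto (J,\dots,J)$, together with the identification $\bigotimes_{i=1}^n (J,\partial J)=(J,\partial J)^{\otimes n}$.

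For naturality in the pointed digraph $G$, note that a pointed map $G\to G'$ acts by postcomposition on every term of both colimit diagrams. This action is compatible with the diagonal comparison maps, so the bijection is natural.

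I expect no real obstacle. The only point needing care is that the colimit diagram in the corollary is the restriction of the one defining $A_n(G)$ along the diagonal. This is immediate because the functor $(J_1,\dots,J_n)\mapsto [\bigotimes_i (J_i,\partial J_i),G]$ restricts on the diagonal to $J\mapsto [(J,\partial J)^{\otimes n},G]$.
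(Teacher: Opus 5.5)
Your proposal is correct and matches the paper's route: the corollary is the preceding proposition specialized to $\SS=\Shr$, which is a nice subcategory of shrinkings by Example~\ref{example:nice_subcategories}(1), and your verification of the two niceness conditions is right. Your remarks about the diagonal comparison map and naturality are consistent with how that proposition is set up.
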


\begin{corollary}
For any pointed digraph $G$ and $n\geq 0$, the cubical homotopy group $A_n(G)$ is isomorphic to the colimit of each of the following sequences. 
\begin{align}
& [(I_1,\partial I_1)^{\otimes n},G] \xrightarrow{l^*} [(I_2^\op,\partial I_2^\op)^{\otimes n},G] \xrightarrow{r^*} [(I_3^\op,\partial I_3^\op)^{\otimes n},G] \xrightarrow{l^*} \dots \\
&[(I_1,\partial I_1)^{\otimes n},G] \xrightarrow{r^*} [(I_2,\partial I_2)^{\otimes n},G] \xrightarrow{r^*} [(I_3,\partial I_3)^{\otimes n},G] \xrightarrow{r^*} \dots \\
&[(I_1,\partial I_1)^{\otimes n},G] \xrightarrow{l^*} [(I_2^\op,\partial I_2^\op)^{\otimes n},G] \xrightarrow{l^*} [(I_3,\partial I_3)^{\otimes n},G] \xrightarrow{l^*} \dots \\
&[(I_4,\partial I_4)^{\otimes n},G] \xrightarrow{(c^2)^*} [(I_8,\partial I_8)^{\otimes n},G] \xrightarrow{(c^2)^*} [(I_{12},\partial I_{12})^{\otimes n},G] \xrightarrow{(c^2)^*}  \dots \\
&[(I_1^\Cant,\partial I^\Cant_1)^{\otimes n},G] \xrightarrow{{\sf pr}^*} [(I_2^\Cant,\partial I^\Cant_2)^{\otimes n},G] \xrightarrow{{\sf pr}^*}  \dots 
\end{align}
\end{corollary}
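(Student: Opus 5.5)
The plan is to deduce each of the five sequences from the preceding proposition, the single-variable version of Proposition \ref{prop:A_n:S_i} giving the bijection \eqref{eq:colim_C} for any nice subcategory $\SS\subseteq\Shr$. For each sequence I will name the subcategory $\SS$ of Example \ref{example:nice_subcategories}, or the Cantor example, that it comes from. These are, respectively, the standard interval category ${\sf St}$ (item (3)), the $r$-sequence (item (4)), the $l$-sequence (item (5)), the $c^2$-sequence (item (6)), and the Cantor sequence. Each such $\SS$ is generated by a single chain $\cdots\to J_3\to J_2\to J_1$. Its morphisms are exactly the composites along the chain, so $\SS$ is the poset $\mathbb{N}^{\op}$, and $\SS^\op\cong\mathbb{N}$. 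A colimit over $\SS^\op$ of the functor $J\mapsto[(J,\partial J)^{\otimes n},G]$ is therefore the colimit of the sequence obtained by applying this functor to the chain. The transition maps are precomposition with $\sigma^{\otimes n}$ for the generating shrinking $\sigma$, which is what $\sigma^*$ denotes in the display.

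Combining this with \eqref{eq:colim_C} gives the isomorphism with $A_n(G)$ for each sequence. I would first check that $\SS\cong\mathbb{N}^\op$ is genuinely a poset, that is, that distinct composites never give parallel distinct arrows. This holds because the objects of the chain are pairwise distinct intervals, having different numbers of vertices. Then I would check that colimits over $\mathbb{N}$ are sequential colimits.

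The only substantive input is that each listed subcategory is nice, which the paper asserts in Example \ref{example:nice_subcategories} and the Cantor example. Verifying it is the main obstacle, if one has to do it. Property (2) requires that whenever some shrinking $J'\to J$ exists between objects of the chain, $J'$ lies earlier in the chain. This holds because a shrinking cannot increase the number of vertices, and the chain is strictly decreasing in length. Property (1) requires that every interval is the target of a shrinking from some member of the chain. For this I would use the remark after Definition \ref{def:nice_subcategory}: it suffices that every finite word in left and right arrows appears as a subsequence of the arrow words of the chain. The words of $I_N$ alternate and have unbounded length, so every word embeds; this covers ${\sf St}$, the $r$-, $l$- and $c^2$-sequences. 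For the Cantor intervals I would check by induction on $n$ that the arrow word of $I^\Cant_{n+1}$ contains two copies of that of $I^\Cant_n$ separated by both arrow directions. Then any word of length $k$ embeds into $I^\Cant_m$ for $m$ large.

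Strictly, we also need the subsequence criterion itself: a subsequence embedding of arrow words yields a shrinking that collapses the omitted arrows. I would construct this shrinking directly, sending vertices monotonically and surjectively so that each retained arrow maps onto the corresponding arrow and every other arrow maps to a degenerate one.
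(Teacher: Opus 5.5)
Your proposal is correct and matches the paper. There the corollary is read off directly from the bijection \eqref{eq:colim_C}, applied to the chain-shaped nice subcategories of Example~\ref{example:nice_subcategories}~(3)--(6) and of the Cantor example; their opposites are $\mathbb{N}$, so the colimit is the displayed sequential one.

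One slip in your optional niceness check for the Cantor chain. Write $w_n$ for the arrow word of $I^\Cant_n$, with ${+}$ for forward and ${-}$ for backward arrows, and $\overline{w}$ for $w$ with all directions reversed. Then $w_{n+1}=\overline{w_n}\,{+}\,\overline{w_n}$, so your claim that $w_{n+1}$ contains two copies of $w_n$ fails; already $w_2={-}{+}{-}$ contains $w_1={+}$ only once. However, $w_{n+2}=w_n\,{-}\,w_n\,{+}\,w_n\,{-}\,w_n$ does contain two copies of $w_n$ separated by both directions. Running your induction in steps of two therefore still shows that every finite word embeds.
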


\begin{definition}[Boundary of a cube]
\label{def:cube_boundary}
For an interval $J$ and $n\geq 0$, the boundary $\partial(J^{\otimes n})$ is the induced subdigraph of $J^{\otimes n}$ given by the second component of the pair $(J,\partial J)^{\otimes n}$; explicitly,
\begin{equation}
\partial(J^{\otimes n}) = \bigcup_{i=1}^n J^{\otimes (i-1)}\otimes \partial J\otimes J^{\otimes(n-i)} \ \subseteq\ J^{\otimes n}
\end{equation}
(for $n=0$ the union is empty, so $\partial(J^{\otimes 0})=\emptyset$). 
\end{definition}

\begin{definition}[Digraph spheres]
For any interval $J$ and $n\geq 0$, we denote by $S^n_J$ the quotient of $J^{\otimes n}$ by its boundary $\partial(J^{\otimes n})$ (Definition \ref{def:cube_boundary}).  
\begin{equation}
\begin{tikzcd}
\partial(J^{\otimes n}) 
\ar[r]
\ar[d]
\arrow[dr, phantom, "\ulcorner" very near end]
& 
J^{\otimes n} 
\ar[d]
\\
* 
\ar[r]
& 
S^n_J
\end{tikzcd}
\end{equation}
If $J=I_m$, we denote it by $S^n_m.$ 
\end{definition}

\begin{corollary}
For a nice subcategory of shrinkings $\SS$, a pointed digraph $G$ and $n\geq 0$, there is a natural bijection 
\begin{equation}
A_n(G) \cong \underset{J\in \SS^\op}\colim\:  [S^n_{J},G]_*.
\end{equation}
\end{corollary}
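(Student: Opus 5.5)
The plan is to reduce the statement to the preceding proposition, which identifies $A_n(G)$ with $\colim_{J\in \SS^\op}[(J,\partial J)^{\otimes n},G]$. It therefore suffices to produce, for each interval $J$, a bijection
\begin{equation}
[(J,\partial J)^{\otimes n},G] \cong [S^n_J,G]_*
\end{equation}
that is natural in $J\in\SS^\op$. Here the left side is relative homotopy classes of maps of pairs $(J^{\otimes n},\partial(J^{\otimes n}))\to (G,g)$, and the right side is pointed homotopy classes of maps $(S^n_J,*)\to(G,g)$. Both sides are $\pi_0$ of box hom digraphs, so I will construct an isomorphism of digraphs
\begin{equation}
\Hom^\otimes\big((J,\partial J)^{\otimes n},(G,g)\big)\cong \Hom^\otimes_*(S^n_J,G)
\end{equation}
and then apply $\pi_0$.

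On vertices, the bijection comes from the universal property of the pushout defining $S^n_J$. A map $J^{\otimes n}\to G$ that sends $\partial(J^{\otimes n})$ to $g$ factors uniquely through the quotient $q:J^{\otimes n}\to S^n_J$, and the factored map is pointed. Conversely, precomposition with $q$ sends pointed maps back to maps of pairs.

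On arrows, I use the explicit description of the pushout in Remark \ref{rem:pushout_along_inclusion}, which applies because $\partial(J^{\otimes n})$ is induced. Vertices of $S^n_J$ are the non-boundary vertices together with the base point, and $q$ is surjective on vertices. Consider an arrow $\varphi\to\psi$ in the pair box hom. It requires an arrow $\varphi(x)\to\psi(x)$ for every $x$, together with $\varphi=\psi=g$ on the boundary. Since $q$ is surjective on vertices, this is exactly the condition for an arrow $\bar\varphi\to\bar\psi$ in $\Hom^\otimes_*(S^n_J,G)$. The base point condition matches because the boundary collapses to the base point. So the vertex bijection is an isomorphism of digraphs, and it induces a bijection on $\pi_0$.

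Naturality in $J$ is the remaining point. A shrinking $\sigma:J'\to J$ preserves endpoints, so $\sigma^{\otimes n}$ maps $\partial(J'^{\otimes n})$ into $\partial(J^{\otimes n})$. It therefore induces a pointed map $S^n_{J'}\to S^n_J$ compatible with the quotients, and this makes the bijections above commute with the transition maps. Taking the colimit over $\SS^\op$ then gives the claim.

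I expect no serious obstacle. The only step needing care is checking that arrows of the quotient do not add extra conditions, which is ensured by surjectivity of $q$ on vertices. Edges of $S^n_J$ incident to the base point come from edges touching the boundary, and these are automatically respected since both maps are constant equal to $g$ there.
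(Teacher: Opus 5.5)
Your proposal is correct and follows the route the paper leaves implicit: combine the preceding proposition with the identification $[(J,\partial J)^{\otimes n},G]\cong[S^n_J,G]_*$, natural in $J$, which comes from the universal property of the quotient $q\colon J^{\otimes n}\to S^n_J$. One small nit: for $n=0$ the map $q$ is not surjective on vertices, because $\partial(J^{\otimes 0})=\emptyset$; however, the missed vertex is the base point, where the required arrow is the degenerate arrow $g\to g$, so your isomorphism of box homs still holds.
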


\subsection{Path and loop digraphs} 

Both Carranza--Kapulkin \cite{CK24} and Li--Wu--Yau--Zhang \cite{LWYZ24} introduce notions of a path digraph and a loop digraph. While their definitions satisfy similar properties, they are not equivalent, even when restricted to the case of ordinary graphs. In this subsection, we present a unified approach to the path digraph construction. We define a path digraph, denoted by $P_{\SS} G$, which depends on a nice subcategory of shrinkings $\SS$ (Definition \ref{def:nice_subcategory}). By varying the choice of $\SS$, one recovers the specific versions developed in both \cite{CK24} and \cite{LWYZ24}. However, while our definition subsumes both previous constructions, the subsequent theory we develop generalizes only that of Carranza--Kapulkin, as our results rely on the assumption that $\SS$ is cofiltered.

\begin{definition}[Path and loop digraphs]
Let $\SS$ be a nice subcategory of shrinkings (Definition \ref{def:nice_subcategory}) and $G$ be a digraph. Then the path digraph $P_\SS G$ is defined by the formula 
\begin{equation}
P_\SS G = \underset{J\in \SS^\op}\colim\: \Hom^\otimes(J,G). 
\end{equation}
This construction defines a functor 
$P_\SS : \DGra \to \DGra.$
The inclusions of the endpoints $j_0,j_1: *\to J$ define two maps $p_0,p_1: P_\SS G \to G$. 

If $G=(G,g)$ is a pointed digraph, the loop digraph $\Omega_\SS G$ is defined by the formula 
\begin{equation}
\Omega_\SS G = \underset{J\in \SS^\op}{\colim}\: \Hom^\otimes_*(S^1_J,G). 
\end{equation}
This construction defines a functor 
$\Omega_\SS : \DGra_* \to \DGra_*.$
\end{definition} 
\begin{remark} The path digraph considered by 
Li--Wu--Yau--Zhang in \cite{LWYZ24} is the path  digraph $P_\SS G$, where $\SS=\Shr$ is the whole category of shrinkings. The path digraph considered by Carranza--Kapulkin in \cite{CK24} is the path digraph $P_\SS G$, where $\SS = {\sf St}$ is the standard interval category (see Example  \ref{example:nice_subcategories}). 
We find the choice $\SS={\sf St}$ more natural, as ${\sf St}$  is cofiltered whereas $\Shr$ is not. However, there are many other cofiltered nice subcategories of shrinkings that can be useful. Therefore, we develop this theory for an arbitrary cofiltered nice subcategory of shrinkings $\SS$. 
\end{remark}

\begin{lemma} 
\label{lemma:Omega_pullback}
For a cofiltered nice subcategory of shrinkings
 $\SS$, the commutative square 
\begin{equation}
\begin{tikzcd}
\Omega_\SS G 
\ar[r]
\ar[d]
\arrow[dr, phantom, "\lrcorner",  very near start]
& 
P_\SS G
\ar[d,"{(p_0,p_1)}"]
\\
* 
\ar[r,"{(g,g)}"]
& 
G\times G
\end{tikzcd}
\end{equation}
is a pullback. 
\end{lemma}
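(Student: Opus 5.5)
The plan is to reduce the statement to a levelwise pullback and then commute a filtered colimit past a finite limit. For each $J\in\SS$ I would first identify $\Hom^\otimes_*(S^1_J,G)$ with the fibre of $(p_0,p_1)$ at level $J$. A pointed map $S^1_J\to G$ is the same as a map $J\to G$ sending both endpoints to $g$, because $S^1_J$ is the pushout of $\partial J\to J$ along $\partial J\to *$. An arrow $\varphi\to\psi$ in the pointed box hom requires $\varphi(x)\to\psi(x)$ for all $x$ and $\varphi(*)=\psi(*)$. Under the identification, this is exactly an arrow $\varphi\to\psi$ in $\Hom^\otimes(J,G)$ whose endpoints are fixed at $g$. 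Since limits of digraphs are computed in $\Quiv$, this gives a pullback square
\begin{equation*}
\begin{tikzcd}
\Hom^\otimes_*(S^1_J,G) \ar[r]\ar[d] & \Hom^\otimes(J,G)\ar[d,"{(j_0^*,j_1^*)}"]\\
* \ar[r,"{(g,g)}"] & G\times G
\end{tikzcd}
\end{equation*}
for every $J$. It is natural in $J\in\SS^\op$, because shrinkings preserve endpoints, and its bottom row is constant in $J$.

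Next I would pass to the colimit over $\SS^\op$. Since $\SS$ is cofiltered, $\SS^\op$ is filtered, so Proposition \ref{prop:filt_colimits_fin_limits} lets filtered colimits commute with finite limits in $\DGra$. The colimit of the levelwise pullbacks is therefore the pullback of the colimits of the three corners. The colimit of the top right corner is $P_\SS G$ and the colimit of the top left corner is $\Omega_\SS G$, by definition. The two bottom corners are constant diagrams, so they have colimits $*$ and $G\times G$, because a filtered category is connected. The induced maps are $(p_0,p_1)$ and $(g,g)$, and this yields the claimed pullback. The pointedness plays no role in the underlying digraph, and in any case Corollary \ref{cor:filt_colimits_fin_limits_pointed} covers the pointed setting.

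The main obstacle is making sure the levelwise identification is genuinely an isomorphism of digraphs and not just a bijection on vertices. Concretely, the arrows of the fibre product computed in $\Quiv$, i.e. pairs (arrow of $\Hom^\otimes(J,G)$, degenerate arrow at $(g,g)$), must match the arrows of $\Hom^\otimes_*(S^1_J,G)$. This comes down to checking that the quotient $J\to S^1_J$ induces the stated bijection on maps and on arrows of box homs. Here one uses that $\partial J$ is discrete and that $S^1_J$ is obtained by the reflector ${\sf d}$, which identifies parallel arrows; if $J$ has length $1$, the arrow of $J$ becomes a loop at the basepoint, which is degenerate. I would also check that cofilteredness is used precisely in the filtered-colimit step, which is consistent with the earlier remark that this fails for $\Shr$.
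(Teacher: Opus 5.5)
Your proposal is correct and matches the paper's proof: the paper also notes that the square is a pullback at each level $J$, and then commutes the filtered colimit over $\SS^\op$ past the pullback using Proposition~\ref{prop:filt_colimits_fin_limits}, together with the fact that the forgetful functor $\DGra_*\to\DGra$ preserves these connected colimits. You also check that the levelwise identification $\Hom^\otimes_*(S^1_J,G)\cong\Hom^\otimes(J,G)\times_{G\times G}*$ holds on arrows as well as on vertices, which is a detail the paper leaves implicit.
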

\begin{proof}
For any $J$, the square 
\begin{equation}
\begin{tikzcd}
\Hom_*^\otimes (S^1_J,G) 
\ar[r]
\ar[d]
\arrow[dr, phantom, "\lrcorner",  very near start]
& 
\Hom^\otimes(J,G)
\ar[d,"{(p_0,p_1)}"]
\\
* 
\ar[r,"{(g,g)}"]
& 
G\times G
\end{tikzcd}
\end{equation}
is a pullback. Note that the forgetful functor $\DGra_*\to \DGra$ creates nonempty limits and all colimits. Then the statement follows from the fact that, in $\DGra$, filtered colimits commute with finite limits (Proposition \ref{prop:filt_colimits_fin_limits}).
\end{proof}

\begin{proposition}
\label{prop:omega_finite_limits} For a cofiltered nice subcategory of shrinkings
 $\SS$, the functors $\Omega_\SS$ and $P_\SS$ preserve finite limits.   
\end{proposition}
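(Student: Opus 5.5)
The plan is to write $P_\SS$ and $\Omega_\SS$ as filtered colimits of functors that each preserve limits, and then commute the colimit past finite limits using Proposition \ref{prop:filt_colimits_fin_limits} and Corollary \ref{cor:filt_colimits_fin_limits_pointed}.

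First I treat $P_\SS$. For each interval $J$, the functor $\Hom^\otimes(J,-):\DGra\to\DGra$ is a right adjoint, with left adjoint $-\otimes J$, by the closed monoidal structure. Hence it preserves all limits. These functors assemble into a diagram $\SS^\op\to \Fun(\DGra,\DGra)$, and $P_\SS$ is its pointwise colimit. Since $\SS$ is cofiltered, $\SS^\op$ is filtered. So for a finite diagram $D:K\to\DGra$ we get
\begin{equation}
P_\SS(\lim D)=\colim_{J\in\SS^\op}\Hom^\otimes(J,\lim_K D)\cong \colim_{J}\lim_K \Hom^\otimes(J,D)\cong \lim_K \colim_J\Hom^\otimes(J,D)=\lim_K P_\SS D.
\end{equation}
The last isomorphism is Proposition \ref{prop:filt_colimits_fin_limits}. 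I will check that this composite is the canonical comparison map, which follows from naturality of the interchange map.

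Next I treat $\Omega_\SS:\DGra_*\to\DGra_*$ in the same way. The functor $\Hom^\otimes_*(S^1_J,-)$ preserves limits of pointed digraphs, because it is right adjoint to the pointed smash-type product with $S^1_J$ coming from the closed monoidal structure on $\PairDGr$, restricted to pointed objects. Alternatively, I can see this directly from the pullback square in the proof of Lemma \ref{lemma:Omega_pullback}: $\Hom^\otimes_*(S^1_J,G)$ is the fiber of $\Hom^\otimes(J,G)\to G\times G$ over $(g,g)$, and limits commute with limits. Taking the filtered colimit over $\SS^\op$ and using Corollary \ref{cor:filt_colimits_fin_limits_pointed} gives the claim. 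A cleaner route is Lemma \ref{lemma:Omega_pullback} itself. It presents $\Omega_\SS G$ as the pullback of $P_\SS G\to G\times G$ along the basepoint. Each of $P_\SS$, $G\mapsto G\times G$ and the constant $*$ preserves finite limits, so the pullback does too, since limits commute with limits. The forgetful functor $\DGra_*\to\DGra$ creates nonempty limits, and the terminal object is preserved trivially because $\Omega_\SS *=*$.

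The main obstacle is the bookkeeping in the pointed case. Finite limits in $\DGra_*$ include the terminal object, and their underlying digraphs must be computed correctly: nonempty limits are created by the forgetful functor, and the empty limit is $*$. I will handle it by checking the terminal object separately and reducing nonempty finite limits to $\DGra$ via Lemma \ref{lemma:Omega_pullback}. I also need that the pullback square there is natural in $G$. This is clear, because each map in it is natural.
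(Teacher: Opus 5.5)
Your proposal is correct and is essentially the paper's proof. The functors $\Hom^\otimes(J,-)$ and $\Hom^\otimes_*(S^1_J,-)$ are right adjoints and hence preserve finite limits, and the filtered colimit over $\SS^\op$ commutes with finite limits in $\DGra$ (Proposition \ref{prop:filt_colimits_fin_limits}) and in $\DGra_*$ (Corollary \ref{cor:filt_colimits_fin_limits_pointed}); your alternative treatment of $\Omega_\SS$ via the natural pullback of Lemma \ref{lemma:Omega_pullback}, with the terminal object checked separately, is also valid but not needed.
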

\begin{proof}
It follows from the fact that filtered colimits commute with finite limits in $\DGra$ (Proposition \ref{prop:filt_colimits_fin_limits}) and in $\DGra_*$ (Corollary \ref{cor:filt_colimits_fin_limits_pointed}), together with the fact that the functors $\Hom^\otimes(J,-)$ and $\Hom^\otimes_*(S^1_J,-)$, being right adjoints, preserve finite limits.
\end{proof}

\begin{proposition}
For a cofiltered nice subcategory of shrinkings
 $\SS$, any pointed digraph $G$ and any $n\geq 0$, there is a natural isomorphism  
\begin{equation}
A_{n+1}(G)\cong A_n( \Omega_\SS G ).
\end{equation}
\end{proposition}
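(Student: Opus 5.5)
We prove $A_{n+1}(G)\cong A_n(\Omega_\SS G)$ by computing both sides as colimits over $\SS^\op$ and comparing them via compactness and the adjunction of Proposition \ref{prop:hom_adjunction}.

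\emph{Step 1: write the left side as a double colimit.} By Proposition \ref{prop:A_n:S_i}, applied with $\SS_1=\dots=\SS_{n}=\SS$ and $\SS_{n+1}=\SS$, we have
\begin{equation*}
A_{n+1}(G)\cong \underset{(J,K)\in (\SS^\op)^{n}\times \SS^\op}{\colim}\ \Big[\bigotimes_{i=1}^n (J_i,\partial J_i)\otimes (K,\partial K),\,G\Big].
\end{equation*}
Here $J=(J_1,\dots,J_n)$. The pair $(G,\{g\})$ is written as $G$.

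\emph{Step 2: apply the adjunction.} Proposition \ref{prop:hom_adjunction} gives
\begin{equation*}
\Big[\bigotimes_i (J_i,\partial J_i)\otimes (K,\partial K), G\Big]\cong \Big[\bigotimes_i (J_i,\partial J_i), \underline{\Hom}^\otimes((K,\partial K),(G,g))\Big].
\end{equation*}
We must identify $\underline{\Hom}^\otimes((K,\partial K),(G,g))$ with the pointed digraph $\Hom^\otimes_*(S^1_K,G)$. Maps $(K,\partial K)\to(G,g)$ are exactly pointed maps $S^1_K\to G$, because $S^1_K$ is the pushout collapsing $\partial K$. The arrows agree, since both require $\varphi(x)\to\psi(x)$ for every $x$. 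The distinguished subdigraph $\Hom(K,\{g\})$ is the single constant map, which is the basepoint. The colimit over $K$ can be computed first, since a colimit over a product is an iterated colimit.

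\emph{Step 3: pull the colimit over $K$ inside.} We need
\begin{equation*}
\underset{K\in\SS^\op}{\colim}\Big[\bigotimes_i (J_i,\partial J_i), \Hom^\otimes_*(S^1_K,G)\Big]\cong \Big[\bigotimes_i (J_i,\partial J_i), \Omega_\SS G\Big].
\end{equation*}
Since $\SS$ is cofiltered, $\SS^\op$ is filtered, so this is a filtered colimit in $\DGra_*$. The pair $\bigotimes_i (J_i,\partial J_i)$ consists of finite digraphs, so by Proposition \ref{prop:compact} it is compact in $\PairDGr/\!\sim_{\sf rel}$.

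The remaining point is that the colimit defining $\Omega_\SS G$ in $\DGra_*$ is also the colimit of the corresponding pairs in $\PairDGr$. The forgetful functor creates colimits, and the basepoint subdigraph of the colimit is the colimit of the one-point subdigraphs. So the colimit of pairs is $(\Omega_\SS G,*)$, and compactness gives the displayed isomorphism.

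\emph{Step 4: conclude.} Taking the colimit over $J\in(\SS^\op)^n$ and using Proposition \ref{prop:A_n:S_i} again yields $A_n(\Omega_\SS G)$. Naturality in $G$ holds because every step is natural.

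\emph{Main obstacle.} The delicate point is Step 3: checking that filtered colimits in $\PairDGr$ are computed componentwise and commute with $[\,P,-]$ for finite $P$, as used in Proposition \ref{prop:compact}. Cofilteredness of $\SS$ is essential here. Without it the colimit is not filtered, and compactness does not apply.
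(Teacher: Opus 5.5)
Your proposal is correct and follows essentially the same route as the paper. Both arguments use the adjunction of Proposition \ref{prop:hom_adjunction}, the identification $\underline{\Hom}^\otimes((K,\partial K),G)\cong \Hom^\otimes_*(S^1_K,G)$, compactness of the finite pair $\bigotimes_i (J_i,\partial J_i)$ in $\PairDGr/\!\sim_{\sf rel}$ (Proposition \ref{prop:compact}), and Proposition \ref{prop:A_n:S_i}; you also spell out details the paper leaves implicit, such as why the filtered colimit defining $\Omega_\SS G$ is computed componentwise in $\PairDGr$.
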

\begin{proof}
By Proposition \ref{prop:hom_adjunction}, there is a natural isomorphism between the set 
\begin{equation}
[(J_1,\partial J_1)\otimes \dots \otimes (J_n,\partial J_n) \otimes (J_{n+1},\partial J_{n+1}), G]    
\end{equation}
and the set
\begin{equation}
[(J_1,\partial J_1)\otimes \dots \otimes (J_n,\partial J_n) , \underline{\Hom}^\otimes( (J_{n+1}, \partial J_{n+1}),G)]
\end{equation}
Note that 
\begin{equation}
    \underline{\Hom}^\otimes( (J_{n+1}, \partial J_{n+1}),G) \cong  \Hom_*^\otimes( S^1_{J_{n+1}},G).
\end{equation}
Then the statement follows from the fact that $(J_1,\partial J_1)\otimes \dots \otimes (J_n,\partial J_n)$ is a compact object of $\PairDGr/\sim_{\sf rel}$ (Proposition \ref{prop:compact}) and Proposition \ref{prop:A_n:S_i}.
\end{proof}

\begin{corollary}
For a cofiltered nice subcategory of shrinkings
 $\SS$, any pointed digraph $G$ and any $n\geq 0$, there is a natural isomorphism 
 \begin{equation}
    A_n(G) \cong \pi_0( \Omega^n_\SS G). 
 \end{equation}
\end{corollary}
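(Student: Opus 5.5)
The plan is induction on $n$, using the preceding proposition $A_{n+1}(G)\cong A_n(\Omega_\SS G)$ to lower the dimension and the definition of $A_0$ for the base case. Throughout, $\Omega^n_\SS G$ means $\Omega_\SS$ applied $n$ times to the pointed digraph $(G,g)$, with $\Omega^0_\SS G=G$. Since $\Omega_\SS:\DGra_*\to\DGra_*$ is a functor, each $\Omega^k_\SS G$ is again a pointed digraph, so the proposition can be applied to it.

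\emph{Base case $n=0$.} For $n=0$ the indexing category $(\Shr^\op)^0$ is the terminal category, and the empty box product of pairs is the monoidal unit $(*,\emptyset)$. Hence
\begin{equation*}
A_0(G)=[(*,\emptyset),(G,g)]=\pi_0\bigl(\Hom^\otimes((*,\emptyset),(G,g))\bigr).
\end{equation*}
A map of pairs $(*,\emptyset)\to(G,\{g\})$ is just a vertex of $G$, because the condition $\varphi(\emptyset)\subseteq\{g\}$ is vacuous. An arrow $\varphi\to\psi$ in this box hom is an arrow $\varphi(*)\to\psi(*)$ in $G$, and the relative condition over $\emptyset$ is again vacuous. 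So $\Hom^\otimes((*,\emptyset),(G,g))\cong G$ as digraphs, and therefore $A_0(G)\cong\pi_0(G)=\pi_0(\Omega^0_\SS G)$. This bijection is natural in pointed maps.

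\emph{Inductive step.} Assume $A_n(H)\cong\pi_0(\Omega^n_\SS H)$ naturally for all pointed digraphs $H$. The preceding proposition gives $A_{n+1}(G)\cong A_n(\Omega_\SS G)$, and the hypothesis applied to $H=\Omega_\SS G$ gives
\begin{equation*}
A_{n+1}(G)\cong A_n(\Omega_\SS G)\cong\pi_0\bigl(\Omega^n_\SS\Omega_\SS G\bigr)=\pi_0(\Omega^{n+1}_\SS G).
\end{equation*}
A composite of natural isomorphisms is natural, so this completes the induction.

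The only point that needs care is the base case: one has to handle the empty-product conventions correctly and check that the empty subdigraph imposes no relative condition. Everything else is formal given the preceding proposition, which is where the cofilteredness of $\SS$ is actually used.
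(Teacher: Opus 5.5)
Your proof is correct and is the argument the paper intends: the paper states the corollary without proof, as an immediate induction on $n$ from the isomorphism $A_{n+1}(G)\cong A_n(\Omega_\SS G)$. The base case is $A_0(G)=[(*,\emptyset),(G,g)]\cong\pi_0(G)$, which the paper leaves implicit and which you verify correctly, including the empty box product and the vacuous relative condition.
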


\section{\bf Cubical nerves of digraphs}
\label{sec:Cubical nerves of digraphs}

In this section we study the cubical nerve functors $N_m \colon \DGra \to \cSet$ from the category of digraphs to the category of cubical sets, together with their colimit $N_\infty \colon \DGra \to \cSet$. We show that, for every digraph $G$, the cubical set $N_\infty G$ is a fibrant replacement of $N_1 G$, and that $A_*(G)\cong \pi_*(N_\infty G)\cong \pi_*(|N_1 G|)$. These results generalize those of \cite{CK24}.

\subsection{Cubical sets}

In this subsection we collect the facts about cubical sets that we use later. More detailed accounts of cubical sets with connections can be found in \cite{Cis14,CK23,CK24}.

\begin{definition}[(Co)cubical objects]
By the \emph{box category} $\Box$ we mean the box category with connections (both positive and negative). It is the subcategory of the category of posets whose objects are the cubes $[1]^n$, $n\geq 0$, and whose morphisms are the composites of the maps
\begin{alignat}{2}
 \partial^{i,\varepsilon}&\colon [1]^{n-1}\to [1]^{n}, \hspace{1cm} &&1\leq i\leq n, \ \varepsilon\in \{0,1\},
 \\
\sigma^i &\colon [1]^n \to [1]^{n-1}, \hspace{1cm}
&&1\leq i\leq n,
\\
\gamma^{i,\varepsilon}&\colon [1]^n \to [1]^{n-1}, \hspace{1cm}
&&1\leq i\leq n-1, \ \varepsilon\in\{0,1\},
\end{alignat}
called \emph{coface}, \emph{codegeneracy} and \emph{coconnection} maps respectively, given by the formulas
\begin{align}
\partial^{i,\varepsilon}(x_1,\dots,x_{n-1})
&=
(x_1,\dots,x_{i-1},\varepsilon,x_i,\dots,x_{n-1}),\\
\sigma^i(x_1,\dots,x_n) &= (x_1,\dots,x_{i-1},x_{i+1},\dots,x_n),\\
\gamma^{i,0}(x_1,\dots,x_n) &= (x_1,\dots,x_{i-1},\max(x_i,x_{i+1}),
x_{i+2},\dots,x_n),\\
\gamma^{i,1}(x_1,\dots,x_n) &= (x_1,\dots,x_{i-1},\min(x_i,x_{i+1}),
x_{i+2},\dots,x_n).
\end{align}
For a category $C$, the category of cubical objects of $C$ is the functor category
\begin{equation}
{\sf c}C=\Fun(\Box^\op,C).
\end{equation}
Equivalently, a cubical object is a sequence of objects $X_n$, $n\geq 0$, equipped with maps
\begin{alignat}{2}
 &\partial_{i,\varepsilon}\colon X_n\to X_{n-1}, \hspace{1cm} &&1\leq i\leq n, \ \varepsilon\in \{0,1\},   \\
&\sigma_i \colon X_{n-1}\to X_n, \hspace{1cm}
&&1\leq i\leq n,
\\
&\gamma_{i,\varepsilon}\colon X_{n-1}\to X_n, \hspace{1cm}
&&1\leq i\leq n-1, \ \varepsilon\in\{0,1\},
\end{alignat}
called \emph{face}, \emph{degeneracy} and \emph{connection} maps respectively, subject to the cubical identities (see \cite{Cis14,CK23}):
\begin{equation}
\partial_{i,\varepsilon} \partial_{j,\varepsilon'} = \partial_{j,\varepsilon'} \partial_{i+1,\varepsilon} \ \text{ for } j\leq i,
\hspace{10mm}
\partial_{i,\varepsilon}\sigma_j =
\begin{cases}
\sigma_j \partial_{i-1,\varepsilon},&  j<i,\\
\id, &  j=i,\\
\sigma_{j-1}\partial_{i,\varepsilon},&  j>i,
\end{cases}
\end{equation}

\begin{equation}
\sigma_{j}\sigma_i = \sigma_{i+1}\sigma_j \ \text{ for } j\leq i, \hspace{10mm} \gamma_{i,\varepsilon}\gamma_{j,\varepsilon'}
=
\begin{cases}
\gamma_{j+1,\varepsilon'} \gamma_{i,\varepsilon}, & j>i,\\
\gamma_{i+1,\varepsilon} \gamma_{i,\varepsilon}, & i=j,\ \varepsilon=\varepsilon',
\end{cases}
\end{equation}
\begin{equation}
\partial_{i,\varepsilon} \gamma_{j,\varepsilon'}
=
\begin{cases}
\gamma_{j,\varepsilon'}\partial_{i-1,\varepsilon},
&
j<i-1,\\
\id, & j\in \{i-1,i\},\ \varepsilon=\varepsilon',\\
\sigma_j\partial_{j,\varepsilon}, & j\in \{i-1,i\},\ \varepsilon\neq \varepsilon',\\
\gamma_{j-1,\varepsilon'} \partial_{i,\varepsilon}, & j>i,
\end{cases}
\hspace{5mm}
\gamma_{i,\varepsilon} \sigma_j =
\begin{cases}
\sigma_j \gamma_{i-1,\varepsilon}, & j<i,\\
\sigma_i \sigma_i, & j=i,\\
\sigma_{j+1}\gamma_{i,\varepsilon}, & j>i.
\end{cases}
\end{equation}
Cocubical objects of $C$ are cubical objects of $C^\op$. \emph{Cubical sets} are the cubical objects in the category of sets,
\begin{equation}
\cSet = \Fun(\Box^\op,\Set);
\end{equation}
in other words, cubical sets are presheaves on the box category. For a cubical set $X$, the elements of $X_n=X([1]^n)$ are called the \emph{$n$-cubes} of $X$. An $n$-cube $x$ is \emph{non-degenerate} if it cannot be written as $x=\sigma_i(y)$ or $x=\gamma_{i,\varepsilon}(y)$ for some $(n-1)$-cube $y$.
\end{definition}

\begin{notation}[Standard cube, boundary, horn and sphere]
We use the standard notation for the basic cubical sets. The standard cubes, their boundaries, horns and spheres are denoted as follows:
\begin{alignat}{2}
 &\Box^n= \Box(-,[1]^n), \hspace{1cm}
 &&\partial \Box^n = \bigcup_{(i,\varepsilon)} \partial^{i,\varepsilon} (\Box^{n-1}), \\
&\sqcap^n_{i,\varepsilon}= \bigcup_{(j,\delta)\neq (i,\varepsilon)} \partial^{j,\delta} (\Box^{n-1}),\hspace{1cm}  &&S^n = \Box^n/\partial \Box^n.
\end{alignat}
\end{notation}

\begin{theorem}[{\cite[Th.~1.34, Prop.~1.35]{DKLS24}, \cite[Th.~1.7]{Cis14}}]
There is a proper, cofibrantly generated model structure on $\cSet$ in which
\begin{alignat}{2}
&\sqcap^n_{i,\varepsilon}\hookrightarrow \Box^n, \hspace{1cm}  &&1\leq i\leq n,\ \varepsilon\in \{0,1\},
\intertext{is a set of generating acyclic cofibrations,}
&\partial \Box^n \hookrightarrow \Box^n, \hspace{1cm} &&n\geq 0,
\end{alignat}
is a set of generating cofibrations, and the cofibrations are precisely the monomorphisms. Moreover, the weak equivalences are closed under finite products.
\end{theorem}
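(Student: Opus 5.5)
The statement is a cited theorem (Cisinski's Grothendieck-test-category results, as refined in \cite{DKLS24}). I would therefore assemble it from known general machinery rather than build it from scratch.

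\textbf{Step 1: the model structure.} Following \cite{Cis14}, I would observe that $\Box$ with connections is a strict test category and an Eilenberg--Zilber (elegant) Reedy category. The Eilenberg--Zilber property is what one uses to show that monomorphisms of cubical sets are exactly the relative cell complexes built from the boundary inclusions $\partial\Box^n\hookrightarrow\Box^n$: every cube decomposes uniquely as a degeneracy/connection of a non-degenerate cube, and one then attaches non-degenerate cubes skeleton by skeleton.

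Cisinski's theory then produces a cofibrantly generated model structure on $\cSet$ with these properties:
\begin{itemize}
\item the cofibrations are the monomorphisms;
\item the weak equivalences are the maps inducing weak equivalences of geometric realizations (equivalently, of the associated simplicial sets via triangulation);
\item the generating cofibrations are the boundary inclusions.
\end{itemize}
Properness follows because $\Box$ is a test category whose weak equivalences are detected by realization, which Cisinski shows yields a proper structure.

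\textbf{Step 2: identifying the generating acyclic cofibrations.} The main obstacle is showing that the open box inclusions $\sqcap^n_{i,\varepsilon}\hookrightarrow\Box^n$ generate the acyclic cofibrations. Equivalently, one must show that the fibrant objects and fibrations between fibrant objects are exactly the Kan ones. I would proceed as in \cite{DKLS24}:
\begin{itemize}
\item Check that the class of maps with the right lifting property against open boxes, together with a cylinder built from $-\otimes\Box^1$, gives an exact cylinder in Cisinski's sense.
\item Check that the open boxes are anodyne for that cylinder, using the connections to write the relevant pushout-products $(\partial\Box^m\hookrightarrow\Box^m)\hat\otimes(\partial^{\varepsilon}:\Box^0\to\Box^1)$ as retracts of composites of open box fillings.
\end{itemize}
Connections are essential here; without them the analogous statement is subtler. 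Cisinski's characterization of the smallest class of anodyne extensions then identifies the acyclic cofibrations with the saturation of the open boxes.

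\textbf{Step 3: closure under finite products.} For products I would use that the geometric realization $|X\otimes Y|\simeq|X|\times|Y|$, or argue directly from the monoidal structure. Cubical products differ from the geometric product, so it is cleaner to use that the triangulation functor $\cSet\to\sSet$ preserves products up to weak equivalence. This is a known consequence of $\Box$ being a strict test category.

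Given products, weak equivalences are stable under $X\times-$. They are also 2-out-of-3, so closure under finite products follows.
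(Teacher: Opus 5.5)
Your strategy of importing the result from Cisinski's theory of test categories and from \cite{DKLS24} matches what the paper does. The paper gives no argument and simply cites \cite[Th.~1.34, Prop.~1.35]{DKLS24} and \cite[Th.~1.7]{Cis14}. Steps~1 and~3 are fine as citations; Step~3 is precisely the fact that $\Box$ with connections is a \emph{strict} test category, a theorem of Maltsiniotis. Step~2, however, has a genuine gap. It sits exactly where the statement says more than ``there is a model structure with monomorphisms as cofibrations and Kan complexes as fibrant objects''.

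\textbf{The gap in Step~2.} Your ``equivalently'' is false. Knowing that the fibrant objects are the Kan complexes, and that fibrations between fibrant objects are the Kan fibrations, does not show that the open boxes generate the acyclic cofibrations. That assertion says that \emph{every} Kan fibration is a fibration, including one whose base is not a Kan complex.

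Cisinski's construction from an exact cylinder and a class of anodyne extensions delivers only the weaker information. The acyclic cofibrations of the resulting model structure come from a cardinality argument and need not be the saturation of the anodyne generators. So the sentence ``Cisinski's characterization \dots\ identifies the acyclic cofibrations with the saturation of the open boxes'' is precisely the unjustified step.

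Apply the retract argument: factor an acyclic cofibration as an open-box-anodyne map followed by a Kan fibration, which is then a weak equivalence. What you need is: a Kan fibration over an \emph{arbitrary} base that is a weak equivalence has the right lifting property against all monomorphisms. Over a Kan base this follows from Cisinski's general theory. In general it needs an extra input, and this is the kind of input the cited works supply. Two possibilities are:
\begin{itemize}
\item minimal fibrations on the Eilenberg--Zilber category $\Box$;
\item a fibrant universe classifying Kan fibrations, so that every Kan fibration is a pullback of one between Kan complexes.
\end{itemize}

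Your bullet points do not address this. The pushout-products $(\partial\Box^m\hookrightarrow\Box^m)\mathbin{\hat{\otimes}}(\partial^{\varepsilon}\colon\Box^0\to\Box^1)$ are literally the open box inclusions $\sqcap^{m+1}_{m+1,1-\varepsilon}\hookrightarrow\Box^{m+1}$, so there is nothing to prove there.

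\textbf{Properness.} Right properness does not follow from weak equivalences being detected by realization, since realization of cubical sets does not even preserve finite products. It also has to be taken from the cited results.
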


\begin{definition}[Kan fibrations, Kan complexes, anodyne maps]
In this model structure, the fibrations are called \emph{Kan fibrations}, the fibrant objects are called \emph{Kan complexes}, and the acyclic cofibrations are called \emph{anodyne maps}.
\end{definition}

\begin{definition}[Pointed cubical sets]
For any model category $M$, one may form the model category $M_*$ of pointed objects, in which a morphism is a cofibration, fibration or weak equivalence precisely when its underlying morphism in $M$ is \cite[Prop.~1.1.8]{Hov07}. We write $\cSet_*$ for the resulting model category of pointed cubical sets.
\end{definition}

\begin{definition}[Geometric realization and singular cubical set]
There is an evident cocubical topological space given by the topological cubes $[0,1]^n$, with connections induced by the maps $\max,\min\colon [0,1]^2\to [0,1]$. By the universal property of the Yoneda embedding \cite[Prop.~II.1.3]{GZ12}, this cocubical object induces an adjunction
\begin{equation}\label{eq:geom:Sing}
|\cdot | \colon \cSet \leftrightarrows  {\sf Top} \colon {\sf Sing}_\Box,
\end{equation}
where $|\cdot|$ is the colimit-preserving functor with $|\Box^n|\cong [0,1]^n$ and ${\sf Sing}_\Box(T)_n = \Hom_{\sf Top}([0,1]^n, T)$.
\end{definition}
\begin{theorem}[{\cite[Cor.~2.25]{CK23}}]
\label{th:quillen_equivalence}
The adjunction \eqref{eq:geom:Sing} is a Quillen equivalence, where ${\sf Top}$ carries the classical Quillen model structure.
\end{theorem}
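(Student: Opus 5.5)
The plan is to obtain the statement by composing known Quillen equivalences through simplicial sets, rather than working with topological spaces directly. There are two things to show: that $(|\cdot|,{\sf Sing}_\Box)$ is a Quillen adjunction, and that it is a Quillen equivalence.

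\emph{Step 1: Quillen adjunction.} The model structure is cofibrantly generated, so it suffices to check the generators. The functor $|\cdot|$ preserves colimits and satisfies $|\Box^n|\cong[0,1]^n$.
\begin{itemize}
\item Applied to $\partial\Box^n\hookrightarrow\Box^n$, it gives the inclusion $\partial[0,1]^n\hookrightarrow[0,1]^n$, which is a relative CW inclusion and hence a cofibration in ${\sf Top}$.
\item Applied to $\sqcap^n_{i,\varepsilon}\hookrightarrow\Box^n$, it gives the inclusion into $[0,1]^n$ of the union of all faces except one. This is a cofibration, and the subspace is a strong deformation retract of $[0,1]^n$ (project radially from a point just outside the missing face). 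So the map is an acyclic cofibration.
\end{itemize}
Hence $|\cdot|$ is left Quillen.

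\emph{Step 2: factor through simplicial sets.} Consider the triangulation functor $T\colon\cSet\to\sSet$. It is the left Kan extension of the cocubical simplicial set $[1]^n\mapsto(\Delta^1)^n=N([1]^n)$. Every map of $\Box$, including the connections $\max$ and $\min$, is a poset map, so this cocubical object is well defined. The simplicial realization of $N([1]^n)$ is $[0,1]^n$. Under this identification, the realization of the nerve of each poset map in $\Box$ agrees with the corresponding map of topological cubes. This uses the standard triangulation of $[0,1]^n$ into simplices $x_{\tau(1)}\le\dots\le x_{\tau(n)}$, on which $\max$ and $\min$ are affine. Both $|\cdot|_\Box$ and $|\cdot|_\Delta\circ T$ preserve colimits and agree naturally on representables, so we get a natural isomorphism
\begin{equation}
|\cdot|_\Box\cong|\cdot|_\Delta\circ T,
\end{equation}
and correspondingly ${\sf Sing}_\Box\cong T^*\circ{\sf Sing}$. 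The simplicial adjunction $|\cdot|_\Delta\dashv{\sf Sing}$ is a Quillen equivalence (Quillen's theorem). Composites of Quillen equivalences are Quillen equivalences, so it remains to show that $T$ is a left Quillen equivalence.

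\emph{Step 3 (main obstacle): $T$ is a Quillen equivalence.}
\begin{itemize}
\item $T$ preserves monomorphisms. The key point is that nondegenerate cubes map to distinct nondegenerate simplices; this can be checked via an Eilenberg--Zilber type lemma for cubical sets with connections.
\item $T$ sends horns to anodyne maps: $T(\sqcap^n_{i,\varepsilon})\hookrightarrow(\Delta^1)^n$ is anodyne by the standard prism/shuffle decomposition. So $T$ is left Quillen.
\item For the equivalence, all objects are cofibrant, so it suffices to show two things. First, $T$ reflects weak equivalences. Second, the derived counit is a weak equivalence for the generating objects $\Delta^n$.
\item For the second point, I would construct a cubical "simplex" $Q^n$, a quotient of $\Box^n$ collapsed along connections, with $TQ^n\to\Delta^n$ a weak equivalence. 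Both sides are contractible: $TQ^n$ is a quotient of $(\Delta^1)^n$ by contractible pieces.
\item For reflection, I would use the test-category machinery of Cisinski and Maltsiniotis. $\Box$ with connections is a strict test category, and the weak equivalences of the given model structure are exactly the test weak equivalences. These are detected by the nerve of the category of elements, i.e.\ by $X\mapsto N(\Box/X)$. One then compares $N(\Box/X)$ with $TX$ by a natural zigzag: the last-vertex-type map $N(\Box/X)\to TX$ is a weak equivalence on representables, since both are contractible. By skeletal induction and gluing along monomorphisms (the model structure is proper, so homotopy pushouts are computed by pushouts along monos), it is then a weak equivalence for all $X$.
\end{itemize}
This skeletal comparison is the step I expect to be most delicate. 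If it resists, the fallback is to verify directly that the unit $X\to{\sf Sing}_\Box|X|$ is a weak equivalence. This would go by the same skeletal induction, starting from the contractible $\Box^n$ and using that both functors turn pushouts along monos into homotopy pushouts.
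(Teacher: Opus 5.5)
Your Steps 1 and 2 follow the route behind the cited result. The paper only quotes \cite{CK23}, and the ingredients it records in the Appendix are exactly two: the natural isomorphism $|TX|\cong|X|$, and the Quillen equivalence $T\dashv N_T$, which it quotes from \cite{DKLS24,Cis06} rather than reproving. Your observation that $\max$ and $\min$ are affine on the simplices $x_{\tau(1)}\le\dots\le x_{\tau(n)}$ is what makes the first one work, and Step 1 becomes redundant once you have it.

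The gap is in Step 3, where you try to reprove that equivalence: the reduction you make is not a valid criterion. Since all cubical sets are cofibrant, \cite[Cor.~1.3.16]{Hov07} asks for two things. First, $T$ must reflect weak equivalences. Second, the counit $TN_TY\to Y$ must be a weak equivalence for \emph{every} Kan complex $Y$. Nothing lets you restrict to $Y=\Delta^n$: propagating from generators would need $\mathbb{R}N_T$ to preserve homotopy colimits, which is essentially the theorem itself. What you actually check at $\Delta^n$ is that it is weakly equivalent to some $TQ^n$. That is only essential surjectivity on generators, and it is vacuous anyway, since $T\Box^0=\Delta^0\to\Delta^n$ already does it.

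Reflection of weak equivalences plus essential surjectivity does not give an equivalence. Take the forgetful functor from simplicial $G$-sets (monomorphisms as cofibrations, underlying weak equivalences) to $\sSet$. It is left Quillen, creates weak equivalences, satisfies your counit condition at each $\Delta^n$, and is essentially surjective on homotopy categories. Yet for $G\neq 1$ it is not full: in its homotopy category there is no morphism from the one-point $G$-set to the free orbit $G$, while $*\to G$ exists in ${\sf Ho}(\sSet)$. What is missing in your argument is full faithfulness of $\mathbb{L}T$, i.e.\ that the derived unit of $T\dashv N_T$ is a weak equivalence at every $X$.

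The test-category input you already use for reflection closes the gap if you use it at full strength. Grothendieck's theorem says that for a test category $A$, the functor $X\mapsto N(A/X)$ does not merely detect the test weak equivalences. It induces an equivalence between the homotopy category of presheaves on $A$ and ${\sf Ho}(\sSet)$. Combine this with your natural comparison $N(\Box/X)\to TX$; your skeletal argument for that comparison is fine, e.g.\ via $N(\Box/X)\simeq\operatorname{hocolim}_{\Box^\op}X$. Together they make $\mathbb{L}T$ an equivalence, and you can drop the counit discussion and the objects $Q^n$. Alternatively, cite the Quillen equivalence $T\dashv N_T$ as the paper does.

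Your fallback has a hole of the same kind. The claim that ${\sf Sing}_\Box|\cdot|$ turns pushouts along monomorphisms into homotopy pushouts is an excision theorem for the cubical singular complex, and it needs its own small-cubes subdivision argument. Also, the unit form of Hovey's criterion requires ${\sf Sing}_\Box$ to reflect weak equivalences, which you do not address.
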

\begin{corollary}
\label{cor:weak_equiv}
A map of cubical sets $f$ is a weak equivalence if and only if its geometric realization $|f|$ is a weak equivalence.
\end{corollary}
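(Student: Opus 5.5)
This is a formal consequence of Theorem \ref{th:quillen_equivalence}. Let $f\colon X\to Y$ be a map of cubical sets. Every cubical set is cofibrant, since the cofibrations are precisely the monomorphisms. Every topological space is fibrant in the classical Quillen model structure. So we are in the situation where a Quillen equivalence is especially well behaved, and I would argue through derived functors.

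First I would use a standard fact about a Quillen equivalence $L\dashv R$: the left adjoint $L$ reflects and preserves weak equivalences between cofibrant objects. Preservation holds for any left Quillen functor, by Ken Brown's lemma. Reflection holds because the total left derived functor $\mathbb{L}L\colon \mathrm{Ho}(\cSet)\to\mathrm{Ho}({\sf Top})$ is an equivalence of categories. On cofibrant objects $\mathbb{L}L$ is computed by $L$ itself, so $\mathbb{L}L$ sends the class $[f]$ to $[|f|]$.

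The argument then runs in two directions.
\begin{itemize}
\item If $f$ is a weak equivalence, then $[f]$ is an isomorphism in $\mathrm{Ho}(\cSet)$. Hence $[|f|]$ is an isomorphism in $\mathrm{Ho}({\sf Top})$.
\item Conversely, suppose $|f|$ is a weak equivalence. Then $\mathbb{L}L([f])$ is an isomorphism. An equivalence of categories reflects isomorphisms, so $[f]$ is an isomorphism in $\mathrm{Ho}(\cSet)$.
\end{itemize}
In both directions the conclusion then transfers between the homotopy category and the model category. A map in a model category is a weak equivalence if and only if its image in the homotopy category is an isomorphism. This is saturation of weak equivalences, for example \cite[Thm.~1.2.10(iv)]{Hov07}. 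Applying it in $\mathrm{Ho}({\sf Top})$ gives the forward implication, and applying it in $\mathrm{Ho}(\cSet)$ gives the converse.

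An alternative route avoids homotopy categories. One can use the characterization of Quillen equivalences by the condition that $X\to R\,(L X)^{\mathrm{fib}}$ is a weak equivalence for cofibrant $X$. With every space fibrant this says that the unit $X\to \Sing_\Box|X|$ is a weak equivalence. The claim would then follow by naturality of the unit, 2-out-of-3, and the fact that the right Quillen functor $\Sing_\Box$ preserves and reflects weak equivalences between fibrant objects.

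The only step needing care is the reflection statement: the left adjoint of a Quillen equivalence reflects weak equivalences between cofibrant objects. I would cite this as standard, e.g. \cite[Cor.~1.3.16]{Hov07}, rather than reprove it. The hypothesis it needs, that all objects are cofibrant, holds here because every object of $\cSet$ is cofibrant.
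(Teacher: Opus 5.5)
Your proposal is correct and matches the paper's proof. The paper likewise deduces the statement from the Quillen equivalence of Theorem~\ref{th:quillen_equivalence} and the cofibrancy of every cubical set, citing Ken Brown's lemma \cite[Lem.~1.1.12]{Hov07} for preservation and \cite[Cor.~1.3.16]{Hov07} for reflection of weak equivalences between cofibrant objects; your unpacking through the total left derived functor and saturation is just a more detailed version of that same citation.
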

\begin{proof}
Since $(|\cdot|,\Sing_\Box)$ is a Quillen equivalence and every object of $\cSet$ is cofibrant, this follows from \cite[Cor.~1.3.16, Lem.~1.1.12]{Hov07}.
\end{proof}
\begin{corollary}
\label{cor:coproducts_of_weak_equivalences}
Weak equivalences of cubical sets are closed under (possibly infinite) coproducts.
\end{corollary}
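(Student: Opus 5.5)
The plan is to reduce the statement to topology via Corollary \ref{cor:weak_equiv} and then use the fact that coproducts of weak homotopy equivalences of spaces are weak homotopy equivalences. Let $f_\alpha\colon X_\alpha\to Y_\alpha$, $\alpha\in A$, be a family of weak equivalences of cubical sets, and consider $f=\coprod_\alpha f_\alpha\colon \coprod_\alpha X_\alpha\to\coprod_\alpha Y_\alpha$. By Corollary \ref{cor:weak_equiv}, it suffices to show that $|f|$ is a weak homotopy equivalence of topological spaces.

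First, geometric realization is a left adjoint in \eqref{eq:geom:Sing}, so it preserves colimits, and in particular coproducts. Hence $|f|\cong\coprod_\alpha |f_\alpha|\colon \coprod_\alpha |X_\alpha|\to\coprod_\alpha|Y_\alpha|$. By Corollary \ref{cor:weak_equiv} again, each $|f_\alpha|$ is a weak homotopy equivalence.

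Second, I would verify that a coproduct of weak homotopy equivalences of spaces is a weak homotopy equivalence. The path components of a topological disjoint union $\coprod_\alpha T_\alpha$ are exactly the path components of the summands, since each summand is open and closed and a path has connected image. So $\pi_0(\coprod_\alpha T_\alpha)=\coprod_\alpha\pi_0(T_\alpha)$, and on $\pi_0$ the map $|f|$ is the coproduct of the bijections $\pi_0|f_\alpha|$, hence a bijection. Moreover, for a basepoint $x\in|X_\alpha|$, every map from a sphere (or a sphere times an interval) into the disjoint union lands in a single summand. Therefore $\pi_n(\coprod_\beta|X_\beta|,x)=\pi_n(|X_\alpha|,x)$, and similarly for $Y$. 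Thus $\pi_n(|f|,x)$ agrees with $\pi_n(|f_\alpha|,x)$, which is an isomorphism.

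There is no real obstacle here; the only point requiring care is the compatibility of $|\cdot|$ with coproducts, which follows from its being a left adjoint. Alternatively, one could argue entirely model-categorically: all objects are cofibrant, and a coproduct of weak equivalences between cofibrant objects is a weak equivalence, by Ken Brown's lemma applied to the coproduct functor $\cSet^A\to\cSet$ with the product model structure. This alternative works as long as the coproduct functor preserves acyclic cofibrations, which it does because those are generated by horn inclusions.
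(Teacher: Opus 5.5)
Your proposal is correct and is the argument the paper intends. The paper gives no separate proof and lists this as a corollary of Corollary~\ref{cor:weak_equiv}: realization preserves coproducts, and a disjoint union of weak homotopy equivalences of spaces is a weak homotopy equivalence, which is exactly what you check.
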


\begin{corollary}
For any topological space $T$, the cubical set $\Sing_\Box(T)$ is a Kan complex.
\end{corollary}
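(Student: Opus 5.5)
The statement is that $\Sing_\Box(T)$ is a Kan complex for every space $T$, i.e.\ that $\Sing_\Box(T)\to *$ is a Kan fibration in the model structure on $\cSet$. My plan is to deduce this from Theorem~\ref{th:quillen_equivalence} together with the fact that every topological space is fibrant in the classical Quillen model structure on ${\sf Top}$.

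First, in the Quillen model structure the fibrations are the Serre fibrations. The map $T\to *$ is a Serre fibration, since a lift of $[0,1]^n\times\{0\}\to T$ against $[0,1]^n\times[0,1]\to *$ exists by precomposing with the projection. Hence every $T$ is fibrant.

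Second, $\Sing_\Box$ is a right Quillen functor by Theorem~\ref{th:quillen_equivalence}, so it preserves fibrations. As a right adjoint it also preserves the terminal object, so $\Sing_\Box(*)\cong *$; this can also be seen directly, since $\Hom([0,1]^n,*)$ is a singleton. Therefore $\Sing_\Box(T)\to\Sing_\Box(*)\cong *$ is a Kan fibration, which is exactly the claim.

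Alternatively, one can argue by hand. By adjunction, a lifting problem $\sqcap^n_{i,\varepsilon}\to\Sing_\Box(T)$ against $\Box^n$ corresponds to extending a map $|\sqcap^n_{i,\varepsilon}|\to T$ along $|\sqcap^n_{i,\varepsilon}|\hookrightarrow[0,1]^n$. Such an extension exists because $|\sqcap^n_{i,\varepsilon}|$ is a retract of $[0,1]^n$ (radial projection from a point just beyond the missing face).

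There is no real obstacle here. The only point to check is that the realization of the horn is the expected subspace of $[0,1]^n$, and this is not needed for the first argument.
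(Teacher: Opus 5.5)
Your main argument is correct and is essentially the paper's: the paper states this as an immediate corollary of Theorem~\ref{th:quillen_equivalence}, so its implicit proof is yours, namely that every space is fibrant in the Quillen model structure and the right Quillen functor $\Sing_\Box$ preserves fibrant objects. Your hands-on alternative, which retracts $[0,1]^n$ onto the realized horn, is also sound. It is not needed here.
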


\begin{definition}[Geometric product of cubical sets]
Let $\otimes\colon\Box \times \Box \to \Box$ be the functor defined by $[1]^n\otimes [1]^m = [1]^{n+m}$. The \emph{geometric tensor product} of cubical sets is the left Kan extension
\begin{equation}
\begin{tikzcd}
\Box\times \Box \ar[r,"\otimes"]
\ar[d]
&
\Box \ar[r]
&
\cSet \\
\cSet \times \cSet
\ar[rru,"\otimes"']
\end{tikzcd}
\end{equation}
This makes $\cSet$ a monoidal category with unit $\Box^0$. Moreover, geometric realization carries the geometric product to the product of spaces:
\begin{equation}
|X\otimes Y| \cong |X|\times |Y|.
\end{equation}
The geometric product is not symmetric in general, that is, $X\otimes Y \not \cong Y\otimes X$; however, it is biclosed. For a cubical set $X$ we write
\begin{equation}
\Hom_L(X,-), \hspace{1cm}
\Hom_R(X,-)
\end{equation}
for the right adjoints of $-\otimes X$ and $X\otimes -$ respectively.
\end{definition}

\begin{proposition}[Cylinder]
\label{prop:geometric_cylinder}
For any cubical set $X$, the object $X\otimes \Box^1$, equipped with the maps
\begin{equation}
 X \sqcup X \xrightarrow{\ (\partial^0,\partial^1)\ } X \otimes \Box^1 \xrightarrow{\ \sf pr \ } X,
\end{equation}
is a cylinder object for the model structure on $\cSet$. Likewise, for any pointed cubical set $(X,x_0)$, the object $(X\otimes \Box^1)/(\{x_0\} \otimes \Box^1)$, equipped with the induced maps
\begin{equation}
 X \vee  X \longrightarrow  (X\otimes \Box^1)/(\{x_0\} \otimes \Box^1)  \longrightarrow  X,
\end{equation}
is a cylinder object in $\cSet_*$.
\end{proposition}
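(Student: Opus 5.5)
To show that $X\otimes \Box^1$ is a cylinder object, we need two facts: the map $(\partial^0,\partial^1)\colon X\sqcup X\to X\otimes\Box^1$ is a cofibration (a monomorphism), and ${\sf pr}\colon X\otimes\Box^1\to X$ is a weak equivalence. Here ${\sf pr}$ is $\id_X\otimes(\Box^1\to\Box^0)$ followed by the unit isomorphism. The composite ${\sf pr}\circ(\partial^0,\partial^1)$ is the fold map, because each $\partial^\varepsilon\colon\Box^0\to\Box^1$ composed with $\Box^1\to\Box^0$ is the identity.

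For the monomorphism, I would compute the geometric product as a colimit. Since $\otimes$ preserves colimits in each variable, we have $X\otimes \Box^1\cong\colim_{\Box^n\to X}\Box^{n+1}$. The $n$-cubes of $X\otimes\Box^1$ can then be described concretely: they are represented by pairs (a cube of $X$, a cube of $\Box^1$), with the nondegenerate cubes of $X\otimes\Box^1$ being products of nondegenerate cubes. The images of the two copies of $X$ correspond to the last coordinate being $0$ or $1$. These images are disjoint, and each is mapped injectively because $X\otimes\Box^0\cong X$ and $\partial^\varepsilon$ is split by the projection. A cleaner route uses the fact that $-\otimes\Box^1$ preserves monomorphisms and that $\partial\Box^1=\Box^0\sqcup\Box^0\hookrightarrow\Box^1$ is mono. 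Then $X\otimes\partial\Box^1\to X\otimes\Box^1$ is mono; this is the standard Leibniz/pushout-product property of the cubical model structure, valid in the setting of \cite{Cis14,DKLS24}. Finally, $X\otimes\partial\Box^1\cong X\sqcup X$ because $\otimes$ commutes with coproducts.

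For the weak equivalence, I would apply Corollary \ref{cor:weak_equiv}. Since $|X\otimes\Box^1|\cong |X|\times[0,1]$ and $|{\sf pr}|$ is the projection $|X|\times[0,1]\to|X|$, which is a homotopy equivalence, ${\sf pr}$ is a weak equivalence. (One should check that the isomorphism $|X\otimes Y|\cong|X|\times|Y|$ is natural and therefore identifies $|{\sf pr}|$ with the projection. This follows because the identification comes from the cocubical structure, where $[1]^n\otimes[1]^0\to[1]^n$ realizes to the projection.)

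For the pointed statement, I would form the quotient $C=(X\otimes\Box^1)/(\{x_0\}\otimes\Box^1)$. The map $X\sqcup X\to X\otimes\Box^1$ restricts to $\{x_0\}\sqcup\{x_0\}\to\{x_0\}\otimes\Box^1$. Taking the quotient by these subobjects gives the induced map $X\vee X\to C$, and it remains a monomorphism because the pushout of a monomorphism along the collapse is computed levelwise in sets. The cylinder's intersection with $\{x_0\}\otimes\Box^1$ is exactly the two basepoints, so injectivity is preserved. The projection descends to $C\to X$, and the composite is again the fold map. To see that $C\to X$ is a weak equivalence, note that $\{x_0\}\otimes\Box^1=\Box^1\to\Box^0$ is a weak equivalence and that the inclusion $\Box^1\hookrightarrow X\otimes\Box^1$ is a cofibration. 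Left properness, or equivalently realizing and using that collapsing the contractible subcomplex $\{x_0\}\times[0,1]$ of the CW complex $|X|\times[0,1]$ is a homotopy equivalence, then gives that $X\otimes\Box^1\to C$ is a weak equivalence. Two-out-of-three finishes the argument.

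The main obstacle I expect is the monomorphism claim, since the geometric product with connections has a nonobvious cube structure. I would rely on the cited fact that the model structure is monoidal for $\otimes$, meaning pushout-products of monomorphisms are monomorphisms. Failing that, I would argue via the explicit description of nondegenerate cubes of $X\otimes\Box^1$ as pairs of nondegenerate cubes.
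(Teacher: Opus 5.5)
Your proposal is correct and follows essentially the same route as the paper. You show that $X\sqcup X\to X\otimes\Box^1$ is a monomorphism and that ${\sf pr}$ is a weak equivalence by realizing it to $|X|\times[0,1]\to|X|$ and applying Corollary~\ref{cor:weak_equiv}, with the pointed case treated analogously; you simply supply details the paper omits (it calls the monomorphism ``clear'' and the pointed case ``analogous''), such as the splitting/disjointness argument for injectivity and the left-properness step for the quotient.
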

\begin{proof}
The map $X\sqcup X \to X \otimes \Box^1$ is clearly a monomorphism. Since $|X|\times [0,1] \to |X|$ is a weak equivalence, so is $X\otimes \Box^1\to X$ by Corollary~\ref{cor:weak_equiv}. Hence $X \otimes \Box^1$ is a cylinder object in $\cSet$. The pointed case is analogous.
\end{proof}

\begin{definition}[Homotopy of cubical sets]
Let $A$ and $X$ be cubical sets. We write $\sim$ for the left-homotopy equivalence relation on $\cSet(A,X)$ generated by elementary homotopies which is determined by the functorial cylinder object $A\otimes \Box^1$, and we denote the quotient set by
\begin{equation}
[A,X]= \cSet(A,X)/\sim.
\end{equation}
When $X$ is a Kan complex, $[A,X]$ is isomorphic to the hom-set in the homotopy category, $[A,X]\cong {\sf Ho}(\cSet)(A,X)$.

A map of cubical sets $f\colon X\to Y$ is a \emph{homotopy equivalence} if there exists a map $g\colon Y\to X$ with $fg\sim \id$ and $gf\sim \id$.

In the pointed setting, pointed homotopy is defined via the cylinder object $(A \otimes \Box^1)/(\{a_0\}\otimes \Box^1)$, and the corresponding quotient set is denoted
\begin{equation}
 [A,X]_* = \cSet_*(A,X)/\sim.
\end{equation}
When $X$ is a pointed Kan complex,
$[A,X]_* \cong {\sf Ho}(\cSet_*)(A,X).$
\end{definition}

\begin{proposition}\label{prop:homotopy_equiv}
A homotopy equivalence of cubical sets is a weak equivalence.
\end{proposition}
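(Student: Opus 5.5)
The plan is to pass to geometric realization and apply Corollary~\ref{cor:weak_equiv}: a map $f\colon X\to Y$ of cubical sets is a weak equivalence if and only if $|f|$ is a weak homotopy equivalence of spaces. So it suffices to show that $|f|$ is a homotopy equivalence of topological spaces, since homotopy equivalences of spaces are weak homotopy equivalences.

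Let $g\colon Y\to X$ be a homotopy inverse, so $gf\sim\id_X$ and $fg\sim\id_Y$. By definition, $\sim$ is the equivalence relation generated by elementary homotopies $H\colon X\otimes\Box^1\to X$ with $H\partial^0=gf$ and $H\partial^1=\id_X$, or with the two ends swapped. Applying $|\cdot|$ and using the isomorphism $|X\otimes\Box^1|\cong|X|\times|\Box^1|\cong|X|\times[0,1]$, each elementary homotopy becomes a topological homotopy $|X|\times[0,1]\to|X|$. Under this identification the maps $|\partial^0|,|\partial^1|$ correspond to the inclusions at $0$ and $1$. This holds because the isomorphism $|X\otimes\Box^1|\cong|X|\times[0,1]$ is natural, and on representables $\Box^n\otimes\Box^1=\Box^{n+1}$ with the cofaces $\partial^{n+1,\varepsilon}$ realizing to the inclusions $[0,1]^n\times\{\varepsilon\}$.

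Homotopy of continuous maps is already an equivalence relation, so a finite chain of elementary homotopies realizes to a chain of topological homotopies. Hence $|g||f|=|gf|\simeq\id_{|X|}$ and similarly $|f||g|\simeq\id_{|Y|}$. Therefore $|f|$ is a homotopy equivalence, hence a weak equivalence, and Corollary~\ref{cor:weak_equiv} concludes the proof.

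The only point needing care is the compatibility of the realization isomorphism $|X\otimes\Box^1|\cong|X|\times[0,1]$ with the end inclusions. I would check it on representables and extend by colimits, since both sides preserve colimits in $X$: $|-|$ and $-\otimes\Box^1$ do, and $-\times[0,1]$ does in compactly generated spaces.

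Alternatively, one could argue purely model-categorically. Since $X\otimes\Box^1$ is a cylinder object (Proposition~\ref{prop:geometric_cylinder}), left-homotopic maps become equal in ${\sf Ho}(\cSet)$, so $f$ becomes an isomorphism there. In a model category, a map inverted by the localization is a weak equivalence, since weak equivalences are saturated. This route also works, but the topological argument above is more elementary.
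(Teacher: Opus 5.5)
Your proof is correct and follows the paper's argument: realize, observe that $|f|$ is a homotopy equivalence of spaces, and conclude by Corollary~\ref{cor:weak_equiv}. Your check that $|X\otimes\Box^1|\cong|X|\times[0,1]$ is compatible with the end inclusions supplies a detail the paper leaves implicit, and your model-categorical alternative via saturation of weak equivalences is also valid.
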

\begin{proof}
If $f$ is a homotopy equivalence of cubical sets, then $|f|$ is a homotopy equivalence of spaces, and the claim follows from Corollary~\ref{cor:weak_equiv}.
\end{proof}

\begin{definition}[Homotopy group of a Kan complex]
For a pointed Kan complex $X\in \cSet_*$, the $n$-th homotopy group is
\begin{equation}
\pi_n(X) = [S^n,X]_*.
\end{equation}
\end{definition}
\begin{theorem}[{\cite[Th.~3.24, 3.25]{CK23}}]
\label{th:homotopy:of:topological}
For any pointed Kan complex $X$ and any pointed topological space $T$, there are natural isomorphisms
\begin{equation}
\pi_*(X)\cong \pi_*(|X|), \hspace{1cm} \pi_*(T)\cong \pi_*(\Sing_\Box(T)).
\end{equation}
\end{theorem}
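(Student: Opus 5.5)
The plan is to deduce both isomorphisms formally from the Quillen equivalence of Theorem~\ref{th:quillen_equivalence}, passed to pointed objects, together with the identification $[A,X]_*\cong {\sf Ho}(\cSet_*)(A,X)$ for pointed Kan complexes $X$.

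\textbf{Step 1: the pointed adjunction.} The adjunction $|\cdot|\dashv \Sing_\Box$ induces a pointed adjunction $\cSet_*\leftrightarrows {\sf Top}_*$, because $|\Box^0|$ is a point and $\Sing_\Box$ preserves the terminal object. Weak equivalences, fibrations and cofibrations in $\cSet_*$ and ${\sf Top}_*$ are detected on underlying objects, so this is again a Quillen pair. It is a Quillen equivalence because its derived unit and counit are computed on underlying objects. Concretely, every pointed cubical set $A$ is cofibrant, so the derived unit $A\to \Sing_\Box R|A|$, with $R$ a fibrant replacement, is the underlying derived unit, which is a weak equivalence by Theorem~\ref{th:quillen_equivalence}; the counit is handled dually. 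I would cite \cite[Prop.~1.3.17]{Hov07} and check its hypothesis this way rather than assume it.

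\textbf{Step 2: $\pi_*(T)\cong\pi_*(\Sing_\Box T)$.} The cubical set $\Sing_\Box T$ is a Kan complex and $S^n$ is cofibrant, so
\[
\pi_n(\Sing_\Box T)=[S^n,\Sing_\Box T]_*\cong {\sf Ho}(\cSet_*)(S^n,\Sing_\Box T)\cong {\sf Ho}({\sf Top}_*)(|S^n|,T).
\]
The last isomorphism is the derived adjunction; every space is fibrant, so no replacement of $T$ is needed. Next, $|S^n|=[0,1]^n/\partial[0,1]^n$ is homeomorphic to the pointed CW sphere, which is cofibrant. Hence ${\sf Ho}({\sf Top}_*)(|S^n|,T)$ is the set of pointed homotopy classes of maps from the sphere to $T$, that is, $\pi_n(T)$. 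Naturality in $T$ is clear because each step is natural.

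\textbf{Step 3: $\pi_*(X)\cong\pi_*(|X|)$.} For a pointed Kan complex $X$, the unit $\eta\colon X\to \Sing_\Box|X|$ is a weak equivalence by Step~1, since $|X|$ is already fibrant. Both source and target are Kan complexes, so $\eta$ induces a bijection
\[
[S^n,X]_*\cong {\sf Ho}(\cSet_*)(S^n,X)\cong {\sf Ho}(\cSet_*)(S^n,\Sing_\Box|X|)=\pi_n(\Sing_\Box|X|).
\]
Step~2 with $T=|X|$ identifies the right-hand side with $\pi_n(|X|)$. Unwinding the maps, the composite is the map $[f]\mapsto[|f|]$.

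\textbf{Main obstacle.} The delicate point is compatibility with group structures for $n\geq 1$, not just bijectivity. The plan is to define the group structure on $\pi_n(X)$ via the cogroup object $S^n$ in ${\sf Ho}(\cSet_*)$, namely via $\Sigma S^{n-1}$ and the suspension coming from the pointed cylinder of Proposition~\ref{prop:geometric_cylinder}. The total left derived functor of $|\cdot|$ preserves homotopy pushouts and hence suspensions, so it carries this cogroup structure to the standard one on the topological sphere. The adjunction bijections are therefore group homomorphisms. If one instead uses the explicit cubical composition of $n$-cubes along a face, I would first show that it agrees with the cogroup structure by an Eckmann–Hilton type argument; I expect this to be the most technical part of the verification.
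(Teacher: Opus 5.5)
Your argument is correct, but it is not the paper's route. The paper proves nothing here; it imports the statement from \cite{CK23}, where homotopy groups of a cubical Kan complex are built combinatorially (cubes with boundary at the basepoint, multiplied by filling open boxes) and then compared with topological homotopy groups. You instead get the bijections formally from Theorem~\ref{th:quillen_equivalence} and the identification $[A,X]_*\cong{\sf Ho}(\cSet_*)(A,X)$ for Kan $X$, both of which the paper takes as known.

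The steps hold up:
\begin{itemize}
\item Since $|\Box^0|=*$, every pointed cubical set is cofibrant and every pointed space is fibrant. So the pointed derived unit and counit have the same underlying maps as the unpointed ones, and the pointed adjunction is a Quillen equivalence.
\item $|S^n|\cong[0,1]^n/\partial[0,1]^n$ is a cofibrant pointed sphere, so ${\sf Ho}({\sf Top}_*)(|S^n|,T)=\pi_n(T)$. This includes $n=0$, where $S^0=\Box^0\sqcup *$.
\item The triangle identity shows that the Step~3 composite is $[f]\mapsto[|f|]$, which is the comparison map one actually wants.
\end{itemize}

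Your route gives brevity, naturality for free, and an explicit formula for the comparison. The combinatorial route of \cite{CK23} gives control over an explicit composition law on cubes, which is what you need to compute $\pi_n$ by hand, e.g.\ in $N_\infty G$. For the uses in this paper (Proposition~\ref{prop:A_n=pi_n}, Corollary~\ref{cor:weak_equivalences_of_kan}, the long exact sequence), your bijections suffice, since the paper defines $\pi_n(X)$ only as the pointed set $[S^n,X]_*$.

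On group structures, your cogroup argument works if you define the product on $[S^n,X]_*$ via $S^n\simeq\Sigma S^{n-1}$. Everything in Steps~2 and~3 is natural in the source, and $|\cdot|$ carries the cubical pointed cylinder to the topological one.

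Your Eckmann--Hilton fallback for an explicit cubical composition, however, cannot work for $n=1$. Two unital operations satisfying the interchange law coincide and are commutative, while $\pi_1$ need not be abelian. The comparison has to be direct. Take the $(n+1)$-cube filling the open box whose missing face is $f\cdot g$, with the remaining faces $f$, $g$ and degenerate ones. Its realization is a pointed homotopy between $|f\cdot g|$ and the concatenation of $|f|$ and $|g|$. Hence $[f]\mapsto[|f|]$ is a homomorphism, and your bijections are isomorphisms of groups.
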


\begin{corollary}\label{cor:weak_equivalences_of_kan}
Let $f\colon X\to Y$ be a map of Kan complexes. Then the following are equivalent:
\begin{enumerate}
\item $f$ is a weak equivalence;
\item $f$ is a homotopy equivalence;
\item $f$ induces a bijection $\pi_0(X)\to\pi_0(Y)$, and for every $x_0\in X_0$ and every $n\geq 1$, the map
$\pi_n(X,x_0)\to \pi_n(Y,f(x_0))$ is an isomorphism.
\end{enumerate}
\end{corollary}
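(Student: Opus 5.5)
The plan is to prove the cycle of implications (1)$\Rightarrow$(3)$\Rightarrow$(2)$\Rightarrow$(1). Two of these reduce immediately to results already stated. For (2)$\Rightarrow$(1) I would simply invoke Proposition \ref{prop:homotopy_equiv}, which needs no fibrancy hypothesis.

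For (1)$\Rightarrow$(3), suppose $f$ is a weak equivalence. By Corollary \ref{cor:weak_equiv}, $|f|$ is a weak homotopy equivalence of spaces. It therefore induces a bijection on $\pi_0$ and isomorphisms $\pi_n(|X|,x_0)\to\pi_n(|Y|,f(x_0))$ for every basepoint $x_0\in X_0$. Every point of $|X|$ lies in the path component of some vertex, so checking at vertices suffices. Theorem \ref{th:homotopy:of:topological} identifies these groups naturally with $\pi_n(X,x_0)$ and $\pi_n(Y,f(x_0))$, since $X$ and $Y$ are Kan. For $\pi_0$ I would use the analogous identification $\pi_0(X)\cong\pi_0(|X|)$. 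This follows from $\pi_0(X)=[\Box^0,X]$ together with the Quillen equivalence of Theorem \ref{th:quillen_equivalence}, which gives $[\Box^0,X]\cong{\sf Ho}(\cSet)(\Box^0,X)\cong{\sf Ho}({\sf Top})(*,|X|)$ when $X$ is fibrant. Naturality of these identifications in $f$ gives (3).

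For (3)$\Rightarrow$(2), reverse the chain: (3) and Theorem \ref{th:homotopy:of:topological} (plus the $\pi_0$ statement above) show that $|f|$ induces isomorphisms on all homotopy groups at all vertex basepoints. Hence $|f|$ is a weak equivalence of spaces, and $f$ is a weak equivalence by Corollary \ref{cor:weak_equiv}. It remains to upgrade this to a homotopy equivalence. This is the standard Whitehead-type statement in a model category: a weak equivalence between objects that are both fibrant and cofibrant is a homotopy equivalence (Hovey, Prop.~1.2.8). Every cubical set is cofibrant, since cofibrations are monomorphisms, and $X$ and $Y$ are fibrant by hypothesis. So $f$ is invertible in ${\sf Ho}(\cSet)$, and ${\sf Ho}(\cSet)(A,Z)\cong[A,Z]$ for Kan $Z$.

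The one point needing care, and the main potential obstacle, is that the homotopy relation $\sim$ on $[A,X]$ is defined using the specific cylinder $A\otimes\Box^1$ rather than an arbitrary one. I would handle it as follows. Proposition \ref{prop:geometric_cylinder} shows $A\otimes\Box^1$ is a genuine cylinder object. When the target is fibrant, left homotopy with respect to any fixed cylinder on a cofibrant source coincides with the model-categorical homotopy relation, which is already an equivalence relation; this is the content of the displayed identification $[A,X]\cong{\sf Ho}(\cSet)(A,X)$. Hence a homotopy inverse $g$ exists with $fg\sim\id_Y$ and $gf\sim\id_X$ in the sense of the definition, which establishes (2).
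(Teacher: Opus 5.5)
Your proposal is correct and follows essentially the same route as the paper, whose proof combines Theorem~\ref{th:homotopy:of:topological}, Corollary~\ref{cor:weak_equiv}, and Hovey's Prop.~1.2.8 on weak equivalences between fibrant--cofibrant objects. Your separate treatment of $\pi_0$ and your check that the fixed cylinder $A\otimes\Box^1$ computes the model-categorical homotopy relation for Kan targets fill in details the paper leaves implicit.
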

\begin{proof}
This follows from Theorem~\ref{th:homotopy:of:topological}, Corollary~\ref{cor:weak_equiv}, and the fact that in any model category the homotopy equivalences between fibrant--cofibrant objects coincide with the weak equivalences \cite[Prop.~1.2.8]{Hov07}.
\end{proof}

\subsection{Cubical nerves}

\begin{definition}[$J$-nerve and $J$-realization]
\label{def:J-nerve}
Let $J\in \DGra$ be an interval with vertex set $\{0,\dots,m\}$ (Definition~\ref{def:interval}). The maximum and minimum functions evidently define digraph maps
\begin{equation}
\max \colon J\otimes J \to J, \hspace{1cm} \min\colon J \otimes J \to J.
\end{equation}
These allow us to define a cocubical digraph
\begin{equation}
\Box \longrightarrow \DGra, \hspace{1cm} [1]^n \mapsto J^{\otimes n},
\end{equation}
whose coface, codegeneracy and coconnection maps are given by
\begin{align}
\partial^{i,\varepsilon}(j_1,\dots,j_{n-1}) &= (j_1,\dots,j_{i-1},\varepsilon m , j_{i},\dots, j_{n-1}), \hspace{1cm} \varepsilon\in \{0,1\},\\
\sigma^i(j_1,\dots,j_n) &= (j_1,\dots,j_{i-1},j_{i+1},\dots,j_n),\\
\gamma^{i,0}(j_1,\dots,j_n) &= (j_1,\dots,j_{i-1},\max(j_i,j_{i+1}),
j_{i+2},\dots,j_n),\\
\gamma^{i,1}(j_1,\dots,j_n) &= (j_1,\dots,j_{i-1},\min(j_i,j_{i+1}),
j_{i+2},\dots,j_n).
\end{align}
By the universal property of the Yoneda embedding we obtain an adjunction
\begin{equation}
|\cdot|_J \colon \cSet \leftrightarrows \DGra \colon N_J,
\end{equation}
where $|\cdot|_J$ is the colimit-preserving functor determined by $|\Box^n|_J=J^{\otimes n}$, and $(N_JG)_n=\Hom(J^{\otimes n},G)$. The functor $|\cdot|_J$ is called the \emph{$J$-realization}, and $N_J$ the \emph{cubical $J$-nerve}. We also use the notation
\begin{equation}
    |\cdot |^\varepsilon_{n} = |\cdot|_{I_n^\varepsilon}, \hspace{1cm} N_n^\varepsilon = N_{I_n^\varepsilon},
\end{equation}
where $\varepsilon\in \{-1,1\}$, together with
\begin{equation}
|\cdot|_n=|\cdot|_n^{+1}, \hspace{5mm}  |\cdot|_n^{\op} = |\cdot |^{-1}_{n},\hspace{5mm}  N_n = N^{+1}_n,\hspace{5mm}  N^\op_n=N^{-1}_n.
\end{equation}
\end{definition}

\begin{proposition}
\label{prop:|otimes|_J}
For cubical sets $X$ and $Y$ and an interval $J$, there is a natural isomorphism
\begin{equation}
|X\otimes Y|_J \cong |X|_J \otimes |Y|_J.
\end{equation}
\end{proposition}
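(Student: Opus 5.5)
The plan is the standard Day-convolution argument: both sides preserve colimits in each variable separately, so it is enough to compare them on representables. First I will show that the left-hand side $(X,Y)\mapsto |X\otimes Y|_J$ commutes with colimits in each variable separately. The geometric product $X\otimes Y$ is a left Kan extension along the Yoneda embedding $\Box\times\Box\to\cSet\times\cSet$, so it is given by the coend formula
\begin{equation}
X\otimes Y\cong \int^{[1]^p,[1]^q} (X_p\times Y_q)\cdot \Box^{p+q},
\end{equation}
which is cocontinuous in $X$ and in $Y$ separately. The functor $|\cdot|_J$ is a left adjoint, so it preserves colimits.

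Next I will check that the right-hand side $(X,Y)\mapsto |X|_J\otimes|Y|_J$ is also separately cocontinuous. The functors $|\cdot|_J$ are left adjoints. The box product $-\otimes-$ on $\DGra$ preserves colimits in each variable because $(\DGra,*,\otimes,\Hom^\otimes)$ is closed monoidal and $\otimes$ is symmetric, so $-\otimes H$ and $H\otimes-$ are both left adjoints to $\Hom^\otimes(H,-)$.

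Then I will express $X$ and $Y$ as canonical colimits of representables and use cocontinuity in each variable separately to reduce to $X=\Box^p$ and $Y=\Box^q$. On representables the two sides compare as
\begin{equation}
|\Box^p\otimes\Box^q|_J=|\Box^{p+q}|_J=J^{\otimes(p+q)}\cong J^{\otimes p}\otimes J^{\otimes q}=|\Box^p|_J\otimes|\Box^q|_J,
\end{equation}
where the isomorphism is the associativity of $\otimes$ in $\DGra$.

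The main obstacle is naturality in morphisms of $\Box\times\Box$. I need the identification $J^{\otimes(p+q)}\cong J^{\otimes p}\otimes J^{\otimes q}$ to be compatible with the cocubical structure maps $\partial^{i,\varepsilon},\sigma^i,\gamma^{i,\varepsilon}$ acting on either factor, where the $Y$-factor maps act with indices shifted by $p$. The cocubical digraph $[1]^n\mapsto J^{\otimes n}$ is defined coordinatewise by the same formulas as $\Box$ on $[1]^n$. The functor $\otimes:\Box\times\Box\to\Box$ is concatenation of coordinates. So the identification is literally the identity on vertex tuples $(j_1,\dots,j_p,j'_1,\dots,j'_q)$, and each structure map visibly commutes with it.

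With this natural isomorphism of the two functors restricted to $\Box\times\Box$, both sides are separately cocontinuous extensions of isomorphic functors. They are therefore naturally isomorphic, by the universal property of the left Kan extension, applied one variable at a time.
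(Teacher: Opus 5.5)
Your proposal is correct and follows the paper's argument. Both functors are separately cocontinuous, hence left Kan extensions of their restrictions to $\Box\times\Box$, and on representables the isomorphism is the identification $J^{\otimes p}\otimes J^{\otimes q}\cong J^{\otimes(p+q)}$. You spell out what the paper leaves implicit: cocontinuity of the box product in each variable (via the closed symmetric monoidal structure on $\DGra$), and compatibility of this identification with the cocubical structure maps.
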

\begin{proof}
Both $|-\otimes -|_J$ and $|-|_J\otimes |-|_J$ are left Kan extensions of their restrictions to $\Box\times \Box \to \DGra$, so it suffices to prove the isomorphism on these restrictions. There it follows from the natural isomorphism $J^{\otimes n} \otimes J^{\otimes m} \cong J^{\otimes n+m}$.
\end{proof}

\begin{proposition}
\label{prop:N_J_hom_L}
Let $X$ be a cubical set, $G$ a digraph and $J$ an interval. Then there are isomorphisms
\begin{equation}
\Hom_L(X,N_JG)\cong N_J(\Hom^\otimes(|X|_J,G)) \cong \Hom_R(X,N_JG),
\end{equation}
natural in $X$ and $G$.
\end{proposition}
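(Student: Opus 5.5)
We prove the statement by the Yoneda lemma, comparing the $n$-cubes of the two sides and using Proposition~\ref{prop:|otimes|_J} together with the closed monoidal structure $(\DGra,*,\otimes,\Hom^\otimes)$. All three cubical sets are presheaves on $\Box$. For each $n$ and each $G$ we will exhibit bijections of $n$-cubes that are natural in $[1]^n\in\Box$, in $X$ and in $G$.

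First we check the middle object on $n$-cubes:
\begin{equation}
N_J(\Hom^\otimes(|X|_J,G))_n=\Hom(J^{\otimes n},\Hom^\otimes(|X|_J,G))\cong \Hom(J^{\otimes n}\otimes |X|_J,G),
\end{equation}
using the adjunction $-\otimes A\dashv \Hom^\otimes(A,-)$. We need the symmetric form here, so we note that the box product of digraphs is symmetric: the swap $(g,h)\mapsto(h,g)$ is an isomorphism $G\otimes H\cong H\otimes G$. Hence $\Hom^\otimes(A,-)$ is right adjoint to both $A\otimes-$ and $-\otimes A$, and the display can equally be written as $\Hom(|X|_J\otimes J^{\otimes n},G)$.

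Next we compute $\Hom_L(X,N_JG)$. By definition and Yoneda, $\Hom_L(X,N_JG)_n=\cSet(\Box^n\otimes X,N_JG)$. The adjunction $|\cdot|_J\dashv N_J$ turns this into $\DGra(|\Box^n\otimes X|_J,G)$. Proposition~\ref{prop:|otimes|_J} then rewrites it as $\DGra(J^{\otimes n}\otimes |X|_J,G)$, which is the middle object. For $\Hom_R$ the same chain applies starting from $\cSet(X\otimes\Box^n,N_JG)$, and we obtain $\DGra(|X|_J\otimes J^{\otimes n},G)$; this matches the middle object through the symmetry noted above. We should double-check which adjoint carries the letter $L$ and which carries $R$. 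The two chains are mirror images, so the result does not depend on that convention.

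The one step needing care is naturality in the cube variable. Each bijection above is natural in the object $\Box^n\in\cSet$, since it is built from adjunction isomorphisms and the isomorphism of Proposition~\ref{prop:|otimes|_J}, all natural in both arguments. We must also check that the cocubical structure on $n\mapsto J^{\otimes n}$ matches $|\Box^n|_J$ under these isomorphisms. This holds by construction of $|\cdot|_J$ as the left Kan extension of that very cocubical digraph. The isomorphism $J^{\otimes n}\otimes J^{\otimes m}\cong J^{\otimes(n+m)}$ is compatible with cofaces, codegeneracies and coconnections acting on the first or second block, because $\max$, $\min$, insertion of endpoints and projection act coordinatewise. The swap isomorphism is compatible as well, since it only permutes the two tensor factors and does not interact with the cube structure on $J^{\otimes n}$. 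The bijections therefore assemble into isomorphisms of cubical sets, natural in $X$ and $G$ because every ingredient is natural in them. We expect this bookkeeping of cocubical compatibility to be the main, though routine, point to verify.
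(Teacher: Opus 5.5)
Your proof is correct and is essentially the paper's argument. The paper takes right-adjoint mates of the square $|-\otimes X|_J\cong|-|_J\otimes|X|_J$ from Proposition~\ref{prop:|otimes|_J}, and your Yoneda computation on $n$-cubes is exactly that mate evaluated on the representables $\Box^n$; your convention for $\Hom_L$ (right adjoint to $-\otimes X$) matches the paper's, and your appeal to symmetry of the digraph box product for the $\Hom_R$ case is what the paper's ``proved similarly'' implicitly uses.
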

\begin{proof}
By Proposition~\ref{prop:|otimes|_J}, the square
\begin{equation}
\begin{tikzcd}
\cSet
\ar[rr,"-\otimes X"]
\ar[d,"|-|_J"]
&&
\cSet
\ar[d,"|-|_J"]
\\
\DGra
\ar[rr,"-\otimes |X|_J"]
&& \DGra
\end{tikzcd}
\end{equation}
commutes up to natural isomorphism. Hence so does the corresponding square of right adjoints,
\begin{equation}
\begin{tikzcd}
\DGra
\ar[rr,"{\Hom^\otimes(|X|_J,-)}"]
\ar[d,"{N_J}"]
&&
\DGra
\ar[d,"{N_J}"]
\\
\cSet
\ar[rr,"{\Hom_L(X,-)}"]
&& \cSet
\end{tikzcd}
\end{equation}
The natural isomorphism of left adjoints in the preceding square is natural in $X$. Taking right-adjoint mates therefore yields an isomorphism natural in $X$. The second isomorphism is proved similarly.
\end{proof}

\begin{definition}[The infinite cubical nerve]\label{def:infinite_nerve}
The \emph{infinite cubical nerve} $N_{\infty}G$ is the colimit
\begin{equation}
N_{\infty} G = \underset{J\in {\sf St^{\op}}}\colim\: N_JG,
\end{equation}
where $J$ ranges over the standard interval category (Example~\ref{example:nice_subcategories}~\eqref{standard_interval_category}). Equivalently, $N_{\infty}G$ is the colimit of the sequence
\begin{equation}
\label{N_inf1}
N_1G
\xrightarrow{l^*}
N_2^{\op}G
\xrightarrow{r^*}
N_3^{\op}G
\xrightarrow{l^*}
N_4 G
\xrightarrow{r^*}
\dots
\end{equation}
Equivalently, $N_\infty G$ can be computed by the following direct sequence:
\begin{equation}\label{N_inf4}
N_{4} G \xrightarrow{(c^2)^*} N_8 G \xrightarrow{(c^2)^*} N_{12} G \xrightarrow{(c^2)^*} N_{16} G \xrightarrow{(c^2)^*} \dots.
\end{equation} 
\end{definition}

\begin{proposition}
\label{prop:N-inf-fin-limits}
The functor $N_\infty \colon \DGra \to \cSet$ preserves finite limits.
\end{proposition}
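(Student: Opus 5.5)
The argument has two steps: each $N_J$ preserves all limits, and a filtered colimit of finite-limit-preserving functors into $\cSet$ again preserves finite limits.

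First, for every interval $J$ the cubical $J$-nerve $N_J\colon\DGra\to\cSet$ is a right adjoint, with left adjoint $|\cdot|_J$ (Definition~\ref{def:J-nerve}). Hence $N_J$ preserves all small limits, and in particular finite ones. This can also be seen levelwise: $(N_JG)_n=\Hom(J^{\otimes n},G)$, and $\Hom(J^{\otimes n},-)$ preserves limits. Limits in $\cSet$ are computed objectwise, so the levelwise statement is enough.

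Second, $N_\infty G=\colim_{J\in{\sf St}^\op}N_JG$, and ${\sf St}^\op$ is filtered, since it is the poset $\mathbb{N}$ given by the sequence \eqref{N_inf1}. Let $F\colon K\to\DGra$ be a finite diagram. Then
\begin{equation}
N_\infty(\lim F)=\colim_{J}N_J(\lim F)\cong\colim_J\lim_{k\in K}N_JF(k)\cong\lim_{k\in K}\colim_J N_JF(k)=\lim_{k\in K}N_\infty F(k).
\end{equation}
The first isomorphism uses the first step. The second is the interchange of filtered colimits with finite limits in the presheaf category $\cSet$. Concretely, evaluating at each $[1]^n$ reduces this to the same interchange in $\Set$, because both limits and colimits in $\cSet$ are computed objectwise.

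It remains to check that this chain of isomorphisms is the canonical comparison map $N_\infty(\lim F)\to\lim N_\infty F(k)$, i.e.\ that it is natural. This holds because the structure maps $l^*,r^*$ are natural transformations $N_J\Rightarrow N_{J'}$ induced by shrinkings, so the isomorphisms $N_J(\lim F)\cong\lim N_JF$ are compatible with them. I do not expect any real obstacle here. The only point needing care is to observe that the indexing category is filtered, which it is because it is a sequence; no cofilteredness of $\Shr$ is involved.
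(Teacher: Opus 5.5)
Your proposal is correct and follows the paper's own argument: each $N_J$ preserves finite limits, being the right adjoint of $|\cdot|_J$, and filtered colimits commute with finite limits in $\cSet$. The paper states this in a single line; you additionally note that ${\sf St}^\op\cong\mathbb{N}$ is filtered and that the comparison isomorphisms are natural in $J$, which is exactly what that line leaves implicit.
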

\begin{proof}
Each $N_m$ preserves finite limits, and filtered colimits in $\cSet$ commute with finite limits.
\end{proof}

\subsection{\texorpdfstring{$N_\infty G$}{} is a Kan complex}
\label{subsection:N-Kan}

In this subsection we generalize \cite[Th.~4.1(1)]{CK24} from graphs to digraphs.

\begin{theorem}
\label{thm:kancomplex}
For any digraph $G$, the cubical set $N_\infty G$ is a Kan complex.
\end{theorem}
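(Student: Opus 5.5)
We follow the strategy of \cite[Th.~4.1(1)]{CK24}, adapted to digraphs. Since the horn $\sqcap^n_{i,\varepsilon}$ is a finite cubical set, it is a compact object of $\cSet$. The colimit defining $N_\infty G$ is filtered; we use the sequence \eqref{N_inf4}. Hence any map $\sqcap^n_{i,\varepsilon}\to N_\infty G$ factors through some stage $N_{4k}G$. By adjunction, such a map corresponds to a digraph map
\begin{equation}
u\colon |\sqcap^n_{i,\varepsilon}|_{I_{4k}}\longrightarrow G .
\end{equation}
Here $|\sqcap^n_{i,\varepsilon}|_{I_{4k}}$ is the union of all codimension-one faces of $I_{4k}^{\otimes n}$ except the $(i,\varepsilon)$-face. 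This uses that $|\cdot|_J$ preserves colimits, and that these faces are induced subdigraphs glued along induced subdigraphs, so the colimit is the union inside $I_{4k}^{\otimes n}$.

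It therefore suffices to construct a digraph retraction up to subdivision. Concretely, we want a digraph map
\begin{equation}
\rho\colon I_{4k+4}^{\otimes n}\longrightarrow I_{4k}^{\otimes n}
\end{equation}
with two properties. First, $\rho$ sends the image of the subdivision of the horn, i.e.\ the faces of $I_{4k+4}^{\otimes n}$ other than $(i,\varepsilon)$, into the horn $|\sqcap^n_{i,\varepsilon}|_{I_{4k}}$. Second, on those faces $\rho$ restricts to $(c^2)^{\otimes(n-1)}$ in the face coordinates, so that $u\circ\rho$ restricted to the horn equals $(c^2)^*u$. Then $u\circ\rho$ is an $n$-cube of $N_{4k+4}G$ extending the image of the horn at the next stage, which gives the required filler in $N_\infty G$.

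By the symmetry $x\mapsto m-x$, which is an anti-isomorphism interchanging $I_m$ and $I_m$ or $I_m^{\op}$ depending on parity, and by permuting coordinates, we may assume the missing face is $x_n=m$. We then build $\rho$ coordinatewise, in the spirit of CK24's retraction of the cube onto an open box. We take $\rho(x_1,\dots,x_n)=(c^2(x_1),\dots,c^2(x_{n-1}),\,h(x))$. The last coordinate $h(x)$ is $c^2(x_n)$ pushed down toward $0$ by an amount controlled by the distance of $(x_1,\dots,x_{n-1})$ to the boundary of the base cube. For example, take
\begin{equation}
h(x)=\min\bigl(c^2(x_n),\ \text{(a function of }\dist(x',\partial I^{\otimes n-1})\text{)}\bigr),
\end{equation}
with parities arranged so that every arrow maps to an arrow or a vertex.

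On the faces $x_j\in\{0,m\}$ with $j<n$, and on the face $x_n=0$, this is $c^2$ in every coordinate. The extra $4$ vertices of margin, the two layers on each side that $c^2$ collapses, give room for the distance function to move through zig-zag arrows compatibly. Every point whose last coordinate would reach $4k$ away from the side faces is pushed off the missing face, so the image lies in the horn.

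The main obstacle is checking that $\rho$ (really $h$) is a digraph map for the alternating orientation of $I_m$. Min/max of digraph maps $J\otimes J\to J$ are digraph maps, and $c^2$ is a shrinking. The delicate point is the distance-type function $d\colon I_{4k+4}^{\otimes(n-1)}\to I_{4k}$: it must preserve arrows, so it must be assembled from $\min$ and $\max$ of coordinate maps, e.g.\ $\min_j\min(c^2 x_j,\ \overline{c^2 x_j})$ with $\overline{y}=4k-y$. Since $4k$ is even, $y\mapsto 4k-y$ is an automorphism of $I_{4k}$, which is exactly why the sequence \eqref{N_inf4} through $I_{4k}$ is used. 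I would first try this explicit formula and verify arrow preservation case by case on a single arrow, which changes only one coordinate. This also corrects the gap mentioned in Remark~\ref{rem:CK24-gap}, if it concerns this step. Faces with $\varepsilon$ and parity variants follow by the symmetries above.
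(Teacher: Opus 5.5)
Your overall architecture matches the paper's proof. You use compactness of $\sqcap^n_{i,\varepsilon}$ to factor through some $N_{4k}G$. You then want a digraph map from a finer cube into the horn that agrees with a shrinking on the horn: the coordinates $j\neq i$ are given by the shrinking, and the $i$-th coordinate is capped by a $\min$ (or $\max$) with a boundary-proximity function.

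The step that fails is the quantitative one. For $k\geq 1$ there is no digraph map $\rho\colon I_{4k+4}^{\otimes n}\to|\sqcap^n_{i,\varepsilon}|_{4k}$ with $\rho=(c^2)^{\otimes n}$ on the horn. To see this, take $n=2$ with the face $x_2=4k+4$ missing.
\begin{itemize}
\item The vertices $a=(0,4k+4)$ and $b=(4k+4,4k+4)$ lie on side faces, so $\rho(a)=(0,4k)$ and $\rho(b)=(4k,4k)$.
\item In the source they are joined by a zig-zag path of length $4k+4$ along the top edge.
\item A digraph map sends each arrow to an arrow or a vertex. So $\rho(a)$ and $\rho(b)$ would be joined inside the horn $\{y_1=0\}\cup\{y_2=0\}\cup\{y_1=4k\}$ by a zig-zag path of length at most $4k+4$.
\item Inside that horn, a step from $y_1=0$ to $y_1=1$, or from $y_1=4k-1$ to $y_1=4k$, can only occur at $y_2=0$. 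Hence any such path passes through $(0,0)$ and then $(4k,0)$, and has length at least $12k>4k+4$.
\end{itemize}
Two collapsed layers per side are therefore far too few: the collapsed margin on each side must be at least as long as the target interval. This is why the paper's Lemma~\ref{lem:extension} uses $c^{2m}\colon I_{6m}\to I_{2m}$ with $m$ replaced by $2m$. A horn in $N_{4m}G$ is then filled only in $N_{12m}G$, and the same count shows $12m$ cannot be improved.

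Your explicit candidate also points the wrong way. The function $d=\min_j\min(c^2x_j,\,4k-c^2x_j)$ vanishes on the side faces. There $h=0\neq c^2(x_n)$, so the restriction to the horn is not $(c^2)^*u$. It is positive in the interior, for example $\rho(2k+2,\dots,2k+2,4k+4)=(2k,\dots,2k,2k)$, which is not in the horn.

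Relatedly, being pushed off the missing face is not enough. The horn is the union of the remaining faces, so whenever all $x_j$ with $j\neq n$ lie in the middle band, the last coordinate must be sent all the way to $0$.

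What works is the paper's cap $D_i(v)=\max_{j\neq i}|v_j-\phi(v_j)|$, where $\phi$ clamps $I_{6m}$ onto $[2m,4m]$. This measures the depth of $v_j$ into the margin and is computed from the source coordinates, not from $c(x_j)$, which is constant on the margin. It equals the full target length $2m$ on the side faces and $0$ in the middle band. Taking $c^{2m}$ in the coordinates $j\neq i$ and $\min\{c^{2m}(v_i),D_i(v)\}$ in coordinate $i$ gives a digraph map into the horn agreeing with $c^{2m}$ on it; use $\max\{c^{2m}(v_i),2m-D_i(v)\}$ when $\varepsilon=0$.

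Finally, the gap described in Remark~\ref{rem:CK24-gap} concerns the anodyne comparison $N_1G\to N_\infty G$, not the Kan property, so this step has nothing to do with it.
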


\begin{lemma}\label{lem:extension}
For all integers $m\geq 0$, $n\geq 1$, every $1\leq i\leq n$, and every
$\varepsilon\in\{0,1\}$, there is a digraph map
\begin{equation}
\Phi\colon |\Box^n|_{6m} \to |\sqcap^n_{i,\varepsilon}|_{2m}
\end{equation}
making the following diagram commute:
\begin{equation}
\label{eq:triang_Mengmeng}
\begin{tikzcd}
{|\sqcap^n_{i,\varepsilon}|_{6m}}
\ar[r,"c^{2m}"]
\ar[d,hookrightarrow]
&
{|\sqcap^n_{i,\varepsilon}|_{2m}}
\\
{|\Box^n|_{6m}}
\ar[ru,"\Phi"']
\end{tikzcd}
\end{equation}
\end{lemma}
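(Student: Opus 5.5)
The plan is to write down $\Phi$ by an explicit coordinate formula on the vertex set $\{0,\dots,6m\}^n$ of $|\Box^n|_{6m}=I_{6m}^{\otimes n}$. I would then check two things: that its image lies in $|\sqcap^n_{i,\varepsilon}|_{2m}$, and that it restricts to $c^{2m}$ on the horn. I identify $|\sqcap^n_{i,\varepsilon}|_{2m}$ with the subdigraph of $I_{2m}^{\otimes n}$ formed by all faces $x_j\in\{0,2m\}$ with $(j,\delta)\ne(i,\varepsilon)$; this identification should follow from $|\cdot|_{2m}$ preserving colimits, together with Remark~\ref{rem:pushout_along_inclusion} applied to the induced face subdigraphs.

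First I would reduce to a single case, say $i=n$, $\varepsilon=1$, so that the missing face is $x_n=2m$ (equivalently $x_n=6m$ upstairs). Permuting tensor factors is an automorphism of $J^{\otimes n}$ that permutes faces. Reversal $x\mapsto 6m-x$ is an automorphism of $I_{6m}$, since $6m$ is even and so parities, hence arrow directions, are preserved; it commutes with $c^{2m}$ and with the corresponding reversal of $I_{2m}$. Next I would record the formula
\begin{equation}
c^{2m}(x)=\min(\max(x-2m,0),2m).
\end{equation}
This map collapses $[0,2m]$ to $0$ and $[4m,6m]$ to $2m$, and is an even shift on the middle third.

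The candidate map keeps $y_k=c^{2m}(x_k)$ for $k<n$ and pushes the last coordinate down to $0$ whenever the point is ``deep inside'' in the first $n-1$ coordinates. Concretely, I would take a folding map $g\colon I_{6m}\to I_{2m}$ with $g(0)=g(6m)=2m$ and $g\equiv 0$ on $[2m,4m]$, namely $g(x)=2m-x$ on $[0,2m]$ and $g(x)=x-4m$ on $[4m,6m]$. Each piece is an even shift or reversal, so $g$ is a digraph map. I would then set
\begin{equation}
\Phi(x)=\Bigl(y_1,\dots,y_{n-1},\ \min\bigl(c^{2m}(x_n),\ \max_{k<n} h_k(x)\bigr)\Bigr),
\end{equation}
where $h_k$ is a suitable thickening of $g(x_k)$. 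The image check goes as follows. If some $x_k$ with $k<n$ lies within $2m$ of an endpoint, then $y_k\in\{0,2m\}$ and the point lies on a side face. Otherwise the max term vanishes, so the last coordinate is $0$ and the point lies on the bottom face. On the horn, either $x_n=0$, or some $x_k\in\{0,6m\}$, in which case the max term is $2m$; in both cases $\Phi$ agrees with $c^{2m}$. The extra room ($6m$ rather than $4m$) is what I expect to need to separate the region where the side coordinate is already collapsed from the region where the last coordinate is forced down.

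The main obstacle is verifying that $\Phi$ is a digraph map $I_{6m}^{\otimes n}\to I_{2m}^{\otimes n}$. An arrow in the box product changes exactly one coordinate $x_k$ by one step, so its image may change at most one output coordinate, by at most one step and in the correct direction. The naive formula uses $x_k$ twice, once in $y_k$ and once in the max term; this is a diagonal, which is not a map into a box product. I would therefore arrange the pieces so that, for each $k<n$, the set of $x_k$-values where $y_k$ moves is disjoint from the set where the max term moves. The plan is to let $g$ vary only on $[0,2m]\cup[4m,6m]$, where $y_k$ is already constant, and to use that $\min$ and $\max$ on distinct tensor factors are digraph maps (Definition~\ref{def:J-nerve}). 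I would then verify the arrow condition case by case, according to which third of $[0,6m]$ contains $x_k$, keeping track of parity so that every shift is even and arrow orientations match. If this direct formula fails the parity check, my fallback is to build $\Phi$ as a composite of $n-1$ one-coordinate ``squeezing'' maps, each handling one side coordinate against $x_n$ using only the maps $\max$, $\min$ and $c$.
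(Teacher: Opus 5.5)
Your construction is the paper's. Take $h_k=g(x_k)$ literally, with no thickening: any enlargement of the support of $h_k$ into $(2m,4m)$ would break the disjoint-support arrow check you describe. Your last coordinate $\min\bigl(c^{2m}(x_n),\max_{k<n}g(x_k)\bigr)$ is then exactly the paper's $\min\{c^{2m}(v_i),D_i(v)\}$, where $D_i(v)=\max_{j\neq i}|v_j-\phi(v_j)|$. Your three checks (arrows, image, agreement on the horn) are the paper's three checks, and conjugating by your reversal reproduces its direct formula $\max\{c^{2m}(v_i),2m-D_i(v)\}$ for $\varepsilon=0$. One small point to add: the image check must also hold on arrows, not only vertices. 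This follows because for $2m\geq 2$ the union of faces $|\sqcap^n_{i,\varepsilon}|_{2m}$ is an induced subdigraph of $I_{2m}^{\otimes n}$, and the case $m=0$ is trivial.
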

\begin{proof}
The case $n=1$ is clear, so assume $n\geq 2$. Consider the map $\phi\colon I_{6m}\to I_{6m}$ given by
\begin{equation}
\phi(x) =
\begin{cases}
2m, & x\leq 2m,\\
x, & 2m\leq  x \leq 4m,\\
4m, & 4m \leq x,
\end{cases}
\end{equation}
and for $1\leq i\leq n$ the map $D_i: I^{\otimes n}_{6m} \to I_{2m}$ defined by
\begin{equation}
D_i(v)= \max\limits_{1\leq j\neq i\leq n}\{|v_j-\phi (v_j)|\}.
\end{equation}
Recall that $c^{2m}\colon I_{6m} \to I_{2m}$ is given by
\begin{equation}
c^{2m}(x)=
\begin{cases}
0,   & x \le 2m,\\
x-2m, & 2m \le x \le 4m,\\
2m,   & x \ge 4m.
\end{cases}
\end{equation}
We now define a map $\tilde \Phi_{i,\varepsilon}\colon I^{\otimes n}_{6m} \to I^{\otimes n}_{2m}$ by
\begin{equation}
\tilde  \Phi_{i,\varepsilon}(v)_{k} =
\begin{cases}
 c^{2m}(v_k), & k\neq i,\\
 \max\{c^{2m}(v_i),\, 2m-D_i(v)\}, & k=i,\ \varepsilon=0,\\
 \min\{c^{2m}(v_i),\, D_i(v)\}, & k=i,\ \varepsilon=1.
\end{cases}
\end{equation}

The following figure illustrates the map $\tilde\Phi_{2,0}:I_6^{\otimes 2}\to I_2^{\otimes2}$:

\begin{figure}[H]
\centering
\begin{tikzpicture}[
  x=0.92cm,y=-0.56cm,
  phi vertex/.style={
    font=\scriptsize,
    inner sep=1pt,outer sep=0.6pt,
    rounded corners=1.2pt
  },
  phi edge/.style={
    -{Stealth[length=1.1mm,width=0.85mm]},
    line width=0.3pt,draw=black!55
  },
  phi target/.style={
    font=\small,
    inner sep=2.2pt,outer sep=0.8pt,
    rounded corners=2pt,line width=0.35pt
  }
]
  \definecolor{phi00}{HTML}{2F5D9B}
  \definecolor{phi01}{HTML}{006F82}
  \definecolor{phi02}{HTML}{236F55}
  \definecolor{phi12}{HTML}{875C00}
  \definecolor{phi22}{HTML}{A94B18}
  \definecolor{phi21}{HTML}{B43D67}
  \definecolor{phi20}{HTML}{744B9D}
  \definecolor{phiInk}{HTML}{283C50}

  \foreach \a in {0,...,6} {
    \pgfmathtruncatemacro{\phix}{min(2,max(0,\a-2))}
    \pgfmathtruncatemacro{\phibase}{min(2,\a,6-\a)}
    \foreach \b in {0,...,6} {
      \pgfmathtruncatemacro{\phiy}{
        max(min(2,max(0,\b-2)),\phibase)
      }
      \edef\phicolor{phi\phix\phiy}
      \node[
        phi vertex,
        text=\phicolor,
        fill=\phicolor!7
      ] (s-\a-\b) at (\a,\b) {$(\phix,\phiy)$};
    }
  }

  \foreach \a/\b in {0/1,2/1,2/3,4/3,4/5,6/5} {
    \foreach \k in {0,...,6} {
      \draw[phi edge] (s-\a-\k) -- (s-\b-\k);
      \draw[phi edge] (s-\k-\a) -- (s-\k-\b);
    }
  }

  \draw[
    -{Stealth[length=2mm]},
    line width=0.55pt,draw=phiInk
  ]
    (6.6,3) --
    node[above,font=\normalsize] {$\tilde\Phi_{2,0}$}
    (7.7,3);

  \foreach \name/\u/\v/\px/\py in {
    00/0/0/8.7/1,
    01/0/1/8.7/3,
    02/0/2/8.7/5,
    12/1/2/9.95/5,
    22/2/2/11.2/5,
    21/2/1/11.2/3,
    20/2/0/11.2/1
  } {
    \edef\phicolor{phi\name}
    \node[
      phi target,
      text=\phicolor,
      fill=\phicolor!9,
      draw=\phicolor!25
    ] (t-\name) at (\px,\py) {$(\u,\v)$};
  }
  \foreach \from/\to in {00/01,02/01,02/12,22/12,22/21,20/21} {
  \draw[phi edge] (t-\from) -- (t-\to);
}
\end{tikzpicture}
\end{figure}

We first check that $\tilde \Phi_{i,\varepsilon}$ is a digraph map. Take a non-degenerate arrow $v\to v'$ in $I_{6m}^{\otimes n}$. Then there is an index $k$ such that $v_k\to v_k'$ is a non-degenerate arrow in $I_{6m}$ and $v_j= v_j'$ for all $j\neq k$. By the definition of $I_{6m}$, the vertex $v_k$ is even, $v_k'$ is odd, and $|v_k-v_k'|=1$. We must produce an arrow $\tilde \Phi_{i,\varepsilon}(v)\to \tilde \Phi_{i,\varepsilon}(v')$ in $I_{2m}^{\otimes n}$. We consider three cases.
\begin{itemize}
\item[(1)] Suppose $k\neq i$ and $c^{2m}(v_k) =c^{2m}(v_k')$. Since $v_j=v'_j$ for $j\neq k$, we have $\tilde \Phi_{i,\varepsilon}(v)_s =\tilde \Phi_{i,\varepsilon}(v')_s$ for all $s\neq i$, so it remains to produce an arrow $\tilde \Phi_{i,\varepsilon}(v)_i\to \tilde \Phi_{i,\varepsilon}(v')_i$. The equality $c^{2m}(v_k)=c^{2m}(v_k')$ forces either $0\leq v_k,v_k'\leq 2m$ or $4m \leq v_k,v'_k \leq  6m$. Using that $v_k$ is even and $v_k'$ is odd, we obtain arrows $D_i(v)\to D_i(v')$ and $2m - D_i(v) \to 2m - D_i(v')$ (recall that there is always an arrow from a vertex to itself). Hence there is an arrow $\tilde \Phi_{i,\varepsilon}(v)_i\to \tilde \Phi_{i,\varepsilon}(v')_i$.

\item[(2)] Suppose $k\neq i$ and there is a non-degenerate arrow $c^{2m}(v_k) \to c^{2m}(v_k')$. Then $2m\leq v_k , v_k'\leq 4m$ and $D_i(v) = D_i(v')$. Thus $\tilde \Phi_{i,\varepsilon}(v)_j = \tilde \Phi_{i,\varepsilon}(v')_j$ for $j\neq k$, and there is an arrow $\tilde \Phi_{i,\varepsilon}(v)_k \to \tilde \Phi_{i,\varepsilon}(v')_k$. Hence there is an arrow $\tilde\Phi_{i,\varepsilon}(v) \to \tilde\Phi_{i,\varepsilon}(v')$.

\item[(3)] Suppose $k=i$. Then $\tilde \Phi_{i,\varepsilon}(v)_j = \tilde \Phi_{i,\varepsilon}(v')_j$ for $j\neq i$. Since $v_j=v_j'$ for $j\neq i$, we have $D_i(v) = D_i(v')$, so the arrow $c^{2m}(v_i)\to c^{2m}(v_i')$ induces an arrow $\tilde \Phi_{i,\varepsilon}(v)_i \to \tilde \Phi_{i,\varepsilon}(v')_i$.
\end{itemize}
Therefore $\tilde \Phi_{i,\varepsilon}$ is a digraph map.

Next we check that the image of $\tilde \Phi_{i,\varepsilon}$ lies in the subdigraph
\begin{equation}
|\sqcap^n_{i,\varepsilon} |_{2m} \subseteq I_{2m}^{\otimes n},
\end{equation}
which consists of the vertices $v=(v_1,\dots,v_n)$ such that either $v_k\in \{0,2m\}$ for some $k\neq i$, or $v_i=(1-\varepsilon) 2m$. Fix a vertex $v=(v_1,\dots,v_n)\in I_{6m}^{\otimes n}$ and consider two cases.
\begin{itemize}
\item[(1)] Suppose $c^{2m}(v_k)\in \{0,2m\}$ for some $k\neq i$. Then $\tilde \Phi_{i,\varepsilon}(v)_k\in \{0,2m\}$, so $\tilde \Phi_{i,\varepsilon}(v)\in |\sqcap^n_{i,\varepsilon}|_{2m}$.

\item[(2)] Suppose $0< c^{2m}(v_k)<2m$ for all $k\neq i$. Then $2m < v_k < 4m$ for all $k\neq i$, and hence $D_i(v) =0$. It follows that
\[\tilde \Phi_{i,0}(v)_i =\max\{c^{2m}(v_i),\, 2m-D_i(v)\}=2m,\qquad
\tilde \Phi_{i,1}(v)_i=\min\{D_i(v),\,c^{2m}(v_i)\}=0.\]
Thus $\tilde \Phi_{i,\varepsilon}(v)_i = (1-\varepsilon) 2m$ and $\tilde \Phi_{i,\varepsilon}(v)  \in |\sqcap^n_{i,\varepsilon}|_{2m}$.
\end{itemize}
Therefore we proved that $\tilde \Phi_{i,\varepsilon}$ sends vertices of $I^{\otimes n}_{6m}$ to $|\sqcap^n_{i,\varepsilon} |_{2m}$. Moreover, for $m\ge 1$, it is obvious that $|\sqcap^n_{i,\varepsilon}|_{2m}\subseteq I^{\otimes n}_{2m}$ is an induced subdigraph. Therefore we deduce that $\tilde\Phi_{i,\varepsilon}$ factors through an induced digraph map
\begin{equation}
\Phi_{i,\varepsilon} \colon I^{\otimes n}_{6m} \longrightarrow |\sqcap^n_{i,\varepsilon}|_{2m}.
\end{equation}
For $m=0$ the case is trivial.

Finally, we verify that diagram~\eqref{eq:triang_Mengmeng} commutes; that is, that $\Phi_{i,\varepsilon}(v)=c^{2m}(v)$ for every $v=(v_1,\dots,v_n)\in |\sqcap^n_{i,\varepsilon}|_{6m} \subseteq I^{\otimes n}_{6m}$. Since $\Phi_{i,\varepsilon}(v)_k=c^{2m}(v_k)$ for all $k\neq i$, it suffices to check that $\Phi_{i,\varepsilon}(v)_i=c^{2m}(v_i)$. We distinguish four cases.
\begin{itemize}
\item[(1)] Suppose $\varepsilon=0$ and $v_k \in  \{0,6m\}$ for some $k\neq i$. Then $D_i(v) =2m$ and $\max\{c^{2m}(v_i),\, 2m-D_i(v)\} =  c^{2m}(v_i)$.
\item[(2)] Suppose $\varepsilon=0$ and $v_i=6m$. Then $c^{2m}(v_i) = 2m$, so $\max\{c^{2m}(v_i),\, 2m-D_i(v)\} = 2m = c^{2m}(v_i)$.
\item[(3)] Suppose $\varepsilon=1$ and $v_k\in \{ 0,6m\}$ for some $k\neq i$. Then $D_i(v) =2m$ and $\min\{D_i(v),\, c^{2m}(v_i)\} = c^{2m}(v_i)$.
\item[(4)] Suppose $\varepsilon=1$ and $v_i= 0$. Then $c^{2m}(v_i) = 0$, so $\min\{D_i(v),\, c^{2m}(v_i)\} = 0 = c^{2m}(v_i)$.
\end{itemize}
Hence the diagram commutes.
\end{proof}
\begin{proof}[Proof of Theorem~\ref{thm:kancomplex}]
Since $\sqcap^n_{i,\varepsilon}$ has finitely many non-degenerate cubes, it is a compact object of $\cSet$. Using the direct sequence  \eqref{N_inf4} of Definition~\ref{def:infinite_nerve}, we obtain an isomorphism
\begin{equation}
\Hom_{\cSet}(\sqcap^n_{i,\varepsilon}, N_\infty G)\cong
\underset{m}\colim \:
\Hom_{\cSet}(\sqcap^n_{i,\varepsilon},N_{4m}G),
\end{equation}
where the transition maps are induced by $c^{2}\colon I_{4(m+1)}\to I_{4m}$. Hence, for any $f\colon \sqcap^n_{i,\varepsilon} \to N_\infty G$, there exist an integer $m\geq 1$ and a cubical map $g\colon \sqcap^n_{i,\varepsilon} \to N_{4m}G$ whose composite with $N_{4m}G \to N_\infty G$ equals $f$. Denote by $\bar g\colon  |\sqcap^n_{i,\varepsilon}|_{4m} \to G$ the adjoint map. Applying Lemma~\ref{lem:extension} with $m$ replaced by $2m$, we obtain a map $\bar h\colon   |\Box^n|_{12m} \to G$ making the diagram
\begin{equation}
 \begin{tikzcd}
{|\sqcap^n_{i,\varepsilon}|_{12m}} \ar[r,"{c^{4m}}"]
\ar[d,hookrightarrow]
&
{|\sqcap^n_{i,\varepsilon}|_{4m}} \ar[r,"{\overline{g}}"]
&
G
\\
{|\Box^n|_{12m}}
\ar[urr,"\bar h"']
& &
\end{tikzcd}
\end{equation}
commute. Consequently there is a map $h\colon \Box^n \to N_{12m} G$ making the diagram
\begin{equation}
\begin{tikzcd}
\sqcap^n_{i,\varepsilon} \ar[r,"g"]
\ar[d]
& N_{4m} G
\ar[d,"(c^{4m})^*"]
\\
\Box^n \ar[r,"h"]
&
N_{12m} G
\end{tikzcd}
\end{equation}
commute. Therefore $f$ extends to a map $\hat f\colon\Box^n \to N_\infty G$, namely the composite of $h$ with $N_{12m} G \to N_\infty G$.
\end{proof}

\subsection{Equivalence of cubical nerves}
\label{subsection:equivalence_N}

In this subsection we prove that for any digraph $G$, the canonical map $ N_1G\to N_\infty G $ is anodyne. We will prove a slightly stronger result: for any interval $J,K\neq I_0$, and shrinking map $s:K\to J$, the induced morphism of nerves $s^*:N_JG\to N_K G$ is anodyne for all digraph $G$.
	
	\begin{lemma}\label{lem:N_J_weak_equiv}
		For any interval $J\neq I_0$, the $J$-nerve $N_J$ sends digraph homotopy equivalences to weak equivalences of cubical sets. 
	\end{lemma}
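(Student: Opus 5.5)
The plan is to show that $N_J$ sends homotopic digraph maps to homotopic maps of cubical sets. Then a digraph homotopy equivalence goes to a cubical homotopy equivalence, which is a weak equivalence by Proposition~\ref{prop:homotopy_equiv}. Since the homotopy relation on $\DGra$ is generated by elementary homotopies $G\otimes I_1\to H$ and $G\otimes I_1^\op\to H$, it suffices to handle one elementary homotopy $\eta\colon G\otimes I_1^{\delta}\to H$ from $\varphi$ to $\psi$. We then need a cubical homotopy $N_JG\otimes\Box^1\to N_JH$ (or a zigzag of such) from $N_J\varphi$ to $N_J\psi$.

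First, shrink the homotopy along $J$. Since $J\neq I_0$, it has a non-degenerate arrow, and as in the proof of Corollary~\ref{cor:homotopy-as-a-coequaliser} there is a shrinking $\sigma\colon J\to I_1$ or $\sigma\colon J\to I_1^\op$. This lets us reparametrize $\eta$ into a $J$-homotopy $\eta'=\eta\circ(\id_G\otimes\sigma)\colon G\otimes J\to H$ from $\varphi$ to $\psi$. If the arrow of $J$ has the wrong direction relative to $\delta$, we instead use the chain $\varphi\sim\varphi$ and then a factorization through $I_2$ or $I_2^\op$, which still shrinks from $J$ or from $J\vee J$. To keep things simple, I would allow the homotopy interval to be $J$ itself or $J\vee J$. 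A $J\vee J$-homotopy is just a concatenation of two $J$-homotopies, so it suffices to treat a $J$-homotopy $\eta'\colon G\otimes J\to H$ with $\eta'(-,0)=\varphi$ and $\eta'(-,m)=\psi$.

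Next, translate this into cubes. By the closed monoidal structure, $\eta'$ corresponds to a map $\hat\eta\colon G\to\Hom^\otimes(J,H)=\Hom^\otimes(|\Box^1|_J,H)$. Applying $N_J$ and Proposition~\ref{prop:N_J_hom_L} gives
\[
N_JG\to N_J\Hom^\otimes(|\Box^1|_J,H)\cong\Hom_L(\Box^1,N_JH),
\]
which is adjoint to a map $N_JG\otimes\Box^1\to N_JH$. Naturality of the isomorphism in $X$, applied to the two endpoint inclusions $\Box^0\to\Box^1$ (whose $J$-realizations are $s,t\colon *\to J$), identifies the two ends of this cubical homotopy with $N_J\varphi$ and $N_J\psi$. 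Hence $N_J\varphi\sim N_J\psi$, and homotopy equivalences go to homotopy equivalences.

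The main obstacle is the direction mismatch in the reduction step. An elementary homotopy on $I_1$ must be converted into a $J$-homotopy even when $J$ is, for example, $I_1^\op$. I would handle this with the observation that any interval homotopy decomposes into single-arrow steps $\varphi_k\to\varphi_{k+1}$ or $\varphi_{k+1}\to\varphi_k$. Each step can be realized as a $J$-homotopy, possibly run backwards: pick an arrow of $J$ with the matching direction, or, if all arrows of $J$ point one way, use a shrinking of $J$ onto the arrow read in reverse, which gives a homotopy from $\psi$ to $\varphi$. Since cubical homotopy is an equivalence relation after passing to the equivalence relation generated, as in the definition of $[A,X]$, reversed steps cause no harm.
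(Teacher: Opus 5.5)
Your proposal is correct and is essentially the paper's proof: you convert each elementary step into a $J$-parametrized homotopy via a shrinking $J\to I_1$ or $J\to I_1^{\mathrm{op}}$, running the step backwards if needed. You then turn it into a cubical homotopy $N_JG\otimes\Box^1\to N_JH$ using Proposition~\ref{prop:N_J_hom_L}, and conclude via realizations, which is exactly what Proposition~\ref{prop:homotopy_equiv} packages. The one blemish is the $J\vee J$/$I_2$ detour in your second paragraph, which is unnecessary and actually fails when all arrows of $J$ point the same way (no iterated wedge of $J$ then shrinks onto $I_2$ or $I_2^{\mathrm{op}}$); your final paragraph's reversal argument already handles the direction issue, so you can simply drop the detour.
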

    \begin{proof}
    	Let $ h_0:G\otimes J\to H $ be an elementary digraph homotopy from $f_0:G\to H$ to $f_1:G\to H$ parametrized by $J$. Note that this parametrization is valid from Corollary~\ref{cor:homotopy-as-a-coequaliser}. By adjunction we consider the map $\tilde{h}_0:G\to \Hom^\otimes(J,H)$. As a result of Proposition~\ref{prop:N_J_hom_L}, after taking $N_J$ we have
        \begin{equation}
            (\tilde{h}_0)_*:=N_J(\tilde{h}_0):N_JG \to N_J\Hom^\otimes(J,H)\cong \Hom_L(\Box^1,N_JH).
        \end{equation}
        By adjunction we obtain a morphism 
        \begin{equation}
            (h_0)_*:N_JG\otimes \Box^1\to N_JH,
        \end{equation}
        which is a homotopy from $N_J(f_0)$ to $N_J(f_1)$. 
        
        Now suppose that we have a digraph homotopy $h:G\otimes I_n\to H$ from $f$ to $g$. Let $f_i=h(-,i)$ for $i\in\{0,\ldots,n\}$, so $f=f_0$ and $g=f_n$. Then for each elementary step we proved that $N_J(f_i)\simeq N_J(f_{i+1})$. Therefore on the level of topological spaces, we know that $|N_J(f)|$ is homotopic to $|N_J(g)|$. 

        As a consequence, for any digraph homotopy equivalence $\varphi:G\to H$, we proved that $|N_J(\varphi)|:|N_JG|\to |N_JH|$ is a homotopy equivalence of topological spaces, therefore $N_J(\varphi)$ is in particular a weak equivalence of cubical sets.
    \end{proof}

    \begin{definition}
        Let $K,J$ be two intervals. For any digraph $G$ we define the bicubical set $N_{K,J}G$ as
        \begin{equation}
            (N_{K,J}G)_{m,n}=\Hom_{\DGra}(K^{\otimes m}\otimes J^{\otimes n},G).
        \end{equation}
        For shrinking maps $s:K'\to K$ and $t:J'\to J$, there is a canonical bicubical map
        \begin{equation}
            \varphi(s,t):N_{K,J}G \to N_{K',J'}G
        \end{equation}
        induced by
        \begin{equation}
            (K')^{\otimes m}\otimes (J')^{\otimes n}\xrightarrow{s^{\otimes m}\otimes t^{\otimes n}}K^{\otimes m}\otimes J^{\otimes n}.
        \end{equation}
    \end{definition}

    \begin{lemma}\label{lem:diag_of_shrinking_is_weak_equiv}
        For any intervals $K,J,K',J'$ with $K,J'\neq I_0$, shrinking maps $s:K'\to K$ and $t:J'\to J$, the diagonal map
        \begin{equation}
            \diag \varphi(s,t):\diag N_{K,J}G \to \diag N_{K',J'}G
        \end{equation}
        is a weak equivalence for all digraph $G$.
    \end{lemma}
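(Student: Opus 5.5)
Factor $\varphi(s,t)$ as
\[
N_{K,J}G \xrightarrow{\varphi(\id_K,t)} N_{K,J'}G \xrightarrow{\varphi(s,\id_{J'})} N_{K',J'}G
\]
and show that each factor becomes a weak equivalence after taking diagonals. The tool in both cases is the standard fact that a map of bicubical sets which is a levelwise weak equivalence in one direction (row by row) induces a weak equivalence on diagonals. This holds because the diagonal of a bicubical set is a homotopy colimit of its rows, or equivalently because every cubical set is cofibrant and $\diag$ can be written as a coend $\int^{[1]^m} \Box^m \otimes X_{m,\bullet}$. I would quote this as a bisimplicial/bicubical realization lemma (cf.\ \cite{Cis14}), using that weak equivalences are closed under coproducts (Corollary~\ref{cor:coproducts_of_weak_equivalences}) and under the geometric product.

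\emph{First factor.} Fix the $K$-degree $m$. The row $(N_{K,J}G)_{m,\bullet}$ is, by the adjunction of $\otimes$ and $\Hom^\otimes$, the cubical set $N_J\Hom^\otimes(K^{\otimes m},G)$. The map of rows is $t^*\colon N_J H\to N_{J'}H$ with $H=\Hom^\otimes(K^{\otimes m},G)$. This is not a priori a weak equivalence, since that is essentially what we are trying to prove. So I would instead fix the $J$-degree $n$ and view the \emph{columns} as $N_K\Hom^\otimes(J^{\otimes n},G)\to N_K\Hom^\otimes(J'^{\otimes n},G)$, induced by the digraph map $(t^{\otimes n})^*\colon\Hom^\otimes(J^{\otimes n},G)\to\Hom^\otimes(J'^{\otimes n},G)$. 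To apply Lemma~\ref{lem:N_J_weak_equiv} (with $K\neq I_0$), it suffices to show this digraph map is a homotopy equivalence. I would prove this using $J'\neq I_0$: a shrinking $t\colon J'\to J$ has a digraph homotopy inverse. Choose any monotone digraph map $u\colon J\to J'$ with the same endpoints, for instance by mapping each arrow of $J$ either to a matching arrow of $J'$ or to a constant. Then show that $ut\sim\id_{J'}$ and $tu\sim \id_J$ through homotopies of digraph maps. For this, use the "push toward the neighbour" argument of Proposition~\ref{propa:shrinkings_homo}, now applied unrelatively, so that endpoints may move. Tensoring these homotopies gives homotopy equivalences $J'^{\otimes n}\simeq J^{\otimes n}$, and by Proposition~\ref{prop:box_product_and_hom_preserve_homotopy} also $\Hom^\otimes(J^{\otimes n},G)\simeq\Hom^\otimes(J'^{\otimes n},G)$. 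The subtle point is the case $J=I_0$, where $u$ must be constant. There any two maps $J'\to J'$ are homotopic because $J'$ is contractible: every interval contracts to an endpoint via the homotopy $x\mapsto\min(x,k)$ over an interval. In fact the cleanest route is to observe that every interval is homotopy equivalent to $*$, so every map between intervals is a homotopy equivalence. Hence every $\Hom^\otimes(J^{\otimes n},G)\to\Hom^\otimes(J'^{\otimes n},G)$ is a homotopy equivalence.

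\emph{Second factor.} This is symmetric. Fix the $K$-degree $m$ and look at the rows $N_{J'}\Hom^\otimes(K^{\otimes m},G)\to N_{J'}\Hom^\otimes(K'^{\otimes m},G)$. These are $N_{J'}$ applied to a homotopy equivalence of digraphs, by contractibility of intervals and Proposition~\ref{prop:box_product_and_hom_preserve_homotopy}. They are therefore weak equivalences by Lemma~\ref{lem:N_J_weak_equiv}, which requires $J'\neq I_0$. This is exactly why the hypotheses are $K,J'\neq I_0$: the first factor varies the $J$-slot while nerving in the $K$ direction, and the second varies the $K$-slot while nerving in the $J'$ direction.

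The main obstacle I expect is the diagonal lemma for bicubical sets with connections. I would prove it by induction on skeleta in the fixed direction: $\diag X$ is built from pieces $\Box^m\otimes X_{m,\bullet}$ by pushouts along monomorphisms $\partial\Box^m\otimes X_{m,\bullet}\cup\Box^m\otimes L_m\hookrightarrow\Box^m\otimes X_{m,\bullet}$. Then I would use left properness, the fact that $\Box^m\otimes-$ preserves weak equivalences (via realization, $|X\otimes Y|\cong|X|\times|Y|$), and the gluing lemma. Extra care is needed because latching objects with connections are less standard. If that becomes unwieldy, a fallback is to pass through geometric realization and use the topological realization lemma for bicubical spaces.
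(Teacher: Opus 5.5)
Your argument is the paper's proof: the same factorization $\varphi(s,t)=\varphi(s,\id_{J'})\circ\varphi(\id_K,t)$, columns and rows identified with $N_K$ (resp.\ $N_{J'}$) applied to $\Hom^\otimes(-,G)$ of a map between contractible digraphs, Lemma~\ref{lem:N_J_weak_equiv}, and a diagonal lemma. The one organizational difference is in the second factor. The paper transports it along the swap isomorphism $\phi_{K,J'}\colon N_{K,J'}G\cong\tau N_{J',K}G$, so it only ever uses the columnwise statement \cite[Prop.~3.12]{ACK25}. Your row-wise use is the same thing, since $\diag\tau X=\diag X$ on the nose. Your contractibility argument via $\min\colon J\otimes J\to J$, showing that every map between intervals (hence each $t^{\otimes n}$ and each $\Hom^\otimes(t^{\otimes n},G)$) is a homotopy equivalence, is also fine.

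What you need to fix is your justification of the diagonal lemma. The diagonal $(\diag X)_n=X_{n,n}$ is the coend $\int^{[1]^m}\Box^m\times X_{m,\bullet}$ with the \emph{cartesian} product, not with the geometric product $\otimes$. Take the representable $X_{m,n}=\Box([1]^m,[1]^a)\times\Box([1]^n,[1]^b)$. Its $\otimes$-coend is $\Box^a\otimes\Box^b=\Box^{a+b}$, whereas $\diag X=\Box^a\times\Box^b$. Already for $a=b=1$ these differ: $\Box^1\times\Box^1$ has nine $1$-cubes and $\Box^2$ has eight, because the diagonal $[1]\to[1]^2$ is not a morphism of $\Box$. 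So the skeletal induction you sketch, resting on ``$\Box^m\otimes-$ preserves weak equivalences since $|X\otimes Y|\cong|X|\times|Y|$'', computes the wrong object.

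The induction does go through with $\times$ in place of $\otimes$. The input you then need is that $\Box^m\times-$ and $\partial\Box^m\times-$ preserve weak equivalences, i.e.\ that weak equivalences of cubical sets are closed under finite cartesian products. This is part of the model-structure theorem quoted in the paper. It does not follow from geometric realization, which does not turn $\times$ into the product of spaces. Alternatively, simply cite \cite[Prop.~3.12]{ACK25}, as the paper does.
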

    \begin{proof}
        For any fixed column $n$, we have 
        \begin{equation}
            (N_{K,J}G)_{\bullet,n}=N_{K}(\Hom^{\otimes}(J^{\otimes n},G)).
        \end{equation}
        Therefore the cubical map
        \begin{equation}
            \varphi(\id_K,t)_{\bullet,n}:(N_{K,J}G)_{\bullet,n}\to (N_{K,J'}G)_{\bullet,n}
        \end{equation}
        is obtained by applying $N_K$ on the digraph map
        \begin{equation}
            \Hom^{\otimes}(t^{\otimes n},\id):\Hom^{\otimes}(J^{\otimes n},G)\to \Hom^{\otimes}((J')^{\otimes n},G).
        \end{equation}
        Since $t^{\otimes n}:(J')^{\otimes n}\to J^{\otimes n}$ is a digraph homotopy equivalence, by Proposition~\ref{prop:box_product_and_hom_preserve_homotopy} and the fact that $t:J'\to J$ is a digraph homotopy equivalence, we know that $\Hom^{\otimes}(t^{\otimes n},\id)$ is a digraph homotopy equivalence, then Lemma~\ref{lem:N_J_weak_equiv} implies that $\varphi(\id_K,t)_{\bullet,n}$ is a weak equivalence. Therefore we proved that $\varphi(\id_K,t)$ is a columnwise weak equivalence, then by \cite[Prop. 3.12]{ACK25} we proved that
        \begin{equation}
            \diag \varphi(\id_K,t): \diag N_{K,J}G \to \diag N_{K,J'}G
        \end{equation}
        is a weak equivalence. Similarly, we can show that
        \begin{equation}
            \diag \varphi(\id_{J'},s): \diag N_{J',K}G \to \diag N_{J',K'}G
        \end{equation}
        is a weak equivalence. Let 
        \begin{equation}
            \tau: \Fun(\Box^{\op}\times \Box^{\op},\Set)\to \Fun(\Box^{\op}\times \Box^{\op},\Set)
        \end{equation}
        denote the functor interchanging the two cubical directions, that is
        \begin{equation}
            (\tau N_{J,K}G)_{m,n}=(N_{J,K}G)_{n,m}=\Hom_{\DGra}(J^{\otimes n}\otimes K^{\otimes m},G).
        \end{equation}
        Then there is a canonical isomorphism
        \begin{equation}
            \phi_{K,J}: N_{K,J}G \xrightarrow{\cong} \tau N_{J,K}G
        \end{equation}
        induced by the natural isomorphism $K^{\otimes m}\otimes J^{\otimes n}\cong J^{\otimes n}\otimes K^{\otimes m}$. Therefore we obtain a canonical isomorphism
        \begin{equation}
            \diag \phi_{K,J}: \diag N_{K,J}G \xrightarrow{\cong} \diag\tau N_{J,K}G=\diag N_{J,K}G.
        \end{equation}
        As a result, we proved that the following composition
        \begin{equation}
            \diag \varphi(s,t)=\diag \phi_{J',K'}\circ \diag \varphi(\id_{J'},s)\circ \diag\phi_{K,J'}\circ\diag \varphi(\id_K,t)
        \end{equation}
        is a weak equivalence as desired.
    \end{proof}

    \begin{proposition}\label{prop:shrinking_induces_weak_equiv}
        For any interval $K,J$ with $K,J\neq I_0$, and any shrinking map $s:K\to J$, the induced cubical map
        \begin{equation}
            s^*:N_JG\to N_KG
        \end{equation}
        is anodyne for all digraph $G$.
    \end{proposition}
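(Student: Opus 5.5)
Since every cubical map that is a monomorphism is a cofibration, and anodyne means acyclic cofibration, the proof splits into two independent claims: (a) $s^*\colon N_JG\to N_KG$ is a monomorphism, and (b) $s^*$ is a weak equivalence.

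For (a), note that a shrinking $s\colon K\to J$ is surjective on vertices. Hence $s^{\otimes n}\colon K^{\otimes n}\to J^{\otimes n}$ is surjective on vertices. A digraph map $J^{\otimes n}\to G$ is determined by its values on vertices, so precomposition with $s^{\otimes n}$ is injective on $\Hom(J^{\otimes n},G)$. This holds levelwise, so $s^*$ is a monomorphism.

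For (b), I would reduce to Lemma~\ref{lem:diag_of_shrinking_is_weak_equiv} by comparing the cubical set $N_JG$ with the diagonal of the bicubical set $N_{J,J}G$. There is a natural comparison map $N_JG\to\diag N_{J,J}G$, given in degree $n$ by precomposition with a fixed digraph map $J^{\otimes n}\otimes J^{\otimes n}\to J^{\otimes n}$ compatible with the cocubical structure. The natural candidate is the coordinatewise $\max$ (or $\min$), i.e.\ the connections applied coordinatewise. However, I would rather avoid needing this map to be a weak equivalence directly.

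A cleaner route is to use the square
\[
\begin{tikzcd}
N_JG \ar[r,"s^*"] \ar[d] & N_KG \ar[d] \\
\diag N_{J,J}G \ar[r] & \diag N_{K,K}G
\end{tikzcd}
\]
where the bottom map is $\diag\varphi(s,s)$. That bottom map is a weak equivalence by Lemma~\ref{lem:diag_of_shrinking_is_weak_equiv}, since $K,J\neq I_0$. It then suffices to show that the vertical maps are weak equivalences for every nontrivial interval $J$, and then apply 2-out-of-3.

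To prove the vertical maps are weak equivalences, I would instead use the first-variable degeneracy, i.e.\ the map $\diag N_{J,J}G\to$ (something) induced by the projection $J^{\otimes n}\otimes J^{\otimes n}\to J^{\otimes n}$ onto one factor. This gives a map $N_JG\to\diag N_{J,J}G$ which is natural in $J$ for shrinkings, so the square above commutes. To see it is a weak equivalence, view $N_JG$ as the diagonal of the bicubical set $(m,n)\mapsto\Hom(J^{\otimes n},G)$, which is constant in $m$. The map then becomes a map of bicubical sets which, column by column, is $N_J$ applied to the constant map $\Hom^\otimes(J^{\otimes n},G)\to$ ... In the other direction, row $n$ is the map from the constant cubical set on the discrete set $\Hom(J^{\otimes n},G)$ to $N_J(\Hom^\otimes(J^{\otimes n},G))$. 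This map is not obviously a weak equivalence, so I would swap the roles: fix $n$ as the $J$-nerve direction of $G$, and let the other direction vary. Then $(N_{J,J}G)_{m,\bullet}=N_J\Hom^\otimes(J^{\otimes m},G)$, and the map from $N_JG$ (constant in $m$) is induced by $\Hom^\otimes(J^{\otimes m},G)\leftarrow G$, i.e.\ constant maps. These are digraph homotopy equivalences because $J^{\otimes m}$ is contractible, for instance via the maps $\min$ contracting to $0$. By Lemma~\ref{lem:N_J_weak_equiv} each row map is a weak equivalence, and \cite[Prop.~3.12]{ACK25} then gives a weak equivalence on diagonals. One must check that the diagonal of the bicubical set constant in $m$ is $N_JG$, which is immediate.

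Finally, naturality of these comparison maps with respect to $s$ makes the square commute, and 2-out-of-3 gives (b). The main obstacle I expect is precisely this step of identifying $N_JG$ with the diagonal of a degenerate bicubical set compatibly with connections, and verifying that the comparison is a genuine bicubical map natural in the shrinking $s$. If this fails, I would fall back on showing directly that $|s^*|$ is a homotopy equivalence using the homotopy inverse constructed from a shrinking $J\vee\cdots\to K$, together with Proposition~\ref{propa:shrinkings_homo}.
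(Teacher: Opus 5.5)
Your proof is correct and is essentially the paper's. Injectivity comes from $s$ being surjective on vertices. Then a commuting square compares $s^*$ with a diagonal map of bicubical nerves, whose vertical maps are induced by the projection $J^{\otimes m}\otimes J^{\otimes n}\to J^{\otimes m}$ (or onto the other factor); these are weak equivalences by the levelwise argument with \cite[Prop.~3.12]{ACK25}, and 2-out-of-3 finishes. The differences are cosmetic: the paper's bottom map is $\diag\varphi(s,\id_J)\colon \diag N_{J,J}G\to\diag N_{K,J}G$ rather than your $\diag\varphi(s,s)$, and it uses $N_JG=\diag N_{J,I_0}G$, so the vertical maps are $\diag\varphi(\id_J,*)$ for the shrinking $*\colon J\to I_0$ and follow directly from Lemma~\ref{lem:diag_of_shrinking_is_weak_equiv}, which removes the bookkeeping you flagged as the main obstacle.
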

    \begin{proof}
        It is straightforward to see that $s^*$ is a monomorphism. Therefore it suffices to prove that it is a weak equivalence. Consider the following commutative diagram:
        \begin{equation}
            \begin{tikzcd}[column sep=5em, row sep=2em]
                N_JG \ar[r,"s^*"]\ar[d,"{\varphi(\id_J,*)}"'] & N_KG \ar[d,"{\varphi(\id_K,*)}"] \\
                \diag N_{J,J}G \ar[r,"{\diag \varphi(s,\id_J)}"'] & \diag N_{K,J}G
            \end{tikzcd}
        \end{equation}
        where $*$ stands for the terminal map $J\to I_0$. By Lemma~\ref{lem:diag_of_shrinking_is_weak_equiv} the vertical morphisms and the bottom morphism are weak equivalences, therefore the top morphism $s^*$ is also a weak equivalence.
    \end{proof}

    \begin{theorem}\label{th:N_1toN}
        For any digraph $G$, the canonical map 
        \begin{equation}
            N_1G \to N_\infty G
        \end{equation}
        is anodyne.
    \end{theorem}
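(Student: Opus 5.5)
The plan is to write $N_1G\to N_\infty G$ as a transfinite (here $\omega$-indexed) composite of anodyne maps and then use that anodyne maps, being the acyclic cofibrations of a model structure, are closed under sequential composition. By Definition~\ref{def:infinite_nerve}, $N_\infty G$ is the colimit of the sequence \eqref{N_inf1},
\begin{equation*}
N_1G \xrightarrow{l^*} N_2^{\op}G \xrightarrow{r^*} N_3^{\op}G \xrightarrow{l^*} N_4G \xrightarrow{r^*}\cdots,
\end{equation*}
and the canonical map $N_1G\to N_\infty G$ is the colimit-inclusion of the first term. Every transition map has the form $s^*\colon N_JG\to N_KG$ for a shrinking $s\colon K\to J$ between standard intervals $K=I_{n+1}^{\pm}$ and $J=I_n^{\pm}$. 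Here $n\ge 1$, so neither interval is $I_0$.

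First, I will apply Proposition~\ref{prop:shrinking_induces_weak_equiv} to each transition map. This shows that each $l^*$ and $r^*$ in the sequence is anodyne, that is, a monomorphism and a weak equivalence. The hypotheses $K,J\neq I_0$ hold because the sequence starts at $I_1$.

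Second, I will check that a sequential colimit of anodyne maps is anodyne. The cleanest route is the left lifting property. Anodyne maps are exactly the maps with the left lifting property against Kan fibrations, and this class is closed under transfinite composition. Given a square from $N_1G\to N_\infty G$ to a Kan fibration $p\colon X\to Y$, I will build lifts $N_kG\to X$ inductively, each extending the previous one via the lifting property of the $k$-th transition map, and then assemble them on the colimit. Alternatively, one can argue directly that $N_1G\to N_\infty G$ is a monomorphism, since filtered colimits of monos in $\Set$ are monos, and a weak equivalence, since sequential colimits along monomorphisms are homotopy colimits in $\cSet$ and preserve weak equivalences. Realizations also commute with these colimits, and homotopy groups of a CW complex built as an increasing union are computed as the colimit by compactness.

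I expect the main obstacle to be Proposition~\ref{prop:shrinking_induces_weak_equiv} itself, which is already available. The only point needing care in the present step is confirming that \emph{every} transition map, including the first $l^*\colon N_1G\to N_2^{\op}G$, satisfies the non-triviality hypothesis. It does, since $I_1$ is nontrivial.
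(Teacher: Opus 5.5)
Your proposal is correct and is essentially the paper's proof: the paper also writes $N_\infty G$ as the colimit of the sequence \eqref{N_inf1}, uses Proposition~\ref{prop:shrinking_induces_weak_equiv} to see that each transition map $l^*$, $r^*$ is anodyne, and concludes because anodyne maps are closed under sequential (transfinite) composition. Your extra checks simply make explicit what the paper leaves implicit: that no interval in the sequence is $I_0$, and that closure under composition follows from the left lifting property against Kan fibrations.
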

    \begin{proof}
        By Definition~\ref{def:infinite_nerve}
        \begin{equation}
            N_\infty G =\colim (N_1G \xrightarrow{l^*} N^{\op}_2G \xrightarrow{r^*} N_3^{\op}G \xrightarrow{l^*}N_4 G \xrightarrow{r^*}\cdots).
        \end{equation}
        Therefore, by Proposition~\ref{prop:shrinking_induces_weak_equiv} we deduce that $N_1G \to N_\infty G$ is the composition of the above sequence of anodyne maps, which is also anodyne.
    \end{proof}
    \begin{remark}\label{rem:CK24-gap}
The argument above corrects a gap in the proof of \cite[Th.~4.1(ii)]{CK24}, specifically in the proof of \cite[Th.~4.12, pp.~2888--2889]{CK24}. We use the notation of that paper. Fix $m>0$, and let
\[
X_0=\operatorname{im}\bigl(l^*:N_mG\hookrightarrow N_{m+1}G\bigr).
\]
Write $\langle-\rangle$ for the cubical subset of $N_{m+1}G$ generated by the specified cubes, including their faces, degeneracies, and connections. For $n\geq1$, the relevant stages of the filtration are
\[
X_{n,n}
=
\left\langle
X_0,\;
x\bar\lambda^k_{m,i-1}
\;\middle|\;
\begin{array}{l}
1\leq i\leq k\leq n,\\
x:I_{m+1}^{\otimes i}\otimes I_m^{\otimes(k-i)}\to G
\end{array}
\right\rangle
\]
and
\[
X_{n+1,1}
=
\left\langle
X_{n,n},\;
x\bar\lambda^{n+1}_{m,0}
\;\middle|\;
x:I_{m+1}\otimes I_m^{\otimes n}\to G
\right\rangle.
\]
In particular, the definition of $X_{n+1,1}$ includes the cube $x\bar\lambda^{n+1}_{m,0}$ for every such $x$, without imposing any condition on its distinguished face.

However, for the inclusion $X_{n,n}\hookrightarrow X_{n+1,1}$, the proposed open box attachments are indexed only by
\[
S_{n+1,1}
=
\left\{
x(\operatorname{id}_{I_{m+1}}\otimes l^{\otimes n})
\;\middle|\;
\begin{array}{l}
x:I_{m+1}\otimes I_m^{\otimes n}\to G,\\
x(\operatorname{id}_{I_{m+1}}\otimes l^{\otimes n})
\notin X_{n,n}
\end{array}
\right\}.
\]
The corresponding attaching cube is $x\bar\lambda^{n+1}_{m,0}$, whose distinguished face satisfies
\[
x\bar\lambda^{n+1}_{m,0}\partial_{n+2,1}
=
x(\operatorname{id}_{I_{m+1}}\otimes l^{\otimes n}).
\]
Thus, if this face is outside $X_{n,n}$, then so is the attaching cube. However, the converse need not hold: one can have
\[
x\bar\lambda^{n+1}_{m,0}\notin X_{n,n},
\qquad
x(\operatorname{id}_{I_{m+1}}\otimes l^{\otimes n})
\in X_{n,n}.
\]
Such a cube is included in the definition of $X_{n+1,1}$ but omitted from the proposed attachments. Consequently, the assertion on p.~2889 that these attachments account for all new generators of $X_{n+1,1}$ is false, and the claimed pushout description does not hold in general.
\end{remark}

\subsection{Cubical weak equivalences}

In this subsection we prove the isomorphism $A_*(G)\cong \pi_*(N_\infty G)$ and give several equivalent characterizations of cubical weak equivalences.

\begin{lemma}
\label{lemma:compact}
Let $X$ be a pointed cubical set with finitely many non-degenerate cubes. Then the functor
\begin{equation}
[X,-]_*\colon \cSet_* \longrightarrow \Set_*
\end{equation}
commutes with filtered colimits.
\end{lemma}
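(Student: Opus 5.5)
The plan is to reduce the statement to two standard facts: $X$ is a compact object of $\cSet_*$, and the pointed cylinder $(X\otimes\Box^1)/(\{x_0\}\otimes\Box^1)$ is again a cubical set with finitely many non-degenerate cubes. Throughout, $[X,Y]_*$ is the quotient of $\cSet_*(X,Y)$ by the equivalence relation generated by elementary homotopies. Equivalently, it is the coequalizer in $\Set$ of
\begin{equation}
\cSet_*\bigl(C X, Y\bigr) \rightrightarrows \cSet_*(X,Y),
\end{equation}
where $CX=(X\otimes\Box^1)/(\{x_0\}\otimes\Box^1)$, the two arrows are restriction along the two ends, and we then pass to the generated equivalence relation. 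Note that the coequalizer in $\Set$ of a parallel pair already quotients by the equivalence relation generated by the image pairs.

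First, I show that $X$ is compact in $\cSet$. Since $X$ has finitely many non-degenerate cubes, it is a quotient of a finite coproduct of representables $\coprod_k \Box^{n_k}$. More precisely, $X$ is a finite colimit of representables: a coequalizer of finite coproducts of representables, with the relations coming from faces of the finitely many non-degenerate cubes, which are themselves finitely many since each cube has finitely many faces. Representables are compact by Yoneda, because filtered colimits in $\cSet$ are computed objectwise. Finite colimits of compact objects are compact, since finite limits commute with filtered colimits in $\Set$.

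For the pointed category, I use that $\cSet_*(X,Y)$ is the fiber of $\cSet(X,Y)\to\cSet(\Box^0,Y)$ over the basepoint. Filtered colimits in $\cSet_*$ are computed in $\cSet$, and fibers commute with filtered colimits, so $X$ is compact in $\cSet_*$.

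Second, I check that $CX$ has finitely many non-degenerate cubes. The product $X\otimes\Box^1$ is a colimit of $\Box^{n_k}\otimes\Box^1=\Box^{n_k+1}$ over the finite diagram above, so it is again a finite colimit of representables and hence compact. The quotient by $\{x_0\}\otimes\Box^1$ is a pushout of compact objects, so it is also compact. Hence $\cSet_*(CX,-)$ commutes with filtered colimits as well.

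Finally, colimits commute with colimits, so the coequalizer description gives
\begin{equation}
\colim_i [X,Y_i]_* \cong [X,\colim_i Y_i]_*.
\end{equation}
There is one subtlety, which I expect to be the only real obstacle. Homotopy is not an equivalence relation on the nose for non-Kan targets, so one must use the generated equivalence relation. Taking the coequalizer in $\Set$ handles this automatically, and a filtered colimit of coequalizers is the coequalizer of the filtered colimits.

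The basepoint in $\Set_*$ is preserved, since every object involved is pointed compatibly.
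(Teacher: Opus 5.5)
Your proposal is correct and is essentially the paper's argument: $X$ and the pointed cylinder $(X\otimes\Box^1)/(\{x_0\}\otimes\Box^1)$ are compact in $\cSet_*$, and $[X,Y]_*$ is the coequalizer of the two hom-sets out of them, so filtered colimits pass through the coequalizer. The paper only asserts these compactness facts. You additionally justify them: the finite presentation of $X$, the passage from $\cSet$ to $\cSet_*$ via fibers, and compactness of the cylinder because $-\otimes\Box^1$ preserves colimits and sends representables to representables.
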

\begin{proof}
A pointed cubical set with finitely many non-degenerate cubes is a compact object of $\cSet_*$. The claim then follows from the fact that $[X,Y]_*$ is the coequalizer
\begin{equation}
\label{eq:coeq_homotopy}
\cSet_*\bigl((X\otimes \Box^1)/ (\{x_0\}\otimes \Box^1) ,Y\bigr) \rightrightarrows \cSet_*(X,Y),
\end{equation}
together with the fact that $(X\otimes \Box^1)/ (\{x_0\}\otimes \Box^1)$ is also compact.
\end{proof}

\begin{lemma}
\label{lemma:homotopy_adjunction}
For a pointed cubical set $X$, a pointed digraph $G$ and an integer $m\geq 1$, there is a natural isomorphism
\begin{equation}
[X,N_m G]_* \cong [|X|_m, G]_*.
\end{equation}
\end{lemma}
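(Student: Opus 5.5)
Both sides are quotients of sets of morphisms by the equivalence relation generated by elementary homotopies, so the plan is to match the sets of maps and then the generating relations. The adjunction $|\cdot|_m\dashv N_m$ of Definition~\ref{def:J-nerve} is the tool for both. Since the adjunction is natural, it passes to pointed objects: if $X$ has base point $x_0\colon \Box^0\to X$, then $|x_0|_m\colon *=|\Box^0|_m\to |X|_m$ is a base point of $|X|_m$. A map $X\to N_mG$ then preserves base points iff its adjoint $|X|_m\to G$ does. This gives a bijection $\cSet_*(X,N_mG)\cong \DGra_*(|X|_m,G)$, natural in $X$ and $G$.

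Next I would identify the relations. By the coequalizer description \eqref{eq:coeq_homotopy}, $[X,N_mG]_*$ is the coequalizer of the two maps $\cSet_*(C X,N_mG)\rightrightarrows \cSet_*(X,N_mG)$, where $CX=(X\otimes\Box^1)/(\{x_0\}\otimes\Box^1)$. The functor $|\cdot|_m$ preserves colimits, and by Proposition~\ref{prop:|otimes|_J} it sends $\otimes$ to $\otimes$. Hence
\[
|CX|_m\cong (|X|_m\otimes I_m)/(\{*\}\otimes I_m).
\]
Under this isomorphism the two endpoint inclusions $\partial^{1,0},\partial^{1,1}$ correspond to the inclusions of $|X|_m$ at the vertices $0$ and $m$ of $I_m$. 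Applying the adjunction bijection to $CX$, the left side becomes the coequalizer of
\[
\DGra_*\bigl((|X|_m\otimes I_m)/(\{*\}\otimes I_m),G\bigr)\rightrightarrows \DGra_*(|X|_m,G),
\]
with the two maps given by restriction to the two ends of $I_m$.

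It remains to recognize this as $[|X|_m,G]_*$. A pointed map out of $(|X|_m\otimes I_m)/(\{*\}\otimes I_m)$ is the same as a map $|X|_m\otimes I_m\to G$ that is constant at the base point on $\{*\}\otimes I_m$. In other words, it is an $I_m$-homotopy relative to the base point, i.e.\ a map of pairs $(|X|_m,*)\otimes (I_m,\emptyset)\to (G,g)$. Corollary~\ref{cor:homotopy-as-a-coequaliser}, applied to the nontrivial interval $J=I_m$ (here $m\geq 1$ is used) and the pairs $(|X|_m,*)$ and $(G,g)$, identifies this coequalizer with $[(|X|_m,*),(G,g)]=[|X|_m,G]_*$. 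Naturality in $X$ and $G$ is inherited from the naturality of every identification used.

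The step needing the most care is the identification of the pointed cylinder $|CX|_m$ with the reduced cylinder digraph, including the check that the cubical endpoint maps go to the endpoint inclusions $s,t$ of $I_m$. There is also a mismatch in shape to handle. In Corollary~\ref{cor:homotopy-as-a-coequaliser} the relevant domain is $(G\otimes J, H\otimes J)$ with $H=*$, where the homotopy only has to send $\{*\}\otimes J$ into the subdigraph $\{g\}$. In the quotient description the homotopy instead factors through the collapse of $\{*\}\otimes J$. These two conditions say the same thing once one uses that a map into the single vertex $g$ is constant, so the quotient digraph, computed via ${\sf d}$ and Remark~\ref{rem:pushout_along_inclusion}, corepresents exactly these homotopies.
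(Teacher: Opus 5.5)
Your proposal is correct and follows essentially the same route as the paper. You transport the pointed cylinder $(X\otimes\Box^1)/(\{x_0\}\otimes\Box^1)$ through $|\cdot|_m$ to $(|X|_m\otimes I_m)/(\{x_0\}\otimes I_m)$, then combine the adjunction $|\cdot|_m\dashv N_m$ with the coequalizer description \eqref{eq:coeq_homotopy} and Corollary~\ref{cor:homotopy-as-a-coequaliser} for $J=I_m$. Your extra checks are details the paper leaves implicit, and you handle them correctly: that base points and the endpoint maps $\partial^{1,\varepsilon}$ correspond, and that pointed maps out of the reduced cylinder are exactly homotopies of pairs into $(G,g)$.
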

\begin{proof}
We have $|X\otimes \Box^1|_m \cong |X|_m \otimes I_m$, and since $|-|_m$ is a left adjoint,
\begin{equation}
|(X\otimes \Box^1)/ (\{x_0\}\otimes \Box^1)|_m \cong (|X|_m \otimes I_m)/(\{x_0\}\otimes I_m).
\end{equation}
The claim now follows from the adjunction $|-|_m \dashv N_m$, together with the descriptions of $[X,Y]_*$ as the coequalizer~\eqref{eq:coeq_homotopy} and of $[H,G]_*$ as the coequalizer
\begin{equation}
\DGra_*\bigl((H \otimes I_m)/ (\{h_0\}\otimes I_m) ,G\bigr) \rightrightarrows \DGra_*(H,G)
\end{equation}
(Corollary~\ref{cor:homotopy-as-a-coequaliser}).
\end{proof}

\begin{proposition}\label{prop:A_n=pi_n}
For any pointed digraph $G$ and any $n\geq 0$, there is a natural isomorphism
\begin{equation}
A_n(G)\cong \pi_n(N_\infty G).
\end{equation}
\end{proposition}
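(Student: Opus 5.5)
The plan is to compute both sides as the same filtered colimit. Since $N_\infty G$ is a Kan complex (Theorem~\ref{thm:kancomplex}), we have $\pi_n(N_\infty G)=[S^n,N_\infty G]_*$, where $S^n=\Box^n/\partial\Box^n$ is pointed by the image of the boundary. The cubical set $S^n$ has finitely many non-degenerate cubes. We compute $N_\infty G$ by the direct sequence \eqref{N_inf4},
\[
N_\infty G=\colim\bigl(N_4G\xrightarrow{(c^2)^*}N_8G\xrightarrow{(c^2)^*}\cdots\bigr),
\]
and apply Lemma~\ref{lemma:compact} to obtain
\[
\pi_n(N_\infty G)\cong \underset{k}{\colim}\,[S^n,N_{4k}G]_*.
\]
Note that the individual $N_{4k}G$ need not be Kan; this does not matter, since $[-,-]_*$ is defined purely via the cylinder coequalizer and Lemma~\ref{lemma:compact} is stated at that level.

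Next we use Lemma~\ref{lemma:homotopy_adjunction} to rewrite each term: $[S^n,N_{4k}G]_*\cong[|S^n|_{4k},G]_*$. Since $|-|_{4k}$ preserves colimits, $|S^n|_{4k}$ is the pushout of $*\leftarrow|\partial\Box^n|_{4k}\to I_{4k}^{\otimes n}$. We must identify $|\partial\Box^n|_{4k}$ with the induced subdigraph $\partial(I_{4k}^{\otimes n})$. The realization of the union of faces is a colimit of subcubes glued along intersections, and the colimit in $\DGra$ of these induced subdigraphs of $I_{4k}^{\otimes n}$ is their union; one checks that no new arrows or identifications arise, using the reflector description \eqref{eq:colim_d}. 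Hence $|S^n|_{4k}\cong S^n_{4k}$, and pointed maps $S^n_{4k}\to G$, together with pointed homotopies, correspond exactly to maps of pairs $(I_{4k},\partial I_{4k})^{\otimes n}\to G$ and homotopies relative to the boundary. This gives
\[
[S^n,N_{4k}G]_*\cong[(I_{4k},\partial I_{4k})^{\otimes n},G].
\]
One point needs care here: the pointed homotopy in Lemma~\ref{lemma:homotopy_adjunction} is an $I_m$-homotopy of the quotient, and we must match it with relative homotopy of pairs via Corollary~\ref{cor:homotopy-as-a-coequaliser}.

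The transition maps are induced by $c^2$, and naturality of the adjunction carries them to $(c^2)^*$. Therefore
\[
\pi_n(N_\infty G)\cong\underset{k}{\colim}\,[(I_{4k},\partial I_{4k})^{\otimes n},G],
\]
which is precisely the fourth sequence in the corollary following \eqref{eq:colim_C}. That sequence computes $A_n(G)$ using the nice subcategory of Example~\ref{example:nice_subcategories}(6). Naturality in $G$ is clear at every step.

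I expect the main obstacle to be the identification $|S^n|_{4k}\cong S^n_{4k}$ as pointed digraphs, together with the matching of the two homotopy relations: cubical pointed homotopy through the quotient versus relative homotopy of pairs. For $n=0$ the statement is simply $\pi_0$ of vertices modulo arrows, which is immediate.
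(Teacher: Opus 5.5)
Your proposal is correct and follows the paper's proof. The paper argues the same way: compactness of $S^n$ (Lemma~\ref{lemma:compact}) applied to the sequence \eqref{N_inf4}, then Lemma~\ref{lemma:homotopy_adjunction} to rewrite each term as $[S^n_{4m},G]_*$, and finally the colimit description of $A_n(G)$ over the nice subcategory of Example~\ref{example:nice_subcategories}(6). The only difference is that you spell out the identification $|S^n|_{4m}\cong S^n_{4m}$ and the passage to relative homotopy classes of pairs, whereas the paper leaves these implicit by invoking the corollary $A_n(G)\cong\colim_{J\in\SS^\op}[S^n_J,G]_*$ directly.
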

\begin{proof}
Since $S^n= \Box^n/\partial \Box^n$ has finitely many non-degenerate cubes, Lemmas~\ref{lemma:compact} and~\ref{lemma:homotopy_adjunction} give
\begin{equation}
[S^n,N_\infty G]_*  \cong \underset{m}{\colim}\:  [S^n,N_{4m}G]_*  \cong \underset{m}\colim \: [S^n_{4m}, G]_*.
\end{equation}
Therefore $\pi_n(N_\infty G) \cong A_n(G)$.
\end{proof}

\begin{definition}[Cubical weak equivalence]
A digraph map $\varphi\colon G\to G'$ is a \emph{cubical weak equivalence} if, $A_0(G)\to A_0(G')$ is a bijection, and for every vertex $g$ of $G$ and every $n\geq 1$, the map $A_n(G,g)\to A_n(G',\varphi(g))$ is an isomorphism.
\end{definition}

\begin{proposition}
\label{prop:cubical_weak_equivalences}
Let $\varphi\colon G\to G'$ be a digraph map. Then the following are equivalent:
\begin{enumerate}
    \item $\varphi$ is a cubical weak equivalence;
    \item $N_\infty \varphi$ is a weak equivalence of cubical sets;
    \item $N_1\varphi$ is a weak equivalence of cubical sets.
\end{enumerate}
\end{proposition}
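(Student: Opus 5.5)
The plan is to prove (2)$\Leftrightarrow$(3) by two-out-of-three and (1)$\Leftrightarrow$(2) by comparing homotopy groups of Kan complexes.

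For (2)$\Leftrightarrow$(3), consider the commutative square formed by $N_1\varphi$, $N_\infty\varphi$ and the canonical maps $N_1G\to N_\infty G$ and $N_1G'\to N_\infty G'$. It commutes because $N_1\to N_\infty$ is the colimit inclusion, which is natural in $G$. By Theorem~\ref{th:N_1toN} both horizontal maps are anodyne, hence weak equivalences. Two-out-of-three in the model structure on $\cSet$ then gives that $N_1\varphi$ is a weak equivalence if and only if $N_\infty\varphi$ is.

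For (1)$\Leftrightarrow$(2), Theorem~\ref{thm:kancomplex} says $N_\infty G$ and $N_\infty G'$ are Kan complexes. So Corollary~\ref{cor:weak_equivalences_of_kan} applies: $N_\infty\varphi$ is a weak equivalence if and only if it induces a bijection on $\pi_0$ and isomorphisms on $\pi_n$ at every basepoint $x_0\in (N_\infty G)_0$ for all $n\geq 1$. Proposition~\ref{prop:A_n=pi_n} gives natural isomorphisms $A_n(G,g)\cong\pi_n(N_\infty G,g)$. Naturality is with respect to pointed digraph maps, so these isomorphisms are compatible with $\varphi$. This includes $n=0$, where $\pi_0$ is taken as a pointed set, and $A_0(G)$ with basepoint forgotten identifies with $\pi_0(N_\infty G)$.

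The remaining gap is about basepoints. Condition (3) of Corollary~\ref{cor:weak_equivalences_of_kan} ranges over all vertices $x_0$ of $N_\infty G$, while cubical weak equivalence ranges over vertices $g$ of $G$. First I will check that $(N_\infty G)_0=\colim_J\Hom(J^{\otimes 0},G)=V(G)$, since $J^{\otimes 0}=*$ for every $J$. So the two sets of basepoints coincide, and the equivalence follows.

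I expect the main obstacle to be naturality bookkeeping. One point is that $A_0(G)$, defined as a pointed set, agrees with $\pi_0(N_\infty G)$ regardless of basepoint. The other is that the isomorphism in Proposition~\ref{prop:A_n=pi_n}, built from Lemmas~\ref{lemma:compact} and~\ref{lemma:homotopy_adjunction}, is natural in the pointed digraph. Both follow from the naturality of the adjunction $|-|_m\dashv N_m$ and of colimits, so this step should be routine.
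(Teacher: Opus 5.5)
Your proposal is correct and follows the paper's proof exactly: (2)$\Leftrightarrow$(3) comes from Theorem~\ref{th:N_1toN} plus two-out-of-three, and (1)$\Leftrightarrow$(2) comes from Proposition~\ref{prop:A_n=pi_n} together with Corollary~\ref{cor:weak_equivalences_of_kan}. Your explicit check that $(N_\infty G)_0=V(G)$ shows the basepoints in the two conditions range over the same set, a point the paper leaves implicit.
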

\begin{proof}
$(1)\Leftrightarrow (2)$. This follows from Proposition~\ref{prop:A_n=pi_n} and the fact that a map of Kan complexes is a weak equivalence if and only if it induces a
bijection on $\pi_0$ and isomorphisms on all higher homotopy groups at every
basepoint (Corollary~\ref{cor:weak_equivalences_of_kan}).

$(2)\Leftrightarrow (3)$. This follows from Theorem~\ref{th:N_1toN} and the two-out-of-three property for weak equivalences.
\end{proof}

\begin{corollary}
\label{cor:transf_comp}
Cubical weak equivalences satisfy the two-out-of-three property and are closed under (possibly infinite) coproducts.
\end{corollary}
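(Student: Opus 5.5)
Both claims will be deduced from Proposition~\ref{prop:cubical_weak_equivalences}, which says that a digraph map $\varphi$ is a cubical weak equivalence if and only if $N_1\varphi$ is a weak equivalence of cubical sets. This reduces everything to properties of $N_1$ and of weak equivalences in the model structure on $\cSet$.

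For the two-out-of-three property, take composable digraph maps $\varphi\colon G\to G'$ and $\psi\colon G'\to G''$. Since $N_1$ is a functor, $N_1(\psi\varphi)=N_1\psi\circ N_1\varphi$. Weak equivalences in any model category satisfy two-out-of-three, so if two of $N_1\varphi$, $N_1\psi$, $N_1(\psi\varphi)$ are weak equivalences, then so is the third. Translating back through Proposition~\ref{prop:cubical_weak_equivalences} gives the claim for cubical weak equivalences.

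For coproducts, let $\varphi_\alpha\colon G_\alpha\to G'_\alpha$ be a family of cubical weak equivalences and consider $\coprod_\alpha\varphi_\alpha$. The key step is that $N_1$ preserves arbitrary coproducts of digraphs: $N_1(\coprod_\alpha G_\alpha)\cong\coprod_\alpha N_1G_\alpha$. To check this, first note that a coproduct in $\DGra$ is the disjoint union. It is computed in $\Quiv$ and is already a digraph, so ${\sf d}$ does nothing. Next, $(N_1G)_n=\Hom(I_1^{\otimes n},G)$, and $I_1^{\otimes n}$ is nonempty and connected. A map from a connected digraph into a disjoint union therefore lands in exactly one summand. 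This gives the required bijection $\Hom(I_1^{\otimes n},\coprod_\alpha G_\alpha)\cong\coprod_\alpha\Hom(I_1^{\otimes n},G_\alpha)$, and it is natural in $[1]^n\in\Box$. Hence $N_1(\coprod_\alpha\varphi_\alpha)\cong\coprod_\alpha N_1\varphi_\alpha$. By Proposition~\ref{prop:cubical_weak_equivalences} each $N_1\varphi_\alpha$ is a weak equivalence, so Corollary~\ref{cor:coproducts_of_weak_equivalences} shows the coproduct is a weak equivalence. Applying Proposition~\ref{prop:cubical_weak_equivalences} once more finishes the argument.

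The only point needing care is the coproduct-preservation of $N_1$, which rests on the connectedness of the cubes $I_1^{\otimes n}$. This includes the case $n=0$, where $I_1^{\otimes 0}$ is the one-point digraph $*$, which is trivially connected.
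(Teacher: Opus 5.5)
Your proof is correct and follows essentially the same route as the paper. The paper likewise reduces both claims, via Proposition~\ref{prop:cubical_weak_equivalences}, to the corresponding properties of weak equivalences of cubical sets (including Corollary~\ref{cor:coproducts_of_weak_equivalences}) together with the fact that $N_1$ preserves coproducts. The only difference is that you justify that coproduct preservation, using the connectedness of the cubes $I_1^{\otimes n}$ and the fact that coproducts in $\DGra$ are disjoint unions, whereas the paper simply asserts it.
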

\begin{proof}
This follows from the corresponding properties of weak equivalences of cubical sets (Corollary~\ref{cor:coproducts_of_weak_equivalences}) and the fact that $N_1$ preserves filtered colimits and coproducts.
\end{proof}

\section{\bf Finite homotopy limits}
\label{sec:Finite homotopy limits}

 In this section, we describe finite limits of the $\infty$-category $\DGra_\infty$ by introducing a structure of a category of fibrant objects on $\DGra$, similar to \cite[Th.~5.9]{CK24}, whose weak equivalences are cubical weak equivalences and whose fibrations are the so-called ``cubical fibrations.'' As a corollary, we obtain that $\DGra_\infty$ admits all finite limits and that the derived cubical nerve functor $N:\DGra_\infty \to \Spc$ preserves finite limits. To provide examples of cubical fibrations, we show that the 2-coverings introduced in \cite{DIMZ24} are cubical fibrations.

\subsection{Cubical fibrations} In this subsection we introduce cubical fibrations of digraphs and prove that the maps $G\to PG \to G\times G$ give a factorization of the diagonal map into a cubical weak equivalence followed by a cubical fibration.   

\begin{definition}[Cubical fibration of digraphs] 
A digraph map $p:G\to H$ is called cubical fibration, if $N_\infty p :N_\infty G\to N_\infty H$ is a fibration of cubical sets. 
\end{definition}

\begin{proposition}
A digraph map $p : G\to H$ is a cubical fibration if and only if, for any commutative square 
\begin{equation}
\begin{tikzcd}
{|\sqcap^n_{i,\varepsilon}|_{4m}}
\ar[r]
\ar[d,hookrightarrow]
& G 
\ar[d]
\\
I_{4m}^{\otimes n} 
\ar[r]
& H
\end{tikzcd}
\end{equation}
there exists $k\geq 0$ and a map $I^{\otimes n}_{4(m+k)}\to G$ such that the diagram 
\begin{equation}
\begin{tikzcd}
{|\sqcap^n_{i,\varepsilon}|_{4(m+k)}}
\ar[r,"c^{2k}_*"]
\ar[d,hookrightarrow]
&
{|\sqcap^n_{i,\varepsilon}|_{4m}}
\ar[r]
\ar[d,hookrightarrow]
& G 
\ar[d]
\\
I_{4(m+k)}^{\otimes n}
\ar[r,"c^{2k}_*"']
\ar[rru,dashed]
& I_{4m}^{\otimes n} 
\ar[r]
& H
\end{tikzcd}
\end{equation}
is commutative. 
\end{proposition}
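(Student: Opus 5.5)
This is an adjunction and compactness argument, modelled on the proof of Theorem~\ref{thm:kancomplex}. By definition, $p$ is a cubical fibration if and only if $N_\infty p$ has the right lifting property with respect to every horn inclusion $\sqcap^n_{i,\varepsilon}\hookrightarrow\Box^n$, since these generate the acyclic cofibrations. So the task is to translate a lifting problem
\begin{equation*}
\sqcap^n_{i,\varepsilon}\to N_\infty G,\qquad \Box^n\to N_\infty H
\end{equation*}
into digraph terms. I will use the description \eqref{N_inf4} of $N_\infty$ as the sequential colimit of the $N_{4m}$ along $(c^2)^*$.

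\emph{Reduction to finite stages.} Both $\sqcap^n_{i,\varepsilon}$ and $\Box^n$ have finitely many non-degenerate cubes, so they are compact in $\cSet$. Hence
\begin{equation*}
\Hom(\sqcap^n_{i,\varepsilon},N_\infty G)\cong\colim_m\Hom(\sqcap^n_{i,\varepsilon},N_{4m}G),
\end{equation*}
and similarly for $\Box^n$ and for $H$. A commutative square in $\cSet$ with target $N_\infty p$ is therefore represented, for some $m$, by maps $\sqcap^n_{i,\varepsilon}\to N_{4m}G$ and $\Box^n\to N_{4m}H$. These commute after passing to $N_\infty H$, and hence commute already at some later stage $N_{4m'}H$. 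Enlarging $m$, I may assume the square commutes at stage $4m$. By the adjunction $|\cdot|_{4m}\dashv N_{4m}$, such a square corresponds exactly to a square of digraphs as in the statement, with $|\Box^n|_{4m}=I_{4m}^{\otimes n}$. Conversely, every digraph square as in the statement gives a cubical square at stage $4m$, and hence one into $N_\infty p$.

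\emph{Lifts correspond to the dashed maps.} A lift $\Box^n\to N_\infty G$ is represented by some $\Box^n\to N_{4(m+k)}G$, that is, a map $I_{4(m+k)}^{\otimes n}\to G$. The two triangle conditions hold in $N_\infty$, so they hold after further enlarging $k$ (by compactness again, and because the transition maps are monomorphisms). Transporting along the adjunction, the upper triangle becomes commutativity with $c^{2k}_*$ restricted to $|\sqcap^n_{i,\varepsilon}|_{4(m+k)}$. The lower triangle becomes commutativity with $I_{4(m+k)}^{\otimes n}\xrightarrow{c^{2k}_*}I_{4m}^{\otimes n}\to H$.

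To check this I need that the composite transition map $N_{4m}\to N_{4(m+k)}$ is induced by $(c^2)^k=c^{2k}\colon I_{4(m+k)}\to I_{4m}$. I also need that $|\cdot|$ of this transition, restricted to the horn, is $c^{2k}_*$; this holds by naturality of the adjunction in the interval. Conversely, given the dashed map, its adjoint composed into $N_\infty G$ is a lift.

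\emph{Main obstacle.} The delicate point is bookkeeping: I must make sure that equalities which hold only in the colimit can be realized at a single finite stage, and that the transition maps are exactly $(c^{2k})^*$ on both $G$ and $H$. The hypothesis on $p$ in the statement allows $k\geq0$ arbitrary, and monomorphicity of $s^*$ (Proposition~\ref{prop:shrinking_induces_weak_equiv}) means that equality in $N_\infty$ already implies equality at every finite stage. Once this is in place, the equivalence is formal in both directions.
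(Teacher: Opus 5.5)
Your proposal is correct and follows the same route as the paper. Compactness of $\sqcap^n_{i,\varepsilon}$ and $\Box^n$ reduces lifting problems against $N_\infty p$ to a finite stage $N_{4m}$, and the adjunction $|\cdot|_{4m}\dashv N_{4m}$, with the composite transition map identified as $(c^{2k})^*$, turns them into the stated digraph condition; your extra bookkeeping (monomorphic transition maps, so that commutativity in $N_\infty$ already holds at each finite stage) just fills in details the paper leaves implicit.
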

\begin{proof}
Since $\sqcap^n_{i,\varepsilon}$ and $\Box^n$ are compact objects in $\cSet$ and $N_\infty G = \colim\:  N_{4m} G$, we obtain that $N_\infty p : N_\infty G\to N_\infty H$ is a fibration if and only if, for any commutative square 
\begin{equation}
\begin{tikzcd}
{\sqcap^n_{i,\varepsilon}}
\ar[r]
\ar[d,hookrightarrow]
& N_{4m}G 
\ar[d]
\\
\Box^n  
\ar[r]
& N_{4m} H,
\end{tikzcd}
\end{equation}
there is $k\geq 0$ and  a map $\Box^n \to N_{4(m+k)}G$ such that the diagram 
\begin{equation}
\begin{tikzcd}
{\sqcap^n_{i,\varepsilon}}
\ar[r]
\ar[d,hookrightarrow]
& N_{4m}G \ar[r,"{(c^{2k})^*}"] \ar[d] & N_{4(m+k)} G 
\ar[d]
\\
\Box^n  
\ar[r]
\ar[rru,dashed]
& N_{4m} H 
\ar[r,"{(c^{2k})^*}"']
& N_{4(m+k)} H
\end{tikzcd}
\end{equation}
is commutative. Then the result follows from the adjunction $|\cdot|_{m} \dashv N_m$. 
\end{proof}

\begin{notation}
For a digraph $G$ and its vertex $g$, we denote by 
\begin{equation}
P G = P_{\sf St} G, \hspace{1cm} \Omega (G,g) = \Omega_{\sf St} (G,g)
\end{equation}
the path and loop digraphs associated with the standard nice subcategory of shrinkings. 
\end{notation}

\begin{proposition}
\label{prop:G-PG-GxG}
For a digraph $G$, the maps   
\begin{equation}
G \longrightarrow P G \longrightarrow G\times G
\end{equation}
give a factorization of the diagonal map into a cubical weak equivalence followed by a cubical fibration. 
\end{proposition}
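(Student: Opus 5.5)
First I would fix the maps. The map $G\to PG$ sends $g$ to the constant map at $g$ in $\Hom^\otimes(J,G)$ (compatible with all shrinkings). The map $(p_0,p_1)\colon PG\to G\times G$ evaluates at the endpoints, and the composite is the diagonal. The plan is to apply $N_\infty$ throughout and reduce both claims to statements about cubical sets. This works because $N_\infty$ preserves finite limits (Proposition~\ref{prop:N-inf-fin-limits}), so $N_\infty(G\times G)\cong N_\infty G\times N_\infty G$.

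\textbf{Weak equivalence.} The map $G\to PG$ has a retraction $p_0\colon PG\to G$. It suffices to show that $p_0$ is a cubical weak equivalence and then use two-out-of-three (Corollary~\ref{cor:transf_comp}). For each $J$, the map $G\to\Hom^\otimes(J,G)$ is a digraph homotopy equivalence with inverse $\mathrm{ev}_0$. A homotopy from $\id$ to (constant)$\circ\mathrm{ev}_0$ is given by contracting $J$ to $0$: use $\min\colon J\otimes J\to J$ (or $\max$, depending on the orientation of the arrows) to get $\Hom^\otimes(J,G)\otimes J\to\Hom^\otimes(J,G)$, $(\alpha,j)\mapsto \alpha(\min(-,j))$. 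This should be checked to be a digraph map; it is, because $\min$ is a map $J\otimes J\to J$. By Lemma~\ref{lem:N_J_weak_equiv}, $N_1$ carries homotopy equivalences to weak equivalences, so each $N_1\Hom^\otimes(J,G)\to N_1G$ is a weak equivalence. The cubical set $N_1 PG$ is the filtered colimit of $N_1\Hom^\otimes(J,G)$, since $N_1$ commutes with filtered colimits ($I_1^{\otimes n}$ is finite). Weak equivalences of cubical sets are stable under filtered colimits, since homotopy groups of realizations commute with them. Hence $N_1(p_0)$ is a weak equivalence, and Proposition~\ref{prop:cubical_weak_equivalences} concludes.

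\textbf{Fibration.} We must show $N_\infty PG\to N_\infty G\times N_\infty G$ is a Kan fibration. Using Proposition~\ref{prop:N_J_hom_L}, $N_J\Hom^\otimes(K,G)\cong\Hom_R(\Box^1, N_JG)$ when $K=J$. Passing to the diagonal colimit over ${\sf St}$ should identify $N_\infty PG$ with $\colim_J \Hom_R(\Box^1,N_JG)$. I would rather argue directly via the lifting criterion in the preceding proposition. Consider a lifting problem: a map $|\sqcap^n_{i,\varepsilon}|_{4m}\to \Hom^\otimes(K,G)$ for some $K=I_{4m'}$ (enlarge so $m=m'$ by cofinality), together with compatible endpoint data $I_{4m}^{\otimes n}\to G\times G$. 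By adjunction this becomes a map from
\[
|\sqcap^n_{i,\varepsilon}|_{4m}\otimes I_{4m}\ \cup\ I_{4m}^{\otimes n}\otimes \partial I_{4m}\ \subseteq\ I_{4m}^{\otimes(n+1)}
\]
to $G$. This subdigraph is the realization of an open box $\sqcap^{n+1}_{i,\varepsilon}$: the faces in the last direction are present, and only face $(i,\varepsilon)$ is missing. Lemma~\ref{lem:extension}, in the form used in the proof of Theorem~\ref{thm:kancomplex}, gives an extension after precomposing with $c^{2k}$. Adjointing back yields a map $I^{\otimes n}_{4(m+k)}\to\Hom^\otimes(I_{4(m+k)},G)$ with the required commutativity, noting that $c^{2k}$ fixes endpoints.

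\textbf{Main obstacle.} The hard part is the bookkeeping in the fibration step. The colimit defining $PG$ and the colimit defining $N_\infty$ are indexed separately, so the path interval $K$ and the cube interval $I_{4m}$ must be equalized. A lift produced by applying $c^{2k}$ in all coordinates also shrinks the path direction. I must therefore check that the shrunken path still represents the same vertex of $PG$, i.e.\ is the image under the transition map of ${\sf St}$. I expect this to hold because $c^2$ is a morphism of ${\sf St}$ (a composite of $l$'s and $r$'s), and that compatibility with the endpoint maps is automatic.
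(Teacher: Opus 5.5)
Your proposal is correct, but it argues differently from the paper.

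\textbf{The paper's route.} The paper proves a single structural identification. It commutes $N_{4m}$ with the sequential colimit defining $PG$, restricts the double colimit to the diagonal, and applies Proposition~\ref{prop:N_J_hom_L} to get $N_\infty PG\cong \colim_m \Hom_R(\Box^1,N_{4m}G)\cong \Hom_R(\Box^1,N_\infty G)$. So $N_\infty$ carries $G\to PG\to G\times G$ to the path-object factorization $X\to \Hom_R(\Box^1,X)\to X\times X$ of the Kan complex $X=N_\infty G$. Both halves then follow at once from \cite[Prop.~2.22]{CK23}.

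\textbf{Your route.} You split the two halves.
\begin{enumerate}
\item For the weak equivalence, you use the stagewise digraph homotopy equivalences $\Hom^\otimes(J,G)\simeq G$ given by the $\min$-contraction. You then apply Lemma~\ref{lem:N_J_weak_equiv} for $N_1$ and invariance of weak equivalences under filtered colimits. This works directly with $N_1$ and finite stages, using only that ${\sf St}$ is cofiltered.
\item For the fibration, you verify the lifting criterion by hand. The adjoint of a lifting problem is an extension problem along $|\sqcap^{n+1}_{i,\varepsilon}|_{4m}\subseteq I_{4m}^{\otimes(n+1)}$. This is the digraph form of $\sqcap^n\otimes\Box^1\cup\Box^n\otimes\partial\Box^1=\sqcap^{n+1}$, and Lemma~\ref{lem:extension} solves it with $k=2m$.
\end{enumerate}

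\textbf{Your bookkeeping concern.} It does work out. Let $\alpha$ be the given map $|\sqcap^n_{i,\varepsilon}|_{4m}\to\Hom^\otimes(I_{4m},G)$. On $|\sqcap^n_{i,\varepsilon}|_{4(m+k)}$ the lift sends $v$ to $\alpha(c^{2k}v)\circ c^{2k}$. This is exactly the image of $\alpha(c^{2k}v)$ under the ${\sf St}$-transition map, because $c^{2k}\colon I_{4(m+k)}\to I_{4m}$ is the unique morphism of ${\sf St}$ between these objects. Moreover $c^{2k}$ sends $0\mapsto 0$ and $4(m+k)\mapsto 4m$, which gives compatibility with the endpoint data in $G\times G$.

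\textbf{Comparison.} In effect you reprove, at the level of digraphs, the part of the path-object lemma that the paper cites. Your route is more self-contained. The paper's diagonal-colimit isomorphism absorbs all of this bookkeeping in a few lines, and it also yields a reusable description of $N_\infty PG$.
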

\begin{proof}
By Proposition \ref{prop:N_J_hom_L} we have an isomorphism 
\begin{equation}
N_{4m}(\Hom^\otimes(I_{4m},G)) \cong \Hom_R(\Box^1,N_{4m}G).
\end{equation}
Therefore, 
\begin{align}
N_\infty (P G) &=  \colim_m\:  N_{4m} ( \colim_k \Hom^\otimes(I_{4k},G) )\\
& \cong \colim_m \colim_k\:  N_{4m} ( \Hom^\otimes(I_{4k},G) ) \\
&\cong \colim_m\:  N_{4m}(\Hom^\otimes(I_{4m},G))\\
&\cong  \colim_m\: \Hom_R(\Box^1,N_{4m}G)\\
&\cong \Hom_R( \Box^1, N_\infty G ). 
\end{align}
It follows that after applying $N_\infty$ to the maps $G\to PG \to G\times G$ we obtain the factorization of the diagonal map 
\begin{equation}
X \longrightarrow \Hom_R(\Box^1, X) \longrightarrow X\times X, 
\end{equation}
into a weak equivalence followed by a fibration \cite[Prop.2.22]{CK23}, where $X=N_\infty G$.
\end{proof}

\subsection{Category of fibrant objects} In this subsection we show that the category $\DGra$ equipped with the classes of cubical weak equivalences and cubical fibrations is a category of fibrant objects. Using this we prove that $\DGra_\infty$ admits finite limits and that the functor $N:\DGra_\infty \to \Spc$ preserves finite limits.  

\begin{definition}[$\infty$-category with weak equivalences and fibrations]
\label{def:inf-cat_fib}
Following Cisinski  \cite[Def. 7.4.6; 7.4.12]{Cis19}, we define an $\infty$-category with weak equivalences and fibrations as an $\infty$-category $C$ equipped with two classes of morphisms $W$ and $F$, called weak equivalences and fibrations, such that the following holds. 
\begin{enumerate}
\item $W$ and $F$ contain all equivalences and closed under composition.
\item $C$ has a terminal object $1$\\ (an object $x$ is called fibrant if a map $x\to 1$ is a fibration).
\item $W$ satisfies $2$-out-of-$3$ property. 
\item For any fibrant objects $x,y,y'$ any fibration $f:x\to y$ and any morphism $v:y'\to y$ the pullback $x\times_y y'$ exists and the map $f ' :  x \times_y y' \to y'$ is a fibration. Moreover, if $f\in F \cap W$, then $f'\in F\cap W.$
\item Any map $u:x\to y$, where $y$ is fibrant, factors as a weak equivalence $w: x\to x'$ followed by a fibration $f:x'\to y.$ 
\end{enumerate}
Morphisms from $W\cap F$ are called acyclic fibrations.  

If $C$ is an ordinary category equipped with two classes of morphisms $W$ and $F$, we say that it is a category with weak equivalences and fibrations if its nerve $\Ner(C)$ is an $\infty$-category of weak equivalences and fibrations. 

An $\infty$-category  with weak equivalences and cofibrations is defined dually. 
\end{definition}

\begin{definition}[$\infty$-category of fibrant objects]
An $\infty$-category of fibrant objects is an $\infty$-category with weak equivalences and fibrations such that all its objects are fibrant \cite[Def. 7.5.7]{Cis19}. An (ordinary) category of fibrant objects is a category with weak equivalences and fibrations such that all its objects are fibrant. Note that any $\infty$-category with finite limits can be equipped with a structure of the category of fibrant objects, where weak equivalences are equivalences and all morphisms are fibrations. This structure will be called the trivial structure of a category of fibrant objects. An $\infty$-category of cofibrant objects is defined dually. 
\end{definition}

The following lemma is well-known but we could not find a reference in the $\infty$-categorical setting, so we give it with a proof. 

\begin{lemma}[Factorization lemma, cf. {\cite{Bro73}}]
\label{lemma:factorisation_lemma}
Let $C$ be an $\infty$-category  equipped with two classes of morphisms $W$ and $F$ satisfying axioms (1)-(4) from  Definition \ref{def:inf-cat_fib}. Assume that all objects are fibrant.  Then $C$ is an $\infty$-category of fibrant objects if and only if the following condition is  satisfied: 
\begin{itemize}
    \item[(5')] for any  object $y$ the diagonal map $y\to y\times y$ factors as a weak equivalence $w: y\to p$ followed by a fibration $f:p\to y\times y$.  
\end{itemize}
Moreover, for a map $u:x\to y$ the factorization of $u$ to a weak equivalence  followed by a fibration  is defined by morphisms 
\begin{equation}
x \xrightarrow{w_u} p_u \xrightarrow{{\sf pr}_2 f_u} y
\end{equation}
where $w_u:x\to p_u$ and $f_u :p_u\to x\times y$ are defined by the diagram 
\begin{equation}
\label{eq:p_u}
\begin{tikzcd}
x
\ar[r,"u"]  
\ar[rdd,"{(\id_x,u)}"']
\ar[rd,dashed,"w_u"]
& 
y 
\ar[rd,"w"]
\\
& p_u 
 \ar[r]
 \ar[d,"f_u"]
 \arrow[dr, phantom, "\lrcorner",  very near start]
 & 
 p 
 \ar[d,"f"]
 \\ 
 &
 x\times y 
 \ar[r,"u\times \id_y"]
 & 
 y\times y
\end{tikzcd}
\end{equation}
where $p_u$ is the pullback. 
\end{lemma}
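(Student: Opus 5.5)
The ``only if'' direction is immediate. Given (5), apply it to the diagonal $\Delta:y\to y\times y$. The target $y\times y$ is fibrant: the projection $y\times y\to y$ is the pullback of the fibration $y\to 1$ along $y\to 1$, so it is a fibration by (4), and composing with $y\to 1$ gives a fibration by (1). This yields (5').

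For the ``if'' direction, fix $u:x\to y$ and build $p_u$ as the pullback in \eqref{eq:p_u}. This pullback exists and $f_u$ is a fibration by axiom (4), since $f$ is a fibration and all objects are fibrant. The map $w_u$ is induced by the universal property: $f\circ w\circ u=\Delta\circ u=(u\times\id_y)\circ(\id_x,u)$. Composing $w_u$ with ${\sf pr}_2 f_u$ gives ${\sf pr}_2(\id_x,u)=u$, as required. The map ${\sf pr}_2:x\times y\to y$ is a fibration, being the pullback of $x\to 1$ along $y\to 1$. Hence ${\sf pr}_2f_u$ is a fibration by (1).

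It remains to show that $w_u$ is a weak equivalence, which is the main step. Following Brown, I would first show that ${\sf pr}_1 f:p\to y$ is an acyclic fibration. It is a fibration, being the composite of $f$ with the fibration ${\sf pr}_1$. It is a weak equivalence by 2-out-of-3 (3), since ${\sf pr}_1 f w=\id_y$ and $w\in W$.

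Next, the composite ${\sf pr}_1 f_u:p_u\to x$ is the pullback of ${\sf pr}_1 f:p\to y$ along $u$. To check this, paste the pullback square \eqref{eq:p_u} with the square
\begin{equation}
\begin{tikzcd}
x\times y \ar[r,"u\times\id_y"] \ar[d,"{\sf pr}_1"'] & y\times y \ar[d,"{\sf pr}_1"]\\
x \ar[r,"u"'] & y
\end{tikzcd}
\end{equation}
which is itself a pullback. The pasting lemma for pullbacks in $\infty$-categories then shows that the outer rectangle is a pullback. By (4), ${\sf pr}_1 f_u$ is therefore an acyclic fibration. Finally, ${\sf pr}_1 f_u\circ w_u={\sf pr}_1(\id_x,u)=\id_x$, so 2-out-of-3 gives $w_u\in W$.

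The only delicate point is the $\infty$-categorical bookkeeping. We need the pullback $p_u$ together with a specific homotopy-coherent cone, so that $w_u$ is well defined up to a contractible choice, and we need the pasting law for pullbacks. Both are standard (e.g.\ Lurie, HTT 4.4.2.1), and all identities above are to be read as equivalences in the homotopy category, which is enough for axioms (1)--(4).
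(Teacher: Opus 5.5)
Your proposal is correct and follows the same route as the paper: you show $\mathsf{pr}_1 f$ is an acyclic fibration by 2-out-of-3 using $w$, identify $\mathsf{pr}_1 f_u$ as its pullback along $u$, and conclude that $w_u$ is a weak equivalence by 2-out-of-3 applied to $\mathsf{pr}_1 f_u\, w_u\simeq \id_x$. The only difference is that you spell out details the paper leaves implicit, namely the pasting argument for that pullback square and why the product projections are fibrations.
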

\begin{proof} The implication $(5)\Rightarrow (5')$ is immediate. For the inverse direction, since all objects are fibrant, a product of any two objects $x\times y$ exists and projections ${\sf pr}_1 : x\times y\to x$ and ${\sf pr}_2: x\times y \to y$ are fibrations. In the diagram \eqref{eq:p_u} the map  $f_u:p_u\to x\times y$ is a fibration.  Therefore, the map ${\sf pr}_2 f_u : p_u \to y$ is a fibration. Note that the square 
\begin{equation}
\begin{tikzcd}
 p_u  
 \ar[r]
 \ar[d,"{\sf pr}_1 f_u"']
 \arrow[dr, phantom, "\lrcorner",  very near start]
 & 
 p 
 \ar[d,"{\sf pr}_1 f"]
 \\ 
 x 
 \ar[r,"u"]
 & 
 y
\end{tikzcd}
\end{equation}
is also a pullback. Using the 2-out-of-3 property and the fact that $w$ is a weak equivalence, and the homotopy ${\sf pr}_1 fw\simeq \id_y$, we obtain that ${\sf pr}_1 f$ is an acyclic fibration. 
Therefore, ${\sf pr}_1 f_u$ is also an acyclic fibration. Using the 2-out-of-3 property again, and the homotopy ${\sf pr}_1 f_u w_u\simeq \id_x$,  we obtain that $w_u$ is a weak equivalence. Therefore the map $u:x\to y$ factors as a weak equivalence $w_u:x\to p_u$ followed by a fibration ${\sf pr}_2 f_u:p_u\to y$ as required. 
\end{proof}

\begin{definition}[Left exact functor] Let $C$ and $D$ be $\infty$-categories of weak equivalences and fibrations. 
A functor $F:C\to D$ is called left exact \cite[Def. 7.5.2]{Cis19}, if
\begin{enumerate}
    \item it preserves the terminal object;
    \item it preserves fibrations and acyclic fibrations between fibrant objects; 
    \item it preserves pullbacks along fibrations between fibrant objects.
\end{enumerate} 
\end{definition}

\begin{proposition}
\label{prop:fibrant_obj}
The category $\DGra$ equipped with classes of cubical weak equivalences and cubical fibrations is a category of fibrant objects. Moreover, the functor $N_\infty : \DGra \to \cSet$ is left exact. 
\end{proposition}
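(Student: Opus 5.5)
The plan is to check axioms (1)--(4) of Definition~\ref{def:inf-cat_fib} for $\Ner(\DGra)$ directly, and then to obtain axiom (5) from the Factorization Lemma~\ref{lemma:factorisation_lemma} via condition (5'), which is supplied by Proposition~\ref{prop:G-PG-GxG}. The guiding principle is that both classes are defined through $N_\infty$. A map $\varphi$ is a cubical weak equivalence iff $N_\infty\varphi$ is a weak equivalence (Proposition~\ref{prop:cubical_weak_equivalences}), and it is a cubical fibration iff $N_\infty\varphi$ is a Kan fibration. Moreover, $N_\infty$ preserves finite limits (Proposition~\ref{prop:N-inf-fin-limits}), so most axioms are transported from the model structure on $\cSet$.

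The axioms are checked in order.
\begin{enumerate}
\item Isomorphisms of digraphs go to isomorphisms of cubical sets, which are both weak equivalences and Kan fibrations. Closure under composition follows from functoriality of $N_\infty$.
\item $\DGra$ has the one-point digraph $*$ as terminal object. Since $N_\infty(*)$ is terminal in $\cSet$, the map $N_\infty G\to N_\infty(*)$ is a Kan fibration by Theorem~\ref{thm:kancomplex}. Hence every digraph is fibrant.
\item Two-out-of-three holds by Corollary~\ref{cor:transf_comp}.
\item $\DGra$ is complete, so pullbacks exist. Since $N_\infty$ preserves pullbacks, $N_\infty$ of a pullback of $p$ is the pullback in $\cSet$ of $N_\infty p$. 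Kan fibrations and acyclic Kan fibrations are stable under pullback in any model category, so the base change is again a cubical fibration, and an acyclic one when $p$ is.
\end{enumerate}

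For (5'), Proposition~\ref{prop:G-PG-GxG} factors the diagonal $G\to G\times G$ as a cubical weak equivalence $G\to PG$ followed by a cubical fibration $PG\to G\times G$. Lemma~\ref{lemma:factorisation_lemma} then yields (5). One subtlety: that lemma is stated for $\infty$-categories, but the homotopies ${\sf pr}_1 f w\simeq \id$ used there are identities here, since $p_0$ composed with the constant-path map is the identity. So the argument applies verbatim to the ordinary category $\DGra$.

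Left exactness of $N_\infty$ is then immediate:
\begin{itemize}
\item it preserves the terminal object;
\item it preserves fibrations and acyclic fibrations by the very definitions of the two classes (using Proposition~\ref{prop:cubical_weak_equivalences} for the weak equivalences);
\item it preserves pullbacks because it preserves all finite limits.
\end{itemize}
The target $\cSet$ carries the structure of a category with weak equivalences and fibrations coming from its model structure.

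I expect the only real content to be the factorization axiom, and that is already done by Proposition~\ref{prop:G-PG-GxG} together with the Factorization Lemma. The one point to check carefully is that Cisinski's definition for the nerve of an ordinary category asks for nothing beyond these strict conditions, for instance pullbacks in the $\infty$-categorical sense. This holds because ordinary limits in $\DGra$ are limits in $\Ner(\DGra)$.
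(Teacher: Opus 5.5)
Your proposal is correct and follows the same route as the paper. Axioms (1)--(4) are transported along the finite-limit-preserving functor $N_\infty$, and Theorem~\ref{thm:kancomplex} makes every digraph fibrant. Axiom (5) comes from Lemma~\ref{lemma:factorisation_lemma} applied to the factorization of Proposition~\ref{prop:G-PG-GxG}, and left exactness is read off from the definitions. Your extra remarks are correct details the paper leaves implicit: the homotopy ${\sf pr}_1 f w\simeq\id$ is a strict identity here, and ordinary pullbacks in $\DGra$ are pullbacks in $\Ner(\DGra)$.
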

\begin{proof} Let us prove that $\DGra$ is a category of fibrant objects. 
(1) Cubical weak equivalences and cubical fibrations contain all isomorphisms and closed under compositions. 
(2) $\DGra$ has a terminal object $*$. 
(3) Cubical weak equivalences satisfy 2-out-of-3 property. 
(4) The functor $N_\infty : \DGra \to \cSet$ preserves pullbacks (Proposition \ref{prop:N-inf-fin-limits}). Therefore the fact that cubical fibrations and acyclic cubical fibrations are closed under pullbacks follows from the fact that fibrations and acyclic fibrations of cubical sets are closed under pullbacks. (5')  Let us use Lemma \ref{lemma:factorisation_lemma}. For any digraph $G$, $N_\infty G$ is a Kan complex (Theorem \ref{thm:kancomplex}). It follows that any digraph is fibrant.  Then the assumption of Lemma \ref{lemma:factorisation_lemma} follows from Proposition  \ref{prop:G-PG-GxG}. 

The fact that $N_\infty$ is left exact follows from Propositions \ref{prop:N-inf-fin-limits} and \ref{prop:cubical_weak_equivalences} and the definition of cubical fibrations.   
\end{proof}

\begin{definition}[$\infty$-category of digraphs]
The $\infty$-category of digraphs is defined as the  localization (in the sense of $\infty$-categories) by the class of cubical weak equivalences $W_{\sf cub}$
\begin{equation}
\DGra_\infty = \DGra[W_{\sf cub}^{-1}].
\end{equation}
We will treat $\DGra_\infty$ as an $\infty$-category with the trivial structures of an $\infty$-category of fibrant and cofibrant objects. Further we identify the $\infty$-category of spaces with the localization (in the sense of $\infty$-categories) of the category of cubical sets 
\begin{equation}
\Spc = \cSet[W^{-1}].
\end{equation}
\end{definition}
\begin{definition}[Derived cubical nerve functor]
Since the functor $N_1:\DGra\to \cSet$ sends cubical weak equivalences to weak equivalences of cubical sets, it induces a functor of $\infty$-categories that we denote by 
\begin{equation}
\label{eq:nerve-inf}
N : \DGra_\infty \longrightarrow \Spc
\end{equation}
and call the derived cubical nerve functor. Note that, since $N_1 G \to N_\infty G$ is anodyne, $N$ can be also described as a functor induced by $N_\infty$.
\end{definition}

\begin{proposition}\label{prop:N_is_conservative}
    The derived cubical nerve $N:\DGra_\infty\to \Spc$ is conservative.
\end{proposition}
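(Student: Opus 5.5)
The plan is to reduce conservativity to a statement about the underlying $1$-categories and maps in $\DGra$. Let $f\colon G\to H$ be a morphism in $\DGra_\infty$ such that $N(f)$ is an equivalence in $\Spc$. The goal is to show that $f$ is an equivalence in $\DGra_\infty$.

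The first step is to represent $f$ by a zigzag of actual digraph maps. Proposition~\ref{prop:fibrant_obj} says $\DGra$ is a category of fibrant objects with every object fibrant. By the standard description of morphisms in the localization of a category of fibrant objects (Brown's calculus of right fractions, or its $\infty$-categorical version in \cite[7.5]{Cis19}), every morphism $G\to H$ in $\DGra_\infty$ is equivalent to a span $G\xleftarrow{w} G'\xrightarrow{\varphi} H$. Here $w$ is an acyclic cubical fibration and $\varphi$ is a digraph map, so $f\simeq \varphi\circ w^{-1}$.

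Since $w$ becomes an equivalence in $\DGra_\infty$, $f$ is an equivalence if and only if the image of $\varphi$ is. Likewise $N(w)$ is an equivalence, so $N(f)$ is an equivalence if and only if $N(\varphi)$ is. This reduces the problem to the case where $f$ is the image of a genuine digraph map $\varphi$.

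For such $\varphi$, the functor $N$ is induced by $N_1$ (equivalently by $N_\infty$). So $N(\varphi)$ is the image of $N_1\varphi$ in $\Spc=\cSet[W^{-1}]$, and we need that $N(\varphi)$ being an equivalence forces $N_1\varphi$ to be a weak equivalence of cubical sets. This uses that the localization $\cSet\to\cSet[W^{-1}]$ of a model category detects weak equivalences: the homotopy category of $\cSet[W^{-1}]$ is ${\sf Ho}(\cSet)$, and a map inverted in ${\sf Ho}$ of a model category is a weak equivalence (saturation, \cite[Th.~1.2.10]{Hov07}). Then Proposition~\ref{prop:cubical_weak_equivalences} shows that $\varphi$ is a cubical weak equivalence, so its image in $\DGra_\infty$ is an equivalence.

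The main obstacle is the first step: making rigorous the claim that every morphism of $\DGra_\infty$ is represented by a right fraction with an acyclic fibration as the backward leg. The cleanest route is to cite Cisinski's result that for an $\infty$-category of fibrant objects the localization is computed by such spans, together with saturation. If that citation is awkward, an alternative is to test conservativity on the homotopy category. A morphism in an $\infty$-category is an equivalence if and only if it is an isomorphism in the homotopy category, and the homotopy category of $\DGra_\infty$ is Brown's homotopy category of the category of fibrant objects, where every morphism is $\varphi w^{-1}$ by Brown's factorization lemma (Lemma~\ref{lemma:factorisation_lemma}). With either route, the remaining steps are routine.
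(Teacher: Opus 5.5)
Your proposal is correct and follows essentially the same route as the paper. The paper likewise represents the morphism by a span $G\xleftarrow{\sim}K\to H$ via the right calculus of fractions coming from the fibrant-object structure (Proposition~\ref{prop:fibrant_obj}), reduces by two-out-of-three to a genuine digraph map, and concludes using $N\gamma\simeq LN_1$, saturation of weak equivalences of cubical sets, and Proposition~\ref{prop:cubical_weak_equivalences}. Taking the backward leg to be an acyclic fibration rather than just a cubical weak equivalence is only a cosmetic difference.
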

\begin{proof}
    Let $\gamma:\DGra\to\DGra_\infty$ denote the localization functor, and let
$
\alpha:\gamma G\to \gamma H$
be a morphism such that $N(\alpha)$ is an equivalence. By Proposition~\ref{prop:fibrant_obj}, we know that $\DGra$ is a category of fibrant objects, therefore the acyclic fibrations determine a right calculus of fractions for cubical weak equivalences \cite[Prop.~7.4.13 and Cor.~7.2.18]{Cis19}. By \cite[Th.~7.2.8 and Rem.~7.2.10]{Cis19}, there exists a span of digraph maps
\[
G\xleftarrow[\sim]{s}K\xrightarrow{f}H,
\]
with $s\in W_{\sf cub}$, such that 
$
\alpha\simeq\gamma(f)\circ\gamma(s)^{-1}$ 
in \(\DGra_\infty(\gamma G,\gamma H)\). Then
\[
N(\alpha)\simeq
N(\gamma(f))\circ N(\gamma(s))^{-1}.
\]
By the two-out-of-three property, we deduce that $N(\gamma(f))$ is an equivalence. Let $L:\cSet\to \Spc$ be the localization functor. Since $N\gamma\simeq LN_1$ and the weak equivalences of cubical sets are saturated, $N_1f$ is a weak equivalence. Proposition~\ref{prop:cubical_weak_equivalences} therefore implies that $f\in W_{\sf cub}$. It follows that $\gamma(f)$ is an equivalence, and therefore $\alpha$ is an equivalence. Thus $N$ is conservative.
\end{proof}

\begin{theorem}
The $\infty$-category $\DGra_\infty$ admits finite limits and the localization functor 
$\DGra \to \DGra_\infty$
is a left exact functor of $\infty$-categories of fibrant objects. Moreover, the functor $N:\DGra_\infty \to \Spc$ preserves finite limits. 
\end{theorem}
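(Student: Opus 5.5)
The plan is to deduce everything from Proposition~\ref{prop:fibrant_obj} together with Cisinski's general theory of $\infty$-categories of fibrant objects \cite[\S7.5]{Cis19}.

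The key input is the standard result for an $\infty$-category of fibrant objects $C$ with weak equivalences $W$: the localization $C[W^{-1}]$ has finite limits, and the localization functor $C\to C[W^{-1}]$ is left exact \cite[Th.~7.5.18]{Cis19}. Here $C[W^{-1}]$ carries the trivial structure. Concretely:
\begin{itemize}
\item the terminal object of $C$ stays terminal;
\item pullbacks along fibrations in $C$ are sent to pullbacks in $C[W^{-1}]$;
\item every finite limit in $C[W^{-1}]$ is computed by replacing maps by fibrations, using the factorization axiom, and then taking the strict pullback in $C$.
\end{itemize}
Applying this to $C=\Ner(\DGra)$, with the structure of Proposition~\ref{prop:fibrant_obj}, gives the first two assertions.

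For the last assertion I would use the functoriality statement in \cite[Prop.~7.5.6 / Th.~7.5.18]{Cis19}. A left exact functor between $\infty$-categories of fibrant objects that preserves weak equivalences induces a finite-limit-preserving functor between the localizations. By Proposition~\ref{prop:fibrant_obj}, the functor $N_\infty\colon\DGra\to\cSet$ is left exact. It lands in the full subcategory $\cSet^{\sf f}$ of Kan complexes, which is a category of fibrant objects with Kan fibrations and weak equivalences. It also preserves weak equivalences by Proposition~\ref{prop:cubical_weak_equivalences}. Hence it induces a left exact functor $\DGra_\infty\to\cSet^{\sf f}[W^{-1}]$.

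It remains to identify the target with $\Spc$. The inclusion $\cSet^{\sf f}\subseteq\cSet$ induces an equivalence $\cSet^{\sf f}[W^{-1}]\simeq\cSet[W^{-1}]=\Spc$, since it has a homotopy inverse given by fibrant replacement in the model category. The induced functor agrees with $N$, because $N_1G\to N_\infty G$ is anodyne (Theorem~\ref{th:N_1toN}). Since equivalences preserve finite limits, $N$ preserves finite limits.

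The main obstacle is bookkeeping rather than mathematics. One must check that Cisinski's hypotheses match Definition~\ref{def:inf-cat_fib}, in particular that $\Ner(\DGra)$ with cubical fibrations satisfies his axioms exactly. One must also make sure that the finite limits in $\Spc$ are those computed through the fibrant-object structure on Kan complexes. For the latter I would cite that the localization of a category of fibrant objects underlying a model category computes homotopy limits \cite[Th.~7.5.18 and Cor.~7.6.x]{Cis19}.
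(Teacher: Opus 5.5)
Your proposal is correct and follows essentially the same route as the paper. The paper deduces the theorem in one line from Proposition~\ref{prop:fibrant_obj} together with Cisinski's results on localizations of $\infty$-categories of fibrant objects and on left exact functors between them \cite[Prop.~7.5.6, Prop.~7.5.28]{Cis19}. Your restriction of $N_\infty$ to Kan complexes, followed by the identification of the induced functor with $N$ via the anodyne maps $N_1G\to N_\infty G$, simply makes explicit a step the paper leaves implicit.
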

\begin{proof}
It follows from Proposition \ref{prop:fibrant_obj} and \cite[Prop.~7.5.6, Prop.~7.5.28]{Cis19}.
\end{proof}

\begin{example}
For any pointed digraph $G=(G,g)$, the square
\begin{equation}
\label{eq:Omega_homotopy_pullback}
\begin{tikzcd}
\Omega G
\ar[r]
\ar[d]
\arrow[dr, phantom, "\lrcorner",  very near start]
& 
* 
\ar[d]
\\
* 
\ar[r]
& G
\end{tikzcd}
\end{equation}
is a pullback square in $\DGra_\infty$. Indeed, consider the diagram, 
\begin{equation}
\label{eq:P_gG}
\begin{tikzcd}
\Omega G
\ar[r]
\ar[d]
\arrow[dr, phantom, "\lrcorner",  very near start]
& 
P_g G 
\ar[r]
\ar[d]
\arrow[dr, phantom, "\lrcorner",  very near start]
& 
PG 
\ar[d]
\\
* 
\ar[r,"g"]
& 
G 
\ar[r,"g \times \id"]
& 
G\times G 
\end{tikzcd}
\end{equation}
where all squares are pullbacks (Lemma  \ref{lemma:Omega_pullback}). By Lemma \ref{lemma:factorisation_lemma} the map $*\to P_g G$ is a cubical weak equivalence and the map $P_gG\to G$ is a cubical fibration. Then applying the left exact localization functor $\DGra \to \DGra_\infty$,  we obtain that the square \eqref{eq:Omega_homotopy_pullback} is a pullback square.  
\end{example} 

\subsection{Long exact sequence}  

In this subsection we show that the main result of \cite{LWYZ24} about the long exact sequence of cubical homotopy groups follows from our results. 

\begin{proposition}
\label{cor:long_exact}
Let  
\begin{equation}
\begin{tikzcd}
F 
\ar[r]
\ar[d]
\arrow[dr, phantom, "\lrcorner",  very near start]
& 
T 
\ar[d]
\\
G 
\ar[r]
& 
H
\end{tikzcd}
\end{equation}
be a pullback square in $\DGra$, where $T$ is a digraph such that $T\to *$ is a cubical weak equivalence. Assume that either $T\to H$ or $G\to H$ is a cubical fibration. Then, for any consistent choice of basepoints, there is a long exact sequence
\begin{equation}
\dots \to 
    A_1(F)\to A_1(G)\to A_1(H) \to A_0(F) \to A_0(G)\to A_0(H). 
\end{equation}
\end{proposition}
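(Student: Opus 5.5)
The plan is to apply the left exact functor $N_\infty$ (Proposition~\ref{prop:fibrant_obj}) and reduce to the long exact sequence of a Kan fibration of cubical sets. By Proposition~\ref{prop:N-inf-fin-limits}, $N_\infty$ preserves finite limits. Hence the square
\begin{equation}
\begin{tikzcd}
N_\infty F \ar[r]\ar[d] & N_\infty T\ar[d]\\
N_\infty G\ar[r] & N_\infty H
\end{tikzcd}
\end{equation}
is a pullback of cubical sets. All four objects are Kan complexes by Theorem~\ref{thm:kancomplex}. By definition of a cubical fibration, one of the two maps into $N_\infty H$ is a Kan fibration. By Proposition~\ref{prop:cubical_weak_equivalences}, $N_\infty T\to *$ is a weak equivalence, so $N_\infty T$ is a contractible Kan complex.

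Next I would pass to topological spaces, or equivalently work in the proper model category $\cSet$. Since $\cSet$ is right proper, a strict pullback along a fibration is a homotopy pullback. In either case (fibration being $T\to H$ or $G\to H$), the square is therefore a homotopy pullback, and we may replace $N_\infty T$ by the point. So $N_\infty F$ is weakly equivalent to the homotopy fiber of $N_\infty G\to N_\infty H$ over the basepoint. This is the point that needs the most care. Concretely, I would take the basepoint $t\in T$ giving $h\in H$, $g\in G$, $f\in F$ consistently, and identify the fibre over $h$ with a homotopy fibre using right properness together with the contractibility of $N_\infty T$. Applying geometric realization, which preserves these weak equivalences by Corollary~\ref{cor:weak_equiv} and Theorem~\ref{th:quillen_equivalence}, gives a homotopy fibre sequence of spaces $|N_\infty F|\to|N_\infty G|\to|N_\infty H|$. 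Alternatively, one can argue in $\DGra_\infty$: the localization functor is left exact, so the square is a pullback in $\DGra_\infty$ with $T\simeq *$, and $N$ preserves finite limits; thus $NF$ is the fibre of $NG\to NH$ in $\Spc$.

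Finally, the standard long exact sequence of homotopy groups of a fibre sequence of pointed spaces, ending in pointed sets at $\pi_0$, gives
\[
\dots\to\pi_1|N_\infty F|\to\pi_1|N_\infty G|\to\pi_1|N_\infty H|\to\pi_0|N_\infty F|\to\pi_0|N_\infty G|\to\pi_0|N_\infty H|.
\]
Using Theorem~\ref{th:homotopy:of:topological} and Proposition~\ref{prop:A_n=pi_n}, we have natural isomorphisms $\pi_n|N_\infty X|\cong\pi_n(N_\infty X)\cong A_n(X)$, compatible with the maps induced by digraph maps by naturality. Substituting these gives the claimed sequence. The main obstacle is the homotopy-pullback identification, in particular handling the case where only $T\to H$ is a fibration. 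There I would use symmetry of right-proper pullbacks, or simply the $\infty$-categorical argument via the theorem on finite limits of $\DGra_\infty$, which treats both cases uniformly.
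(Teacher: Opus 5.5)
Your proposal is correct, though your main route differs from the paper's proof. Your fallback argument is exactly what the paper does: the localization $\DGra\to\DGra_\infty$ is left exact, so the square is a pullback in $\DGra_\infty$ with $T\simeq *$. Then $N$ preserves finite limits, so $NF\to NG\to NH$ is a fibre sequence in $\Spc$, and one concludes with the long exact sequence for spaces and $A_*\cong\pi_*(N(-))$.

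Your main route works at the point-set level instead:
\begin{enumerate}
\item $N_\infty$ preserves pullbacks.
\item All four $N_\infty$-images are Kan complexes and one leg is a Kan fibration, so the strict pullback is a homotopy pullback in $\cSet$ (by right properness, or simply because everything is fibrant).
\item Contractibility of $N_\infty T$ identifies $N_\infty F$ with the homotopy fibre.
\end{enumerate}
This is essentially the content the paper packages into Proposition~\ref{prop:fibrant_obj} together with Cisinski's results on categories of fibrant objects. Your version is more elementary and self-contained. The paper's version is a two-line consequence of the already-proved fact that $N$ preserves finite limits, and it treats both fibration cases uniformly without further comment.

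One small point in your direct route. The fact that realization preserves weak equivalences does not by itself carry a homotopy pullback in $\cSet$ to one in $\mathsf{Top}$. For that you need that $|\cdot|$ is part of a Quillen equivalence, so its derived functor preserves homotopy limits. Alternatively, skip realization and note that a homotopy pullback in $\cSet$ becomes a pullback in $\Spc=\cSet[W^{-1}]$.

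Also, the basepoint $t\in T$ does not determine $g$. It is cleaner to choose $f\in F$ and take its images in $G$, $T$ and $H$.
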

\begin{proof} 
Since either $G\to H$ is a cubical fibration, or $T\to H$ is a cubical fibration, we obtain that the image of this square in $\DGra_\infty$ is a pullback. Since $N: \DGra_\infty \to \Spc$ preserves pullbacks, applying $N$ to this square we obtain a pullback in the $\infty$-category $\Spc$, where $NT\simeq *$. Therefore $NF \to NG \to NH$ is a fiber sequence.  Then the result follows from the long sequence of homotopy groups for spaces and the isomorphism $A_*(-)\cong \pi_*(N(-))$.
\end{proof}

\begin{definition}
 For any pointed digraphs $G=(G,g)$ and $H=(H,h)$, and a pointed digraph map $\varphi:G\to H$, we define  $P_{\varphi}$ as a pullback 
 \begin{equation}
 \begin{tikzcd}
P_{\varphi}  
\ar[r]
\ar[d]
\arrow[dr, phantom, "\lrcorner",  very near start]
& 
P_h H 
\ar[d]
\\
G   
\ar[r,"\varphi"]
& 
H
\end{tikzcd}
\end{equation}  
where $P_h H$ is defined in \eqref{eq:P_gG}. Note that by Lemma \ref{lemma:factorisation_lemma}  the map $* \to P_h H$ is a cubical weak equivalence and the map $P_hH\to H$ is a cubical fibration. 
\end{definition}
\begin{corollary}[{\cite[Th.1.1]{LWYZ24}}]
\label{cor:long_exact_sequence}
For any pointed digraphs $G=(G,g)$ and $H=(H,h)$ and a pointed digraph map $\varphi: G\to H$ there is a long exact sequence
\begin{equation}
\dots \to 
    A_1(P_{\varphi})\to A_1(G)\to A_1(H) \to A_0(P_{\varphi}) \to A_0(G)\to A_0(H). 
\end{equation}
\end{corollary}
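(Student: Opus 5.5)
The plan is to apply Proposition~\ref{cor:long_exact} to the pullback square defining $P_\varphi$, taking $T=P_hH$ and $F=P_\varphi$. This means checking three hypotheses and then identifying basepoints.

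First, the square
\begin{equation}
\begin{tikzcd}
P_{\varphi}
\ar[r]
\ar[d]
&
P_h H
\ar[d]
\\
G
\ar[r,"\varphi"]
&
H
\end{tikzcd}
\end{equation}
is a pullback in $\DGra$ by definition.

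Second, I need $P_hH\to *$ to be a cubical weak equivalence. As noted after the definition of $P_\varphi$, the map $*\to P_hH$ is a cubical weak equivalence by Lemma~\ref{lemma:factorisation_lemma}. Applied to $u=h:*\to H$, that lemma gives the factorization $*\xrightarrow{w_u}p_u\to H$ with $p_u=P_hH$ and $w_u$ a weak equivalence. The composite $*\to P_hH\to *$ is the identity, so the two-out-of-three property (Corollary~\ref{cor:transf_comp}) shows that $P_hH\to *$ is a cubical weak equivalence.

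Third, I need $P_hH\to H$ to be a cubical fibration. This is again given by Lemma~\ref{lemma:factorisation_lemma}: the map ${\sf pr}_2 f_u$ is a fibration. This uses Proposition~\ref{prop:fibrant_obj}, which makes $\DGra$ a category of fibrant objects, so the lemma applies. Here one should check that the pullback $p_u$ in the lemma agrees with $P_hH$ as defined in \eqref{eq:P_gG} via $g\times\id$ and $PG\to G\times G$. They agree up to swapping factors, and the fibration property is insensitive to this twist.

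It remains to choose basepoints consistently. I would take $h\in H$, $g\in G$, and in $P_hH$ the image of $*$ under $w_u$, which is the constant path at $h$. In $P_\varphi$ I take the pair consisting of $g$ and the constant path at $h=\varphi(g)$; it lies in the pullback because both components map to $h$. With these choices Proposition~\ref{cor:long_exact} yields the stated exact sequence.

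The only delicate point, which is mild, is the identification of $P_hH$ with the lemma's $p_u$, so that its weak-equivalence and fibration conclusions apply verbatim.
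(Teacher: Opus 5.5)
Your proposal is correct and is exactly the argument the paper intends: the corollary is Proposition~\ref{cor:long_exact} applied to the defining square of $P_\varphi$ with $T=P_hH$, using the remark after the definition of $P_\varphi$ (via Lemma~\ref{lemma:factorisation_lemma}) that $*\to P_hH$ is a cubical weak equivalence and $P_hH\to H$ is a cubical fibration, together with two-out-of-three to get $P_hH\to *$. One small correction: no twist of factors is needed, because for $x=*$ and $u=h$ the map $u\times\id_y$ is literally $h\times\id\colon H\to H\times H$, so $p_u=P_hH$ on the nose.
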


\subsection{2-covering digraphs} 

In this subsection we recall the definition of $\ell$-covering digraphs introduced in \cite{DIMZ24} and show that $2$-covering digraphs  are cubical fibrations. As a corollary we obtain that for a 2-covering $p:G\to H$ we have an isomorphism $A_n(G,g)\cong A_n(H,p(g))$ for $n\geq 2$. 

\begin{definition}[$1$-covering digraph]
A digraph map $p:G\to H$ is called $1$-covering, if for any commutative square 
\begin{equation}
\begin{tikzcd}
* 
\ar[r]
\ar[d,"i_k"]
& 
G 
\ar[d,"p"]
\\
I_1 
\ar[r]
& 
H
\end{tikzcd}
\end{equation}
where $k\in \{0,1\}$, there exists a unique map $I_1 \to G$ making the diagram commutative.
\end{definition}
\begin{definition}[Directed distance]
For two vertices $g,g'$ of a digraph $G$, we denote by ${\sf dist}(g,g')$ the infimum of lengths of paths from $g$ to $g'$.  Note that for any digraph map $\varphi:G\to H$ and any $g,g'\in G$ we have 
\begin{equation}
\dist( \varphi(g), \varphi( g')) \leq \dist(g,g').    
\end{equation} 
\end{definition}  

\begin{definition}[$\ell$-covering digraph] 
For a digraph $G$ and $k\geq 1$, we denote by $D_k(G)$ the digraph with the same set of vertices such that there is an arrow $g\to g'$ in $D_k(G)$ if there is a path of length at most $k$ from $g$ to $g'$ in $G$.
\begin{equation}
E(D_k(G)) = \{(g,g')\mid \dist(g,g')\leq k\}.    
\end{equation}
For $\ell\geq 1$, a digraph map $p:G\to H$ is called $\ell$-covering, if the induced map $D_k(p):D_k(G)\to D_k(H)$ is a $1$-covering for all $1\leq k\leq \ell$.
\end{definition}
\begin{proposition}[{\cite[Prop.3.3]{DIMZ24}}]
\label{prop:l-coverings}
For a digraph map $p:G\to H$, the following are equivalent. 
\begin{enumerate}
    \item $p$ is an $\ell$-covering.
    \item $D_\ell(p):D_\ell(G)\to D_\ell(H)$ is a $1$-covering and $G=D_\ell(p)^{-1}(H).$
    \item $p$ is $1$-covering and if $(g_0,\dots,g_n)$ and $(g'_0,\dots,g'_m)$ are paths in $G$ such that $0\leq n,m\leq \ell$ and either $g_0=g_0'$ and $p(g_n)=p(g'_m)$, or $p(g_0)=p(g'_0)$ and $g_n=g'_m$, then $g_0=g'_0$ and $g_n=g'_m$.
    \item For any $h,h'\in H$ such that $\dist_H(h,h')\leq \ell$, there is a unique bijection between fibers  $\alpha:p^{-1}(h)\to p^{-1}(h')$ such that for any $g\in p^{-1}(h)$ we have $\dist_G(g,\alpha(g))=\dist_H(h,h')$ and $\dist_G(g,g')>\ell$ for any $g'\in p^{-1}(h')\setminus \{\alpha(g)\}$. 
\end{enumerate}
\end{proposition}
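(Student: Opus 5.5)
The plan is to prove the cycle of implications $(1)\Rightarrow(3)\Rightarrow(2)\Rightarrow(1)$ and then $(3)\Leftrightarrow(4)$. Throughout, the basic tool is lifting paths step by step using the $1$-covering property of $p$. The only facts about $\dist$ we need are that digraph maps do not increase it, and that a directed path of length $n$ in $G$ gives an arrow in $D_k(G)$ for every $k\ge n$.

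\emph{$(1)\Rightarrow(3)$.} Taking $k=1$ and using $D_1(G)=G$ shows that $p$ is a $1$-covering. For the path condition, suppose $(g_0,\dots,g_n)$ and $(g_0',\dots,g_m')$ have $n,m\le \ell$, $g_0=g_0'$ and $p(g_n)=p(g_m')$. Then $g_0\to g_n$ and $g_0\to g_m'$ are arrows of $D_\ell(G)$. They start at the same vertex and have the same image arrow $p(g_0)\to p(g_n)$ in $D_\ell(H)$. Uniqueness of lifts with prescribed source, for the $1$-covering $D_\ell(p)$, gives $g_n=g_m'$. The case $g_n=g_m'$ and $p(g_0)=p(g_0')$ is symmetric, using lifts with prescribed target.

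\emph{$(3)\Rightarrow(2)$.} An arrow $h\to h'$ of $D_\ell(H)$ is a path of length $\le\ell$ in $H$. Given a lift $g$ of $h$, we lift this path edge by edge using the $1$-covering property of $p$, obtaining a path of length $\le \ell$ in $G$ and hence an arrow of $D_\ell(G)$. Lifting from a prescribed target is done in the same way, backwards. Uniqueness of these lifts is exactly the path condition in (3). For the equality $G=D_\ell(p)^{-1}(H)$, let $(g,g')$ be an arrow of $D_\ell(G)$ with $p(g)\to p(g')$ an arrow of $H$. Lift this arrow from $g$ to get an arrow $g\to g''$ in $G$. The length-one path $(g,g'')$ and a path of length $\le\ell$ from $g$ to $g'$ have the same initial vertex and ends with the same image, so the path condition forces $g''=g'$, and hence $g\to g'$ lies in $G$.

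\emph{$(2)\Rightarrow(1)$.} Fix $k\le\ell$ and an arrow of $D_k(H)$, realised by a path of length $\le k$ in $H$. We lift this path edge by edge via $D_\ell(p)$, starting from the prescribed endpoint. Each lifted edge maps to an arrow of $H$, so by $G=D_\ell(p)^{-1}(H)$ it is an arrow of $G$. The lift is therefore a path of length $\le k$ in $G$, which gives existence of lifts in $D_k(G)$. Uniqueness holds because $D_k(G)\subseteq D_\ell(G)$, $D_k(H)\subseteq D_\ell(H)$, and lifts are unique for $D_\ell(p)$.

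\emph{$(3)\Leftrightarrow(4)$.} For $(3)\Rightarrow(4)$, let $\dist_H(h,h')\le\ell$ and $g\in p^{-1}(h)$. Define $\alpha(g)$ as the endpoint of the lift from $g$ of a shortest path from $h$ to $h'$; the path condition shows this is independent of the chosen path. Lifting from the target instead gives an inverse, so $\alpha$ is a bijection. Because maps are non-expanding, $\dist_G(g,\alpha(g))\ge\dist_H(h,h')$, and the lifted path gives the reverse inequality. If $g'\in p^{-1}(h')$ satisfies $\dist_G(g,g')\le\ell$, then a shortest path from $g$ to $g'$ and the lifted path to $\alpha(g)$ start at $g$ and end over $h'$, so $g'=\alpha(g)$. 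Uniqueness of $\alpha$ follows, since any such bijection must send $g$ to the only vertex of $p^{-1}(h')$ within distance $\ell$ of $g$.

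For $(4)\Rightarrow(3)$, the $1$-covering property is the case $\dist_H(h,h')\le 1$. For the path condition, suppose $g_0=g_0'$ and $p(g_n)=p(g_m')=h'$. Then $\dist_H(p(g_0),h')\le\ell$, and both $g_n$ and $g_m'$ are vertices of $p^{-1}(h')$ within distance $\ell$ of $g_0$. By (4) both must equal $\alpha(g_0)$. The case with a common final vertex is symmetric, using $\alpha^{-1}$.

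The main obstacle is bookkeeping in $(3)\Rightarrow(4)$. One must check that $\alpha$ is well defined independently of the chosen path, which requires comparing lifts of different shortest paths. The degenerate case $h=h'$ needs separate attention: there $\dist=0$, $\alpha$ should be the identity, and the claim reduces to the statement that distinct points of one fibre lie at distance greater than $\ell$. This again follows from the path condition, applied with one of the paths of length $0$.
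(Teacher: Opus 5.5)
Your proof is correct. The cycle $(1)\Rightarrow(3)\Rightarrow(2)\Rightarrow(1)$ together with $(3)\Leftrightarrow(4)$ goes through as you describe. Every existence statement comes from lifting paths edge by edge along the $1$-covering $p$ (or along $D_\ell(p)$ combined with $G=D_\ell(p)^{-1}(H)$). Every uniqueness statement is an instance of the path condition in (3), including the case of paths of length $0$ and of degenerate arrows. The paper gives no proof of this proposition; it is quoted from \cite[Prop.~3.3]{DIMZ24}, so there is no in-paper argument to compare your route against, and your argument serves as a self-contained proof.

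Two steps you leave implicit are immediate.
\begin{enumerate}
\item In $(3)\Rightarrow(4)$, use the same shortest path from $h$ to $h'$ both for lifting from a prescribed source and for lifting from a prescribed target. Then $\alpha$ and the target-lift map are mutually inverse, by uniqueness of edge-by-edge lifts for the $1$-covering $p$.
\item In $(4)\Rightarrow(3)$, the lift of an arrow $h\to h'$ with prescribed target $g'$ is $\alpha^{-1}(g')$. It is unique because any arrow $g''\to g'$ with $p(g'')=h$ gives $\dist_G(g'',g')\le 1\le \ell$, which forces $\alpha(g'')=g'$.
\end{enumerate}
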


\begin{lemma}
\label{lemma:2-covering-lifting}
Let $A\subseteq B$ be an induced subdigraph such that the following holds. 
\begin{enumerate}
    \item For any $b\in B$, there exists $a\in A$ with an arrow $a\to b$ in $B$. 
    \item For any $b\in B\setminus A$ and $a,a'\in A$ such that there are arrows $a\to b$ and $a'\to b$, there exists $a''\in A$  with arrows $a''\to a$ and $a''\to a'$ in $A$.
    \item For any arrow $b\to b'$ in $B$, there exists an arrow $a\to a'$ in $A$ together with arrows $a\to b$ and $a'\to b'$ in $B$.
\end{enumerate}
Then any $2$-covering $p:G\to H$ has a unique right lifting property with respect to the inclusion $A\hookrightarrow B$.
\end{lemma}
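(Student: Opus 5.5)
We are given a commutative square with top map $f\colon A\to G$ and bottom map $\beta\colon B\to H$, with $p f=\beta|_A$. We must show there is a unique digraph map $\lambda\colon B\to G$ with $\lambda|_A=f$ and $p\lambda=\beta$. The plan is to define $\lambda$ vertexwise using condition (1) together with the unique lifting of $1$-coverings. Then check well-definedness with condition (2) and the $2$-covering uniqueness statement, and check that $\lambda$ preserves arrows with condition (3). Uniqueness will be handled first, since the same argument pins down the definition.

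\emph{Uniqueness and definition on vertices.} Let $b\in B\setminus A$, and by (1) choose $a\in A$ with an arrow $a\to b$. Any lift $\lambda$ must send the arrow $a\to b$ to an arrow $f(a)\to\lambda(b)$ lying over the arrow $\beta(a)\to\beta(b)$. Since a $2$-covering is in particular a $1$-covering, applied with $k=0$, there is a unique such arrow starting at $f(a)$. So $\lambda(b)$ is forced to be its endpoint, which we call $\lambda_a(b)$. This proves uniqueness, and we define $\lambda(b)=\lambda_a(b)$ and $\lambda|_A=f$. (If $b\in A$, the arrow $f(a)\to f(b)$ already lies over $\beta(a)\to\beta(b)$, so the same formula gives $f(b)$ by uniqueness.)

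\emph{Independence of the choice of $a$.} Let $a,a'\in A$ both have arrows to $b$. By (2) there is $a''\in A$ with arrows $a''\to a$ and $a''\to a'$ in $A$. In $G$ this gives two paths of length $2$ starting at $f(a'')$:
\[
f(a'')\to f(a)\to\lambda_a(b), \qquad f(a'')\to f(a')\to \lambda_{a'}(b).
\]
Their endpoints have the same image $\beta(b)$ under $p$. Proposition~\ref{prop:l-coverings}(3) with $\ell=2$ then gives $\lambda_a(b)=\lambda_{a'}(b)$. This is the key place where $2$-covering rather than $1$-covering is used, and the main thing to verify is that the hypotheses of (3) are met exactly: the paths have lengths $n,m\le 2$, they share a start vertex, and their endpoints have equal images. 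Degenerate arrows are allowed in these paths, so length at most $2$ is fine.

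\emph{Arrows.} Let $b\to b'$ be an arrow of $B$. By (3) there is an arrow $a\to a'$ in $A$ with arrows $a\to b$ and $a'\to b'$. By well-definedness, $\lambda(b)$ is the endpoint of the lift of $\beta(a)\to\beta(b)$ from $f(a)$, and $\lambda(b')$ is the endpoint of the lift of $\beta(a')\to\beta(b')$ from $f(a')$. Separately, lift the arrow $\beta(b)\to\beta(b')$ starting at $\lambda(b)$ to get an arrow $\lambda(b)\to g'$ with $p(g')=\beta(b')$. Now consider the two paths
\[
f(a)\to\lambda(b)\to g', \qquad f(a)\to f(a')\to\lambda(b').
\]
Both have length $2$, share the start $f(a)$, and have endpoints over $\beta(b')$. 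Proposition~\ref{prop:l-coverings}(3) gives $g'=\lambda(b')$, so $\lambda(b)\to\lambda(b')$ is an arrow. Finally, $p\lambda=\beta$ holds by construction, so $\lambda$ is the required lift, and it is unique by the first step.
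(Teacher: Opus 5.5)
Your proof is correct and matches the paper's argument essentially step for step. Uniqueness and the definition on vertices come from (1) together with unique arrow lifting for $1$-coverings. Independence of the choice of $a$ comes from (2) and Proposition~\ref{prop:l-coverings}(3), applied to the two length-$2$ paths from $f(a'')$. Arrow preservation comes from (3), by comparing the two length-$2$ lifts starting at $f(a)$.
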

\begin{remark}
In the formulation of Lemma \ref{lemma:2-covering-lifting} some of the arrows can be degenerate. For example, in (1), if $b\in A$, then $a$ can be equal to $b$. In (2), $a''$ can be equal to $a$ or to $a'$. In (3), $b$ or $b'$ can be in $A$ and in this case we can have $a=b$ or $a'=b'$. 
\end{remark}

\begin{proof}[Proof of Lemma \ref{lemma:2-covering-lifting}]
Let $\alpha : A\to G$ and $\beta: B\to H$ be digraph maps such that $\beta|_A = p\alpha$. We need to show that there is a unique digraph map $\tilde \alpha:B\to G$ such that $\tilde \alpha|_A = \alpha$ and $p\tilde \alpha = \beta$. Uniqueness follows from (1): if $\tilde \alpha$ exists and $a\to b$ is an arrow from a vertex of $A$ to a vertex of $B$, then   $\alpha(a)\to \tilde \alpha(b)$ is the unique  lifting of the arrow $\beta(a)\to \beta(b)$.

Let us construct $\tilde \alpha.$ For any arrow $a\to b$ in $B$, where $a\in A$ and $b\in B\setminus A$, we define $\tilde \alpha_a(b)\in G$ so  that $\alpha(a)\to \tilde \alpha_a(b)$ is the  lifting of the arrow $\beta(a)\to \beta(b)$. Using (2) and Proposition \ref{prop:l-coverings}(3) we obtain that $\tilde \alpha_a(b)$ does not depend on $a.$ So we can denote it by $\tilde \alpha(b)$. Using (1) we obtain that $\tilde \alpha(b)$ is defined for all $b\in B\setminus A$. For $a\in A$ we of course define $\tilde\alpha(a)=\alpha(a)$. Let us show that $\tilde \alpha:B\to G$ is a digraph map. Take an arrow $b\to b'$ in $B$. Then by (3) there exist an arrow $a\to a'$ in $A$ together with arrows $a\to b$ and $a'\to b'$ in $B$. Consider a lifting  $\tilde \alpha(b) \to  g'$ of 
$\beta(b)\to \beta(b')$. Then $\alpha(a)\to \alpha(a')\to \tilde \alpha(b')$ and $\alpha(a)\to \tilde\alpha(b)\to g'$ are the  liftings of $\beta(a)\to \beta(a')\to \beta(b')$ and $\beta(a)\to \beta(b)\to \beta(b')$ respectively. By Proposition \ref{prop:l-coverings}(3), it follows that $g'=\tilde \alpha(b'),$ and hence, there is an arrow $\tilde \alpha(b)\to \tilde \alpha(b').$
\end{proof}

\begin{proposition}
\label{prop:2-cov}
Any 2-covering is a cubical fibration. 
\end{proposition}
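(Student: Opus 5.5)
By the characterization of cubical fibrations proved at the beginning of this section, it suffices to solve lifting problems of the form
\[
|\sqcap^n_{i,\varepsilon}|_{4m} \hookrightarrow I^{\otimes n}_{4m}
\]
against $p$. We are allowed to pass first to a finer subdivision via $c^{2k}_*$. The plan is to take $k=0$ when possible and to apply Lemma~\ref{lemma:2-covering-lifting} with $A=|\sqcap^n_{i,\varepsilon}|_{4m}$ and $B=I^{\otimes n}_{4m}$. In fact the lemma gives \emph{unique} lifts, so it is natural to factor the inclusion $A\hookrightarrow B$ into a finite chain of induced inclusions
\[
A=B_0\subseteq B_1\subseteq\dots\subseteq B_r=B .
\]
Each step $B_{j}\subseteq B_{j+1}$ should satisfy conditions (1)–(3) of Lemma~\ref{lemma:2-covering-lifting}. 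Lifts then compose stepwise, because the right lifting property is closed under composition of the left maps.

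To build the filtration, consider a vertex $v\in I_{4m}^{\otimes n}$ and measure the distance of $v_i$ from the open face. For $\varepsilon=0$ this distance is $d(v)=(4m-v_i)$, restricted to vertices with all $v_k$, $k\neq i$, in the interior. We add layers $\{v : v_i = 4m-t\}$ for $t=1,2,\dots,4m$.

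The key local observation concerns the direction of arrows in the $i$-th coordinate. Arrows in $I_{4m}$ go from even vertices to odd ones. A layer with $v_i$ odd is therefore reached by arrows $a\to b$ from the already-added adjacent even layer. A layer with $v_i$ even does not receive arrows from neighbouring layers in the $i$-th direction; instead, its vertices send arrows to the odd layers. That is the wrong direction for condition (1).

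To fix this I would instead add two layers at a time, namely an odd layer together with the adjacent even layer beyond it. An alternative is to choose the filtration in all coordinates simultaneously, adding vertices ordered by parity so that every new vertex has an in-arrow from an earlier vertex. For instance, order the vertices by the number of even coordinates and add odd-heavy vertices after their even neighbours. Condition (1) needs an in-neighbour in the previous stage. Condition (2) needs two in-neighbours $a,a'$ of $b$ to have a common in-neighbour $a''$. This holds in a box product of zigzags: $a$ and $a'$ differ from $b$ in coordinates $k$ and $l$, and the vertex differing from $b$ in both coordinates is even in both, so it maps to both. Condition (3) should follow coordinatewise from the same square-completion.

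The main obstacle is to arrange that at each stage every new vertex has an in-neighbour in the old stage. Vertices with all coordinates even are sources in $I^{\otimes n}_{4m}$, so they can never be added via condition (1). For this reason I expect to need $c^{2k}_*$ with $k\geq1$: precomposing with the central shrinking collapses the problematic source vertices near the boundary. Concretely, I would use the map $\Phi$ of Lemma~\ref{lem:extension} to push everything except a thin region onto the horn. Alternatively, I would use a filtration in which all-even vertices are added together with an odd neighbour, treating the pair as a two-step attachment. Verifying conditions (1)–(3) for this thickened attachment is the step I expect to require the most care.
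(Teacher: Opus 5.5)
\paragraph{Verdict.} The proposal has a genuine gap, and it sits where you abandon your first plan.

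\paragraph{The missing idea.} Your single-layer filtration in the $i$-th coordinate, starting from the face that lies in the horn, is exactly the paper's argument. For $\varepsilon=1$ the paper takes $A_k=(I_m^{\otimes(i-1)}\otimes I_k\otimes I_m^{\otimes(n-i)})\cup|\sqcap^n_{i,1}|_m$, so that step $k$ adds the layer $v_i=k$. You drop this filtration at the even layers because you only use Lemma~\ref{lemma:2-covering-lifting} in its stated orientation. But the lemma also holds in its dual form, with all arrows reversed: every new $b$ has an arrow $b\to a$ into $A$, two such targets have a common out-neighbour in $A$, and so on. This duality is automatic because being a $2$-covering is invariant under passing to opposite digraphs: a $1$-covering lifts arrows at both endpoints, and $D_k(G^{\mathrm{op}})=D_k(G)^{\mathrm{op}}$.

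\paragraph{How the layer filtration then goes through.} An even layer emits arrows into the adjacent, already attached odd layer. Since no non-degenerate arrow of $I_{4m}$ ends at the even values $0$ or $4m$, that arrow gives the \emph{only} out-neighbour of a new vertex in the old stage. Hence the dual of (2) is trivial, and the duals of (1) and (3) follow by moving $v_i$ one step back. Odd layers satisfy the original conditions. Their extra in-neighbours, coming from arrows $0\to 1$ or $4m\to 4m-1$ in coordinates $j\neq i$, are handled by the square completion you describe. Alternating the lemma and its dual gives unique lifts with $k=0$, so no subdivision is needed.

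\paragraph{Why your workarounds fail.} As you note, a vertex whose coordinates are all even is a source. So no chain of attachments each satisfying condition (1) can ever reach the whole cube.
\begin{itemize}
\item Passing to $I_{4(m+k)}^{\otimes n}$ via $c^{2k}_*$ does not remove such sources.
\item A two-layer ``thickened'' attachment fails (1) for the same reason, since (1) demands a single arrow from the old stage.
\item The map $\Phi$ of Lemma~\ref{lem:extension} only solves lifting problems against $G\to *$. It agrees with the central shrinking on the horn but not on the whole cube. So composing your horn map with $\Phi$ gives a map whose image under $p$ is, in general, not the given cube map into $H$ precomposed with the shrinking.
\item Your check of (2) is too quick for filtrations other than the layer one. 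If both neighbours $b_k\pm 1$ of an odd coordinate $b_k$ are already present, these two in-neighbours of $b$ have no common in-neighbour at all.
\end{itemize}
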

\begin{proof} Take a $2$-covering $p:G\to H$. It is sufficient to show that $p$ has the unique right lifting property with respect to the map $| \sqcap^n_{i,\varepsilon} |_m \to I_m^{\otimes n}$ for any $m=4r$, $r\geq 1$, any $n\geq 0$ and any $\varepsilon\in \{0,1\}$. Assume that $\varepsilon=1$. If we set
\begin{equation}
A_k =(I_m^{\otimes (i-1)}\otimes  I_k \otimes I_m^{\otimes (n-i)}) \cup  | \sqcap^n_{i,1} |_m,
\end{equation}
 we obtain a filtration 
\begin{equation}
| \sqcap^n_{i,1} |_m = A_0 \subseteq A_1 \subseteq  \dots \subseteq A_m = I_m^{\otimes n} 
\end{equation}
such that, for any $1\leq k\leq m$, the inclusion  $A_{k-1}\subseteq A_{k}$ either satisfies the assumption of Lemma \ref{lemma:2-covering-lifting}, or its dual. Then $p$ has the unique right lifting property with respect to $|\sqcap^n_{i,1} |_m \to I_m^{\otimes n}.$  The case $\varepsilon=0$ is similar. 
\end{proof}

\begin{corollary}
For any $2$-covering  $p:G\to H$ and any $g\in G$, $p$ induces an injective map  $A_1(G,g)\to A_1(H,p(g))$ and isomorphisms 
\begin{equation}
A_n(G,g)\cong A_n(H,p(g)), \ \ n\geq 2.    
\end{equation}
\end{corollary}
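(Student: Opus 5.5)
The plan is to derive the corollary from the long exact sequence of Proposition~\ref{cor:long_exact}, applied to the fibre of $p$. By Proposition~\ref{prop:2-cov}, $p$ is a cubical fibration. Put $h=p(g)$ and let $F=p^{-1}(h)$, the induced subdigraph on the vertices over $h$, which is the pullback of $p$ along $h\colon *\to H$. Apply Proposition~\ref{cor:long_exact} to the pullback square
\begin{equation}
\begin{tikzcd}
F \ar[r]\ar[d] \arrow[dr, phantom, "\lrcorner", very near start] & * \ar[d,"h"] \\
G \ar[r,"p"] & H
\end{tikzcd}
\end{equation}
taking $T=*$ (trivially $T\to *$ is a cubical weak equivalence) and using that $G\to H$ is a cubical fibration. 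With basepoint $g\in F$ this gives the long exact sequence
\begin{equation}
\dots \to A_{n}(F,g)\to A_n(G,g)\to A_n(H,h)\to A_{n-1}(F,g)\to \dots \to A_1(H,h)\to A_0(F,g)\to A_0(G,g).
\end{equation}

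The key step is to show that $F$ is a discrete digraph, so that $A_n(F,g)$ is trivial for $n\geq 1$. Suppose there were a non-degenerate arrow $g_1\to g_2$ in $F$. Then $g_1\to g_1$ (degenerate) and $g_1\to g_2$ are two lifts starting at $g_1$ of the degenerate arrow $h\to h$. Uniqueness in the $1$-covering property (which every $2$-covering has) then forces $g_1=g_2$, a contradiction. So $F$ has no non-degenerate arrows.

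A discrete digraph is a coproduct of one-point digraphs, so the component containing $g$ is the one-point digraph. Hence $A_n(F,g)\cong A_n(*)=0$ for $n\geq 1$; equivalently, $N_\infty F$ is a discrete cubical set, since any map $I_m^{\otimes n}\to F$ is constant because $I_m^{\otimes n}$ is connected. Exactness at $A_n(G,g)$ and $A_n(H,h)$ then gives isomorphisms $A_n(G,g)\cong A_n(H,h)$ for $n\geq 2$. At $n=1$, exactness of $0=A_1(F,g)\to A_1(G,g)\to A_1(H,h)$, read as groups via $A_*\cong\pi_*N(-)$, gives injectivity.

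The only delicate point is that Proposition~\ref{cor:long_exact} is stated for a ``consistent choice of basepoints''. I expect this to be harmless: basepoint $g$ in $F$ and $G$, and $h$ in $H$ and $T$. The low-degree terms are only pointed sets, but injectivity at $A_1$ only uses the group-level exactness coming from the fibre sequence of spaces $NF\to NG\to NH$.
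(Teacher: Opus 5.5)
Your proposal is correct and follows the paper's proof, which applies Proposition~\ref{cor:long_exact} to the fibre square (with $T=*$), uses Proposition~\ref{prop:2-cov}, and uses the discreteness of the fibre of a $2$-covering. Your argument for discreteness, via uniqueness of lifts for the underlying $1$-covering (note $D_1(p)=p$), correctly fills in the one detail the paper leaves implicit.
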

\begin{proof}
It follows from Corollary  \ref{cor:long_exact}, Proposition \ref{prop:2-cov} and the fact that the fiber of a $2$-covering is discrete. 
\end{proof}

\begin{example}
For any $n\geq 3$, we denote by $C_n$ the directed $n$-cycle that can be defined as the Cayley digraph of $\mathbb Z/n$ with respect to the generator $1$. We also denote by $C_\infty$ the Cayley digraph of $\mathbb{Z}$ with respect to the generator $1$. Then the map 
\begin{equation}
C_\infty\longrightarrow C_n  
\end{equation}
is a 2-covering. It is easy to see that $C_\infty = \colim \: J_n$, where $J_n$ is the induced subdigraph on the set of vertices $\{-n,\dots,n\}$. Therefore $A_*(C_\infty,0)=\colim \:  A_*(J_n,0) \simeq *$. 
It follows that   
\begin{equation}
A_i(C_n,0) \cong 
\begin{cases}
\mathbb{Z}, & i=1,\\
0, & i\neq 1.
\end{cases}
\end{equation}
\end{example}

\section{\bf Nerve theorem for in-closed covers} 
\label{sec:nerve_theorem}

In this section, we introduce the notions of in-closed and out-closed subdigraphs, as well as the notions of in-closure and out-closure of a subdigraph. We prove a version of the nerve theorem for covers of a digraph by in-closed subdigraphs, which is a very efficient tool for computing the homotopy type of $N_1G$ for certain classes of digraphs $G$. We also show that if a digraph $G$ is the union of two of its in-closed subdigraphs, $G=H\cup H'$, then $N_1G = N_1H \sqcup_{N_1(H\cap H')} N_1H'$. This result is used in the following section to construct a structure of a category of cofibrant objects.

\begin{definition}[In-closed subdigraph]
An induced subdigraph $H\subseteq G$ is called in-closed  if every arrow of $G$ with target in $H$ also has its source in $H$. Dually $H\subseteq G$ is out-closed if every arrow of $G$ with source in $H$ also has its target in $H$. Note that $H\subseteq G$ is in-closed if and only if the  subdigraph $G\setminus H$ (induced subdigraph on  $V(G)\setminus V(H)$) is out-closed.  For a subdigraph $H\subseteq G$ we denote by 
\begin{equation}
\Out(H)\subseteq G, \hspace{1cm} \In(H) \subseteq G   
\end{equation}
the least out-closed subdigraph and the least in-closed subdigraph containing $H$ respectively. They are called out-closure and in-closure of $H$. If we want to specify $G$, we will use the notations $\Out_G(H)$ and $\In_G(H)$. Note that for any digraph map $\varphi:G\to G'$ we have 
\begin{equation}
\varphi(\Out_G(H)) \subseteq \Out_{G'}(\varphi(H)), \hspace{1cm} \varphi(\In_G(H)) \subseteq \In_{G'}(\varphi(H)). 
\end{equation}
Also note that the preimage of an in-closed (resp. out-closed) subdigraph is in-closed (resp. out-closed).  
\end{definition}

\begin{remark}
Hepworth and Roff in \cite{HR25} and \cite{HR24} denote the out-closure $\Out(H)$  by $rH$ and call it ``reach of $H$''.
\end{remark}

\begin{definition}[In-closed cover]
An in-closed (resp. out-closed) cover of a digraph $G$ is a family $(H_i)_{i\in I}$ of its in-closed (resp. out-closed) subdigraphs such that $G=\bigcup_{i\in I} H_i.$ 
\end{definition}

\begin{proposition}\label{prop:in-closed_union}
Let $G$ be a digraph and $(H_i)_{i\in I}$ be its in-closed (resp. out-closed) cover and let  $A$ be a digraph which is the in-closure (resp. out-closure) of one of its vertices. Then 
\begin{equation}
\Hom(A,G) = \bigcup_{i\in I} \Hom(A,H_i)  
\end{equation}
(here we identify $\Hom(A,H_i)$ with its image in $\Hom(A,G)$).
\end{proposition}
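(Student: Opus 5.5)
The inclusion $\bigcup_{i\in I}\Hom(A,H_i)\subseteq \Hom(A,G)$ is clear. For the converse, we show that the image of any digraph map $\varphi\colon A\to G$ lies entirely in one of the $H_i$. We treat the in-closed case; the out-closed case follows by applying the in-closed case to $G^\op$ and $A^\op$, since in-closed subdigraphs of $G$ are exactly the out-closed subdigraphs of $G^\op$, and $\In_A(a)=A$ translates to $\Out_{A^\op}(a)=A^\op$.

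Fix a vertex $a\in A$ with $\In_A(\{a\})=A$. Since the $H_i$ cover $G$, the vertex $\varphi(a)$ lies in some $H_i$. By the observation recorded after the definition of in-closure, a digraph map sends in-closures into in-closures, so
\begin{equation*}
\varphi(A)=\varphi(\In_A(\{a\}))\subseteq \In_G(\{\varphi(a)\})\subseteq \In_G(H_i)=H_i,
\end{equation*}
where the last equality holds because $H_i$ is in-closed. Alternatively, the preimage $\varphi^{-1}(H_i)$ is an in-closed subdigraph of $A$ containing $a$, hence equal to $A$.

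The only remaining point is to check that $\varphi$ factors as a digraph map $A\to H_i$ whose composite with the inclusion is $\varphi$. All vertices land in $H_i$, and $H_i$ is an induced subdigraph (in-closed subdigraphs are induced by definition), so every arrow $\varphi(x)\to\varphi(y)$ of $G$ lies in $H_i$. Hence $\varphi\in\Hom(A,H_i)$. No step is a genuine obstacle; the only care needed is this use of inducedness to factor the arrows.
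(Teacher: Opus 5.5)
Your proof is correct and matches the paper's argument: choose $i$ with $\varphi(a)\in H_i$ and use $\varphi(\In_A(a))\subseteq \In_G(\varphi(a))\subseteq H_i$. Your extra remarks (using that $H_i$ is induced to factor the arrows, and obtaining the out-closed case by passing to opposite digraphs) only spell out details the paper leaves implicit.
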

\begin{proof}
Let $a$ be the vertex of $A$ such that $A=\In_A(\{a\}).$ Then for any map $\varphi:A\to G$ we have $\varphi(A) = \varphi(\In_A(a)) \subseteq \In_G(\varphi(a)).$ Since $G=\bigcup_i H_i,$ there exists $i$ such that $\varphi(a)\in H_i.$ Therefore $\varphi(A)\subseteq \In_G(\varphi(a)) \subseteq H_i.$
\end{proof}

\begin{corollary}\label{cor:N_1:union}
Let $G$ be a digraph and $(H_i)_{i\in I}$ be its in-closed (resp. out-closed) cover. Then 
\begin{equation}
N_1G = \bigcup_{i\in I} N_1H_i.  
\end{equation}
\end{corollary}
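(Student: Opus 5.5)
The statement is an equality of cubical subsets of $N_1G$, so the plan is to compare cubes degree by degree. Each $N_1H_i$ is a cubical subset of $N_1G$: the inclusion $H_i\subseteq G$ is a monomorphism, so $N_1H_i\to N_1G$ is injective, and $N_1$ is a right adjoint. The union $\bigcup_i N_1H_i$ is computed levelwise in $\cSet$, because a union of cubical subsets is again a cubical subset. It therefore suffices to show that, for every $n\geq 0$, every $n$-cube of $N_1G$, i.e.\ every digraph map $I_1^{\otimes n}\to G$, factors through some $H_i$. The inclusion $\bigcup_i N_1H_i\subseteq N_1G$ is obvious.

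By Proposition \ref{prop:in-closed_union}, this reduces to checking that $A=I_1^{\otimes n}$ is the in-closure, or respectively the out-closure, of a single vertex.

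\emph{In-closed case.} In $I_1$ the only non-degenerate arrow is $0\to 1$. In $I_1^{\otimes n}$ the arrows are those changing one coordinate from $0$ to $1$. Take the top vertex $a=(1,\dots,1)$ and induct on the number of zeros: every vertex $v$ with some coordinate $0$ has an arrow $v\to v'$ to the vertex $v'$ obtained by replacing one $0$ by $1$. Since an in-closed subdigraph contains all sources of arrows whose targets lie in it, descending induction on the number of $1$'s puts every vertex into $\In(\{a\})$. Hence $I_1^{\otimes n}=\In_{I_1^{\otimes n}}(\{a\})$.

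\emph{Out-closed case.} Dually, take $a=(0,\dots,0)$; every vertex is reachable from it along directed paths, so $I_1^{\otimes n}=\Out(\{a\})$.

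With this, Proposition \ref{prop:in-closed_union} gives
\[
\Hom(I_1^{\otimes n},G)=\bigcup_i \Hom(I_1^{\otimes n},H_i)
\]
for each $n$, which is exactly the levelwise equality $(N_1G)_n=\bigcup_i (N_1H_i)_n$. No step is a real obstacle. The only point needing care is that the identification of $\Hom(A,H_i)$ with its image is compatible with the cubical structure maps. This holds because the structure maps act by precomposition and the inclusions $H_i\subseteq G$ act by postcomposition, and these commute.
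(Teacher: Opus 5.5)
Your proposal is correct and takes essentially the same route as the paper. The paper also reduces the statement, via Proposition \ref{prop:in-closed_union}, to the fact that $I_1^{\otimes n}$ is the in-closure of $(1,\dots,1)$ and the out-closure of $(0,\dots,0)$; your remarks on levelwise unions and on compatibility with the cubical structure maps just spell out what the paper leaves implicit.
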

\begin{proof}
It follows from the fact that $I_1^{\otimes k}$ is in-closure of its vertex $(1,\dots,1)$ and out-closure of its vertex $(0,\dots,0)$. 
\end{proof}

\begin{definition}[Nerve of a collection of subsets or subdigraphs]
\label{def:Ner}
Let $X$ be a set and $\UU=(U_i)_{i\in I}$ be a family of its subsets. For a finite subset $\sigma \subseteq I,$ we set 
$U_\sigma = \bigcap_{i\in \sigma} U_i.$
The nerve complex $\Ner(\UU)$ of this family is the (abstract) simplicial complex defined by  
\begin{equation}
\Ner(\UU) = \{\sigma \subseteq I\mid U_\sigma \neq \emptyset,\: \sigma \text{ is non-empty finite}\}.
\end{equation} 
For a digraph $G$ and a family of its induced subdigraphs $\HH=(H_i)_{i\in I}$ we denote by $\Ner(\HH)$ the nerve-complex of the family $(V(H_i))_{i\in I}$ in the set of vertices $V(G)$. 
\end{definition}

\begin{proposition}[Nerve theorem for in-closed covers]
Let $G$ be a digraph and $\HH=(H_i)_{i\in I}$ be its in-closed cover (resp. out-closed cover) such that  $N_1(\bigcap_{i\in \sigma } H_{i})$ is weakly contractible for any $\sigma\in \Ner(\HH).$ Then there is a homotopy equivalence
\begin{equation}
|N_1G| \simeq |\Ner(\HH)|. 
\end{equation}
\end{proposition}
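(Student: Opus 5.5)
The plan is to reduce the statement to the classical nerve theorem for covers of a space (or of a cubical set) by subcomplexes, using Corollary~\ref{cor:N_1:union}. That corollary shows that the cubical sets $N_1H_i$ form a cover of $N_1G$ by cubical subsets. Passing to geometric realization, $|N_1G|$ is a CW complex covered by the subcomplexes $|N_1H_i|$.

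The key point is that intersections commute with $N_1$. Since $N_1$ is a right adjoint, it preserves limits, and the intersection of induced subdigraphs $H_\sigma=\bigcap_{i\in\sigma}H_i$ is a finite limit (iterated pullback over $G$). Hence
\[
N_1H_\sigma=\bigcap_{i\in\sigma}N_1H_i\subseteq N_1G .
\]
Moreover, intersections of in-closed subdigraphs are again in-closed, although we only need the identity of subobjects. Realization commutes with finite intersections of cubical subsets, because a cube lies in both subsets exactly when its nondegenerate representative does. Therefore
\[
|N_1H_\sigma|=\bigcap_{i\in\sigma}|N_1H_i| .
\]

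Next we identify nerves. The subdigraph $H_\sigma$ is nonempty if and only if $N_1H_\sigma$ has a $0$-cube, that is, if and only if $|N_1H_\sigma|\neq\emptyset$. So the nerve of the cover $(|N_1H_i|)_{i\in I}$ of $|N_1G|$ coincides with $\Ner(\HH)$. By hypothesis each nonempty intersection is weakly contractible. A weakly contractible CW complex is contractible, so each nonempty intersection $|N_1H_\sigma|$ is contractible.

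Now we apply the nerve theorem for covers of a CW complex by subcomplexes with all nonempty finite intersections contractible (Borsuk, Weil, McCord, or Hatcher Cor.~4G.3). This holds for arbitrary index sets, since the cover is by subcomplexes. It gives
\[
|N_1G|\simeq|\Ner(\HH)| .
\]
The out-closed case is identical, since Corollary~\ref{cor:N_1:union} covers it too.

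The main obstacle is making sure the CW nerve theorem applies to a possibly infinite cover. Two facts need checking: that $|N_1G|$ is indeed the union (in fact colimit) of the subcomplexes, and that the cubical realization is a CW complex with the $|N_1H_i|$ as subcomplexes. Both follow because cubical sets with connections have realizations built cell-by-cell from nondegenerate cubes, by the Eilenberg–Zilber-type property used for the model structure. If a more intrinsic argument is preferred, one can instead form the homotopy colimit over the poset of simplices of $\Ner(\HH)$ of the diagram $\sigma\mapsto N_1H_\sigma$. Its canonical map to $\colim_\sigma N_1H_\sigma=N_1G$ is a weak equivalence because the cover is by monomorphisms (a Reedy-cofibrant or projective argument). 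Its map to $\mathrm{hocolim}\,*\simeq|\Ner(\HH)|$ is a weak equivalence since every term is weakly contractible.
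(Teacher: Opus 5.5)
Your proposal is correct and follows the paper's argument. The paper likewise uses $N_1G=\bigcup_i N_1H_i$ (Corollary~\ref{cor:N_1:union}, via Proposition~\ref{prop:in-closed_union}) together with $N_1(\bigcap_{i\in\sigma}H_i)=\bigcap_{i\in\sigma}N_1H_i$, and then invokes a nerve theorem for cubical sets.

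The only difference is the black box. The paper proves its cubical nerve theorem (Proposition~\ref{prop:simplicial_nerve_theorem}) by transferring along the triangulation $T\colon\cSet\to\sSet$, which preserves monomorphisms and finite intersections and satisfies $|TX|\cong|X|$, and then citing the simplicial nerve theorem. You instead transfer along geometric realization to CW complexes and cite the subcomplex version of the topological nerve theorem (Hatcher, Cor.~4G.3). Both routes rest on the same Eilenberg--Zilber-type facts about cubes with connections.
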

\begin{proof}
By Proposition \ref{prop:in-closed_union} we have $N_1G= \bigcup_{i\in I} N_1H_i.$ Observe that $\bigcap_{i\in \sigma }  N_1H_{i} = N_1( \bigcap_{i\in \sigma } H_{i})$ for any finite $\sigma \subseteq I.$ Therefore, the result follows from the nerve lemma for cubical sets (see Proposition \ref{prop:simplicial_nerve_theorem} in the Appendix).
\end{proof}

\begin{proposition}\label{prop:nerve-equivalence}
Let  $\varphi:G\to G'$ be a morphism of digraphs and  $(H_i)_{i\in I}$ and $(H'_i)_{i\in I}$ be in-closed covers (resp. out-closed covers) of $G$ and $G'$ respectively such that $\varphi(H_i)\subseteq H'_i.$ Assume that for any $i_0,\dots,i_n\in I$ the map 
\begin{equation}
H_{i_0}\cap \dots \cap H_{i_n} \overset{\sim}\longrightarrow H'_{i_0} \cap \dots \cap H'_{i_n} 
\end{equation}
is a cubical weak equivalence. Then $\varphi$ is a cubical weak equivalence. 
\end{proposition}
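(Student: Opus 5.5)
By Proposition~\ref{prop:cubical_weak_equivalences}, it suffices to show that $N_1\varphi\colon N_1G\to N_1G'$ is a weak equivalence of cubical sets. By Corollary~\ref{cor:N_1:union}, we have
\[
N_1G=\bigcup_{i\in I}N_1H_i, \qquad N_1G'=\bigcup_{i\in I}N_1H'_i .
\]
Because $N_1$ preserves finite limits, each finite intersection satisfies
\[
\bigcap_{i\in\sigma}N_1H_i=N_1\Bigl(\bigcap_{i\in\sigma}H_i\Bigr),
\]
and the same holds for the $H'_i$. Thus $N_1\varphi$ is a map between two cubical sets covered by families of cubical subsets indexed by the same set $I$. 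The map sends $N_1H_i$ into $N_1H'_i$, and by hypothesis it restricts on every finite intersection (empty or not) to a weak equivalence $N_1(H_{i_0}\cap\dots\cap H_{i_n})\to N_1(H'_{i_0}\cap\dots\cap H'_{i_n})$. Here we again use Proposition~\ref{prop:cubical_weak_equivalences} to translate each cubical weak equivalence into a weak equivalence of $N_1$.

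Next, express each union as a homotopy colimit. For a cubical set $X=\bigcup_{i\in I}X_i$ covered by subobjects, consider the poset $P$ of nonempty finite subsets $\sigma\subseteq I$, ordered by reverse inclusion, and the diagram $\sigma\mapsto X_\sigma=\bigcap_{i\in\sigma}X_i$. Its colimit is $X$, since cubical sets are presheaves and unions of subpresheaves are computed pointwise. Moreover, the canonical map $\mathrm{hocolim}_P X_\sigma\to X$ is a weak equivalence. This is the generalized Mayer--Vietoris or "union of subcomplexes" lemma. The same ingredient underlies the nerve lemma in the Appendix (Proposition~\ref{prop:simplicial_nerve_theorem}), so I would extract it from there. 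Failing that, I would prove it directly: apply geometric realization $|\cdot|$, which preserves colimits and monomorphisms and is a Quillen equivalence (Theorem~\ref{th:quillen_equivalence}, Corollary~\ref{cor:weak_equiv}). This reduces the claim to the classical fact that a CW complex covered by subcomplexes is the homotopy colimit of the finite intersections, via the projection from the Segal-type space $\coprod_\sigma |X_\sigma|\times\Delta^\sigma/\!\sim$.

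Given this, $N_1\varphi$ is identified with the map of homotopy colimits induced by the natural transformation $N_1(H_\sigma)\to N_1(H'_\sigma)$ over the same poset $P$. This transformation is an objectwise weak equivalence. Its empty intersections are harmless: either we index only by $\sigma$ nonempty, treating empty values as the empty cubical set, where $\emptyset\to\emptyset$ is fine, or we note that $H_\sigma$ empty must correspond to $H'_\sigma$ empty, since a weak equivalence to or from $\emptyset$ forces emptiness. Homotopy colimits preserve objectwise weak equivalences, so $N_1\varphi$ is a weak equivalence and $\varphi$ is a cubical weak equivalence.

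The main obstacle is the homotopy-colimit step: the diagrams are infinite, and $I$ can be arbitrary. I expect to handle this by a filtered colimit over finite subsets $I_0\subseteq I$, using that weak equivalences are stable under filtered colimits of monomorphisms by compactness of $\partial\Box^n$. For finite $I_0$, I would use induction on $|I_0|$ with the gluing lemma: the squares $X_{I_0}\cap X_j\to X_{I_0}$, $X_j\to X_{I_0}\cup X_j$ are pushouts along monomorphisms, hence homotopy pushouts in the left proper model structure on $\cSet$. The inductive hypothesis applies to $X_{I_0}\cap X_j$, because $\bigl(\bigcup_{i\in I_0}X_i\bigr)\cap X_j=\bigcup_{i\in I_0}(X_i\cap X_j)$ is again a cover whose finite intersections are intersections of the original family.
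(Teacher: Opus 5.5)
Your argument is correct. Its first half is exactly the paper's: you reduce to $N_1\varphi$ via Proposition~\ref{prop:cubical_weak_equivalences}, write $N_1G=\bigcup_i N_1H_i$ by Corollary~\ref{cor:N_1:union}, and identify $\bigcap_{i\in\sigma}N_1H_i$ with $N_1(\bigcap_{i\in\sigma}H_i)$.

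The difference lies in the covering lemma for cubical sets. The paper simply invokes its Appendix result, Proposition~\ref{prop:nerve_cubical_2}, which is precisely the statement you need. It proves that result by transporting along the triangulation $T\colon\cSet\to\sSet$ to the simplicial version \cite[Prop.~12.7]{IX24}; this works because $T$ preserves monomorphisms and intersections (Lemma~\ref{lemma:intersections}) and detects weak equivalences (Lemma~\ref{lemma:weak_Tf}).

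You instead prove the cubical statement internally. Of your two sketches, the second already suffices on its own: induct on $|I_0|$ using the gluing lemma for pushouts along monomorphisms (all objects are cofibrant), apply the inductive hypothesis to the cover $(X_i\cap X_j)_{i\in I_0}$ of $X_{I_0}\cap X_j$, and then pass to the filtered union over finite $I_0\subseteq I$. With that, the homotopy colimit over the poset of finite subsets and the reduction to CW complexes are superfluous. Your route is self-contained in $\cSet$. The paper's is shorter and reuses the same transport for the nerve theorem (Proposition~\ref{prop:simplicial_nerve_theorem}).

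Two small points. First, justify closure of weak equivalences under filtered unions more concretely than ``compactness of $\partial\Box^n$''. For example, $|X|=\bigcup|X_{I_0}|$ is a union of subcomplexes, so spheres and homotopies land in some $|X_{I_0}|$; alternatively, apply $T$ and use the corresponding fact in $\sSet$. Second, avoid using $X_\sigma$ for intersections and $X_{I_0}$ for unions at the same time.
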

\begin{proof}
By Proposition \ref{prop:in-closed_union} we have $N_1G= \bigcup_{i\in I} N_1H_i$ and it is easy to see that  $N_1(H_{i_0}\cap \dots \cap H_{i_n}) = N_1H_{i_0} \cap \dots \cap N_1H_{i_n}$. Then the result follows from Proposition \ref{prop:nerve_cubical_2} in the Appendix. 
\end{proof}

\begin{example}
Consider the digraph $O:=\Out_{I_4^{\otimes 2}}(\partial I_4^{\otimes 2}).$
\begin{equation*} O: \hspace{5mm}
\begin{tikzcd}[column sep = 4mm, row sep = 3mm]
(0,0)
\ar[r]
\ar[d]
& 
(0,1) 
\ar[d]
& 
(0,2) 
\ar[r]
\ar[d]
\ar[l]
& (0,3) 
\ar[d]
& 
(0,4) 
\ar[l]
\ar[d]
\\ 
(1,0) 
\ar[r]
& (1,1) & 
(1,2) 
\ar[r]
\ar[l]
& (1,3) & 
(1,4) 
\ar[l]
\\
(2,0) 
\ar[r]
\ar[d]
\ar[u]
& 
(2,1) 
\ar[d]
\ar[u]
&   & 
(2,3) 
\ar[d]
\ar[u]
& (2,4)
\ar[d]
\ar[l]
\ar[u]
\\
(3,0) 
\ar[r]
& (3,1) & 
(3,2)
\ar[r]
\ar[l]
& (3,3) & (3,4) 
\ar[l]
\\
(4,0) 
\ar[r]
\ar[u]
& 
(4,1) 
\ar[u]
& 
(4,2) 
\ar[r]
\ar[l]
\ar[u]
& 
(4,3) 
\ar[u]
& (4,4) 
\ar[l]
\ar[u]
\\
\end{tikzcd}
\end{equation*}
Take its out-closed covering defined by the 4 sets of vertices: 
\begin{equation}
\{0,1\} \times V,\ \  \{3,4\} \times V,\ \   V\times \{0,1\},\ \  V\times \{3,4\}, 
\end{equation}
 where $V=V(I_4)=\{0,\dots,4\}$. All these 4 out-closed subdigraphs and their non-empty intersections are contractible. Then the nerve of this cover is the cyclic graph with 4 vertices. Therefore 
\begin{equation}
|N_1O|\sim S^1.  
\end{equation}
Considering the intersection of this out-closed cover with $\partial I_4^{\otimes 2}$. We similarly obtain that $|N_1(\partial I_4^{\otimes 2})|\sim S^1.$ Moreover, using Proposition \ref{prop:nerve-equivalence}, we obtain that the inclusion
\begin{equation}
\partial I_4^{\otimes 2} \longrightarrow O
\end{equation}
is a cubical weak equivalence. 
\end{example}

\begin{proposition}\label{prop:union_of_two_inclosed}
Let $H,H'\subseteq G$ be two in-closed subdigraphs such that $G=H\cup H'$. Then the squares 
\begin{equation}
\begin{tikzcd}
H\cap H'
\ar[r]
\ar[d]
\arrow[dr, phantom, "\ulcorner" very near end]
& 
H
\ar[d]
\\
H' 
\ar[r]
&
G
\end{tikzcd}
\hspace{15mm}
\begin{tikzcd}
N_1(H\cap H') 
\ar[r]
\ar[d]
\arrow[dr, phantom, "\ulcorner" very near end]
& 
N_1H 
\ar[d]
\\
N_1H' 
\ar[r]
&
N_1G
\end{tikzcd}    
\end{equation}
are pushout squares in the category of digraphs and the category of cubical sets respectively. 
\end{proposition}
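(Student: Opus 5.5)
For the square of digraphs I will use the description of colimits in $\DGra$ as ${\sf d}$ applied to the colimit in $\Quiv$. The pushout $P=H\sqcup_{H\cap H'}H'$ computed in $\Quiv$ has vertex set $V(H)\cup V(H')=V(G)$ and arrow set $E(H)\cup E(H')$. Here $H\cap H'$ means the induced subdigraph on $V(H)\cap V(H')$, and both $H$ and $H'$ are induced. So $E(H)\cap E(H')$ is exactly $E(H\cap H')$, and the quiver pushout is already a digraph, namely the subdigraph of $G$ with arrows $E(H)\cup E(H')$.

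It then suffices to show every arrow $g\to g'$ of $G$ lies in $H$ or in $H'$. Since $G=H\cup H'$, the target $g'$ lies in, say, $H$. By in-closedness the source $g$ is also in $H$. Because $H$ is induced, the arrow lies in $E(H)$. Hence $P=G$ and the first square is a pushout.

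For the cubical square I will argue levelwise, since colimits in $\cSet$ are computed objectwise. In degree $n$ the square is
\[
\Hom(I_1^{\otimes n},H\cap H')\to \Hom(I_1^{\otimes n},H),\ \Hom(I_1^{\otimes n},H')\to \Hom(I_1^{\otimes n},G).
\]
All four maps are injective, since they are postcomposition with inclusions. So it is enough to check two facts.

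First, $\Hom(I_1^{\otimes n},G)=\Hom(I_1^{\otimes n},H)\cup\Hom(I_1^{\otimes n},H')$. This is Corollary \ref{cor:N_1:union}, or directly Proposition \ref{prop:in-closed_union}, using that $I_1^{\otimes n}$ is the in-closure of its vertex $(1,\dots,1)$.

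Second, $\Hom(I_1^{\otimes n},H)\cap\Hom(I_1^{\otimes n},H')=\Hom(I_1^{\otimes n},H\cap H')$. This holds because a map landing in both induced subdigraphs lands in the induced subdigraph $H\cap H'$.

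A square of sets consisting of two subsets, their intersection and their union is a pushout. So each level, and hence the square of cubical sets, is a pushout; naturality in $n$ is automatic because the maps are inclusions.

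The only delicate point is the in-closedness step, which ensures no arrow of $G$ runs between $H\setminus H'$ and $H'\setminus H$ and that every cube lies in one piece. Both are handled by the in-closure argument above, so I expect no serious obstacle.
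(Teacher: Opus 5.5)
Your proposal is correct and follows essentially the same route as the paper. You show that every arrow of $G$ lies in $H$ or $H'$ by in-closedness (arguing directly where the paper argues by contradiction), and then obtain the cubical pushout from $N_1G=N_1H\cup N_1H'$ (Corollary~\ref{cor:N_1:union}) together with $N_1H\cap N_1H'=N_1(H\cap H')$. Your remarks that the quiver pushout is already a digraph and that the cubical pushout can be checked levelwise spell out details the paper leaves to the reader.
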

\begin{proof}
First we prove that the left-hand square is a pushout. It is sufficient to prove that any arrow of $G$ is either in $H$ or in $H'$. Suppose for contradiction that an arrow $(u,v)$ of $G$ belongs neither to $H$ nor to $H'$. Without loss of generality we can assume that $u\in G\setminus H$ and $v\in G\setminus H'.$ Then $v\in H$ and this contradicts the fact that $H$ is in-closed. Therefore $G=H \sqcup_{H\cap H'} H'.$ 

Corollary \ref{cor:N_1:union} implies that $N_1G=N_1H\cup N_1H'.$ One checks that $N_1H\cap N_1H'=N_1(H\cap H').$ Then $N_1G=N_1H\sqcup_{N_1(H\cap H')}N_1H'.$ 
\end{proof}

\section{\bf Homotopy colimits}  
\label{sec:homotopy_colimits}

In this section, we describe  colimits of the $\infty$-category $\DGra_\infty$ by introducing
a structure of a homotopy cocomplete  category of cofibrant objects on $\DGra$, whose weak equivalences are cubical weak equivalences and whose cofibrations are the
so-called “in-closed cofibrations.” As a corollary, we obtain that $\DGra_\infty$ is cocomplete and the derived cubical nerve functor $N : \DGra_\infty \to  \Spc$ is colimit-preserving. In the end of this section we use these results to prove the main result of the paper that $\DGra_\infty$ is equivalent to the $\infty$-category of spaces.

\subsection{In-closed inclusions} 

In this subsection we study inclusions of in-closed subdigraphs which are later used in the definition of in-closed cofibrations.  In particular, we show that in-closed inclusions form a saturated class of morphisms and that the functor $N_1 : \DGra \to \cSet$ preserves pushouts of two in-closed inclusions. 

\begin{definition}[Induced inclusion]
An injective digraph map is called induced inclusion if its image is an induced subdigraph. 
\end{definition}

\begin{definition}[In-closed inclusion]
An induced inclusion of digraphs is called in-closed (resp. out-closed) inclusion if its image is an in-closed (resp. out-closed) subdigraph. 
\end{definition}

\begin{lemma}
\label{lemma:in-closed-stable}\ 
\begin{enumerate}
\item Induced inclusions are stable under pushouts.
\item In-closed  inclusions are stable under pushouts.
\end{enumerate}
\end{lemma}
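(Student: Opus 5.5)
The plan is to use the explicit description of a pushout along an inclusion of an induced subdigraph given in Remark \ref{rem:pushout_along_inclusion}. Let $i:H\to G$ be an induced inclusion and $\varphi:H\to H'$ an arbitrary digraph map. Identifying $H$ with its image, the pushout is $G'=G\sqcup_H H'$ with
\[
V(G')=(V(G)\setminus V(H))\cup V(H'),\qquad E(G')=(\varphi'\times\varphi')(E(G))\cup E(H').
\]
The map $i':H'\to G'$ is the evident inclusion of vertices. I would first check that this explicit digraph satisfies the universal property, which is routine. Given maps $\alpha:G\to T$ and $\beta:H'\to T$ agreeing on $H$, define the induced map on $V(G)\setminus V(H)$ by $\alpha$ and on $V(H')$ by $\beta$. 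Arrows in $(\varphi'\times\varphi')(E(G))$ are images of arrows of $G$, and $\alpha$ and $\beta\varphi$ agree on $H$, so these arrows are preserved. Uniqueness is clear because $\varphi'$ and $i'$ are jointly surjective on vertices.

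For (1), $i'$ is injective on vertices by construction. It remains to show that $H'$ is induced in $G'$. Take an arrow of $G'$ with both endpoints in $V(H')$. Either it lies in $E(H')$, or it is $(\varphi'(x),\varphi'(y))$ for some $(x,y)\in E(G)$. In the second case both $\varphi'(x)$ and $\varphi'(y)$ lie in $H'$. Since $\varphi'$ fixes the vertices of $V(G)\setminus V(H)$, which are disjoint from $V(H')$, this forces $x,y\in V(H)$. Because $H$ is induced in $G$, we get $(x,y)\in E(H)$, so $(\varphi(x),\varphi(y))\in E(H')$.

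For (2), assume in addition that $H$ is in-closed. Take an arrow $(u,v)$ of $G'$ with $v\in H'$; we need $u\in H'$. If the arrow lies in $E(H')$ we are done. Otherwise $(u,v)=(\varphi'(x),\varphi'(y))$ with $(x,y)\in E(G)$. As in (1), $\varphi'(y)\in H'$ forces $y\in V(H)$. In-closedness of $H$ then gives $x\in V(H)$, so $u=\varphi(x)\in H'$. Together with (1) this shows $i'$ is an in-closed inclusion. The out-closed case follows dually, or by applying $(-)^{\op}$.

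The only step needing care is the vertex bookkeeping in (1): one must check that no vertex outside $H$ gets identified into $H'$, which is exactly where the disjoint-union description of $V(G')$ is used.
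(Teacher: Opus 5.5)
Your proof is correct and follows the paper's argument: both use the explicit pushout description of Remark~\ref{rem:pushout_along_inclusion}. In each case, an arrow of $G'$ coming from $E(G)$ with both endpoints (resp.\ its target) in $H'$ must come from an arrow of $G$ with both endpoints (resp.\ its target) in $H$, and then inducedness (resp.\ in-closedness) of $H$ gives the claim; your additional check of the universal property of that explicit description is something the paper simply takes from the remark.
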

\begin{proof}
 (1) Let $H\subseteq  G$ be an induced subdigraph and $\varphi:H\to H'$ be a map. Set $G'=G \sqcup_H H'$ (see the description of the pushout in  Remark \ref{rem:pushout_along_inclusion}).   We need to prove that the subdigraph $H' \subseteq G'$ is induced. Take an arrow $(v,u)\in E(G')$, where $v,u\in V(H')$. We need to show that $(v,u)\in E(H')$. Assume that $(v,u)$ is from the image of   $E(G)\to V(G')\times V(G')$. Then there exist $(v',u')\in E(G)$ such that $\varphi'(v')=v$ and $\varphi'(u')=u.$ It follows that $v',u'\in V(H)$. Since $H$ is induced, $(v',u')\in E(H)$. Therefore, $(v,u)\in E(H')$.

(2) Now we assume that $H\subseteq G$ is in-closed and prove that $H'\subseteq G'$ is in-closed. Take an arrow $(v,u)\in E(G')$, where $u\in V(H').$ We need to prove that $v\in V(H')$. If $(v,u)\in E(H'),$ there is nothing to prove. Assume that $(v,u)$ is from the image of $E(G)\to E(G').$ Then there exists $(v',u')\in E(G)$ such that $\varphi'(v')=v$ and $\varphi'(u')=u.$ Since $u\in V(H'),$ we have $u'\in V(H)$. Using that $H$ is in-closed, we obtain $v'\in V(H).$ Therefore, $v\in V(H')$. 
\end{proof}

\begin{lemma}
\label{lemma:in-closed_transfinite}
\ 
\begin{enumerate}
    \item Induced inclusions are closed with respect to transfinite compositions. 
    \item In-closed inclusions are closed with respect to transfinite compositions. 
\end{enumerate}
\end{lemma}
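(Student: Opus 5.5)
The plan is to use the explicit description of transfinite compositions in $\DGra$. A transfinite composition is a colimit of a chain $G_0\to G_1\to\dots\to G_\beta\to\cdots$ ($\beta<\lambda$), with $G_\beta=\colim_{\gamma<\beta}G_\gamma$ at limit ordinals. Such a chain is a filtered colimit, so by Lemma \ref{lemma:DGra_closed_filtered_colimits} it is computed in $\Quiv$, hence objectwise in $\Set$. Thus $V(G_\lambda)=\colim V(G_\beta)$ and $E(G_\lambda)=\colim E(G_\beta)$, with no application of the reflector ${\sf d}$. Since every transition map is injective on vertices, each $G_\beta$ may be identified with a subdigraph of $G_\lambda$. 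Then $V(G_\lambda)=\bigcup_\beta V(G_\beta)$ and $E(G_\lambda)=\bigcup_\beta E(G_\beta)$, and the canonical map $G_0\to G_\lambda$ is injective.

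Before this, I need the maps at limit stages $G_\gamma\to G_\beta$ to be induced (resp.\ in-closed) inclusions as well. So I would argue by transfinite induction that every composite $G_\gamma\to G_\beta$ with $\gamma<\beta\le\lambda$ has the property. The successor step uses closure under composition. Composites of induced inclusions are induced, because inducedness is transitive. Composites of in-closed inclusions are in-closed, because if $H\subseteq K\subseteq G$ are both in-closed and an arrow of $G$ ends in $H$, its source lies in $K$. The arrow then lies in $K$, since $K$ is induced, so its source lies in $H$. The limit step is the case $\beta$ a limit ordinal, which is exactly the statement for a shorter chain.

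It therefore suffices to show that $G_0\subseteq G_\lambda=\bigcup_\beta G_\beta$ is induced (resp.\ in-closed), given that every $G_0\subseteq G_\beta$ is. For (1), take an arrow $(u,v)\in E(G_\lambda)$ with $u,v\in V(G_0)$. It lies in $E(G_\beta)$ for some $\beta$, and since $G_0\subseteq G_\beta$ is induced, $(u,v)\in E(G_0)$. For (2), take an arrow $(u,v)\in E(G_\lambda)$ with $v\in V(G_0)$. It lies in some $E(G_\beta)$, and in-closedness of $G_0\subseteq G_\beta$ gives $u\in V(G_0)$. Together with (1), this shows that $G_0$ is an in-closed subdigraph of $G_\lambda$.

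The only real subtlety is the first step: making sure the colimit is the set-theoretic union, so that no new arrows appear and no vertices are identified. This is precisely the content of Lemma \ref{lemma:DGra_closed_filtered_colimits}, and it relies on the fact that filtered colimits in $\Set$ preserve monomorphisms.
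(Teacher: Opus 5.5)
Your argument is correct and is essentially the paper's. Both reduce to the fact that the colimit is the union of the stages, so any arrow of the colimit already lies in some $G_\beta$, where the stage-wise hypothesis applies.

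The paper phrases this as a minimal-stage argument: the first stage containing a given arrow can be neither a limit ordinal nor a successor $\gamma+1$, since $G_\gamma\subseteq G_{\gamma+1}$ is induced, resp.\ in-closed. You instead run a transfinite induction over all composites $G_\gamma\to G_\beta$, and you explicitly justify the union description via Lemma~\ref{lemma:DGra_closed_filtered_colimits}, which the paper takes for granted.
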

\begin{proof}
(1). We can always replace in-closed inclusions with actual inclusions of subdigraphs. Therefore we can assume that we have an $\alpha$-sequence of subdigraphs i.e a collection of subdigraphs  $(G_\beta \subseteq G)_{\beta<\alpha},$  such that $G_\beta \subseteq G_{\gamma}$ for $\beta\leq \gamma<\alpha$; for any limit ordinal $\lambda$ we have $G_\lambda=\bigcup_{\beta<\lambda}G_\beta$  and $G=\bigcup_{\beta<\alpha}  G_\beta$. We need to show that if $G_\beta \subseteq G_{\beta+1}$ is induced, then $G_0\subseteq G$ is induced. Assume that $g'\to g$ is an arrow of $G$ such that $g',g\in G_0$. Consider the least $\beta$ such that $g'\to g$ is in $G_\beta$. We need to prove that $\beta=0$. If $\beta$ is the limit ordinal, then $G_\beta = \bigcup_{\gamma<\beta} G_\gamma$, then we can find $\gamma<\beta$ such that $g'\to g$ is in $G_\gamma$, which is a contradiction. If $\beta=\gamma+1$, then $g',g\in G_\gamma$ and, using that  $G_\gamma\subseteq G_{\gamma+1}$ is induced we obtain that $g'\to g$ is in $G_\gamma$, which is a contradiction. Therefore $\beta=0$.

(2). We need to show that if  $G_\beta \subseteq G_{\beta+1}$ is in-closed, for any $\beta$ such that $\beta+1<\alpha$, then  $G_0\subseteq G$ is in-closed. Assume that there is an arrow $g'\to g$ of $G$ with $g\in G_0$. Consider the least $\beta$ such that $g'\to g$ is in $G_\beta$. We need to show that $\beta=0$. The ordinal $\beta$ can not be a limit ordinal because in this case $G_\beta = \bigcup_{\gamma<\beta} G_{\gamma}$ and we would find $\gamma<\beta$ such that the arrow is in $G_\gamma$. If  $\beta=\gamma+1$ for some $\gamma,$ then $g\in G_\gamma$, and using that $G_\gamma \subseteq G_{\gamma+1}$ is in-closed we obtain that $g'\to g$ is in $G_\gamma$, which is a contradiction. Therefore $\beta=0$.
\end{proof}

\begin{lemma}\label{lemma:in-closed-retracts}
\ 
\begin{enumerate}
\item Induced inclusions are closed under retracts. 
\item In-closed inclusions are closed under retracts.
\end{enumerate}
\end{lemma}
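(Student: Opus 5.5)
We set up the retract diagram and check the two defining properties of the map directly, on vertices and on arrows. Suppose $f\colon A\to B$ is a retract of $g\colon C\to D$ in the arrow category of $\DGra$. That is, we have maps $i_A\colon A\to C$, $r_A\colon C\to A$, $i_B\colon B\to D$, $r_B\colon D\to B$ with $r_Ai_A=\id_A$, $r_Bi_B=\id_B$, $g i_A = i_B f$ and $f r_A = r_B g$. We assume $g$ is an induced (resp. in-closed) inclusion and show the same for $f$.

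\emph{Injectivity.} If $f(a)=f(a')$, then $g(i_A a)=i_B f(a)=i_B f(a')=g(i_A a')$. Since $g$ is injective, $i_Aa=i_Aa'$, and applying $r_A$ gives $a=a'$.

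\emph{Induced image (part (1)).} Let $f(a)\to f(a')$ be an arrow of $B$. Applying $i_B$ gives the arrow $g(i_Aa)\to g(i_Aa')$ in $D$. Since $g$ has induced image and is injective, there is an arrow $i_Aa\to i_Aa'$ in $C$. Applying $r_A$ gives an arrow $a\to a'$ in $A$. So every arrow of $B$ between vertices of $f(A)$ lies in $f(A)$.

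\emph{In-closed image (part (2)).} By (1), $f$ is already an induced inclusion. Now let $b\to f(a)$ be an arrow of $B$. Then $i_Bb\to i_Bf(a)=g(i_Aa)$ is an arrow of $D$ whose target lies in $g(C)$. By in-closedness $i_Bb=g(c)$ for some $c\in C$. Applying $r_B$ gives
\[
b=r_Bi_Bb=r_Bg(c)=f(r_Ac),
\]
so $b\in f(A)$.

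The only point requiring care is the in-closed case. There one must use the retraction $r_B$ together with the relation $f r_A=r_B g$ to push the preimage vertex back into $A$. This relation is part of the data of a retract in the arrow category, so the argument above already supplies it and no further obstacle arises; everything else is a direct diagram chase.
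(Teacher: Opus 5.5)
Your proof is correct and follows the same argument as the paper. The paper assumes the retract is already given by subdigraphs $H'\subseteq H$, $G'\subseteq G$ with a retraction $r$ satisfying $r(H)\subseteq H'$, and runs the same two chases: push the arrow into the big pair, use inducedness or in-closedness there, and pull it back with the retraction. Your arrow-category notation also checks injectivity of $f$, which the paper's subdigraph formulation gets for free.
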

\begin{proof}
(1) Let $H\subseteq G$ be an induced subdigraph. Assume that  $G'\subseteq G$ and $H'\subseteq G'\cap H$ are subdigraphs and there is a retraction $r:G\to G'$ such that $r(H)\subseteq H'$. We need to prove that $H'\subseteq G'$ is induced. 
If $h'_1,h'_2\in H'$ such that there is an arrow $h'_1\to h_2'$ in $G'$, then this arrow is in $H$, because $H$ is induced. Then the image of this arrow of $H$ under $r$ is the arrow $h'_1\to h'_2$ of $H'$. Hence $H'\subseteq G'$ is induced. 

(2) Assume that $h'\in H'$ and $g'\in G'$ and $g'\to h'$ is an arrow in $G'$. Since $H\subseteq G$ is in-closed, $g'\in H.$ Then $g'=r(g')\in H'.$ 
\end{proof}

\begin{definition}[Saturated class of morphisms] A class of morphisms of a category is called saturated if it is closed under pushouts, transfinite compositions and retracts. 
\end{definition}

\begin{proposition}\ 
\begin{enumerate}
    \item Induced inclusions form a saturated class of morphisms.
    \item In-closed (resp. out-closed) inclusions form a saturated class of morphisms.
\end{enumerate}
\end{proposition}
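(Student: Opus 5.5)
The proposition follows by combining the three preceding lemmas, since a saturated class is by definition one closed under pushouts, transfinite compositions and retracts. For (1), Lemma \ref{lemma:in-closed-stable}(1), Lemma \ref{lemma:in-closed_transfinite}(1) and Lemma \ref{lemma:in-closed-retracts}(1) give exactly these three closure properties for induced inclusions. For (2), in the in-closed case, parts (2) of the same three lemmas give the three properties.

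Some bookkeeping is needed. The lemmas are phrased for actual inclusions of subdigraphs, while the classes consist of injective maps whose image is induced (resp. in-closed). We therefore first note that both classes are closed under composition with isomorphisms. In any pushout, transfinite composition or retract diagram we may then replace each map by an isomorphic inclusion of a subdigraph. This is harmless because pushouts, colimits and retracts are only defined up to isomorphism.

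For retracts we also need that a retract of an injective map is injective. A retract diagram of $f$ exhibits $f'$ as a composite $f'=r\circ f\circ s$ on both the domain and codomain side, with $s$ a split mono. Hence $f'$ is a retract of a monomorphism and is mono, so on vertices it is an injection and its image $H'$ sits inside $G'$ exactly as in the setup of Lemma \ref{lemma:in-closed-retracts}. For pushouts, Remark \ref{rem:pushout_along_inclusion} shows that the pushout of an induced inclusion is again an injective map $H'\to G'$, which reduces to the lemma.

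The out-closed case follows by duality. The functor $G\mapsto G^\op$ is an automorphism of $\DGra$ that preserves pushouts, transfinite compositions and retracts. It sends in-closed inclusions exactly to out-closed inclusions, so the out-closed statement follows from the in-closed one.

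The only step that needs any care is the reduction from abstract monomorphisms to literal subdigraph inclusions, in particular checking that the retract really lands in the configuration assumed in Lemma \ref{lemma:in-closed-retracts}. Everything else is a direct citation of the lemmas.
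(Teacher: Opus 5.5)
Your proposal is correct and follows the paper's proof, which simply cites Lemmas \ref{lemma:in-closed-stable}, \ref{lemma:in-closed_transfinite} and \ref{lemma:in-closed-retracts} together with their duals for out-closed inclusions; your use of $G\mapsto G^\op$ just makes that duality explicit. One small fix in your bookkeeping: injectivity of a retract $f'$ of $f$ does not follow from $f'=r\circ f\circ s$, since $r$ need not be injective, but from the commutativity $s_B\circ f'=f\circ s_A$ of the retract diagram, where $s_A,s_B$ are the sections on the domain and codomain, and the right-hand side is a composite of monomorphisms.
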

\begin{proof}
It follows from Lemmas \ref{lemma:in-closed-stable}, \ref{lemma:in-closed_transfinite},  \ref{lemma:in-closed-retracts}, and these dual versions for out-closed inclusions.
\end{proof}

\begin{proposition}
\label{cor:N_1:pushout}
Let $i:H\to G$ and $j: H \to G'$ be in-closed inclusions (resp. out-closed). Then 
\begin{equation}
\begin{tikzcd}
N_1H 
\ar[r] 
\ar[d]
\arrow[dr, phantom, "\ulcorner" very near end]
& 
N_1G 
\ar[d]
\\
N_1G' 
\ar[r]
& N_1(G \sqcup_H G')
\end{tikzcd}  
\end{equation}
is pushout square in the category of cubical sets.
\end{proposition}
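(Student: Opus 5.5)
The plan is to reduce to Proposition \ref{prop:union_of_two_inclosed}. Set $P=G\sqcup_H G'$ and replace $i$ and $j$ by honest inclusions of in-closed subdigraphs $H\subseteq G$ and $H\subseteq G'$. By Lemma \ref{lemma:in-closed-stable}(2), applied to the pushout of $i$ along $j$, the canonical map $G'\to P$ is an in-closed inclusion. Symmetrically, $G\to P$ is an in-closed inclusion. Using the explicit description of the pushout along an induced subdigraph (Remark \ref{rem:pushout_along_inclusion}), we can write $V(P)=(V(G)\setminus V(H))\sqcup V(G')$. Under this identification, $G$ and $G'$ become induced subdigraphs of $P$, both are in-closed, and $P=G\cup G'$.

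The one point to verify is that $G\cap G'=H$ inside $P$. On vertices this is immediate from the disjoint-union description: the vertices common to both images are exactly those of $H$. On arrows, both $G$ and $G'$ are induced subdigraphs of $P$. Hence their intersection is the induced subdigraph of $P$ on $V(H)$. Since $H\subseteq G$ is induced, this is $H$ itself.

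Proposition \ref{prop:union_of_two_inclosed} now applies to the cover $P=G\cup G'$ by two in-closed subdigraphs with $G\cap G'=H$. It gives that $N_1P=N_1G\sqcup_{N_1H}N_1G'$, which is the required statement. The out-closed case follows by applying the in-closed case to opposite digraphs. This uses $N_1(X^\op)\cong$ the cubical set obtained from $N_1X$ by the automorphism of $\Box$ reversing coordinates, $x\mapsto 1-x$, under which $I_1^\op\cong I_1$ via $0\leftrightarrow1$. Alternatively, one can simply note that every cited result holds verbatim in the dual form: Corollary \ref{cor:N_1:union} is already stated for out-closed covers, and the proof of Proposition \ref{prop:union_of_two_inclosed} dualizes directly.

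The only step needing care, and the main possible obstacle, is identifying $G$ inside $P$ as an induced, in-closed subdigraph meeting $G'$ exactly in $H$. This requires checking that $G\to P$ is injective, which holds because $j$ is injective. It also requires checking that no new arrows between vertices of $G$ appear in $P$. Such arrows could only come from $G'$ between vertices of $H$, and these already lie in $H$ because $H\subseteq G'$ is induced.
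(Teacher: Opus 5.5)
Your proposal is correct and follows the paper's proof. You use stability of in-closed inclusions under pushouts to identify $G$ and $G'$ with in-closed subdigraphs of $G\sqcup_H G'$ that cover it and meet exactly in $H$, and then invoke Proposition~\ref{prop:union_of_two_inclosed}; your explicit check that $G\cap G'=H$ via the vertex description of the pushout, and your duality argument for the out-closed case, fill in details the paper leaves implicit.
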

\begin{proof} Set $G''=G\sqcup_H G'.$ Since $i$ and $j$ are in-closed inclusions, the maps $H\to G''$, $G\to G''$ and $G'\to G''$ are also in-closed inclusions.  Therefore we can identify them with their images $G,G'\subseteq G''$ which are in-closed subdigraphs such that $G''=G\cup G'$ and $G\cap G'=H.$ Then the result follows from Proposition \ref{prop:union_of_two_inclosed}.
\end{proof}

\begin{corollary}\label{cor:in-closed_decomp}
Let $H\subseteq G$ be an in-closed subdigraph and $O=\Out(H).$ Then the square 
\begin{equation}
\begin{tikzcd}
N_1(O\setminus H) 
\ar[r]
\ar[d]
\arrow[dr, phantom, "\ulcorner" very near end]
& 
N_1O 
\ar[d]
\\
N_1(G\setminus H) 
\ar[r]
&
N_1G
\end{tikzcd}    
\end{equation}
is a pushout square in the category of cubical sets. 
\end{corollary}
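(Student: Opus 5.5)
The plan is to apply Proposition~\ref{prop:union_of_two_inclosed} (equivalently Proposition~\ref{cor:N_1:pushout}) to a suitable pair of in-closed subdigraphs. The decomposition $G = (G\setminus H)\cup O$ is not by in-closed pieces. It is, however, a decomposition by \emph{out-closed} pieces, and the proposition has an out-closed version. Indeed, $O=\Out(H)$ is out-closed by definition. Since $H$ is in-closed, its complement $G\setminus H$ is out-closed, as noted in the definition of in-closed subdigraphs. Both are induced subdigraphs, and $O\cup (G\setminus H)=G$ because $H\subseteq O$. Their intersection is the induced subdigraph on $V(O)\setminus V(H)$, which is $O\setminus H$.

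The next step is the out-closed analogue of Proposition~\ref{prop:union_of_two_inclosed}: if $G=K\cup K'$ with $K,K'$ out-closed, then $N_1G=N_1K\sqcup_{N_1(K\cap K')}N_1K'$. I would obtain it by passing to opposite digraphs, since out-closed subdigraphs of $G$ are exactly the in-closed subdigraphs of $G^\op$. Alternatively, one can repeat the proof directly. Corollary~\ref{cor:N_1:union} already covers out-closed covers, using that $I_1^{\otimes k}$ is the out-closure of $(0,\dots,0)$, so $N_1G=N_1K\cup N_1K'$. The identity $N_1K\cap N_1K'=N_1(K\cap K')$ holds because $K,K'$ are induced subdigraphs: a cube lying in both has all its vertices in $K\cap K'$. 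A union of two subobjects of a presheaf is the pushout over their intersection, which gives the square.

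Applying this with $K=O$ and $K'=G\setminus H$ yields exactly the stated pushout square. The only point needing care is that $N_1(O\setminus H)\to N_1O$ and $N_1(O\setminus H)\to N_1(G\setminus H)$ are the maps induced by the inclusions. This holds since all maps are restrictions of inclusions of induced subdigraphs of $G$. I do not expect any real obstacle. The main check is that $G\setminus H$ is out-closed, which is immediate: an arrow $u\to v$ with $u\notin H$ and $v\in H$ would contradict in-closedness of $H$.
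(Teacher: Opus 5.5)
Your proposal is correct and matches the argument the paper intends. The corollary is stated without proof right after Proposition~\ref{cor:N_1:pushout}, whose out-closed case applies because $O$ and $G\setminus H$ are out-closed subdigraphs covering $G$ with intersection $O\setminus H$; your direct route (the out-closed version of Corollary~\ref{cor:N_1:union} together with $N_1O\cap N_1(G\setminus H)=N_1(O\setminus H)$) is the cleaner of your two options, since passing to $G^{\op}$ would also require noting that $N_1(G^{\op})$ is $N_1G$ twisted by the coordinate-reversing automorphism of $\Box$.
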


\begin{proposition}\label{prop:O-pushout}
Let $H\subseteq G$ be an in-closed subdigraph and $\varphi:H\to H'$ be a digraph map. Set  $O=\Out_G(H),$ $O'=O\sqcup_H H'$ and $G'=G\sqcup_H H'$. Then $O'=\Out_{G'}(H')$ and the square 
\begin{equation}
\begin{tikzcd}
N_1O 
\ar[r] \ar[d]
\arrow[dr, phantom, "\ulcorner" very near end]
& 
N_1O'\ar[d] \\
N_1G\ar[r] & N_1G'
\end{tikzcd}
\end{equation}
is a pushout square. 
\end{proposition}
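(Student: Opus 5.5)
We first identify $O'$ concretely, using the explicit description of pushouts along induced inclusions in Remark~\ref{rem:pushout_along_inclusion}. Since $H\subseteq G$ is in-closed, it is induced, and so is $H\subseteq O$. Hence $G'$ has vertex set $(V(G)\setminus V(H))\cup V(H')$ and arrows $(\varphi'\times\varphi')(E(G))\cup E(H')$. The pushout $O'=O\sqcup_H H'$ embeds in $G'$ as the induced subdigraph on $(V(O)\setminus V(H))\cup V(H')$. To see that the arrows agree, note that an arrow of $G'$ between such vertices either lies in $E(H')$ or is the image of an arrow $(x,y)\in E(G)$ with $\varphi'(x),\varphi'(y)\in O'$. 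In the latter case $x,y\in O$, because the preimage of $O'$ under $\varphi'$ is exactly $O$. Thus $O'\subseteq G'$ is induced.

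Next we show $O'=\Out_{G'}(H')$.
\begin{itemize}
\item[(a)] $O'$ is out-closed. Take an arrow of $G'$ with source in $O'$. If it lies in $E(H')$ its target is in $H'\subseteq O'$. Otherwise it is $\varphi'$ of an arrow $x\to y$ of $G$ with $x\in O$; since $O$ is out-closed, $y\in O$ and $\varphi'(y)\in O'$.
\item[(b)] Minimality. Each vertex of $O\setminus H$ is reached from $H$ by a directed path in $G$. Its image under $\varphi'$ is a directed path in $G'$ from $H'$. Hence every vertex of $O'$ lies in $\Out_{G'}(H')$.
\end{itemize}

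For the pushout square of nerves, we apply Corollary~\ref{cor:in-closed_decomp} twice. The first application is to $H\subseteq G$ with $O=\Out_G(H)$. For the second, $H'\subseteq G'$ is in-closed by Lemma~\ref{lemma:in-closed-stable}(2), and $O'=\Out_{G'}(H')$ by the above. This gives
\[
N_1G=N_1(G\setminus H)\sqcup_{N_1(O\setminus H)}N_1O,\qquad N_1G'=N_1(G'\setminus H')\sqcup_{N_1(O'\setminus H')}N_1O'.
\]
By \eqref{eq:G_H}, $G'\setminus H'\cong G\setminus H$, compatibly with $\varphi'$. Similarly $O'\setminus H'\cong O\setminus H$, by the same remark applied to $H\subseteq O$. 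So the second decomposition has the same left-hand pieces as the first, with $N_1O$ replaced by $N_1O'$.

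A pasting argument then finishes the proof. Consider the composite square
\[
\begin{array}{ccccc}
N_1(O\setminus H) & \to & N_1O & \to & N_1O'\\
\downarrow & & \downarrow & & \downarrow\\
N_1(G\setminus H) & \to & N_1G & \to & N_1G'.
\end{array}
\]
The outer rectangle is identified with the pushout for $G'$, and the left square is a pushout. By the pasting lemma the right square is a pushout, which is the claim.

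The step needing the most care is the naturality check. We must verify that the isomorphisms $G'\setminus H'\cong G\setminus H$ and $O'\setminus H'\cong O\setminus H$ are induced by $\varphi'$, so that the diagram genuinely commutes. This reduces to the fact that $\varphi'$ is the identity on vertices outside $H$. We must also confirm that $O\setminus H\subseteq O$ is the relevant in-closed piece, so that Corollary~\ref{cor:in-closed_decomp} applies verbatim in both cases.
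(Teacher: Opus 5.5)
Your proof is correct and follows essentially the paper's route: identify $O'$ with $\Out_{G'}(H')$, apply Corollary~\ref{cor:in-closed_decomp} to both $H\subseteq G$ and $H'\subseteq G'$, transport along $O'\setminus H'\cong O\setminus H$ and $G'\setminus H'\cong G\setminus H$, and conclude by pasting pushout squares. The only difference is that you check out-closedness and minimality of $O'$ by hand from the explicit pushout description, where the paper cites stability of out-closed inclusions under pushouts and $q(\Out_G(H))\subseteq \Out_{G'}(H')$. One small wording slip: $O\setminus H$ is out-closed rather than in-closed, but Corollary~\ref{cor:in-closed_decomp} only needs $H$ in-closed and $O=\Out(H)$, so nothing changes.
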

\begin{proof}
We assume  $H\subseteq G$ is an in-closed subdigraph and $i:H\to G$ is the inclusion. Since the maps $H'\to G'$ and $O'\to G'$ are inclusions, we can identify $H'$ and $O'$ with their images in $G'.$ Since in-closed inclusions and out-closed inclusions are closed under pushouts, we obtain that $H'\subseteq G'$ is in-closed and $O'\subseteq G'$ is out-closed. Therefore $\Out_{G'}(H')\subseteq O'$. Moreover, let $q:G\to G'$ be the pushout map. Note that
\begin{equation}
    q(O)=q(\Out_G(H))\subseteq \Out_{G'}(q(H))\subseteq \Out_{G'}(H'),
\end{equation}
and $O'=q(O)\cup H'$, we deduce that $O'\subseteq \Out_{G'}(H')$. So we have $O'=\Out_{G'}(H').$ Combining  Corollary \ref{cor:in-closed_decomp} with isomorphisms (see \eqref{eq:G_H}) 
\begin{equation}
 O'\setminus H' \cong O\setminus H, \hspace{1cm} G'\setminus H' \cong G\setminus H   
\end{equation}
we obtain that the big composition square and the left-hand square of the diagram
\begin{equation}
\begin{tikzcd}
N_1(O\setminus H) 
\ar[r]
\ar[d]
& 
N_1O
\ar[r]
\ar[d]
& 
N_1O' 
\ar[d]
\\
N_1(G\setminus H) 
\ar[r]
&
N_1G
\ar[r]
&
N_1G'
\end{tikzcd}    
\end{equation}
are both pushout squares. Therefore, the right-hand square is also a pushout square.
\end{proof}

\subsection{Directed deformation retracts} 

In this subsection we introduce a notion of a directed deformation retract which is later used in the definition of in-closed cofibrations.

\begin{definition}[Directed deformation retract]
Let $H\subseteq G$ be an in-closed subdigraph such that $G=\Out_G(H)$. It is called directed deformation retract, if there is a morphism of digraphs 
\begin{equation}
 \eta:G\to G   
\end{equation}
such that the following holds: 
\begin{enumerate}
    \item[(DDR1)] there is an arrow $\eta(g)\to g$, for any $g\in G$;
    \item[(DDR2)] if $h\in H$, then  $\eta(h)=h$;
    \item[(DDR3)] if $g\in G \setminus H$, then for any  $h\in H$ we have   
    \[ {\sf dist}(h,g)={\sf dist}(h,\eta(g))+1.\]
    In particular, ${\sf dist}(h,g)=\infty$ if and only if ${\sf dist}(h,\eta(g))=\infty$. 
\end{enumerate}
An in-closed inclusion $i:H\to G$ is called directed deformation retract, if the image $i(H)\subseteq G$ is a directed deformation retract. 
\end{definition}

\begin{example}
Let $\tilde{\mathbb{N}}$ be the digraph, whose vertices are natural numbers and arrows have the form $n\to n+1$. Then the one-vertex subdigraph $H=\{0\} \subseteq  \tilde{\mathbb{N}}$ is a directed deformation retract. The map $\eta : \tilde{\mathbb{N}} \to \tilde{\mathbb{N}}$ is defined by $\eta(n)=\max\{0, n-1\}$. It is indicated by dashed arrows:
\begin{equation}
\begin{tikzcd}
{\bf 0} 
\ar[r]
& 
1 
\ar[r]
\ar[l,bend right=60,dashed]
& 
2 
\ar[r]
\ar[l,bend right=60,dashed]
& 
3
\ar[r]
\ar[l,bend right=60,dashed]
& 
\dots 
\end{tikzcd}
\end{equation}
Another example is given by $G=I_1 \otimes I_1$ and $H=\{(0,0)\}$.
\begin{equation}
\begin{tikzcd}
{\bf 00} 
\ar[r]
\ar[d]
& 
01
\ar[d]
\ar[l,bend right=60,dashed]
\\
10 
\ar[r]
\ar[u,bend left=60,dashed]
& 
11
\ar[u,bend right=60,dashed]
\end{tikzcd}
\end{equation}
For any digraph $G,$ $G\otimes \{0\}$ is a directed  deformation retract  of $G \otimes I_1.$
\end{example}

\begin{lemma}\label{lemma:def_retr_equiv}
Let $H\subseteq G$ be an in-closed subdigraph such that $G=\Out_G(H)$, and let $\eta:G\to G$ be a morphism of digraphs such that  (DDR1) and (DDR2) are satisfied. Then (DDR3) is equivalent to the fact that, for any $g\in G\setminus H$, there is a natural number $n=n(g)\geq 1$ such that 
\begin{itemize}
    \item[(a)] $\eta^{n}(g)\in H$;
    \item[(b)] for any $h\in H$ such that there is a path from $h$ to $g$, there is a shortest path from $h$ to $g$ that factors via
    $\eta^{n}(g) \to \eta^{n-1}(g) \to \dots \to g.$
\end{itemize}
\end{lemma}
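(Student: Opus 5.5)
We prove the two implications separately, using throughout that $\eta$ is a digraph map, so ${\sf dist}(\eta(x),\eta(y))\leq {\sf dist}(x,y)$, and that by (DDR2) $\eta$ fixes $H$. By (DDR1) there is an arrow $\eta(g)\to g$, hence ${\sf dist}(h,g)\leq {\sf dist}(h,\eta(g))+1$ for every $h$.

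\emph{(DDR3) $\Rightarrow$ (a),(b).} Fix $g\in G\setminus H$. Since $G=\Out_G(H)$, there is a vertex $h_0\in H$ with ${\sf dist}(h_0,g)=d<\infty$, and $d\geq 1$ because $g\notin H$. Applying (DDR3) repeatedly gives ${\sf dist}(h_0,\eta^k(g))=d-k$ as long as $\eta^{k-1}(g)\notin H$. The distance cannot become negative, so there is a least $n\geq 1$ with $\eta^n(g)\in H$; this is (a). Now take any $h\in H$ with a path to $g$. Since $\eta^k(g)\notin H$ for $k<n$, iterating (DDR3) yields ${\sf dist}(h,g)={\sf dist}(h,\eta^n(g))+n$. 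Concatenating a shortest path from $h$ to $\eta^n(g)$ with the chain $\eta^n(g)\to\cdots\to g$ (whose arrows are arrows $\eta(x)\to x$ by (DDR1)) gives a path of length exactly ${\sf dist}(h,g)$, hence a shortest path; this is (b). If some of these arrows are degenerate, they only shorten the path, which would contradict minimality, so they are in fact non-degenerate. Note also that the $n$ obtained here depends only on $g$ and not on $h_0$.

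\emph{(a),(b) $\Rightarrow$ (DDR3).} Let $g\in G\setminus H$ and $h\in H$. If ${\sf dist}(h,g)=\infty$, then ${\sf dist}(h,\eta(g))=\infty$ as well, since otherwise the arrow $\eta(g)\to g$ would give a path to $g$. Suppose instead that ${\sf dist}(h,g)<\infty$. By (b) some shortest path factors through $\eta(g)\to g$, so its initial segment shows ${\sf dist}(h,\eta(g))\leq {\sf dist}(h,g)-1$. Combined with the reverse inequality from (DDR1) noted above, this gives equality. The factorization in (b) passes through $\eta(g)$ because $n\geq 1$.

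The main obstacle is in the first direction: one must make sure that a single $n$ works uniformly for all $h$, and handle vertices of $H$ not connected to $g$. Both are settled by the observation that $n$ is defined intrinsically, as the first index with $\eta^n(g)\in H$. The remaining issue is checking that the distances along the chain are consistent, which amounts to showing that the concatenated path has length exactly ${\sf dist}(h,g)$; the iterated (DDR3) identity above gives exactly this.
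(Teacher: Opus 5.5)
Your proposal is correct and follows essentially the same route as the paper. In the forward direction you take $n$ to be the least index with $\eta^n(g)\in H$, iterate (DDR3) to get ${\sf dist}(h,g)={\sf dist}(h,\eta^n(g))+n$, and concatenate; in the reverse direction you use (b) together with the arrow $\eta(g)\to g$. You also make explicit the use of $G=\Out_G(H)$ to guarantee that $n$ exists, which the paper leaves implicit.
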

\begin{proof}
Assume that $\eta$ satisfies (DDR3). Take some $g\in G\setminus H$. By induction, we obtain that, for any $i\geq 1$ such that $\eta^{i-1}(g)\in G\setminus H,$ and any $h\in H$ such that there is a path from $h$ to $g$, we have 
\begin{equation}
\dist(h,g) = \dist(h,\eta^i(g)) + i
\end{equation}
Therefore, there exists $n$ such that $\eta^n(g)\in H.$ We denote by $n=n(g)\geq 1$ the smallest $n$ such that $\eta^n(g)\in H$. Then we obtain $\dist(\eta^n(g),g)=n$. Hence $\eta^n(g)\to \dots \to \eta(g) \to g $ is a shortest path. We also have $\dist(h,g) = \dist(h,\eta^n(g))+n$ for any $h\in H$ such that $d(h,g)<\infty.$ Then there is a path from $h$ to $g$ that factors via $\eta^n(g)\to \dots \to \eta(g) \to g $. 

Now assume that (a),(b) are satisfied. Take some $g\in G\setminus H$ and consider the corresponding number $n=n(g)$. If $\dist(h,g)=\infty$, then $\dist(h,\eta(g))=\infty$, because there is an arrow $\eta(g)\to g$. If $\dist(h,g)<\infty$, there is a shortest path of the form $h=x_0\to \dots \to x_k \to \eta^n(g)\to \dots \to  \eta(g)\to g.$ Then $\dist(h,g)=  \dist(h,\eta(g))+1$.
\end{proof}

\begin{proposition} 
\label{prop:DDR-weak_equiv}
A directed deformation retract  is a cubical weak equivalence. 
\end{proposition}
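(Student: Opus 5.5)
The plan is to filter $G$ by directed distance from $H$ and to show that each step of the filtration is a digraph homotopy equivalence, with $\eta$ providing the homotopy inverse. For $g\in G$ set $d(g)=\min_{h\in H}\dist(h,g)$. Because $G=\Out_G(H)$, every vertex is reachable from $H$, so $d(g)<\infty$. Let $G_k\subseteq G$ be the induced subdigraph on the vertices with $d(g)\le k$. Then $G_0=H$ and $G=\bigcup_k G_k$, so the inclusion $H\hookrightarrow G$ is the sequential composite of the inclusions $G_{k-1}\hookrightarrow G_k$.

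\textbf{Step 1: $\eta$ lowers the filtration.} For $g\in G\setminus H$, (DDR3) gives $\dist(h,\eta(g))=\dist(h,g)-1$ for every $h\in H$. Taking the minimum over $h$ yields $d(\eta(g))=d(g)-1$. On $H$ we have $\eta=\id$ by (DDR2). Hence $\eta(G_k)\subseteq G_{k-1}$ for $k\ge1$, and $\eta(G_0)=G_0$. Since the $G_k$ are induced, $\eta$ restricts to digraph maps $\eta_k\colon G_k\to G_{k-1}$ and also $G_{k-1}\to G_{k-1}$.

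\textbf{Step 2: each inclusion $i_k\colon G_{k-1}\hookrightarrow G_k$ is a homotopy equivalence.} By (DDR1) there is an arrow $\eta(g)\to g$ for every $g$. Hence the assignment $(g,0)\mapsto\eta(g)$, $(g,1)\mapsto g$ defines a digraph map $G_k\otimes I_1\to G_k$. The arrows of the form $(g,0)\to(g',0)$ go to arrows because $\eta$ is a digraph map; the other arrows go to arrows because the $G_k$ are induced and contain both $\eta(g)$ and $g$. This map is a homotopy $i_k\eta_k\sim\id_{G_k}$. The same formula on $G_{k-1}$ gives $\eta_k i_k\sim\id_{G_{k-1}}$. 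By Lemma~\ref{lem:N_J_weak_equiv} with $J=I_1$, $N_1(i_k)$ is a weak equivalence of cubical sets. It is also a monomorphism, so it is anodyne.

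\textbf{Step 3: pass to the colimit.} Each cube $I_1^{\otimes n}\to G$ is finite, so it has image in some $G_k$. Therefore $N_1G=\bigcup_k N_1G_k=\colim_k N_1G_k$. A transfinite (here sequential) composite of anodyne maps is anodyne, so $N_1H\to N_1G$ is a weak equivalence. By Proposition~\ref{prop:cubical_weak_equivalences}, the inclusion $H\to G$ is a cubical weak equivalence.

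I expect the main obstacle to be Step 1, specifically making sure that (DDR3) gives exactly $d(\eta(g))=d(g)-1$ uniformly, so that $\eta$ really decreases the filtration level. This is where the hypothesis $G=\Out_G(H)$ is needed, since it guarantees finiteness of $d$. A secondary check is that the homotopy of Step 2 stays inside $G_k$, which holds because $\eta(g)\in G_{k-1}\subseteq G_k$. In-closedness of $H$ is not needed for this argument.
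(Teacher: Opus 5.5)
Your proof is correct and follows essentially the paper's argument. Your filtration by $d(g)=\min_{h\in H}\dist(h,g)$ coincides, by your Step 1, with the paper's $G_n=\{g:\eta^n(g)\in H\}$, and as in the paper you use the arrows $\eta(g)\to g$ to get digraph homotopy equivalences, then conclude via closure of anodyne maps under sequential composition and $N_1G=\colim N_1G_k$. The differences are cosmetic: you compare consecutive stages $G_{k-1}\subseteq G_k$ using a single $\eta$ and invoke Lemma~\ref{lem:N_J_weak_equiv} directly, whereas the paper compares $H\subseteq G_n$ via the pointed retraction $\eta^n$ on $A_*$ and then uses two-out-of-three.
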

\begin{proof}
Denote by $G_n\subseteq G$ the induced subdigraph consisting of vertices $g$ such that $\eta^n(g)\in H.$ Then $G_0=H$ and  $G=\bigcup G_n$. Since there exists arrows $\eta(g)\to g,$ the restriction  $\eta_n : G_n \to G_n$ of $\eta$ is homotopic to the identity map $\eta_n \simeq \id_{G_n}$. Therefore its $n$-th power is also homotopic to the identity $\eta_n^n \simeq \id_{G_n}$. If we denote by $i_n:H\to G_n$ the inclusion and by $r_n: G_n \to H$ the map defined by $r_n(g)=\eta_n^n(g),$ we obtain $r_n i_n = \id_H$ and $i_n r_n = \eta_n^n \sim \id_{G_n}.$ Note that for any vertex $h\in H$, the map $i_n:(H,h)\to (G_n,h)$ and $r_n:(G_n,h)\to (H,h)$ are pointed morphisms and the constructed homotopy equivalence $i_nr_n \sim \id_{G_n}$ is also pointed. Therefore, $i_n$ induces an isomorphism $A_*(H,h)\cong A_*(G_n,h)$, and hence, $i_n$ is a cubical weak equivalence.  

Since cubical weak equivalences satisfy 2-out-of-3 property, the inclusions $G_n\to G_{n+1}$ are also weak equivalences. Therefore the maps $N_1(G_n)\to N_1(G_{n+1})$ are anodyne. Since anodyne maps are closed under transfinite compositions, and $N_1$ preserves filtered colimits, the map $N_1(H)\to N_1(G)$ is also anodyne. Hence the map $H\to G$ is a cubical weak equivalence. 
\end{proof}

\begin{lemma}
\label{lemma:deformation-stable}
Directed deformation retracts  are closed under pushouts. 
\end{lemma}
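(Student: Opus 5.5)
Let $H\subseteq G$ be a directed deformation retract with retraction map $\eta:G\to G$, and let $\varphi:H\to H'$ be any digraph map. Put $G'=G\sqcup_H H'$ and use the explicit model of Remark \ref{rem:pushout_along_inclusion}: $V(G')=(V(G)\setminus V(H))\cup V(H')$, with $\varphi':G\to G'$ the identity off $H$ and $\varphi$ on $H$. By Lemma \ref{lemma:in-closed-stable}, $H'\subseteq G'$ is in-closed. Since $G=\Out_G(H)$ and $\varphi'(\Out_G(H))\subseteq \Out_{G'}(H')$, we get $G'=\varphi'(G)\cup H'=\Out_{G'}(H')$.

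Define $\eta':G'\to G'$ by $\eta'(h')=h'$ for $h'\in H'$ and $\eta'(g)=\varphi'(\eta(g))$ for $g\in V(G)\setminus V(H)$. Equivalently, $\eta'$ is the map induced on the pushout by $\varphi'\circ\eta:G\to G'$ and $\id_{H'}$; these agree on $H$ by (DDR2). Hence $\eta'$ is a digraph map by the universal property. (DDR2) for $\eta'$ holds by construction. (DDR1) is immediate on $H'$ (degenerate arrows). Off $H'$, the arrow $\eta(g)\to g$ maps under $\varphi'$ to $\eta'(g)\to g$.

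The main obstacle is (DDR3): for $g\in G'\setminus H'=G\setminus H$ and $h'\in H'$, we need $\dist_{G'}(h',g)=\dist_{G'}(h',\eta'(g))+1$. The inequality $\le$ is clear from the arrow $\eta'(g)\to g$. For $\ge$, take a shortest path $h'=x_0\to\dots\to x_k=g$ in $G'$, and let $x_j$ be its last vertex in $H'$. Since $G'\setminus H'\cong G\setminus H$ is out-closed and the arrows of $G'$ off $H'$ come from $G$, the tail $x_j\to x_{j+1}\to\dots\to g$ is the image of a path in $G$. Every arrow of $G'$ leaving $H'$ toward $G'\setminus H'$ is the image of an arrow $h\to x_{j+1}$ of $G$ with $h\in H$ and $\varphi(h)=x_j$, so the tail lifts to a path $h\to x_{j+1}\to\dots\to g$ in $G$ of length $k-j$.

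Apply Lemma \ref{lemma:def_retr_equiv}(b) in $G$: among shortest paths from $h$ to $g$ there is one passing through $\eta(g)$. Together with minimality of the whole $G'$-path, this shows $\dist_G(h,g)=k-j$. By (DDR3) in $G$, $\dist_G(h,\eta(g))=k-j-1$. Mapping that path by $\varphi'$ and prepending $x_0\to\dots\to x_j$ gives a path in $G'$ from $h'$ to $\eta'(g)$ of length $k-1$. This proves the distance identity. The case $\dist=\infty$ follows from the same lifting argument: any path from $h'$ to $\eta'(g)$ extends by the arrow $\eta'(g)\to g$.

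What needs care is the lifting step. Arrows inside $H'$ need not lift, but we only lift the portion after the last visit to $H'$, which is fine because its vertices all lie in $G\setminus H$ apart from its start.
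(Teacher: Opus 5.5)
Your proof is correct and is essentially the paper's argument: the same $\eta'$ is obtained from $\varphi'\circ\eta$ and $\id_{H'}$ by the universal property, and the key step is the same, namely cutting a shortest path in $G'$ at its last vertex in $H'$, lifting the tail to $G$, and pushing a path in $G$ back down. The differences are minor. You check (DDR3) directly from the one-step identity in $G$, rather than through the iterate characterization of Lemma~\ref{lemma:def_retr_equiv}; your appeal to part (b) there is superfluous, since the lifted path already gives $\dist_G(h,g)\le k-j$, which is all you need. You also make explicit the check $G'=\Out_{G'}(H')$, which the paper leaves implicit.
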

\begin{proof} 
Let $H\subseteq G$ be a directed deformation retract. Consider a map $\varphi:H\to H'$ and set $G'=G \sqcup_{H} H'$ and $O'= O\sqcup_{H} H',$ where $O=\Out_G(H)$. We will identify $H'$ with its image in $G'$ (Remark \ref{rem:pushout_along_inclusion}). Therefore, we need to show that $H'\subseteq G'$ is a directed deformation retract.  We construct $\tilde \eta:G'\to G'$ by the universal property of the pushout
\begin{equation}
\begin{tikzcd}
H 
\ar[r,"\varphi"]
\ar[d,"i"]
\arrow[dr, phantom, "\ulcorner" very near end]
& 
H' 
\ar[d,"i'"]
\ar[rdd,"i'",bend left =1cm]
& 
\\
G 
\ar[r,"\tilde \varphi"]
\ar[rrd,"\tilde \varphi \eta"',bend right = 1cm]
& 
G' 
\ar[rd,"\tilde \eta",dashed]
& \\
& & 
G'
\end{tikzcd}
\end{equation}
Since in-closed inclusions are closed under pushouts, $H'\subseteq G'$ is in-closed.  So we need to check that $\tilde\eta$ satisfies  (DDR1)-(DDR3).  (DDR1) and (DDR2) are obvious. 

Let us prove (DDR3) using Lemma \ref{lemma:def_retr_equiv}. We take some $g'\in G'\setminus H'$, denote its preimage by $g\in G\setminus H$ and set $n=n(g'):=n(g)$. Then $\tilde \eta^n(g') = \varphi( \eta^n(g)) \in H'$.  
Take some $h'\in H'$ such that there exists a path from $h'$ to $g'$. It is sufficient to show that there is a shortest path from $h'$ to $g'$ that factors via the path $\tilde \eta^n(g') \to \dots \to \tilde \eta(g')\to g'$. Take some shortest path $h'=x_0 \to \dots\to x_k=g'$. Choose the last vertex of this path $x_l$, which is in $H'.$ Using the isomorphism $G\setminus H \cong G'\setminus H'$ we obtain that the path $x_{l+1}\to \dots \to x_k$ has a unique preimage $y_{l+1}\to \dots \to y_{k}=g$ in $G.$ The arrow $x_l\to x_{l+1}$ also has a preimage $y_l\to y_{l+1}$ in $G$, where $y_l\in H.$ Then the path $y_{l}\to \dots \to y_{k}=g$ is a preimage of a shortest path. Therefore, it is also shortest. In particular $\dist_G(y_l,g)=k-l.$ By Lemma \ref{lemma:def_retr_equiv} there exists a shortest path  $y_l=y'_l \to y'_{l+1} \to \dots \to y'_{k}=g$ in $G$ that factors via $\eta^n(g)\to \dots \to  \eta(g)\to g$.  Set $x'_{l+i}=\tilde\varphi(y'_{l+i}).$ Then $h'=x_0\to  \dots \to x_l \to x'_{l+1} \to \dots \to x'_{k}=g'$ is a shortest path that factors via  $\tilde \eta^n(g')\to \dots \to \tilde \eta(g') \to g'$.
\end{proof}

\begin{lemma}
\label{lemma:DDR-transfinite}
Directed deformation retracts are closed under transfinite compositions.    
\end{lemma}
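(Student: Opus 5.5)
We model the argument on Lemma~\ref{lemma:in-closed_transfinite}. Let $(G_\beta)_{\beta<\alpha}$ be an $\alpha$-sequence of subdigraphs of $G=\bigcup_\beta G_\beta$, with $G_\lambda=\bigcup_{\beta<\lambda}G_\beta$ at limit ordinals and each $G_\beta\subseteq G_{\beta+1}$ a directed deformation retract with retraction map $\eta_\beta:G_{\beta+1}\to G_{\beta+1}$. By Lemma~\ref{lemma:in-closed_transfinite}(2), $G_0\subseteq G$ is in-closed, and more generally each $G_\beta\subseteq G$ is in-closed. For $g\in G\setminus G_0$ let $\beta(g)$ be the least $\beta$ with $g\in G_\beta$; this is always a successor ordinal $\gamma+1$, since at a limit stage the union has no new vertices. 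We then define $\eta:G\to G$ by $\eta(g)=\eta_\gamma(g)$ when $\beta(g)=\gamma+1$, and $\eta(h)=h$ for $h\in G_0$. Conditions (DDR1) and (DDR2) hold by construction.

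The equality $G=\Out_G(G_0)$ follows by transfinite induction. Each $G_{\gamma+1}=\Out_{G_{\gamma+1}}(G_\gamma)$, and at limits we take unions, so every vertex is reached from $G_0$ by a directed path.

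The main obstacle is checking that $\eta$ is a digraph map. Take an arrow $g\to g'$ in $G$. Since each $G_\beta$ is in-closed in $G$, we have $\beta(g)\le\beta(g')$. If $\beta(g)=\beta(g')=\gamma+1$, the arrow lies in $G_{\gamma+1}$ and $\eta_\gamma$ maps it to an arrow. If $\beta(g)<\beta(g')=\gamma+1$, then $g\in G_\gamma$, so $\eta_\gamma(g)=g$. Hence $\eta_\gamma(g)\to\eta(g')$ is the arrow $g\to\eta(g')$, and we need it to equal $\eta(g)\to\eta(g')$. This works when $g\in G_0$, but fails in general because $\eta(g)\neq g$. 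So the naive $\eta$ need not be a map, and the plan has to change here.

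We instead use the criterion of Lemma~\ref{lemma:def_retr_equiv} and choose $\eta$ more cleverly, composing the retractions so that $\eta$ acts stepwise toward $G_0$. As a first attempt we would define $\eta$ on $G_{\gamma+1}\setminus G_\gamma$ via the shortest-path structure: for $g\notin G_0$, set $\eta(g)$ to be the predecessor of $g$ on a shortest path from $G_0$ that passes through the $\eta_\gamma$-chains. To prove (DDR3) we compare distances from $h\in G_0$. Since $G_\gamma$ is in-closed, every path from $h$ to $g\in G_{\gamma+1}$ stays inside $G_{\gamma+1}$. Applying (DDR3) for $G_\gamma\subseteq G_{\gamma+1}$ repeatedly, together with transfinite induction on $\beta(g)$, then gives $\dist(h,g)=\dist(h,\eta(g))+1$. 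Verifying that this corrected $\eta$ is a digraph map is where the real work lies. We would do it by induction on $\beta(g')$, using the uniqueness of distance decrements to show that $\eta(g)\to\eta(g')$ is an arrow whenever $g\to g'$ is.
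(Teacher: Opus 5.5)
\textbf{There is a genuine gap.} Your setup is right, and the map you call ``naive'' is exactly the one that works. You wrongly conclude that it fails to be a digraph map. You then replace it by a ``corrected'' $\eta$ (a predecessor on some shortest path) that is never actually well-defined. Whether that replacement is a digraph map is left as ``where the real work lies,'' so the proof is not completed.

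The missing idea is to apply (DDR3) for $G_\gamma\subseteq G_{\gamma+1}$ with $h$ taken to be the \emph{source} of the arrow. In your problematic case there is an arrow $g\to g'$ with $g\in G_\gamma$ and $g'\in G_{\gamma+1}\setminus G_\gamma$. Then $g\neq g'$, so $\dist_{G_{\gamma+1}}(g,g')=1$. (DDR3) therefore gives $\dist_{G_{\gamma+1}}(g,\eta_\gamma(g'))=0$, that is, $\eta(g')=\eta_\gamma(g')=g$. The required arrow $\eta(g)\to\eta(g')$ is thus just $\eta(g)\to g$, which exists by (DDR1) for your $\eta$. It is degenerate if $g\in G_0$, and it is the arrow $\eta_\delta(g)\to g$ of $G_{\delta+1}\subseteq G$ if $\beta(g)=\delta+1$. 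Your worry that $\eta(g)\neq g$ is beside the point. You should not try to match the image arrow $\eta_\gamma(g)\to\eta_\gamma(g')$; instead, identify $\eta(g')$ directly using (DDR3).

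With this observation, neither a shortest-path construction nor an induction on $\beta(g')$ is needed. (DDR3) for the composite is a single step. For $h\in G_0$ and $g\in G_{\gamma+1}\setminus G_\gamma$, in-closedness of $G_{\gamma+1}$ in $G$ means every path ending in $G_{\gamma+1}$ stays inside it. Hence
$\dist_G(h,g)=\dist_{G_{\gamma+1}}(h,g)=\dist_{G_{\gamma+1}}(h,\eta_\gamma(g))+1=\dist_G(h,\eta(g))+1$.
This is precisely the paper's argument.

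Your other ingredients are fine:
\begin{itemize}
\item each $G_\beta\subseteq G$ is in-closed, so $\beta(g)\le\beta(g')$ for an arrow $g\to g'$;
\item $\beta(g)$ is a successor ordinal for $g\notin G_0$;
\item $G=\Out_G(G_0)$ by transfinite induction.
\end{itemize}
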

\begin{proof}
We can always replace in-closed inclusions with actual inclusions of subdigraphs. Therefore we can assume that we have an $\alpha$-sequence of subdigraphs i.e a collection of subdigraphs  $(G_\beta \subseteq G)_{\beta<\alpha},$  such that $G_\beta \subseteq G_{\gamma}$ for $\beta\leq \gamma<\alpha$; for any limit ordinal $\lambda$ we have $G_\lambda=\bigcup_{\beta<\lambda} G_\beta$  and $G=\bigcup_{\beta<\alpha}  G_\beta$. We need to show that if $G_\beta \subseteq G_{\beta+1}$ is a directed deformation retract, for any $\beta$ such that $\beta+1<\alpha$, then $G_0\subseteq G$ is a directed deformation retract.

Note that the fact that for any limit ordinal $\lambda$ we have $G_\lambda = \bigcup_{\beta<\lambda} G_\beta$ implies that for any $g\in G\setminus G_0$ there is a unique $\beta=\beta(g)$ such that $g\in G_{\beta+1}\setminus G_\beta$. We denote by $\eta_\beta : G_{\beta+1}\to G_{\beta+1}$ the map from the definition of a directed deformation retract with respect to the subdigraph $G_\beta \subseteq G_{\beta+1}$. We need to show that $G_0\subseteq G$ is a directed deformation retract.

Since in-closed inclusions are closed under transfinite compositions,  $G_\beta\subseteq G$ is in-closed for any $\beta<\alpha$. We define $\eta:G\to G$ so that for any vertex $g\in G_{\beta+1}\setminus G_\beta$ we have $\eta(g)=\eta_\beta(g)$, and for $g\in G_0$, we have $\eta(g)=g$. Note that for any $g\in G$ we have an arrow $\eta(g)\to g$ (either degenerate or not).
Let us check that $\eta$ is a morphism of digraphs. Consider an arrow $g'\to g$ in $G$ and prove that there is an arrow $\eta(g')\to \eta(g)$. If $g\in G_0$, then $g'\in G_0$ and $\eta(g')=g'\to g=\eta(g)$ is the required arrow. Now assume that $g\in G\setminus G_0$. Take $\beta$ such that $g\in G_{\beta+1}\setminus G_\beta$. Since $G_{\beta+1}\subseteq G$ is in-closed, we obtain $g'\in G_{\beta+1}$. If $g'\in G_{\beta+1} \setminus G_{\beta}$, then $\eta_\beta(g')\to \eta_\beta(g)$ is the required arrow. If $g'\in G_{\beta}$, then $1=\dist_{G_{\beta+1}}(g',g)=\dist_{G_{\beta+1}}(g',\eta_{\beta+1}(g))+1.$ Therefore $g'=\eta_{\beta+1}(g)$. Hence $\eta(g')\to g'=\eta(g)$ is the required arrow. 

We already have the properties (DDR1) and (DDR2). 
Let's check the property (DDR3). Assume that $g\in G\setminus G_0$ and $h\in G_0$. Take $\beta$ such that $g\in G_{\beta+1}\setminus G_\beta$. Then 
\begin{align}
\dist_G(h,g) &=\dist_{G_{\beta+1}}(h,g) \\
& = \dist_{G_{\beta+1}}(h,\eta_{\beta}(g)) +1 \\
&= \dist_{G}(h,\eta(g)) +1.
\end{align}
\end{proof}

\begin{lemma}
\label{lemma:DDR_retracts}
Directed deformation retracts are closed under retracts. 
\end{lemma}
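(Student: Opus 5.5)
We must show: if $i':H'\to G'$ is a retract of a directed deformation retract $i:H\to G$, then $i'$ is a directed deformation retract. As in Lemma \ref{lemma:in-closed-retracts}, we realize everything by inclusions. Take $G'\subseteq G$, $H'\subseteq G'\cap H$, and a retraction $r:G\to G'$ with $r(H)\subseteq H'$ and $r|_{G'}=\id$. By Lemma \ref{lemma:in-closed-retracts}(2), $H'\subseteq G'$ is in-closed. We then need to check three things: that $G'=\Out_{G'}(H')$, that a suitable map $\eta'$ exists, and that it satisfies (DDR1)--(DDR3).

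We define $\eta'=r\,\eta|_{G'}:G'\to G'$. This is a digraph map.

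\begin{itemize}
\item[(DDR1)] For $g\in G'$ there is an arrow $\eta(g)\to g$ in $G$. Applying $r$ gives an arrow $r\eta(g)\to r(g)=g$.
\item[(DDR2)] If $h\in H'\subseteq H$, then $\eta'(h)=r(h)=h$.
\end{itemize}

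For $G'=\Out_{G'}(H')$, take $g\in G'$. Since $G=\Out_G(H)$, there is a path in $G$ from some $h\in H$ to $g$. Applying $r$ gives a path in $G'$ from $r(h)\in H'$ to $g$.

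The key fact for (DDR3) is that $r$ is distance non-increasing and the inclusion $G'\subseteq G$ is too. Hence for $x,y\in G'$ we have $\dist_{G'}(x,y)=\dist_G(x,y)$.

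Now take $g\in G'\setminus H'$ and $h\in H'$. We aim to show $\dist_{G'}(h,g)=\dist_{G'}(h,\eta'(g))+1$.

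\emph{Inequality $\leq$.} The arrow $\eta'(g)\to g$ gives $\dist(h,g)\le \dist(h,\eta'(g))+1$.

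\emph{Inequality $\geq$, case $g\notin H$.} (DDR3) in $G$ gives $\dist_G(h,\eta(g))=\dist_G(h,g)-1$. Since $r$ does not increase distance and fixes $h$, we get $\dist_{G'}(h,r\eta(g))\le \dist_G(h,\eta(g))=\dist_{G'}(h,g)-1$.

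\emph{Inequality $\geq$, case $g\in H\setminus H'$.} Here $g\in G'\cap H$, so $g=r(g)\in r(H)\subseteq H'$. This contradicts $g\notin H'$, so the case cannot occur.

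This case analysis is the main point to watch: we must make sure no vertex of $G'\setminus H'$ lies in $H$, and the observation $r(H)\subseteq H'$ with $r|_{G'}=\id$ gives exactly that. The infinite-distance cases follow from the same inequalities.
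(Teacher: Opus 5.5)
Your proof is correct and follows the paper's argument: the same map $\eta'=r\circ\eta|_{G'}$, the same observation that $\dist_{G'}=\dist_G$ on $G'$ because of the retraction, and the same two-sided inequality for (DDR3). You also check two points that the paper's proof uses without comment. One is that $G'=\Out_{G'}(H')$. The other is that a vertex of $G'\setminus H'$ cannot lie in $H$, which is needed before applying (DDR3) in $G$; your argument for it is $g=r(g)\in r(H)\subseteq H'$.
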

\begin{proof}
Let $H\subseteq G$ be a directed deformation retract and $H'\subseteq H$ and $G'\subseteq G$ be subdigraphs with a retraction $r:G\to G'$ such that $r(H)\subseteq H'$. We need to prove that $H'\subseteq G'$ is a directed deformation retract. Since in-closed inclusions are closed under retracts, $H'\subseteq G'$ is in-closed. Let $\eta:G\to G$ be a morphism of digraphs from the definition of directed deformation retract. Define $\eta' :G'\to G'$ by the formula $\eta' = r\circ \eta|_{G'}$. Let us check the properties (DDR1)-(DDR3) for $\eta'$ with respect to the subdigraph $H'\subseteq G'$. Take $g'\in G'$. Then we have an arrow $\eta(g')\to g'$. Since $r(g')=g',$ its image is an arrow $r(\eta(g'))\to g'.$ Then (DDR1) is satisfied. If $h'\in H'$, then $\eta(h')=h'$ and $r(h')=h'$. Therefore (DDR2) is also satisfied. Take $h'\in H'$ and $g'\in G'\setminus H'$. Note that, since $G'\subseteq G$ is a retract, we have $\dist_{G'}(h',g')=\dist_{G}(h',g')$. Therefore 
\begin{align}
\dist_{G'}(h',g') & = \dist_{G}(h',g') \\ 
& =\dist_{G}(h',\eta(g')) +1 \\
&\geq \dist_{G'}(r(h'),r(\eta(g'))) +1\\
&=\dist_{G'}(h',r(\eta(g'))) +1
\end{align}
Since there is an arrow $r(\eta(g'))\to g'$, we obtain $\dist_{G'}(h',g') \leq  \dist_{G'}(h',r(\eta(g'))) + 1$. Then $\dist_{G'}(h',g') =  \dist_{G'}(h',r(\eta(g'))) + 1$ and the property (DDR3) is satisfied. 
\end{proof}

\begin{proposition}
Directed deformation retracts form a saturated class of morphisms.
\end{proposition}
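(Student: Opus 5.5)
By the definition of a saturated class, the statement follows once we know that directed deformation retracts are closed under pushouts, transfinite compositions and retracts. These are exactly Lemma~\ref{lemma:deformation-stable}, Lemma~\ref{lemma:DDR-transfinite} and Lemma~\ref{lemma:DDR_retracts}, so the proof amounts to quoting those three lemmas, in the same way as the proof for induced and in-closed inclusions. Before doing so, I would check that each lemma is stated for arbitrary in-closed inclusions and not only for literal subdigraphs. The definition of a directed deformation retract is given for an in-closed inclusion $i:H\to G$ via its image, so it is invariant under isomorphism, and replacing maps by actual inclusions of subdigraphs, as the lemmas do, loses nothing.

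There is one point I would verify explicitly: the standing hypothesis $G=\Out_G(H)$ in the definition must be preserved by each of the three operations. The lemmas as written mainly check in-closedness and the conditions (DDR1)--(DDR3).

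\emph{Pushouts.} With $G'=G\sqcup_H H'$, every vertex of $G'\setminus H'$ comes from $G\setminus H=\Out_G(H)\setminus H$. So it is reachable from $H$ by a directed path, whose image is a path in $G'$ from $H'$. Hence $G'=\Out_{G'}(H')$; alternatively, this follows from $O'=\Out_{G'}(H')$ in Proposition~\ref{prop:O-pushout}.

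\emph{Transfinite compositions.} Each $g\in G_{\beta+1}\setminus G_\beta$ is reachable from $G_\beta$. By transfinite induction on $\beta$, using that every vertex lies in some successor stage, every vertex is reachable from $G_0$.

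\emph{Retracts.} For $g'\in G'$, take a path from some $h\in H$ to $g'$ in $G$ and apply $r$. This gives a path from $r(h)\in H'$ to $r(g')=g'$ in $G'$.

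I expect no real obstacle. The only subtle point is this bookkeeping of the condition $G=\Out_G(H)$. Alternatively, it can be deduced from (DDR3) together with the iteration in Lemma~\ref{lemma:def_retr_equiv}, since each $g\notin H$ satisfies $\eta^n(g)\in H$ and there is a directed path $\eta^n(g)\to\cdots\to g$.
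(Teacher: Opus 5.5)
Your proof is correct and is the paper's own argument, which simply cites Lemmas~\ref{lemma:deformation-stable}, \ref{lemma:DDR-transfinite} and \ref{lemma:DDR_retracts}; your explicit check that $G=\Out_G(H)$ survives pushouts, transfinite compositions and retracts is a worthwhile addition, since those lemmas leave it implicit. Drop the closing ``alternatively'' remark, though: Lemma~\ref{lemma:def_retr_equiv} itself uses $G=\Out_G(H)$ to obtain $\eta^n(g)\in H$, and (DDR1)--(DDR3) alone do not imply $G=\Out_G(H)$ (an isolated vertex $x\notin H$ with $\eta(x)=x$ satisfies them, since both sides of (DDR3) are then $\infty$).
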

\begin{proof}
It follows from Lemmas \ref{lemma:DDR_retracts}, \ref{lemma:DDR-transfinite}, \ref{lemma:deformation-stable}. 
\end{proof}

\subsection{In-closed cofibrations} 

In this subsection we introduce the  notion of in-closed cofibrations which is used in the definition of a category of cofibrant objects on $\DGra$, show that the class of in-closed cofibrations is saturated and prove that $N_1$ sends pushouts along in-closed cofibrations to homotopy pushouts. 

\begin{definition}[ODDR-subdigraph, in-closed cofibration] An in-closed subdigraph $H\subseteq G$ is called ODDR-subdigraph (out-closure directed deformation retract) if $H$ is a directed deformation retract of $\Out(H)$. An in-closed inclusion $i:H\to G$ is called in-closed cofibration, if $i(H)\subseteq G$ is an ODDR-subdigraph. 
\end{definition}

\begin{lemma}\label{lemma:in-closed-cof_stable}
In-closed cofibrations are closed under pushouts. 
\end{lemma}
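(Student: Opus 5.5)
We reduce the claim to results already proved. Let $H\subseteq G$ be an ODDR-subdigraph and $\varphi:H\to H'$ a digraph map, and put $G'=G\sqcup_H H'$, $O=\Out_G(H)$, $O'=O\sqcup_H H'$. We must show two things: the image of $H'$ in $G'$ is in-closed, and it is a directed deformation retract of $\Out_{G'}(H')$.

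\emph{Step 1: in-closedness.} By Lemma~\ref{lemma:in-closed-stable}(2), the pushout $H'\to G'$ of the in-closed inclusion $H\to G$ is again an in-closed inclusion. As in Remark~\ref{rem:pushout_along_inclusion}, we identify $H'$ with its image in $G'$.

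\emph{Step 2: identify the out-closure.} By Proposition~\ref{prop:O-pushout}, applied to the in-closed subdigraph $H\subseteq G$ and the map $\varphi$, we have
\[
\Out_{G'}(H')=O'=O\sqcup_H H'.
\]
Strictly, Proposition~\ref{prop:O-pushout} identifies $O'$ with its image in $G'$. Here one should check that the canonical map $O\sqcup_H H'\to G'$ is an induced inclusion. This follows from the explicit description of pushouts along induced inclusions in Remark~\ref{rem:pushout_along_inclusion}. Indeed, the vertex set of $O\sqcup_H H'$ is $(V(O)\setminus V(H))\cup V(H')\subseteq V(G')$. Its arrows are the images of arrows of $O$ together with arrows of $H'$. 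Any arrow of $G'$ between such vertices comes either from $H'$ or from an arrow of $G$ whose endpoints lie in $O\cup H=O$, because $O$ is induced. So the inclusion is induced.

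\emph{Step 3: apply stability of directed deformation retracts.} By hypothesis, $H\subseteq O$ is a directed deformation retract. Lemma~\ref{lemma:deformation-stable} says directed deformation retracts are closed under pushouts. Hence the pushout $H'\to O\sqcup_H H'=O'$ is a directed deformation retract.

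There is one point to check here. The notion of a directed deformation retract refers to the ambient digraph: distances in (DDR3), and the condition $G=\Out_G(H)$. We must make sure these are computed in $O'$, not in $G'$. This is automatic, since the definition is intrinsic to the pair $(O',H')$. Moreover, $O'=\Out_{O'}(H')$ holds because $O'=\Out_{G'}(H')$ and out-closure within an out-closed subdigraph agrees with out-closure in the ambient digraph. Combining this with Step 2, $H'\subseteq G'$ is an in-closed subdigraph that is a directed deformation retract of $\Out_{G'}(H')$, that is, an ODDR-subdigraph.

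The main obstacle is Step 2: matching the abstract pushout $O\sqcup_H H'$ on which Lemma~\ref{lemma:deformation-stable} operates with the concrete subdigraph $\Out_{G'}(H')$ of $G'$. Once Proposition~\ref{prop:O-pushout} and the induced-inclusion check are in place, the rest is bookkeeping.
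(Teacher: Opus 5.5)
Your proposal is correct and follows the paper's proof. The paper likewise sets $O=\Out_G(H)$, $G'=G\sqcup_H H'$ and $O'=O\sqcup_H H'$, uses Proposition~\ref{prop:O-pushout} to get $O'=\Out_{G'}(H')$, and applies Lemma~\ref{lemma:deformation-stable} to conclude that $H'\subseteq O'$ is a directed deformation retract. Your extra checks are valid details that the paper leaves implicit: in-closedness of $H'$ via Lemma~\ref{lemma:in-closed-stable}, that $O'\to G'$ is an induced inclusion, and that $\Out_{O'}(H')=O'$.
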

\begin{proof}
Let $H\subseteq G$ be an ODDR-subdigraph and $\varphi : H\to H'$ be a digraph map. Set $O=\Out_G(H)$, $G'=G\sqcup_H H'$ and $O'=O \sqcup_{H} H'$. By Proposition \ref{prop:O-pushout}, $O'=\Out_{G'}(H')$.
Since directed deformation retracts are closed under pushouts, $H'\subseteq O'$ is a directed deformation retract. The statement follows. 
\end{proof}

\begin{lemma}
\label{lemma:in-closed-cof-transfinite}
In-closed cofibrations are closed under transfinite compositions. 
\end{lemma}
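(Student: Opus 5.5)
We reduce to an $\alpha$-sequence of subdigraphs $(G_\beta\subseteq G)_{\beta<\alpha}$, as in the proof of Lemma \ref{lemma:DDR-transfinite}. Here $G_\lambda=\bigcup_{\beta<\lambda}G_\beta$ for limit $\lambda$, $G=\bigcup_\beta G_\beta$, and each $G_\beta\subseteq G_{\beta+1}$ is an ODDR-subdigraph. We must show that $G_0\subseteq G$ is an ODDR-subdigraph. By Lemma \ref{lemma:in-closed_transfinite}, $G_0\subseteq G$ and every $G_\beta\subseteq G$ are in-closed. It therefore remains to show that $G_0$ is a directed deformation retract of $O:=\Out_G(G_0)$.

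The plan is to present $G_0\subseteq O$ itself as a transfinite composition of directed deformation retracts and then apply Lemma \ref{lemma:DDR-transfinite}. Set $O_\beta:=O\cap G_\beta$. Since $O$ is out-closed in $G$ and $G_\beta$ is in-closed, $O_\beta$ is an induced subdigraph that is in-closed in $O_{\beta+1}$. We also have $O_\lambda=\bigcup_{\beta<\lambda}O_\beta$ at limits and $O=\bigcup_\beta O_\beta$, with $O_0=G_0$. So it suffices to show that each $O_\beta\subseteq O_{\beta+1}$ is a directed deformation retract.

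For this step, write $P_\beta:=\Out_{G_{\beta+1}}(G_\beta)$, and let $\eta_\beta$ be the given retraction of $P_\beta$ onto $G_\beta$. First, $O_{\beta+1}\subseteq O_\beta\cup P_\beta$. Indeed, a vertex of $O_{\beta+1}\setminus O_\beta$ is reached by a path from $G_0\subseteq G_\beta$, and since $G_{\beta+1}$ is in-closed the whole path lies in $G_{\beta+1}$. Hence the vertex lies in $P_\beta$, and in fact $O_{\beta+1}=\Out_{O_{\beta+1}}(O_\beta)$. Second, define $\eta:O_{\beta+1}\to O_{\beta+1}$ as the identity on $O_\beta$ and as $\eta_\beta$ on $O_{\beta+1}\setminus O_\beta\subseteq P_\beta\setminus G_\beta$. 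We must check that $\eta_\beta(g)\in O$. By (DDR3), $\dist(h,\eta_\beta(g))=\dist(h,g)-1<\infty$ for some $h\in G_0$ with $\dist(h,g)<\infty$, so $\eta_\beta(g)$ is reachable from $G_0$ and lies in $O$. The same formula handles the arrows into $G_\beta$, in the way this was done in the proof of Lemma \ref{lemma:DDR-transfinite}. In that case $\eta_\beta(g)$ may land in $G_\beta\setminus O_\beta$, but the previous sentence shows it is in $O\cap G_\beta=O_\beta$.

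The main obstacle is (DDR3) for $\eta$: we need $\dist_{O_{\beta+1}}(h,g)=\dist_{O_{\beta+1}}(h,\eta(g))+1$ for $h\in O_\beta$. Distances inside $O_{\beta+1}$ from $h\in O$ equal distances in $G_{\beta+1}$, because any path starting in $O$ stays in the out-closed $O$. We need to argue this carefully, noting that $O_{\beta+1}$ is out-closed in $G_{\beta+1}$. Distances in $G_{\beta+1}$ from vertices of $G_\beta$ to vertices of $P_\beta$ agree with those computed in $P_\beta$ for the same reason. Hence (DDR3) for $\eta_\beta$ transfers directly. Alternatively, $O_\beta\subseteq O_{\beta+1}$ can be viewed as a retract of $G_\beta\subseteq P_\beta$. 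Once this is verified, Lemma \ref{lemma:DDR-transfinite} gives that $G_0\subseteq O$ is a directed deformation retract, which finishes the proof.
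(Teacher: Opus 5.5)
Your argument is correct and is essentially the paper's proof. Your $O_\beta=O\cap G_\beta$ coincides with the paper's recursively defined $H_\beta$ (both equal $\Out_{G_\beta}(G_0)$, as $G_\beta\subseteq G$ is in-closed); defining it directly just makes the limit and union conditions immediate, after which both proofs restrict $\eta_\beta$ to $O_{\beta+1}$ and conclude with Lemma~\ref{lemma:DDR-transfinite}.

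Drop the closing ``alternatively, a retract of $G_\beta\subseteq P_\beta$'' remark, though. A retraction $P_\beta\to O_{\beta+1}$ need not exist: e.g.\ if $G_0=\emptyset$ but $P_\beta\neq\emptyset$, then $O_{\beta+1}=\emptyset$. Your direct verification of (DDR1)--(DDR3) does not need it.
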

\begin{proof}
As usual we assume that an $\alpha$-sequence of in-closed inclusions is represented by inclusions of digraphs  $(G_\beta \subseteq G)_{\beta<\alpha},$  such that $G_\beta \subseteq G_{\gamma}$ for $\beta\leq \gamma<\alpha$; for any limit ordinal $\lambda$ we have $G_\lambda=\bigcup_{\beta<\lambda} G_\beta$;  and $G=\bigcup_{\beta<\alpha}  G_\beta$. We assume that $G_\beta \subseteq G_{\beta+1}$ are ODDR-subdigraphs, for any $\beta$ such that $\beta+1<\alpha$, and prove that $G_0\subseteq G$ is an ODDR-subdigraph.

Set $O=\Out_{G}(G_0)$. We define $H_\beta$ for $\beta<\alpha$ so that $H_0=G_0$, $H_{\beta+1} =\Out_{G_{\beta+1}}(H_\beta) $ and $H_\lambda = \bigcup_{\beta<\lambda} H_\beta$, for a limit ordinal $\lambda$. We need to prove that $H_0\to O$ is a directed deformation retract. Since directed deformation retracts are closed under transfinite compositions, it is sufficient to prove that $H_\beta \subseteq H_{\beta+1}$ is a directed deformation retract; for any limit ordinal $\lambda$ we have $H_\lambda = \bigcup_{\beta<\lambda} H_\beta$; and  $O=\bigcup_{\beta<\alpha} H_\beta$.  

First we prove that  $H_\beta\subseteq G_\beta$ is out-closed and that 
\begin{equation}
H_{\beta} \cap  G_{\beta'} = H_{\beta'}, \hspace{1cm} \beta'\leq \beta <\alpha.
\end{equation}
The proof is by induction on $\beta$. Assume that $\beta=\gamma+1$. Then $H_\beta\subseteq G_\beta$ is out-closed by the definition. For $\beta'\leq \gamma$ we have $H_\gamma \cap G_{\beta'}=H_{\beta'}$. Since $H_{\beta'}$ is out-closed in $G_{\beta'}$, we obtain $H_\beta \cap G_{\beta'} =\Out_{G_\beta}(H_\gamma) \cap G_{\beta'} = H_{\beta '}$. For the case $\beta'=\beta$ the equation $ H_\beta \cap G_{\beta'}=H_{\beta'}$ is obvious. Now assume that  $\beta$ is a limit ordinal. Then we have $G_\beta = \bigcup_{\gamma<\beta} G_{\gamma}$ and $H_\beta = \bigcup_{\gamma<\beta} H_{\gamma}$.  Assume that there is an arrow $g\to g'$, where $g\in H_\beta$ and $g'\in G_\beta$. Then there exists $\gamma<\beta$ such that $g\to g'$ is in $G_\gamma$ and there exists $ \gamma \leq \gamma'<\beta$ such that $g\in H_{\gamma'}$. Since $H_{\gamma'} \cap G_\gamma = H_\gamma$ we obtain $g\in H_\gamma.$ Using that $H_\gamma\subseteq G_\gamma$ is out-closed, we obtain $g'\in H_\gamma.$ Therefore $H_\beta \subseteq G_\beta$ is out-closed. The fact that for $\beta'<\beta$ we have $H_\beta \cap G_{\beta'}=H_{\beta'}$ follows from the distributivity 
$\left(\bigcup_{\beta'\leq \gamma < \beta} H_\gamma \right) \cap G_{\beta'} = \bigcup_{\beta' \leq \gamma < \beta} (H_\gamma \cap G_{\beta'}) = H_{\beta'}. $

Let's prove that $H_\beta \subseteq H_{\beta+1}$ is a directed deformation retract. Denote by $\eta_\beta :\Out_{G_{\beta+1}}(G_\beta)\to \Out_{G_{\beta+1}}(G_\beta)$ the map from the definition of a directed deformation retract. We claim that $\eta_\beta$ can be restricted to a map $\tilde \eta_\beta:H_{\beta+1}\to H_{\beta+1}$. Indeed, for a vertex $h\in H_{\beta+1}$, we have a path from a vertex of $H_\beta$ to $h$. The image of this path under $\eta_\beta$ is a path from the same vertex of $H_\beta$ to $\eta_\beta(h)$. Therefore $\eta_\beta(h)\in H_{\beta+1}$. Using that  $H_{\beta+1} \cap G_\beta = H_\beta$, we see that the properties (DDR1)-(DDR3) for $\tilde \eta_\beta$ with respect to the subdigraph $H_\beta \subseteq H_{\beta+1}$ follow from the properties of $\eta_\beta$ with respect to the subdigraph $G_\beta \subseteq G_{\beta+1}$.   

Let us prove that, for a limit ordinal $\lambda<\alpha$, we have $H_\lambda = \bigcup_{\beta<\lambda} H_\beta$. Indeed, $H_\lambda = H_\lambda \cap (\bigcup_{\beta<\lambda} G_\beta )= \bigcup_{\beta<\lambda} (H_\lambda \cap G_\beta)= \bigcup_{\beta<\lambda} H_\beta$. 

Let us prove that $O=\bigcup H_\beta$. By induction one checks that $H_\beta \subseteq O.$ Let us prove $O\subseteq \bigcup H_\beta$. Take a vertex $o$ of $O.$ Then there is a path from a vertex of $H_0$ to $o$. 
Take  $\beta<\alpha$ such that $o\in G_\beta$.  Then there is a path from a vertex of $H_\beta$ to $o$. Using that $H_\beta \subseteq G_\beta$ is out-closed, we obtain $o\in H_\beta$. 
\end{proof}

\begin{lemma}
\label{lemma:in-closed-cof-retracts}
In-closed cofibrations are closed under retracts. 
\end{lemma}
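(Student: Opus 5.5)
We reduce to the retract statements already proved for in-closed inclusions (Lemma \ref{lemma:in-closed-retracts}) and for directed deformation retracts (Lemma \ref{lemma:DDR_retracts}). As in the earlier retract lemmas, replace the inclusions by actual subdigraphs. We then have an ODDR-subdigraph $H\subseteq G$, subdigraphs $G'\subseteq G$ and $H'\subseteq H\cap G'$, and a retraction $r:G\to G'$ with $r(H)\subseteq H'$. The goal is to show that $H'\subseteq G'$ is an ODDR-subdigraph. Lemma \ref{lemma:in-closed-retracts}(2) already gives that $H'\subseteq G'$ is in-closed, so it remains to show that $H'$ is a directed deformation retract of $O'=\Out_{G'}(H')$.

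Put $O=\Out_G(H)$. The plan is to exhibit $(O',H')$ as a retract of $(O,H)$ and then apply Lemma \ref{lemma:DDR_retracts}. There are two steps.

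\emph{Step 1: $O'\subseteq O$.} Out-closures are directed reachability sets. Since $H'\subseteq H$ and $G'\subseteq G$, any path in $G'$ starting in $H'$ is a path in $G$ starting in $H$. Hence $O'\subseteq O$.

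\emph{Step 2: $r(O)\subseteq O'$.} This uses the monotonicity $\varphi(\Out_G(X))\subseteq \Out_{G'}(\varphi(X))$ noted in the definition of out-closures. It gives
\[
r(O)\subseteq \Out_{G'}(r(H))\subseteq \Out_{G'}(H')=O'.
\]
So $r$ restricts to a retraction $O\to O'$ that is the identity on $O'$ and sends $H$ into $H'$. Hence $(O',H')$ is a retract of $(O,H)$ in exactly the sense used in Lemma \ref{lemma:DDR_retracts}.

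That lemma requires $H\subseteq O$ to be a directed deformation retract, which is the hypothesis that $H$ is ODDR. Applying it shows that $H'\subseteq O'$ is a directed deformation retract. One should check that its proof needs no extra hypothesis. It needs $O'=\Out_{O'}(H')$, which holds because $O'$ is, by definition, the out-closure of $H'$ in $G'$, and $\Out_{O'}(H')=O'$. It also needs $H'\subseteq O'$ to be in-closed, which holds because $H'$ is in-closed in $G'\supseteq O'$.

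The main obstacle I expect is Step 2 together with the distance comparison inside Lemma \ref{lemma:DDR_retracts}. That comparison uses $\dist_{O'}(h',g')=\dist_O(h',g')$ for the retract, and this holds because $r$ restricted to $O$ is a distance-nonincreasing retraction onto $O'$. So the equality of distances follows from the two inequalities and needs no separate argument.
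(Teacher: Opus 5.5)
Your proof is correct and follows the paper's argument. You set $O=\Out_G(H)$ and $O'=\Out_{G'}(H')$, show $r(O)\subseteq O'$ (the paper pushes paths forward along $r$, you use monotonicity of out-closures, which is the same point), and apply Lemma~\ref{lemma:DDR_retracts} to the retract $(O',H')$ of $(O,H)$. You also make explicit what the paper leaves implicit: the inclusion $O'\subseteq O$, and the facts that $H'$ is in-closed in $O'$ and that $\Out_{O'}(H')=O'$.
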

\begin{proof}
Let $H\subseteq G$ be an ODDR-subdigraph. Assume that $G'\subseteq G$ and $H'\subseteq G'\cap H$ are subdigraphs such that there is a retraction $r:G\to G'$ such that $r(H)\subseteq H'$. We need to show that $H'\subseteq G'$ is an ODDR-subdigraph. Set $O=\Out_G(H)$ and $O'=\Out_{G'}(H')$. Then $H\subseteq O$ is a directed deformation retract. Applying $r$ to any path from a vertex of $H$ to a vertex of $G$ yields a path from a vertex of $H'$ to a vertex of $G'$. Therefore $r(O)\subseteq O'$. Then the statement follows from the fact that directed deformation retracts are closed under retracts. 
\end{proof}

\begin{proposition}
\label{prop:in-closed-cof-saturated}
In-closed cofibrations form a saturated class of maps. 
\end{proposition}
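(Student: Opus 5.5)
The proof combines the three preceding lemmas, in the same way as the analogous statements for induced inclusions, in-closed inclusions and directed deformation retracts. By definition, a class is saturated when it is closed under pushouts, transfinite compositions and retracts. These three closure properties are exactly Lemma \ref{lemma:in-closed-cof_stable}, Lemma \ref{lemma:in-closed-cof-transfinite} and Lemma \ref{lemma:in-closed-cof-retracts}, so the proposition is their conjunction.

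A few bookkeeping points need checking. First, each lemma is phrased for actual inclusions of subdigraphs $H\subseteq G$, whereas an in-closed cofibration is an injective map whose image is an ODDR-subdigraph. Being an in-closed cofibration is invariant under isomorphism of arrows. Pushouts, transfinite compositions and retracts are only defined up to isomorphism, and an in-closed inclusion is isomorphic to the inclusion of its image. So we may pass to images throughout without loss of generality.

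Second, for pushouts, Lemma \ref{lemma:in-closed-stable} shows that the pushed-out map is again an induced and in-closed inclusion, identified with $H'\subseteq G'$. Lemma \ref{lemma:in-closed-cof_stable}, via Proposition \ref{prop:O-pushout}, then gives the ODDR property.

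Third, for retracts, a retract of the arrow $H\to G$ in the arrow category yields subdigraphs $H'\subseteq G'$ with a retraction $r$ satisfying $r(H)\subseteq H'$. This is precisely the setup of Lemma \ref{lemma:in-closed-cof-retracts}.

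The main point of substance is already contained in the transfinite composition lemma, which required the auxiliary filtration $H_\beta$ of $\Out_G(G_0)$. At this stage, then, the only real work is to confirm that the hypotheses match, in particular that all morphisms in the class contain the identities. This holds because $H\subseteq H$ is trivially ODDR with $\eta=\id$, since $\Out_H(H)=H$.
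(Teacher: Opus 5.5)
Your proposal is correct and follows the paper's proof, which simply combines Lemmas \ref{lemma:in-closed-cof_stable}, \ref{lemma:in-closed-cof-transfinite} and \ref{lemma:in-closed-cof-retracts}. Your additional bookkeeping is accurate and harmless: passing to images, matching a retract in the arrow category to the setup of Lemma \ref{lemma:in-closed-cof-retracts}, and noting that identities are ODDR with $\eta=\id$ (presumably what you meant by the garbled phrase about identities).
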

\begin{proof}
It follows from Lemmas \ref{lemma:in-closed-cof_stable}, \ref{lemma:in-closed-cof-transfinite}, \ref{lemma:in-closed-cof-retracts}. 
\end{proof}

\begin{theorem}
\label{th:Hur_cof_pushout} 
The functor $N_1:\DGra \to \cSet$ sends pushouts along in-closed cofibrations to homotopy pushouts. 
\end{theorem}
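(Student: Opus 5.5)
Let $i:H\hookrightarrow G$ be an in-closed cofibration and $\varphi:H\to H'$ any digraph map, and set $G'=G\sqcup_H H'$, $O=\Out_G(H)$, $O'=O\sqcup_H H'$. The plan is to split the square into two squares and treat each separately, in the same way as in the proof of Proposition~\ref{prop:O-pushout}. Consider the composite diagram
\begin{equation}
\begin{tikzcd}
N_1H \ar[r]\ar[d] & N_1H' \ar[d]\\
N_1O \ar[r]\ar[d] & N_1O' \ar[d]\\
N_1G \ar[r] & N_1G'
\end{tikzcd}
\end{equation}
in which the outer rectangle is $N_1$ applied to the original pushout. By Proposition~\ref{prop:O-pushout}, the lower square is a strict pushout in $\cSet$. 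Its left vertical map $N_1O\to N_1G$ is a monomorphism, since $O\subseteq G$ is a subdigraph, so it is a cofibration. Because every cubical set is cofibrant and the model structure is proper, a strict pushout along a cofibration is a homotopy pushout. So the lower square is a homotopy pushout.

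For the upper square, we use the definition of an in-closed cofibration: $H\subseteq O$ is a directed deformation retract. By Proposition~\ref{prop:DDR-weak_equiv} it is a cubical weak equivalence, and hence, by Proposition~\ref{prop:cubical_weak_equivalences}, $N_1H\to N_1O$ is a weak equivalence. By Lemma~\ref{lemma:deformation-stable}, directed deformation retracts are stable under pushout. Moreover, $O'=O\sqcup_H H'$ (and $O'=\Out_{G'}(H')$ by Proposition~\ref{prop:O-pushout}), so $H'\subseteq O'$ is again a directed deformation retract, and $N_1H'\to N_1O'$ is a weak equivalence for the same reasons. A commutative square in which two parallel arrows are weak equivalences is automatically a homotopy pushout. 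Therefore the upper square is a homotopy pushout.

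By the pasting lemma for homotopy pushouts, the outer rectangle is a homotopy pushout, which is the claim. The main point to check is the homotopy-invariance argument for the upper square, which I expect is the delicate step. It relies on the stability of directed deformation retracts under pushout and on the identification $O'=O\sqcup_H H'$; both are already established. Note that no strict pushout statement is needed for this square, which is fortunate, since $N_1$ need not preserve it. The other thing to verify carefully is that Proposition~\ref{prop:O-pushout} applies, which requires only that $H\subseteq G$ be in-closed, and this is part of the definition of an in-closed cofibration.
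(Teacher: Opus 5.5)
Your proof is correct and follows essentially the same route as the paper: it uses the intermediate digraphs $O=\Out_G(H)$ and $O'=O\sqcup_H H'$, invokes Proposition~\ref{prop:O-pushout} for the strict pushout along the monomorphism $N_1O\to N_1G$, and applies Proposition~\ref{prop:DDR-weak_equiv}, together with stability of directed deformation retracts under pushout, to get the weak equivalences $N_1H\to N_1O$ and $N_1H'\to N_1O'$. The only difference is packaging: the paper compares the spans $N_1G\leftarrow N_1H\to N_1H'$ and $N_1G\leftarrow N_1O\to N_1O'$ via an objectwise weak equivalence, while you paste two homotopy pushout squares.
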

\begin{proof} 
We assume  $H\subseteq G$ is an ODDR-subdigraph, denote by  $i:H\to G$ the inclusion. We need to show that $N_1$ sends pushouts along $i$ to homotopy pushouts. Consider a map $\varphi:H\to H'$. We set $O=\Out_G(H)$,  $G' = G \sqcup_H H'$, $O'= O \sqcup_H H'$. Since the maps $H'\to G'$ and $O'\to G'$ are inclusions, we can identify $H'$ and $O'$ with their images in $G'.$ Since in-closed cofibrations are stable under pushouts,  $H'\subseteq G'$ is an ODDR-subdigraph. 
Proposition \ref{prop:O-pushout} implies that the square 
\begin{equation}
\begin{tikzcd}
{N_1O} \ar[r] \ar[d]
\arrow[dr, phantom, "\ulcorner" very near end]
& 
{N_1O'}\ar[d] \\
{N_1G}\ar[r] & {N_1G'}
\end{tikzcd}
\end{equation}
a pushout square. Moreover, since $N_1O\to N_1G$ is a cofibration, it is a homotopy pushout square.  

Using that $H\subseteq G$ and $H'\subseteq G'$ are ODDR-subdigraphs, by Proposition \ref{prop:DDR-weak_equiv} we obtain the following commutative diagram with vertical maps all cubical weak equivalences.
\begin{equation}
\begin{tikzcd}
    {N_1G} \ar[d,equal] & {N_1H} \ar[l]\ar[r]\ar[d,"\sim"] & {N_1H'} \ar[d,"\sim"] \\
    {N_1G} & {N_1O} \ar[l] \ar[r] & {N_1O'}
\end{tikzcd}    
\end{equation}
Therefore, the homotopy pushout of the upper row coincides with the homotopy pushout of the bottom row and is equal to $N_1G'$.  
\end{proof}

\subsection{Category of cofibrant objects}

In this subsection we show that $\DGra$ equipped with classes of cubical weak equivalences and in-closed cofibrations is a homotopy cocomplete category of cofibrant objects. Using this we prove that $\DGra_\infty$ admits small colimits and $N:\DGra_\infty \to \Spc$ preserves them. 

\begin{definition}[Homotopy cocomplete $\infty$-category of cofibrant objects] 
An $\infty$-category of cofibrant objects $C$ is homotopy cocomplete \cite[Def. 7.7.2]{Cis19} if it has small coproducts and the classes of  cofibrations and acyclic cofibrations are closed under coproducts. Let $C$ and $D$ be homotopy cocomplete $\infty$-categories of cofibrant objects.  A right exact functor $F:C\to D$ is called homotopy cocontinuous, if it preserves coproducts. 
\end{definition}

\begin{lemma}
\label{lemma:I_5}
For any digraph $G$, the maps 
\begin{equation}
G \sqcup  G \longrightarrow G\otimes I_4 \longrightarrow G
\end{equation}
form a factorization of the codiagonal $G\sqcup G\to G$ as an in-closed cofibration followed by a cubical weak equivalence.  
\end{lemma}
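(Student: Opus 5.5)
The plan is to identify $G\sqcup G\to G\otimes I_4$ with the inclusion of the induced subdigraph $H:=G\otimes\{0,4\}=G\otimes\partial I_4$, and $G\otimes I_4\to G$ with the projection. I will show the first map is an ODDR-subdigraph inclusion and the second is a digraph homotopy equivalence. The composite is clearly the codiagonal.

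\emph{In-closedness.} Recall $I_4\colon 0\to 1\leftarrow 2\to 3\leftarrow 4$, so $0$ and $4$ are sources of $I_4$. An arrow of $G\otimes I_4$ with target $(g,0)$ or $(g,4)$ cannot be vertical, since no arrow of $I_4$ ends at $0$ or $4$. Hence it is horizontal, of the form $(g',j)\to(g,j)$ with $j\in\{0,4\}$, and its source lies in $H$. So $H$ is in-closed. It is induced because $\{0,4\}$ spans no arrows of $I_4$.

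\emph{Out-closure.} Next compute $O=\Out(H)$. Leaving level $0$ one can only move horizontally or go to level $1$. Level $1$ is a sink in $I_4$, so from level $1$ one can only move horizontally. Symmetrically, level $4$ leads only to level $3$, and level $3$ is a sink. Hence $O$ is the induced subdigraph on $G\otimes\{0,1,3,4\}$, which is the disjoint union $G\otimes\{0\to1\}\ \sqcup\ G\otimes\{4\to3\}$.

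\emph{Directed deformation retraction.} Define $\eta\colon O\to O$ by $\eta(g,1)=(g,0)$ and $\eta(g,3)=(g,4)$, and the identity on $H$.
\begin{itemize}
\item It is a digraph map: horizontal arrows at level $1$ go to horizontal arrows at level $0$, and vertical arrows go to degenerate ones; similarly on the other component.
\item (DDR1) holds because $(g,0)\to(g,1)$ and $(g,4)\to(g,3)$ are arrows.
\item (DDR2) holds by construction.
\end{itemize}
The main obstacle is (DDR3). Take $h=(g',0)$. Any path from $h$ to $(g,1)$ uses exactly one vertical arrow $0\to1$ and otherwise horizontal arrows. Projecting to $G$ therefore gives $\dist(h,(g,1))=\dist_G(g',g)+1=\dist(h,(g,0))+1$. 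For $h=(g',4)$ both distances are infinite, because the two components of $O$ are disjoint. The level-$3$ vertices are handled symmetrically. Thus $H\subseteq O$ is a directed deformation retract, and $G\sqcup G\to G\otimes I_4$ is an in-closed cofibration.

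\emph{Weak equivalence.} Let $i_0\colon G\to G\otimes I_4$ be the inclusion at level $0$ and $p$ the projection. Then $p\,i_0=\id_G$ and $i_0\,p=\id_G\otimes c_0$, where $c_0$ is the constant map at $0$. I would show $c_0\sim\id_{I_4}$ in $\Hom^\otimes(I_4,I_4)$ using the chain $f_k(x)=\min(x,k)$ for $k=0,\dots,4$. The maps $f_k$ and $f_{k+1}$ differ exactly on $\{x\ge k+1\}$, where they take the values $k$ and $k+1$. There is a single arrow of $I_4$ between $k$ and $k+1$, in a fixed direction, so either $f_k\to f_{k+1}$ or $f_{k+1}\to f_k$ is an arrow of $\Hom^\otimes(I_4,I_4)$. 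By Proposition~\ref{prop:box_product_and_hom_preserve_homotopy}, $i_0\,p\sim\id$, so $p$ is a digraph homotopy equivalence. Lemma~\ref{lem:N_J_weak_equiv} (with $J=I_1$) then makes $N_1p$ a weak equivalence, and Proposition~\ref{prop:cubical_weak_equivalences} turns this into a cubical weak equivalence. Alternatively, Proposition~\ref{prop:DDR-weak_equiv} makes $i_0$ a cubical weak equivalence, and 2-out-of-3 finishes.
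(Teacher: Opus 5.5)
Your proof is correct and follows the paper's argument, with the verifications the paper leaves implicit written out: that $G\otimes\{0,4\}$ is induced and in-closed, that its out-closure is $G\otimes\{0,1,3,4\}$, that $\eta$ satisfies (DDR1)--(DDR3), and that $G\otimes I_4\to G$ is a homotopy equivalence via the chain $\min(-,k)$. Drop the closing ``alternatively'', though. $G\otimes\{0\}\subseteq G\otimes I_4$ is not a directed deformation retract, because its out-closure is only $G\otimes\{0,1\}$ (level $2$ is unreachable), so Proposition~\ref{prop:DDR-weak_equiv} does not apply to $i_0$. The route through Lemma~\ref{lem:N_J_weak_equiv} and Proposition~\ref{prop:cubical_weak_equivalences} is the one that works.
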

\begin{proof}
Since $G\otimes I_4 \to G$ is a homotopy equivalence, it is a cubical weak equivalence. The image of the map $G\sqcup G \to G\otimes I_4$ is $G\otimes \{0,4\}$. Its out-closure is $G\otimes (I_{0,1} \sqcup I_{3,4})$, where $I_{0,1}$ and $I_{3,4}$ are induced subdigraphs of $I_4$ spanned by vertices $0,1$ and $3,4$ respectively. So the result follows from the fact that  $G\otimes \{0,4\}\subseteq G\otimes (I_{0,1} \sqcup I_{3,4})$ is an ODDR-subdigraph.   
\end{proof}

\begin{proposition}
\label{prop:digraphs-cofibration_cat}
The category of digraphs $\DGra$ equipped with the classes of cubical weak equivalences and in-closed cofibrations is a homotopy cocomplete category of cofibrant objects.   
\end{proposition}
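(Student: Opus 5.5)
We verify the axioms of a category of cofibrant objects (the dual of Definition \ref{def:inf-cat_fib}, with all objects cofibrant), together with the coproduct conditions for homotopy cocompleteness. The plan is to use the dual of the factorization lemma (Lemma \ref{lemma:factorisation_lemma}). This lets us replace the general factorization axiom by the factorization of the codiagonal, which is exactly Lemma \ref{lemma:I_5}.

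\emph{Axioms (1)--(3).} Every isomorphism is an in-closed cofibration, since its image is the whole digraph, $\Out$ of it is itself, and $\eta=\id$ works. Composition is closed by Proposition \ref{prop:in-closed-cof-saturated}, since transfinite composition includes finite composition. The initial object is $\emptyset$. The map $\emptyset\to G$ is an in-closed cofibration, because $\emptyset$ is in-closed, $\Out(\emptyset)=\emptyset$, and the conditions (DDR1)--(DDR3) are vacuous. Hence every object is cofibrant. Two-out-of-three holds by Corollary \ref{cor:transf_comp}.

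\emph{Axiom (4), pushouts.} Pushouts along in-closed cofibrations exist in $\DGra$ and are again in-closed cofibrations by Lemma \ref{lemma:in-closed-cof_stable}. The main point is that pushouts of \emph{acyclic} in-closed cofibrations are acyclic. For $i:H\to G$ acyclic and $\varphi:H\to H'$, Theorem \ref{th:Hur_cof_pushout} says that $N_1$ sends the pushout square to a homotopy pushout in $\cSet$. Since $N_1i$ is a weak equivalence, its homotopy pushout $N_1H'\to N_1G'$ is a weak equivalence. Proposition \ref{prop:cubical_weak_equivalences} then gives that $H'\to G'$ is a cubical weak equivalence.

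\emph{Axiom (5).} By the dual of Lemma \ref{lemma:factorisation_lemma}, it suffices to factor the codiagonal $G\sqcup G\to G$ as a cofibration followed by a weak equivalence, and Lemma \ref{lemma:I_5} does this. Coproducts $G\sqcup G'$ exist and the inclusions are in-closed cofibrations: each summand is in-closed, its out-closure is itself, and $\eta=\id$ works. This is what the dual factorization lemma needs.

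\emph{Homotopy cocompleteness.} Small coproducts exist in $\DGra$. A coproduct of in-closed cofibrations $H_k\subseteq G_k$ is an in-closed cofibration: in-closedness and out-closure are computed componentwise, and the maps $\eta_k$ assemble into a single map satisfying (DDR1)--(DDR3), since distances between different components are infinite. Acyclicity is preserved because cubical weak equivalences are closed under coproducts (Corollary \ref{cor:transf_comp}).

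The main obstacle is axiom (4), preservation of acyclic cofibrations under pushout. It rests entirely on Theorem \ref{th:Hur_cof_pushout}, so the remaining check there is only the standard fact that a weak equivalence is preserved under homotopy pushout in $\cSet$. That fact holds because the model structure is left proper and the pushouts in question are homotopy pushouts.
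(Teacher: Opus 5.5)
Your proposal is correct and follows essentially the same route as the paper. Both verify the axioms directly, use Theorem \ref{th:Hur_cof_pushout} to show that pushouts of acyclic in-closed cofibrations are acyclic, use Lemma \ref{lemma:I_5} together with the (dual) factorization lemma for the factorization axiom, and use Corollary \ref{cor:transf_comp} for coproducts. The only difference is that you check a few closure properties by hand (isomorphisms, coproducts of in-closed cofibrations), where the paper deduces them from saturation (Proposition \ref{prop:in-closed-cof-saturated}).
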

\begin{proof}
Let us prove the axioms of a category of cofibrant objects. $\DGra$ has an initial object $\emptyset$. Moreover, $\emptyset\to G$ is an in-closed cofibration, for any $G$. Therefore all digraphs are cofibrant.  Since in-closed cofibrations form a saturated class of maps (Proposition \ref{prop:in-closed-cof-saturated}) they contain all isomorphisms, and are closed under compositions and  pushouts.  Cubical weak equivalences satisfy 2-out-of-3 property by Corollary \ref{cor:transf_comp}.  The fact that acyclic cofibrations are closed under pushouts follows from Theorem \ref{th:Hur_cof_pushout}. For any digraph $G$,  the maps $G \sqcup G \to G \otimes I_4 \to G$ form a factorization of the codiagonal $G\sqcup G\to G$ as an in-closed cofibration followed by a cubical weak equivalence (Lemma \ref{lemma:I_5}). 
Then by Factorization Lemma (Lemma \ref{lemma:factorisation_lemma}) the category $\DGra$ is a category of cofibrant objects. 

Since in-closed cofibrations form a saturated class, they are closed under coproducts. Cubical weak equivalences are closed under coproducts by Corollary \ref{cor:transf_comp}. Therefore the category of cofibrant objects is homotopy cocomplete.
\end{proof}

\begin{theorem}
\label{th:colimits}
The $\infty$-category $\DGra_\infty$ is cocomplete, and the localization functor $\DGra  \to \DGra_\infty$
is homotopy cocontinuous. Moreover, the derived cubical nerve functor $N : \DGra_\infty \to \Spc$ is colimit-preserving.
\end{theorem}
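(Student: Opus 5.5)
The proof will apply Cisinski's general machinery for homotopy cocomplete $\infty$-categories of cofibrant objects, in the same way the finite-limits theorem was obtained from Proposition~\ref{prop:fibrant_obj}. The argument has three steps.

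\textbf{Step 1: cocompleteness and cocontinuity of the localization.} By Proposition~\ref{prop:digraphs-cofibration_cat}, $\DGra$ with cubical weak equivalences and in-closed cofibrations is a homotopy cocomplete category of cofibrant objects. The dual of \cite[Prop.~7.5.6]{Cis19} shows that $\DGra_\infty=\DGra[W_{\sf cub}^{-1}]$ has finite colimits. It also shows that the localization functor $\DGra\to\DGra_\infty$ is right exact: it sends pushouts along cofibrations to pushouts. The results of \cite[Sec.~7.7]{Cis19} on homotopy cocomplete categories then upgrade this. They show that $\DGra_\infty$ has small coproducts, computed as coproducts in $\DGra$, and hence all small colimits. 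They also show that the localization preserves coproducts, so it is homotopy cocontinuous.

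\textbf{Step 2: $N_1$ is homotopy cocontinuous.} The model category $\cSet$, with its cofibrations (monomorphisms) and weak equivalences, is a homotopy cocomplete category of cofibrant objects, since all objects are cofibrant. I will check that $N_1:\DGra\to\cSet$ is right exact and preserves coproducts.
\begin{enumerate}
\item $N_1\emptyset=\emptyset$, because $\Hom(I_1^{\otimes n},\emptyset)=\emptyset$.
\item $N_1$ sends in-closed cofibrations to monomorphisms, since it sends injective maps to injective maps. It sends acyclic in-closed cofibrations to acyclic cofibrations, because cubical weak equivalences are detected by $N_1$ (Proposition~\ref{prop:cubical_weak_equivalences}).
\item $N_1$ sends pushouts along in-closed cofibrations to homotopy pushouts, by Theorem~\ref{th:Hur_cof_pushout}.
\item $N_1$ preserves coproducts, because each cube $I_1^{\otimes n}$ is connected. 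Moreover, weak equivalences of cubical sets are closed under coproducts (Corollary~\ref{cor:coproducts_of_weak_equivalences}), so these coproducts are homotopy coproducts.
\end{enumerate}

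The main obstacle is item~3 as stated. Right exactness in Cisinski's sense asks for honest pushouts along cofibrations, not homotopy pushouts, and $N_1$ does not literally preserve the pushout in general. I will handle this with the characterization that a functor between $\infty$-categories of cofibrant objects induces a finite-colimit-preserving functor on localizations whenever it preserves weak equivalences and sends cofibration-pushouts to homotopy pushouts. To invoke this, I will compose $N_1$ with the localization $\cSet\to\Spc$ and use the dual of \cite[Prop.~7.5.28]{Cis19} in that form.

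\textbf{Step 3: $N$ preserves colimits.} Combining Steps 1 and 2, the induced functor $N:\DGra_\infty\to\Spc$ preserves finite colimits and small coproducts. Every small colimit is built from these, so $N$ preserves all small colimits.
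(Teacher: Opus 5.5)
Your proposal is correct and follows essentially the paper's proof. Proposition~\ref{prop:digraphs-cofibration_cat}, together with Cisinski's results on homotopy cocomplete categories of cofibrant objects, gives cocompleteness of $\DGra_\infty$ and homotopy cocontinuity of the localization. The obstacle you flag in item~3 is resolved exactly as in the paper: compose $N_1$ with $\cSet\to\Spc$, equip $\Spc$ with the trivial cofibrant-object structure, and use Theorem~\ref{th:Hur_cof_pushout} to make the composite right exact and coproduct-preserving. The only difference is cosmetic: the paper gets colimit-preservation of $N$ directly from \cite[Cor.~7.7.5]{Cis19}, while you assemble it by hand from finite colimits and small coproducts.
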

\begin{proof}
Since $\DGra$ is a homotopy cocomplete category of cofibrant objects (Proposition \ref{prop:digraphs-cofibration_cat}), by \cite[Prop.7.7.4]{Cis19}, we obtain that $\DGra_\infty$ is cocomplete and $\DGra\to \DGra_\infty$ is homotopy cocontinuous. 

Consider the functor 
$\tilde N_1 : \DGra \longrightarrow \Spc$
defined as the composition of $N_1$ with the localization functor $\cSet\to \Spc$. We claim that $\tilde N_1$ is homotopy cocontinuous. First we need to check that it is right exact. To see this, note that $\tilde N_1$ preserves the initial object because it sends the empty digraph to the empty space;   $\tilde N_1$ preserves cofibrations and acyclic cofibrations because all morphisms of $\Spc$ are cofibrations and $\tilde N_1$ sends acyclic cofibrations to equivalences; $\tilde N_1$ sends pushouts along in-closed cofibrations to pushouts in $\Spc$ by Theorem \ref{th:Hur_cof_pushout}. Then $\tilde N_1$ is homotopy cocontinuous because it preserves coproducts. Therefore the fact that $N:\DGra_\infty \to \Spc$ is colimit-preserving  follows from \cite[Cor.7.7.5]{Cis19}.
\end{proof} 

\section{\bf Equivalence with the \texorpdfstring{$\infty$}{}-category of spaces} 
\label{section:equivalence}

In this section we show that the functor $N : \DGra_\infty \to \Spc$ induced by $N_1$ is equivalent to the mapping space functor from the point 
\begin{equation}
N\simeq \DGra_\infty(*,-).    
\end{equation}
Then we use it to prove our main result that $N$ is an equivalence of $\infty$-categories. 

\subsection{Calculus of fractions} In this subsection we recall the idea of calculus of fractions from \cite[\S 7.2]{Cis19} that allows to compute mapping spaces in localizations of $\infty$-categories.  

\begin{definition}[Right calculus of fractions] 
\label{def:RCF}
Let $C$ be an $\infty$-category, $W$ be a class of morphisms of $C$ and $x$ be an object of $C$. A right calculus of fractions at $x$ (see \cite[Def.~7.2.2,~7.2.6 and \S 7.2.7]{Cis19}) is a functor $\pi : I \to C$ from an $\infty$-category $I$ such that 
\begin{itemize}
    \item[(a)] $I$ has a terminal object $i_0$ and $\pi(i_0)=x$;
    \item[(b)] $\pi$ sends any map $i\to i_0$ to $W$;
    \item[(c)] the colimit  
    \[\underset{i\in I^\op }\colim\: C(\pi(i) , - ) = \colim ( I^\op \to C^\op \to  \Fun(C,\Spc) )\]
    treated as a functor $C\to \Spc$ sends $W$ to equivalences in $\Spc$. 
\end{itemize}
\end{definition}

\begin{remark}
In Definition \ref{def:RCF} we combine \cite[Def.7.2.6]{Cis19} with the remark in \cite[Def.7.2.7]{Cis19} saying that the fact that the map $\pi:I\to C$ is $W$-local is equivalent to the property (c) in our definition. Here we also use that, since $i_0$ is terminal in $I$, the functor $I_{/i_0}\to I$ is an equivalence and the colimit of the functor $(I_{/i_0})^\op\to \Fun(C,\Spc)$ is canonically equivalent to the colimit of the functor $I^\op \to \Fun(C,\Spc)$.  
\end{remark}

\begin{theorem}[{\cite[Th.7.2.8]{Cis19}}]
\label{th:calculus_of_fractions}
 Let $C$ be an $\infty$-category, $W$ be a class of morphisms of $C$, 
\begin{equation}
 \gamma:C\longrightarrow C[W^{-1}]   
\end{equation}
 be the localization functor, and $x$ be an object of  $C$. Assume that $\pi: I \to C$ is a right calculus of fractions at $x$. Then there is a canonical equivalence 
 \begin{equation}
   \underset{i\in I^\op }\colim\: C(\pi(i) , - ) \simeq C[W^{-1}](\gamma(x),\gamma(-))  
 \end{equation}
 in the $\infty$-category $\Fun(C,\Spc).$
\end{theorem}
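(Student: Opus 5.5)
The strategy is to identify both functors as corepresented by the same object, once both are viewed as functors out of $C[W^{-1}]$. Write
\[
F=\underset{i\in I^\op}\colim\: C(\pi(i),-)\colon C\to\Spc.
\]
By (c) of Definition \ref{def:RCF}, $F$ inverts $W$. By the universal property of the localization, restriction along $\gamma$ is a fully faithful functor $\Fun(C[W^{-1}],\Spc)\to\Fun(C,\Spc)$, and its essential image is the $W$-local functors. Hence $F\simeq \bar F\circ\gamma$ for an essentially unique $\bar F\colon C[W^{-1}]\to\Spc$. By full faithfulness, it suffices to produce an equivalence
\[
\bar F\simeq C[W^{-1}](\gamma(x),-)
\]
in $\Fun(C[W^{-1}],\Spc)$ and then restrict along $\gamma$. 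By the Yoneda lemma in $C[W^{-1}]$, a map $C[W^{-1}](\gamma(x),-)\to\bar F$ is the same as a point $e\in\bar F(\gamma x)=F(x)$. I will take $e$ to be the image of $\id_x\in C(\pi(i_0),x)$ under the structure map of the colimit at the vertex $i_0$, using (a). This gives a canonical map $\Phi\colon C[W^{-1}](\gamma x,-)\to\bar F$.

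Next I construct a candidate inverse $\Psi\colon F\to C[W^{-1}](\gamma x,\gamma-)$ on the colimit diagram. For each $i$, (b) applied to the unique map $s_i\colon\pi(i)\to x$ coming from $i\to i_0$ says that $\gamma(s_i)$ is invertible. The component at $i$ is then
\[
C(\pi(i),-)\xrightarrow{\ \gamma\ }C[W^{-1}](\gamma\pi(i),\gamma-)\xrightarrow{\ \gamma(s_i)^{-1}}C[W^{-1}](\gamma x,\gamma -),
\]
that is, $f\mapsto\gamma(f)\gamma(s_i)^{-1}$. To see that these components form a cocone over $I^\op$, I will regard them as the functor $I^\op\to\Fun(C,\Spc)$ obtained by restricting $C[W^{-1}](\gamma\pi(-),\gamma-)$ along $\gamma\pi$, and use that every map in $I_{/i_0}\simeq I$ is sent to an equivalence. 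The cocone then factors through the constant diagram at $C[W^{-1}](\gamma x,\gamma-)$.

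It remains to check that $\Phi$ and $\Psi$ are mutually inverse. For $\Psi\circ\gamma^*\Phi$, by Yoneda it suffices to evaluate at $\id_{\gamma x}$. Now $\Phi(\id)=e$ is the class of $\id_x$ at $i_0$, and $\Psi$ sends it to $\gamma(\id_x)\gamma(s_{i_0})^{-1}=\id$. For the other composite, I check on each colimit component. Take $f\colon\pi(i)\to y$, the class $[f]_i$. Then $\Psi[f]_i=\gamma(f)\gamma(s_i)^{-1}$, and $\Phi$ sends this to $\bar F(\gamma f)\,\bar F(\gamma s_i)^{-1}(e)$. Using the morphism $i\to i_0$ in $I$, we get $F(s_i)[\id_{\pi(i)}]_i=[s_i]_i=[\id_x]_{i_0}=e$, so $\bar F(\gamma s_i)^{-1}(e)=[\id_{\pi(i)}]_i$. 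Applying $F(f)$ then gives $[f]_i$, as required.

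The main obstacle is making this last componentwise check coherent in the $\infty$-categorical setting, rather than a pointwise computation on $\pi_0$. I would handle it by expressing both composites as maps out of a colimit of corepresentables. Such a map is determined, via Yoneda, by a compatible family of points $[\id_{\pi(i)}]_i$, and the computation above identifies these points coherently, because all the maps involved are constructed functorially from $\gamma$ and the cocone structure.
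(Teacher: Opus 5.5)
Your $\Phi$ is the right map, $\Psi$ is well defined, and the check $\bar\Psi\circ\Phi\simeq\id$ is rigorous. An endomorphism of the corepresentable $C[W^{-1}](\gamma x,-)$ is determined up to homotopy by the path component of the image of $\id_{\gamma x}$. The gap is in the other composite, where you suspected. By Yoneda, a map $F\to F$ is a point of $\mathrm{Map}(F,F)\simeq\lim_{i\in I}F(\pi(i))$. Your computation only shows that the two relevant points, $([\id_{\pi(i)}]_i)_i$ and its image under $\gamma^*\Phi\circ\Psi$, have the same image in each $\pi_0F(\pi(i))$. That does not put them in the same component of the limit. In general $\pi_0$ of a limit over $I$ is not the limit of the $\pi_0$'s; there are $\lim^1$-type and higher coherence obstructions. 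Saying that everything is ``constructed functorially'' does not supply homotopies that are compatible over all of $I$. As written, you have only shown that $C[W^{-1}](\gamma x,-)$ is a retract of $\bar F$.

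The missing idea is to use (b) and (c) to compute maps \emph{out of} $F$. For any $W$-local $G\colon C\to\Spc$ one has $\mathrm{Map}(F,G)\simeq\lim_{i\in I}G(\pi(i))$. Since $\pi$ sends every arrow $i\to i_0$ into $W$, and the space of such arrows is contractible, two-out-of-three shows that $G\pi\colon I\to\Spc$ inverts \emph{every} arrow of $I$. Since $I$ has a terminal object, it is weakly contractible. So $G\pi$ is equivalent to a constant diagram, and the projection $\lim_I G\pi\to G(\pi(i_0))=G(x)$ is an equivalence. Unwinding, this equivalence is $\phi\mapsto\phi_x(e)$, which under $\gamma^*$ is precisely precomposition with $\Phi$. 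Hence $\Phi^*$ is an equivalence on mapping spaces into every functor $C[W^{-1}]\to\Spc$, and $\Phi$ is an equivalence by Yoneda; $\Psi$ is not needed at all. Equivalently, taking $G=F$ shows that endomorphisms of $F$ are detected by where they send $e$, so your single check at $i_0$ already gives $\Phi\bar\Psi\simeq\id$. This is the standard argument behind Cisinski's Theorem 7.2.8; the paper only cites that result and gives no proof of its own.
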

\begin{remark}
Note that there is a more general version of Theorem \ref{th:calculus_of_fractions} in an unpublished preprint \cite{ACK25}.
\end{remark}

\subsection{Derived cubical nerve functor} In this subsection we show that the derived cubical nerve functor $N:\DGra_\infty\to \Spc$ induced by $N_1:\DGra \to \cSet$ is equivalent to the mapping space functor from the point  $\DGra_\infty(*,-)$.

\begin{theorem}
\label{th:derived_nerve}
There is a natural equivalence  
\begin{equation}
N \simeq \DGra_\infty(*,-)
\end{equation}
of functors $\DGra_\infty \to \Spc$, where $*$ is the point digraph. 
\end{theorem}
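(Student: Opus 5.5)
We apply Theorem~\ref{th:calculus_of_fractions} with $C=\DGra$, $W=W_{\sf cub}$ and $x=*$. The aim is a right calculus of fractions $\pi:I\to\DGra$ at $*$ such that the colimit $\colim_{i\in I^\op}\DGra(\pi(i),-)$ is naturally equivalent to $LN_1$, the composite of $N_1$ with the localization $\cSet\to\Spc$. Here $\DGra(\pi(i),-)$ is a discrete space, since $\DGra$ is an ordinary category.

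The natural choice makes $I$ the (nerve of the) opposite of the category of cubes, since $N_1G$ is the presheaf $[1]^n\mapsto\Hom(I_1^{\otimes n},G)$. Take $I=\Box^\op$ and $\pi([1]^n)=I_1^{\otimes n}$, the cocubical digraph from Definition~\ref{def:J-nerve}, viewed as a functor $\Box\to\DGra$. Then
\[
\underset{[1]^n\in\Box^\op}{\colim}\ \DGra(I_1^{\otimes n},G)
\]
is the $\infty$-categorical colimit of the discrete diagram $(N_1G)_\bullet$ over $\Box^\op$. The standard fact that the homotopy colimit of a cubical set, regarded as a diagram of discrete spaces over $\Box^\op$, is its geometric realization (the cube category with connections is test/weakly contractible in the sense of Cisinski) identifies this colimit with $LN_1G$, naturally in $G$.

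Condition (a) fails as stated, because $\Box^\op$ has no terminal object; its initial object in $\Box$ is $[1]^0$, and $I_1^{\otimes 0}=*$. The fix is to reindex so that $[1]^0$ becomes terminal in $I$. Concretely, take $I=(\Box_{/[1]^0})^{\op}$-style slices, or simply note that $[1]^0$ is terminal in $\Box$ and take $I=\Box$ with $\pi([1]^n)=I_1^{\otimes n}$ covariantly. Then $\colim_{I^\op}$ runs over $\Box^\op$, matching the nerve. In this form $[1]^0$ is terminal in $\Box$, so (a) holds with $\pi([1]^0)=*$. For (b), each map $I_1^{\otimes n}\to *$ is a homotopy equivalence of digraphs, since $I_1^{\otimes n}$ contracts to $(0,\dots,0)$. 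It is therefore a cubical weak equivalence: it induces isomorphisms on $A_*$ by the remark after the definition of $A_n$, or alternatively by Lemma~\ref{lem:N_J_weak_equiv} with $J=I_1$ together with Proposition~\ref{prop:cubical_weak_equivalences}. For (c), the colimit functor is $LN_1$ by the previous paragraph, and $LN_1$ sends $W_{\sf cub}$ to equivalences by Proposition~\ref{prop:cubical_weak_equivalences}.

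Theorem~\ref{th:calculus_of_fractions} then gives $LN_1\simeq\DGra_\infty(\gamma *,\gamma(-))$ in $\Fun(\DGra,\Spc)$. Both sides invert $W_{\sf cub}$, so by the universal property of localization they descend to an equivalence $N\simeq\DGra_\infty(*,-)$ of functors on $\DGra_\infty$.

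The main obstacle is the identification in the second paragraph: the colimit in $\Spc$ of the discrete diagram $(N_1G)_\bullet$ over $\Box^\op$ must agree with the weak homotopy type of $N_1G$, naturally in $G$. I would prove this first for representables, where the colimit of $\Box(-,[1]^n)$ is contractible because its category of elements $\Box_{/[1]^n}$ has a terminal object, and where $\Box^n$ is weakly contractible. Both functors $\cSet\to\Spc$ preserve colimits: the left Kan extension by construction, and $L$ via the Cisinski model structure, which has monomorphisms as cofibrations and is left proper. Two colimit-preserving functors that agree naturally on representables agree everywhere, giving a natural equivalence. Some care is needed to check that $L$ preserves all colimits, not just homotopy colimits. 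I would handle this by writing every cubical set as the homotopy colimit of its cubes, which uses that $\Box$ is a test category.
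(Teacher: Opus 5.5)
Your skeleton is the paper's proof. You use the cocubical digraph $I_1^{\otimes\bullet}\colon\Box\to\DGra$ as a right calculus of fractions at $*$. Then (a) holds because $[1]^0$ is terminal in $\Box$, (b) holds because $I_1^{\otimes n}$ is contractible, and (c) holds once $\colim_{\Box^\op}\DGra(I_1^{\otimes\bullet},-)$ is identified with $LN_1$; you finish by descending along $\gamma$. Your aside that $[1]^0$ is "initial in $\Box$" is a slip: it is terminal there, and your final choice $I=\Box$ uses this correctly.

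The only departure from the paper is the natural equivalence $\colim_{\Box^\op} iX\simeq LX$ for $X\in\cSet$. The paper imports it from \cite[Cor.~3.14]{ACK25}. You try to prove it, and your sketch has a false step: neither $X\mapsto\colim_{\Box^\op}iX$ nor $L\colon\cSet\to\Spc$ preserves colimits of the $1$-category $\cSet$. For example, take the pushout of $\Box^0\leftarrow\partial\Box^1\to\Box^0$ along the fold maps. In $\cSet$ it is $\Box^0$. Both functors, however, send the span to $*\leftarrow *\sqcup *\to *$, whose pushout in $\Spc$ is $S^1$. The principle "colimit-preserving functors that agree on representables agree everywhere" applies to functors out of $\Fun(\Box^\op,\Spc)$, not out of $\cSet$. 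So no amount of care will show that $L$ preserves all colimits, because it does not.

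What does hold is that both functors preserve three kinds of colimits: coproducts, pushouts along monomorphisms, and transfinite composites of monomorphisms. For $L$, this is because these are homotopy colimits in the Grothendieck model structure, where all objects are cofibrant. For $\colim_{\Box^\op}i(-)$, it is because such colimits of sets, computed degreewise, remain colimits of discrete spaces in $\Spc$. The repair goes in three steps:
\begin{enumerate}
\item Write down the natural comparison map $\colim_{\Box^\op}iX\simeq\colim_{\Box_{/X}}*\simeq\colim_{\Box_{/X}}L\Box^n\to LX$.
\item Check that it is an equivalence on each $\Box^n$.
\item Propagate by induction over cells, noting that $\partial\Box^n$ is built from lower-dimensional cubes and that every cubical set is a retract of a $\{\partial\Box^n\hookrightarrow\Box^n\}$-cell complex.
\end{enumerate}
This is exactly the statement that $X$ is the homotopy colimit of its cubes, which your last sentence points to. With this repair, or simply with the citation, the rest of your argument goes through as in the paper.
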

\begin{proof} Here we use Theorem \ref{th:calculus_of_fractions}. We claim that 
\begin{equation}
I_1^{\otimes \bullet} : \Box \longrightarrow \DGra
\end{equation}
is a right calculus of fractions at $*$. We need to check the properties (a),(b),(c) from Definition \ref{def:RCF}. (a): $\Box$ has the terminal object $[1]^0$ and $I_1^{\otimes 0}=*$. (b): The maps $I_1^{\otimes n}\to *$ are cubical weak equivalences because $I_1^{\otimes n}$ is contractible.

Let us prove (c). We denote by $i:\Set\to \Spc$ the inclusion of the category of sets to the $\infty$-category of spaces. Then for a cubical set $X:\Box^\op\to \Set$ we have 
\begin{equation}
\colim \: iX \simeq LX,
\end{equation}
where $L:\cSet\to \Spc$ is the localization functor \cite[Cor.3.14]{ACK25}. Moreover, this equivalence is natural in $X$. Therefore, we have
\begin{equation}
\label{eq:N-colim}
\underset{ \Box^\op}\colim \:  i\DGra(I_1^{\otimes \bullet}, - )\simeq LN_1\simeq N\gamma
\end{equation}
in $\Fun(\DGra,\Spc),$ where $\gamma:\DGra\to \DGra_\infty$ is the localization functor. It follows that $\colim_{\Box^\op} \:  \DGra(I_1^{\otimes \bullet}, - ) $ sends weak equivalences to equivalences in $\Spc$. 

Then by Theorem \ref{th:calculus_of_fractions}, we have
\begin{equation}
    N\gamma \simeq \DGra_\infty(\gamma(*),\gamma(-)).
\end{equation}
Finally, since 
\begin{equation}
    \gamma^*:\Fun(\DGra_\infty,\Spc)\to \Fun(\DGra,\Spc)
\end{equation}
is fully faithful by the universal property of localization, we conclude that $N\simeq \DGra_\infty(*,-).$

\end{proof}

\subsection{The main result}
In this subsection we prove our main result that $\DGra_\infty$ is equivalent to the $\infty$-category of spaces. 

\begin{remark}[Universal property $\Spc$]
\label{rem:universal_proprty_of_Spc}
For a cocomplete $\infty$-category $C$, the evaluation at the one-point space defines an equivalence of $\infty$-categories 
\begin{equation}
\Fun^{\colim}(\Spc,C) \simeq C, 
\end{equation} 
where $\Fun^{\colim}$ denotes the full subcategory in the $\infty$-category of functors spanned by colimit-preserving  functors \cite[Th.5.1.5.6]{Lur09}. Moreover, the category of colimit-preserving functors coincides with the full subcategory of left adjoint functors 
\begin{equation}
\Fun^L(\Spc,C) = \Fun^{\colim}(\Spc,C)
\end{equation}
(see \cite[Cor.~5.5.2.9 and Rem.~5.5.2.10]{Lur09}). In particular, for any object $X$ of $C$, there exists a unique (up to equivalence) adjunction 
\begin{equation}
F_X : \Spc \leftrightarrows C : G_X,
\end{equation}
where $F_X$ is the only colimit-preserving functor preserving colimits such that $F_X(*)\simeq X$. Note that the equivalence $ \Id_{\Spc}\simeq \Spc(*,-)$ implies that \begin{equation}
G_X \simeq C(X, -).
\end{equation}
\end{remark}
\begin{definition} 
Since the $\infty$-category $\DGra_\infty$ is cocomplete, by the universal property of $\Spc$, there exists a colimit-preserving functor 
\begin{equation}
R : \Spc \longrightarrow \DGra_\infty
\end{equation}
such that $R(*)\simeq *$.  
\end{definition}
\begin{theorem}
\label{th:equivalence_to_spaces}
The functor  $R:\Spc \to \DGra_\infty$ and the  derived cubical nerve functor $N: \DGra_\infty\to \Spc$ are mutually inverse equivalences of $\infty$-categories 
\begin{equation}
\Spc \simeq \DGra_\infty.
\end{equation}
\end{theorem}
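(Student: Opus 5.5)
By Remark~\ref{rem:universal_proprty_of_Spc} applied to $C=\DGra_\infty$ and $X=*$, the functor $R=F_*$ has a right adjoint $G_*\simeq\DGra_\infty(*,-)$. By Theorem~\ref{th:derived_nerve}, this right adjoint is equivalent to $N$. So we have an adjunction $R\dashv N$, with unit $\eta:\Id_{\Spc}\to NR$ and counit $\epsilon:RN\to\Id_{\DGra_\infty}$. The plan is to show that both are equivalences: the unit by a colimit argument in $\Spc$, and the counit by conservativity of $N$.

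\emph{Unit.} Both $\Id_{\Spc}$ and $NR$ preserve colimits. For $R$ this holds by construction. For $N$ it is Theorem~\ref{th:colimits}, so $NR$ is a composite of colimit-preserving functors. By Remark~\ref{rem:universal_proprty_of_Spc}, evaluation at $*$ gives an equivalence $\Fun^{\colim}(\Spc,\Spc)\simeq\Spc$. A natural transformation between colimit-preserving functors is therefore an equivalence as soon as its component at $*$ is one, because every space is a colimit of the constant diagram at $*$ indexed by itself. It thus suffices to check that $\eta_*:*\to NR(*)\simeq N(*)$ is an equivalence. Here $N(*)$ is represented by $N_1(*)=\Box^0$, a point, so $\eta_*$ is a map between contractible spaces and hence an equivalence. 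This identifies $NR\simeq\Id_{\Spc}$.

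\emph{Counit.} By the triangle identity, $N\epsilon\circ\eta_N\simeq\id_N$. Since $\eta$ is an equivalence, so is $\eta_N$, and therefore $N\epsilon_G$ is an equivalence for every $G$. The functor $N$ is conservative (Proposition~\ref{prop:N_is_conservative}), so $\epsilon_G$ is an equivalence. Hence $RN\simeq\Id$, and $R$ and $N$ are mutually inverse equivalences.

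The main obstacle is not in these formal steps; the substantive inputs are Theorems~\ref{th:derived_nerve} and~\ref{th:colimits}, which are already proved. The delicate point is that the adjunction $R\dashv N$ must be taken in a form whose unit is genuinely the one for which the triangle identity is used. I would make this precise by transporting the adjunction $F_*\dashv G_*$ along the equivalence $G_*\simeq N$: the adjunction is then defined with $N$ as the right adjoint, and the unit and counit are defined accordingly. Checking that $\eta_*$ is an equivalence needs no explicit formula, since the source and target are both contractible.
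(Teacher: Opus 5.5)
Your proposal is correct and follows the paper's proof essentially step for step. You obtain $R\dashv N$ from Theorem~\ref{th:derived_nerve} and Remark~\ref{rem:universal_proprty_of_Spc}, show the unit is an equivalence via $\Fun^{\colim}(\Spc,\Spc)\simeq\Spc$ together with $NR(*)\simeq *$, and then deduce that the counit is an equivalence from the triangle identity and the conservativity of $N$ (Proposition~\ref{prop:N_is_conservative}); your explicit check that $\eta_*$ is a map between contractible spaces only spells out a step the paper leaves implicit.
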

\begin{proof} By Theorem \ref{th:derived_nerve} we have $N\simeq \DGra_\infty(*,-).$ Then  
Remark  \ref{rem:universal_proprty_of_Spc} implies  that the functors 
\begin{equation}
R : \Spc \leftrightarrows  \DGra_\infty: N
\end{equation}
are adjoint. Since $N$ is colimit-preserving (Theorem \ref{th:colimits}), 
$NR:\Spc\to \Spc$ is also colimit-preserving and  $NR(*)\simeq *.$ Using the equivalence  $\Fun^\colim(\Spc,\Spc)\simeq \Spc$, we obtain that the unit $\eta: \Id_{\Spc} \to NR$ is an equivalence. 

Let us check that the counit $ \varepsilon: RN \to \Id_{\DGra_\infty}$ is an equivalence. Since $N:\DGra_\infty\to \Spc$ is conservative by Proposition~\ref{prop:N_is_conservative}, it is sufficient to prove that $N\varepsilon : NRN \to N$ is an equivalence. This follows from the commutativity of the triangle
\begin{equation}
\begin{tikzcd}
N
\ar[rr,"\id_N"]
\ar[rd,"\eta N"']
& & 
N 
\\
& 
NRN
\ar[ru,"N\varepsilon"']
&
\end{tikzcd}
\end{equation}
and the 2-out-of-3 property of the class of equivalences. 
\end{proof}

\section{\bf Appendix. Nerve theorem for cubical sets}

In this section we prove the nerve theorem for cubical sets which is similar to the nerve theorem for simplicial sets that can be found in \cite[\S 12]{IX24}.  

\begin{definition}[Triangulation of a cubical set]
Note that there are well-defined maps of simplicial sets 
\begin{equation}
\max,\min : \Delta^1 \times \Delta^1 \to \Delta^1.    
\end{equation}
They allow us to define a cocubical simplicial set
\begin{equation}
\Box \longrightarrow \sSet, \hspace{1cm} [1]^n \mapsto (\Delta^1)^n 
\end{equation}
that induces an adjunction that we denote by 
\begin{equation}
T : \cSet \leftrightarrows \sSet : N_T.
\end{equation}
It is known that this adjunction is a Quillen equivalence between the Grothendieck model structure on $\cSet$ and the Kan-Quillen model structure on $\sSet$ \cite[Th.6.26]{DKLS24}, \cite[Prop.8.4.30]{Cis06}. The functor $T$ respects the geometric realizations \cite[Lemma 2.24]{CK23}
\begin{equation}
|TX| \cong |X|,
\end{equation}
and preserves monomorphisms \cite[Prop.1.31]{DKLS24}, \cite[Lem.8.4.29]{Cis06}.
\end{definition}
\begin{lemma}
\label{lemma:square_of_mono}
Assume that 
\begin{equation}
\begin{tikzcd}
A 
\ar[r]
\ar[d]
& 
B 
\ar[d]
\\
C 
\ar[r]
& D
\end{tikzcd}
\end{equation}
is a commutative square in the category of sets and that all the maps are monomorphisms.
If the square is a pushout  then it is also a pullback. 
\end{lemma}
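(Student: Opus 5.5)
The plan is to use the standard explicit description of pushouts in $\Set$ and reduce the pullback condition to an injectivity statement. Identify $B$ and $C$ with subsets of $D$ via the given monomorphisms $B\to D$ and $C\to D$. Also identify $A$ with its common image in $D$, which is legitimate because $A\to B\to D$ is a monomorphism. Under these identifications the square becomes a square of inclusions $A\subseteq B\subseteq D$ and $A\subseteq C\subseteq D$. The pullback of $B\to D\leftarrow C$ is then the intersection $B\cap C$. So the claim amounts to showing $A=B\cap C$, that is, that the canonical injective map $A\to B\times_D C$ is also surjective.

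First I recall that the pushout in $\Set$ is $D\cong (B\sqcup C)/\!\sim$, where $\sim$ is the equivalence relation generated by $f(a)\sim g(a)$ for $a\in A$, with $f\colon A\to B$ and $g\colon A\to C$. Next I take $b\in B$ and $c\in C$ with the same image in $D$. Then $b$ and $c$ are joined by a zigzag $b=x_0\sim x_1\sim\dots\sim x_k=c$ of elementary relations, each of the form $f(a)\sim g(a)$.

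The key step, and the only real point, is analysing this zigzag. Because $f$ and $g$ are injective, each element of $B$ is related to at most one element of $C$ by an elementary step, and the same holds in the other direction. Hence a minimal zigzag cannot revisit a side, since the steps $B\to C\to B$ would have to return to the same element. So the zigzag has length one: there is $a\in A$ with $f(a)=b$ and $g(a)=c$. This shows that $(b,c)$ lies in the image of $A$.

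Finally, injectivity of $A\to B\times_D C$ follows because $f$ is injective. Therefore $A\cong B\times_D C$ and the square is a pullback. I do not expect a serious obstacle. The only care needed is the zigzag argument, where injectivity of both $f$ and $g$ is exactly what prevents longer chains. Notably, injectivity of $B\to D$ and $C\to D$ is not even needed for this.
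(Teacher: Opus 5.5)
Your proof is correct and follows essentially the same route as the paper, which reduces to a square of inclusions in $D$ and uses that $B\sqcup_A C\to D$ is a bijection to conclude $A=B\cap C$. Your zigzag analysis spells out the step the paper leaves implicit: because $A\to B$ and $A\to C$ are injective, $B\sqcup_A C$ identifies an element of $B$ with an element of $C$ only when both come from the same element of $A$.
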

\begin{proof}
We can assume that $A\subseteq B,C \subseteq D$ and the maps are inclusions. Then the fact that $D$ is a pushout means that the map $B \sqcup_A C \to D$ is a bijection. This implies that $A=B\cap C$. 
\end{proof}

\begin{lemma}
\label{lemma:intersections}
Assume that  $X$ is a cubical set and $U_0,\dots,U_n \subseteq X$ are its cubical subsets. Let us identify $TU_i$ with its image in $TX$. Then
\begin{equation}
T\left( \bigcap_{i=0}^n U_i \right) = \bigcap_{i=0}^n TU_i.
\end{equation}
\end{lemma}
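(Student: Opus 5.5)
We want to show that $T$ commutes with finite intersections of cubical subsets. The plan is an induction on $n$ that reduces everything to the case of two subsets. For two subsets we use that $T$ is a left adjoint, so it preserves pushouts, and that $T$ preserves monomorphisms; Lemma~\ref{lemma:square_of_mono} then converts the resulting pushout square into the desired pullback.

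\textbf{Base step: two subsets.} Let $U,V\subseteq X$ be cubical subsets. In $\cSet$ the square with corners
\[
U\cap V,\qquad U,\qquad V,\qquad U\cup V
\]
is a pushout, since colimits are computed levelwise and the corresponding square of sets is a pushout. Because $T$ is a left adjoint, applying it gives a pushout square in $\sSet$ with corners
\[
T(U\cap V),\qquad TU,\qquad TV,\qquad T(U\cup V).
\]
All four maps in this square are monomorphisms, since $T$ preserves monomorphisms. Colimits in $\sSet$ are also computed levelwise, so in each simplicial degree we obtain a pushout square of sets made of injections. Lemma~\ref{lemma:square_of_mono} says such a square is also a pullback. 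Hence, levelwise,
\[
T(U\cap V)=TU\cap TV \quad\text{inside } T(U\cup V).
\]
Next we pass from $T(U\cup V)$ to $TX$. The map $T(U\cup V)\to TX$ is a monomorphism, and intersections of subobjects of $T(U\cup V)$ computed there agree with the same intersections computed in $TX$. So the identification holds inside $TX$ as well, and the case $n=1$ is done. The case $n=0$ is trivial.

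\textbf{Induction step.} Assume the formula holds for $n-1$, and apply the two-subset case to $U=\bigcap_{i<n}U_i$ and $V=U_n$. This gives
\[
T\Bigl(\bigcap_{i=0}^n U_i\Bigr)=T\Bigl(\bigcap_{i<n}U_i\Bigr)\cap TU_n=\Bigl(\bigcap_{i<n}TU_i\Bigr)\cap TU_n,
\]
which is the claim for $n$.

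\textbf{Main point of care.} The only delicate step is the identification of images. We must check that the image of $T(U\cap V)$ in $TX$ is the same whether we map it through $TU$ or through $TV$, and that all the subobjects involved are compared inside a single ambient object. This holds because every map in sight is injective and the square commutes, so all images are unambiguous subobjects of $TX$.
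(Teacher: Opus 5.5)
Your proposal is correct and follows essentially the same route as the paper: reduce to two subsets by induction, apply $T$ to the pushout square $U\cap V\to U,\,V\to U\cup V$, and use that $T$ preserves pushouts and monomorphisms together with Lemma~\ref{lemma:square_of_mono} levelwise to see that the image square is a pullback. Your extra remark about passing from $T(U\cup V)$ to $TX$ along a monomorphism makes explicit a detail the paper leaves implicit.
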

\begin{proof} It is sufficient to prove the statement for $n=1$, as the general case follows by induction. 
Since $T$ preserves pushouts and monomorphisms, the square 
\begin{equation}
\begin{tikzcd}
T(U_0\cap U_1) 
\ar[r]
\ar[d]
& 
TU_0 
\ar[d]
\\
TU_1
\ar[r]
& 
T(U_0\cup U_1)
\end{tikzcd}
\end{equation}
is a pushout square all of whose maps are monomorphisms. Since limits and colimits in $\sSet$ are degreewise, using Lemma \ref{lemma:square_of_mono}, we obtain that this square is a pullback. Therefore $T(U_0\cap U_1) = TU_0 \cap TU_1.$
\end{proof}

\begin{lemma}
\label{lemma:weak_Tf}
A map of cubical sets $f$ is a weak equivalence if and only if $Tf$ is a weak equivalence of simplicial sets.
\end{lemma}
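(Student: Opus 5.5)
The plan is to reduce the statement to geometric realization, using the two facts recalled in the definition of triangulation: $T$ commutes with geometric realization, $|TX|\cong|X|$ naturally in $X$, and weak equivalences on each side are detected by realization. The first step is to make the isomorphism $|TX|\cong|X|$ natural. I would check this on representables, where both sides are $[0,1]^n$, since the realization of $(\Delta^1)^n$ is $[0,1]^n$ and $\max$, $\min$ realize to $\max$, $\min$. Both $|T(-)|$ and $|-|$ are colimit-preserving functors $\cSet\to{\sf Top}$, because $T$ is a left adjoint. Since the isomorphism on representables is compatible with the cocubical structure maps, it extends to a natural isomorphism $|T(-)|\cong|-|$. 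This is exactly \cite[Lemma 2.24]{CK23}, and I would cite it as such.

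With naturality in hand, for a map of cubical sets $f\colon X\to Y$ we obtain a commutative square identifying $|Tf|$ with $|f|$ up to the natural homeomorphisms. Hence $|Tf|$ is a weak homotopy equivalence if and only if $|f|$ is.

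The two ends of the argument are then standard. By Corollary~\ref{cor:weak_equiv}, $f$ is a weak equivalence of cubical sets if and only if $|f|$ is a weak equivalence. On the simplicial side, weak equivalences in the Kan--Quillen model structure are by definition, or by the classical Quillen equivalence $|\cdot|\dashv\Sing$ with all simplicial sets cofibrant, exactly the maps whose realization is a weak homotopy equivalence. Chaining these equivalences gives the lemma.

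As an alternative that avoids realization altogether, one could use that $T$ is a left Quillen equivalence \cite[Th.6.26]{DKLS24} between model categories in which every object is cofibrant. A left Quillen functor preserves weak equivalences between cofibrant objects, and a Quillen equivalence reflects them between cofibrant objects \cite[Cor.~1.3.16]{Hov07}. This gives the same conclusion directly.

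The only point needing care is the naturality of $|TX|\cong|X|$, that is, checking compatibility with connections on representables. Everything else is formal, so I expect that check to be the main obstacle.
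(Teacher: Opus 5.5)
Your main argument is the paper's proof: combine Corollary~\ref{cor:weak_equiv} with the natural isomorphism $|Tf|\cong|f|$ from \cite[Lemma 2.24]{CK23}, plus the standard fact that weak equivalences of simplicial sets are detected by realization. Your alternative route, via the Quillen equivalence $T\dashv N_T$ in which all objects are cofibrant, is also valid but not needed.
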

\begin{proof}
It follows from Corollary \ref{cor:weak_equiv} and the isomorphism $|Tf|\cong |f|$.
\end{proof}

\begin{proposition}\label{prop:simplicial_nerve_theorem}
Let $X$ be a cubical set and $\UU=(U_i)_{i\in I}$ be a family of its cubical subsets such that $X=\bigcup_{i\in I} U_i$ and  $U_\sigma$ is weakly contractible for any $\sigma\in \Ner(\UU).$ Then there is a homotopy equivalence
\begin{equation}
|X| \simeq |\Ner(\UU)|. 
\end{equation}
(see Definition \ref{def:Ner} for the definition of $\Ner(\UU)$).
\end{proposition}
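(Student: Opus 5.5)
The plan is to reduce the statement to the nerve theorem for simplicial sets from \cite[\S 12]{IX24}, using the triangulation functor $T\colon \cSet\to\sSet$ introduced above. Set $Y=TX$ and $V_i=TU_i\subseteq Y$; these are simplicial subsets because $T$ preserves monomorphisms.

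The first step is to check that $(V_i)_{i\in I}$ covers $Y$. The cubical set $X$ is the union of the $U_i$, so it is the colimit of the diagram of the $U_\sigma$ over finite subsets $\sigma$, with inclusions as transition maps. Equivalently, every cube of $X$ lies in some $U_i$, so the map $\coprod_i U_i\to X$ is an epimorphism. Since $T$ is a left adjoint, it preserves epimorphisms and coproducts. Hence $\coprod_i TU_i\to TX$ is surjective, which gives $Y=\bigcup_i V_i$.

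The second step identifies the finite intersections. By Lemma~\ref{lemma:intersections}, for every finite $\sigma\subseteq I$ we have
\[
V_\sigma=\bigcap_{i\in\sigma}TU_i=T(U_\sigma).
\]
Because $T$ sends the empty cubical set to the empty simplicial set and a nonempty cubical set to a nonempty one, $V_\sigma\neq\emptyset$ exactly when $U_\sigma\neq\emptyset$. Therefore the two nerves agree, $\Ner((V_i)_{i\in I})=\Ner(\UU)$. For $\sigma\in\Ner(\UU)$, the map $U_\sigma\to\Box^0$ is a weak equivalence by hypothesis. Lemma~\ref{lemma:weak_Tf} then shows that $T(U_\sigma)\to T\Box^0=\Delta^0$ is a weak equivalence, so $V_\sigma$ is weakly contractible.

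The final step applies the simplicial nerve theorem to get $|Y|\simeq|\Ner(\UU)|$, and then uses $|TX|\cong|X|$ to conclude $|X|\simeq|\Ner(\UU)|$.

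The main risk is whether a nerve theorem of exactly this form, for arbitrary families of simplicial subsets with weakly contractible finite intersections, is actually available. If it is not, I would prove it directly. The idea is to form the bisimplicial set $B_{p,q}=\coprod_{\sigma_0\subseteq\cdots\subseteq\sigma_p}(V_{\sigma_p})_q$ indexed by chains in the face poset of the nerve. Fixing $q$ and varying $p$, the realization in $p$ recovers $Y_q$, since it is the nerve of a poset with a terminal element over each simplex; this requires the covering property. Fixing $p$ and varying $q$, each piece is weakly contractible, so the realization recovers the barycentric subdivision of $\Ner(\UU)$. A diagonal argument then compares $|Y|$ and $|\Ner(\UU)|$, and the resulting weak equivalence of CW complexes is a homotopy equivalence by Whitehead's theorem.
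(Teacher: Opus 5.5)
Your proposal is correct and is essentially the paper's proof: transport the cover along the triangulation $T$, use that $T$ preserves monomorphisms and finite intersections (Lemma~\ref{lemma:intersections}) together with Lemma~\ref{lemma:weak_Tf} to verify the hypotheses of the simplicial nerve theorem \cite[Th.~12.6]{IX24}, and conclude via $|TX|\cong|X|$. You make explicit the checks the paper leaves implicit: that $(TU_i)_{i\in I}$ still covers $TX$, that nonemptiness is preserved so the two nerves coincide, and that weak contractibility transfers. Your fallback bisimplicial argument is unnecessary, since that theorem is exactly what the paper cites.
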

\begin{proof}
This follows from the analogous statement for simplicial sets \cite[Th. 12.6]{IX24}, and the fact that $T:\cSet\to \sSet$ preserves monomorphisms and intersections (Lemma \ref{lemma:intersections}) and $|TX|\cong |X|$.  
\end{proof}

\begin{proposition}
\label{prop:nerve_cubical_2}
Let  $\varphi:X\to Y$ be a morphism of cubical sets and let  $\UU=(U_i)_{i\in I}$ and $\VV=(V_i)_{i\in I}$ be  covers of $X$ and $Y$ respectively such that $\varphi(U_i)\subseteq V_i.$ Assume that for any $i_0,\dots,i_n\in I$ the map 
\begin{equation}
U_{i_0}\cap \dots \cap U_{i_n} \overset{\sim}\longrightarrow V_{i_0} \cap \dots \cap V_{i_n} 
\end{equation}
is a weak equivalence. Then $\varphi$ is a weak equivalence. 
\end{proposition}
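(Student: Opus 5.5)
The plan is to reduce the statement to its simplicial analogue via the triangulation functor $T:\cSet\to\sSet$, in the same way as Proposition \ref{prop:simplicial_nerve_theorem}. Applying $T$ gives a map $T\varphi:TX\to TY$. Since $T$ preserves monomorphisms, we identify each $TU_i$ and $TV_i$ with a simplicial subset of $TX$ and $TY$ respectively, and $T\varphi$ carries $TU_i$ into $TV_i$.

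The first step is to check that $(TU_i)$ and $(TV_i)$ are covers. The functor $T$ is a left adjoint, so it preserves colimits. The union $X=\bigcup_i U_i$ is the image of the epimorphism $\coprod_i U_i\to X$, and $T$ preserves epimorphisms because it preserves pushouts. Hence $\coprod_i TU_i\to TX$ is surjective, that is, $TX=\bigcup_i TU_i$, and likewise $TY=\bigcup_i TV_i$.

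Next, Lemma \ref{lemma:intersections} gives
\[
T(U_{i_0}\cap\dots\cap U_{i_n})=TU_{i_0}\cap\dots\cap TU_{i_n},
\]
and similarly for the $V_i$. So the restricted map
\[
TU_{i_0}\cap\dots\cap TU_{i_n}\to TV_{i_0}\cap\dots\cap TV_{i_n}
\]
is $T$ applied to the original map on intersections. By Lemma \ref{lemma:weak_Tf} it is a weak equivalence of simplicial sets. The index set is the same family $I$ on both sides, so no re-indexing is needed.

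We then invoke the simplicial version of this comparison statement, which is proved alongside the nerve theorem in \cite[\S 12]{IX24}: a map of simplicial sets with compatible covers that is a weak equivalence on all finite intersections is a weak equivalence. This yields that $T\varphi$ is a weak equivalence, and Lemma \ref{lemma:weak_Tf} again gives that $\varphi$ is a weak equivalence.

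The main obstacle is that the exact simplicial comparison lemma may not be stated in \cite{IX24}. If it is not, I would prove it directly. Consider the Čech-type diagrams $\sigma\mapsto U_\sigma$ and $\sigma\mapsto V_\sigma$, indexed by the poset of nonempty finite subsets $\sigma\subseteq I$ taken with the reverse order. Both diagrams are Reedy cofibrant, because all the maps are inclusions of subobjects and latching maps are inclusions of unions. So their colimits, namely $X$ and $Y$, are homotopy colimits: the colimit of such a cover diagram of monomorphisms equals the union, by the standard projective/Reedy argument for covers. A levelwise weak equivalence of Reedy cofibrant diagrams induces a weak equivalence on colimits, which finishes the proof. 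One could run this argument directly in $\cSet$ and skip $T$ altogether, since the model structure on $\cSet$ has monomorphisms as cofibrations.
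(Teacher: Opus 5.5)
Your main argument is the paper's proof. You transport everything along $T$, using that $T$ preserves monomorphisms and intersections (Lemma~\ref{lemma:intersections}) and preserves and reflects weak equivalences (Lemma~\ref{lemma:weak_Tf}), and then apply the simplicial comparison result, which is stated as \cite[Prop.~12.7]{IX24}; your check that $T$ carries covers to covers (left adjoints preserve epimorphisms) fills in a detail the paper leaves implicit.

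This does not affect the main argument, but your fallback Reedy argument only works as stated for finite $I$. When $I$ is infinite, the poset of nonempty finite subsets (with maps $\sigma\to\tau$ for $\tau\subseteq\sigma$) admits no direct Reedy degree function, since a chain $\sigma_0\subsetneq\sigma_1\subsetneq\cdots$ would give an infinite descending sequence of ordinal degrees. To cover that case you would first reduce to finite subfamilies: write $\varphi$ as the filtered union of the maps $\bigcup_{i\in J}U_i\to\bigcup_{i\in J}V_i$ over finite $J\subseteq I$, and use that weak equivalences of cubical sets are stable under filtered colimits (e.g.\ via $T$).
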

\begin{proof}
The statement follows from the analogous statement for simplicial sets \cite[Prop.12.7]{IX24}, Lemma \ref{lemma:weak_Tf} and the fact that $T:\cSet\to \sSet$ preserves  monomorphisms and intersections (Lemma \ref{lemma:intersections}) and $|TX|\cong |X|$.
\end{proof}

\end{document}